\numberwithin{equation}{section}
\newtheorem{theorem}{Theorem}[section]
\newtheorem{lemma}[theorem]{Lemma}
\newtheorem{prop}[theorem]{Proposition}
\newtheorem{cor}[theorem]{Corollary}
\newtheorem{Th}{Theorem}
\newtheorem{Cor}[Th]{Corollary}
\newtheorem{defn}{Definition}
\newtheorem{corollary}[theorem]{Corollary}
\theoremstyle{definition}
\newtheorem{remark}[theorem]{Remark}
\newtheorem{notation}[theorem]{Notation}
\newtheorem{definition}[theorem]{Definition}
\newtheorem*{Rem}{Remark}
\renewcommand{\phi}{\varphi}
\newcommand{\One}{\operatorname{\mathbf{1}}}
\theoremstyle{definition}
\newcommand{\mD}{\mathcal{D}}
\newcommand{\C}{\mathcal{C}}
\newcommand{\D}{\bold{D}}
\newcommand{\E}{\mathcal{E}}
\newcommand{\F}{\mathcal{F}}
\newcommand{\N}{\mathcal{N}}
\newcommand{\X}{\mathcal{X}}
\newcommand{\Y}{\mathcal{Y}}
\newcommand{\T}{\mathcal{T}}
\newcommand{\W}{\bold{W}}
\newcommand{\m}{\mathcal}
\newcommand{\<}{\langle}
\renewcommand{\>}{\rangle}
\newcommand{\Aut}{\mathrm{Aut}}
\newcommand{\Hom}{\mathrm{Hom}}
\newcommand{\Mor}{\mathrm{Mor}}
\newcommand{\Out}{\mathrm{Out}}
\newcommand{\Inn}{\mathrm{Inn}}
\newcommand{\Syl}{\mathrm{Syl}}
\newcommand{\Iso}{\mathrm{Iso}}
\newcommand{\Ob}{\mathrm{Ob}}
\newcommand{\id}{\operatorname{id}}
\newcommand{\tDelta}{\tilde{\Delta}}
\newcommand{\tGamma}{\tilde{\Gamma}}
\newcommand{\tS}{\tilde{S}}
\newcommand{\tF}{\tilde{\F}}
\newcommand{\tH}{\tilde{H}}
\newcommand{\norm}{\mathrel{\unlhd}}
\def \L {\mathcal L}
\def \H {\mathcal H}
\def \ov {\overline}
\title{Kernels of localities}
\author[V. Grazian]{Valentina Grazian}
\address{Department of Mathematics and Applications, University of Milano - Bicocca, Via Roberto Cozzi 55, 20125 Milano, Italy}
\email{valentina.grazian@unimib.it}
\author[E.~Henke]{Ellen Henke}
\address{Institut f{\"u}r Algebra, Fakult{\"a}t Mathematik, Technische Universit{\"a}t Dresden, 01062 Dresden, Germany}
\email{ellen.henke@tu-dresden.de}
\thanks{The authors were partially supported by EPSRC grant no EP/R010048/1. They moreover would like to thank the Isaac Newton Institute for Mathematical Sciences, Cambridge, for support and hospitality during the programme \emph{Groups, representations and applications: new perspectives}, where some of the work on this paper was undertaken and supported by EPSRC grant no EP/R014604/1.}
\begin{document}
\maketitle

\begin{abstract}
We state a sufficient condition for a fusion system to be saturated. This is then used to investigate localities with kernels, i.e. localities which are (in a particular way) extensions of groups by localities. As an application of these results, we define and study certain products in fusion systems and localities, thus giving a new method to construct saturated subsystems of fusion systems.
\end{abstract}

\section{Introduction}

The problem of showing that a given fusion system is saturated arises in different contexts. For example, proving that certain subsystems of  fusion systems are  saturated is one of the major difficulties in  developing a theory of saturated fusion systems in analogy to the theory of finite groups. When studying extensions of fusion systems, it is also crucial to understand under which conditions such extensions are saturated. In this paper, we seek to take up both themes simultaneously in a systematic way. To formulate and study  extension problems, it is common to work with linking systems or, more generally, with transporter systems associated to fusion systems (see e.g. \cite{BCGLOExtensions,OliverVentura,OliverExtensions}). The equivalent framework of localities (introduced by Chermak \cite{Ch}) was recently used by Chermak and the second named author of this paper to construct saturated subsystems of fusion systems, thereby enriching the theory of fusion systems by some new concepts (cf. \cite[Theorem~C]{normal} and \cite{Henke:NK}).


\smallskip

In the present paper (apart from the appendix) we work with localities rather than transporter systems. Our first main result is however formulated purely in terms of fusion systems. It gives a sufficient condition for a fusion system to be saturated. The proof generalizes an argument used by Oliver \cite{OliverExtensions} to show that the fusion systems associated to certain extensions of groups by linking systems are saturated.

\smallskip

Our saturation criterion serves us as an important tool for studying \emph{kernels} of  localities as introduced in Definition~\ref{D:Kernel} below. A kernel of a locality $\L$ is basically a partial normal  subgroup $\N$ such that the factor locality $\L/\N$ is a group and $\N$ itself supports the structure of a locality. We show in an appendix that kernels of localities correspond to ``normal pairs of transporter systems'' (cf. Definition~\ref{D:NormalPair}).

\smallskip

In Section~\ref{S:Products} we prove some results which demonstrate  that the theory of kernels can be used to construct saturated subsystems of fusion systems. More precisely, we  study certain products in localities which give rise to ``sublocalities'' whose fusion systems are saturated. As a special case, if $\F$ is a saturated fusion system over $S$, one can define a notion of a product of a normal subsystem with a subgroup of a model for $N_\F(S)$ (or equivalently with a saturated subsystem of $N_\F(S)$). In particular, our work generalizes the notion of a product of a normal subsystem with a subgroup of $S$, which was introduced by Aschbacher \cite{AschbacherGeneralized}. Our results on products are also used by the second named author of this article \cite{Henke:Subnormal} to construct normalizers and centralizers of subnormal subsystems of saturated fusion systems.

\smallskip

\textbf{For the remainder of this introduction let $\F$ be a fusion system over a finite $p$-group $S$.}

\smallskip

We will adapt the terminology and notation regarding fusion systems from \cite[Chapter~1]{AKO}, except that we will write homomorphisms on the right hand side of the argument (similarly as in \cite[Chapter~2]{AKO}) and that we will  define centric radical subgroups of $\F$ differently, namely as follows.

\begin{defn}\label{D:Fcr}
Define a subgroup $P\leq S$ to be \emph{centric radical} in $\F$ if
\begin{itemize}
 \item $P$ is centric, i.e. $C_S(Q)\leq Q$ for every $\F$-conjugate $Q$ of $P$; and
 \item $O_p(N_\F(Q))=Q$ for every fully $\F$-normalized $\F$-conjugate $Q$ of $P$.
\end{itemize}
Write $\F^{cr}$ for the set of subgroups of $S$ which are centric radical in $\F$.
\end{defn}

If $\F$ is saturated, then our notion of centric radical subgroups of $\F$ coincides with the usual notion (cf. Lemma~\ref{L:radical}). However, defining centric radical subgroups as above is crucial  if we want to conclude that the fusion systems of certain localities are saturated.

\subsection{A saturation criterion} The following definition will be used to formulate the previously mentioned sufficient condition for a fusion system to be saturated.

\begin{defn}\label{D:SaturationCriterion}
Let $\Delta$ be a set of subgroups of $S$.
\begin{itemize}
 \item The set $\Delta$ is called \emph{$\F$-closed} if $\Delta$ is closed under $\F$-conjugacy and overgroup-closed in $S$.
\item $\F$ is called $\Delta$-generated if every morphism in $\F$ can be written as a product of restrictions of $\F$-morphisms between subgroups in $\Delta$.
 \item $\F$ is called $\Delta$-saturated, if each $\F$-conjugacy class in $\Delta$ contains a subgroup which is fully automized and receptive in $\F$ (as defined in \cite[Definition~I.2.2]{AKO}).
\end{itemize}
\end{defn}

Generalizing arguments used by Oliver \cite{OliverExtensions} we prove the following theorem.

\begin{Th}\label{T:FInvariantOliverExtensions2}
Let $\E$ be an $\F$-invariant saturated subsystem of $\F$. Suppose $\F$ is $\Delta$-generated and $\Delta$-saturated for some $\F$-closed set $\Delta$ of subgroups of $S$ with $\E^{cr}\subseteq\Delta$. Then $\F$ is saturated.
\end{Th}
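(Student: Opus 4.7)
I would verify the standard saturation axioms for an arbitrary $\F$-conjugacy class of subgroups of $S$: namely, that each such class contains a subgroup which is fully automized and receptive in $\F$. The hypothesis gives this directly for classes meeting $\Delta$; the job is therefore to bootstrap from $\Delta$ to all of $S$, using the $\F$-invariant saturated subsystem $\E$ as a source of local control and $\Delta$-generation as a decomposition tool. Fix a fully $\F$-normalized representative $P$ of its class (which exists since $S$ is finite), and aim to show $P$ is both fully automized and receptive in $\F$.

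\emph{Fully automized.} The $\F$-invariance of $\E$ yields normality relations between $\Aut_\F$ and $\Aut_\E$ that tightly constrain $\Aut_\F(P)$; in particular, $\Aut_\E(P) \norm \Aut_\F(P)$ on the support of $\E$. After arranging, if necessary, that $P$ is simultaneously fully $\F$-normalized and fully $\E$-normalized, saturation of $\E$ supplies the Sylow property inside $\Aut_\E(P)$. A Frattini argument through the normal subgroup $\Aut_\E(P)$ should then upgrade this to $\Aut_S(P) \in \Syl_p(\Aut_\F(P))$, using Alperin's fusion theorem in $\E$ (available by saturation of $\E$) combined with $\E^{cr} \subseteq \Delta$ and $\Delta$-saturation of $\F$ to handle any residual pieces.

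\emph{Receptivity.} Given an $\F$-isomorphism $\phi\colon Q \to P$, I would use $\Delta$-generation to decompose $\phi$ as a composite of restrictions of $\F$-morphisms between subgroups in $\Delta$. Receptivity at $\Delta$-representatives (granted by $\Delta$-saturation) extends each elementary factor on its own normalizer; the goal is to glue these extensions into a single morphism defined on $N_\phi$. I would choose the decomposition so that each intermediate subgroup contains a chosen $\E^{cr}$-subgroup, permitting the use of Alperin's theorem in $\E$ to control the ``$\E$-part'' of $\phi$, and use $\F$-invariance of $\E$ to handle the transversal part.

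\textbf{Main obstacle.} The delicate point is to ensure that the pieced-together extension is defined on the \emph{full} normalizer $N_\phi$, rather than a proper subgroup. This requires precise tracking of how ``extension normalizers'' shrink across each elementary factor and a decomposition arranged so that they do not shrink below $N_\phi$. Oliver's argument in \cite{OliverExtensions} handles the analogous issue for linking system extensions via a downward induction through the subgroup lattice; the main adaptation here is to accommodate the weaker hypothesis that $\E$ is merely a saturated subsystem (not one produced by a linking-system construction) and the flexibility allowed by $\Delta$, which may properly contain $\F^{cr}$. Arranging the induction to close up without circularity---so that the fully-automized and receptive conclusions for subgroups outside $\Delta$ never require themselves as input---will be the most intricate bookkeeping step.
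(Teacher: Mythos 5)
Your plan proposes a direct verification: fix a fully $\F$-normalized $P$ and show it is fully automized and receptive, decomposing morphisms via $\Delta$-generation and gluing local extensions. That is not the route the paper takes, and as written it has genuine gaps that you yourself partly flag. The central problem is the gluing step for receptivity. The hypothesis that $\F$ is $\Delta$-generated only guarantees that \emph{some} decomposition of a given $\phi\colon Q\to P$ into restrictions of morphisms between $\Delta$-subgroups exists; it gives you no control over the intermediate subgroups, so the proposed move to ``choose the decomposition so that each intermediate subgroup contains a chosen $\E^{cr}$-subgroup'' has no justification. Worse, even with a favorable decomposition, piecing together extensions of the elementary factors so that the composite is defined on all of $N_\phi$ is exactly the obstruction that makes saturation hard to verify in general, and the proposal stops at acknowledging this (``the most intricate bookkeeping step'') rather than resolving it. The fully-automized half is also unresolved: the Frattini argument through $\Aut_\E(P\cap T)\unlhd\Aut_\F(P\cap T)$ controls behavior on $P\cap T$, but translating that into $\Aut_S(P)\in\Syl_p(\Aut_\F(P))$ for a subgroup $P$ not contained in $T$ is not addressed.

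The paper avoids these difficulties entirely by arguing by contradiction from a maximal bad conjugacy class rather than verifying the axioms subgroup by subgroup. It introduces the set $\H$ of subgroups $P$ such that every overgroup of every $\F$-conjugate of $P$ respects $\F$-saturation, and shows (Theorem~\ref{T:FInvariantOliverExtensions}) that $\H$-generation plus $\E^{cr}\subseteq\H$ forces $\H$ to be everything. The mechanism is the one from \cite{controlling}: if $\F$ is not saturated, a maximal $\F$-conjugacy class $\m{P}$ outside $\H$ consists of $\F$-critical subgroups (Lemma~\ref{L:BCGLOMainArgument}), and after a delicate conjugation argument (Step~3 of that proof, which uses receptivity of subgroups already known to lie in $\H$ to increase $|N_T(P\cap T)|$), one can arrange $P\cap T$ to be fully $\E$-normalized. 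Lemma~\ref{L:FinvariantCritical}(b) then places $P\cap T$ in $\E^{cr}\subseteq\H$, a contradiction. So in the paper all extension-of-morphism work happens only on subgroups already known to respect $\F$-saturation, sidestepping the circularity you were worried about, whereas your plan would need to extend morphisms landing on the very subgroup whose receptivity you are trying to establish. Theorem~\ref{T:FInvariantOliverExtensions2} then follows in a few lines by observing $\Delta\subseteq\H$.
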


\subsection{Kernels of localities}\label{SS:Kernels}

The reader is referred to Section~\ref{S:Localities} for an introduction to partial groups and localities.  We say that a locality $(\L,\Delta,S)$ is a locality \emph{over} $\F$ to indicate that $\F=\F_S(\L)$. The set $\Delta$ is called the \emph{object set} of $(\L,\Delta,S)$. It follows from the definition of a locality that the normalizer $N_\L(P)$ of any object $P\in\Delta$ is a subgroup of $\L$ and thus a finite group. We will use the following definition.

\begin{defn}~
\begin{itemize}
 \item A locality $(\L,\Delta,S)$ over $\F$ is called \emph{cr-complete} if $\F^{cr}\subseteq\Delta$.
 \item A finite group $G$ is said to be \emph{of characteristic $p$} if $C_G(O_p(G))\leq O_p(G)$. 
 \item A locality $(\L,\Delta,S)$ is called a \emph{linking locality} if it is cr-complete and $N_\L(P)$ is of characteristic $p$ for every $P\in\Delta$.
\end{itemize}
\end{defn}

The slightly non-standard notion of centric radical subgroups introduced in Definition~\ref{D:Fcr} ensures that the fusion system $\F_S(\L)$ of a cr-complete locality $(\L,\Delta,S)$ is saturated (cf. Proposition~
\ref{P:crCompleteSaturated}(c)). If $(\L,\Delta,S)$ is a cr-complete locality over $\F$, then it gives rise to a transporter system $\m{T}$ associated to $\F$ whose object set is $\Delta$ and thus contains the set $\F^{cr}$. It follows from \cite[Proposition~4.6]{OliverVentura} that the $p$-completed nerve of such a transporter system $\m{T}$ is homotopy equivalent to the $p$-completed nerve of a linking system associated to $\F$.  The results we present next are centered around the following concept.

\begin{defn}\label{D:Kernel}
 A \emph{kernel} of a locality $(\L,\Delta,S)$ is a partial normal subgroup $\N$ of $\L$ such that $P\cap \N\in\Delta$ for every $P\in\Delta$.
\end{defn}

We show in Appendix~\ref{A} that kernels of localities correspond to ``normal pairs of transporter systems''. In particular, the results presented below can be translated to results on transporter systems. The reader is referred to Definition~\ref{D:NormalPair}, Proposition~\ref{L:KernelsToNormalPairs},   Theorem~\ref{T:NormalPairsToKernels} and  Remark~\ref{R:KernelsToNormalPairs} for details.

\smallskip

If $\N$ is a kernel of a locality $(\L,\Delta,S)$ then, setting
\[T:=\N\cap S\mbox{ and }\Gamma:=\{P\cap\N\colon P\in\Delta\},\]
it is easy to see that $(\N,\Gamma,T)$ is a locality (cf. Lemma \ref{L:KernelBasic}). We say in this situation also that $(\N,\Gamma,T)$ is a kernel of $(\L,\Delta,S)$.

\smallskip

Suppose now that $(\N,\Gamma,T)$ is a kernel of $(\L,\Delta,S)$. Observe that $T$ is an element of $\Gamma\subseteq\Delta$ and so $N_\L(T)$ is a subgroup of $\L$. It follows therefore from \cite[Theorem~4.3(b), Corollary~4.5]{loc1} that $\L/\N\cong N_\L(T)/N_\N(T)$ is a group. Thus, $\L$ can be seen as an extension of the group $\L/\N$ by the locality $(\N,\Gamma,T)$.

\smallskip

If the kernel $(\N,\Gamma,T)$ is cr-complete, then the following theorem implies that $\F_S(\L)$ is saturated. Its proof uses Theorem~\ref{T:FInvariantOliverExtensions2}.

\begin{Th}\label{T:crComplete}
Let $(\N,\Gamma,T)$ be a kernel of a locality $(\L,\Delta,S)$. Then $(\L,\Delta,S)$ is cr-complete if and only if $(\N,\Gamma,T)$ is cr-complete. If so, then $\F_T(\N)$ is a normal subsystem of $\F_S(\L)$.
\end{Th}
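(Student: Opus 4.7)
My plan is to prove the equivalence of cr-completeness by a direct correspondence at the level of centric radical subgroups, and then deduce normality. Before starting, I record preliminary observations: by the very definition of a kernel, $\Gamma\subseteq\Delta$; since $\N\norm\L$, the Sylow $S$ normalizes $T=\N\cap S$, so $T\norm S$; and $\Delta$ is overgroup-closed in $S$ by the locality axioms. The consequence I will use repeatedly is that any subgroup of $S$ containing an element of $\Gamma$ already lies in $\Delta$. Also, $\F_T(\N)$ is $\F_S(\L)$-invariant because $\N$ is partial normal in $\L$.

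For the direction ``$(\N,\Gamma,T)$ cr-complete $\Rightarrow(\L,\Delta,S)$ cr-complete'', I take $P\in\F_S(\L)^{cr}$ and set $Q:=P\cap T$. If I can show $Q\in\F_T(\N)^{cr}$, then $Q\in\Gamma\subseteq\Delta$ and overgroup-closure gives $P\in\Delta$. Centricity of $Q$ in $\F_T(\N)$ is deduced from centricity of $P$: every $\F_T(\N)$-conjugate $Q'$ of $Q$ arises from some $n\in\N$, and by extending partial conjugations within the locality (using normality of $\N$ and of $T$ in $S$) one obtains an $\F_S(\L)$-conjugate $P'$ of $P$ with $P'\cap T=Q'$; then $C_T(Q')\leq C_S(P')\cap T\leq P'\cap T=Q'$. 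Radicality transfers in parallel: the inclusion $N_{\F_T(\N)}(Q)\hookrightarrow N_{\F_S(\L)}(P)$ is compatible with intersection by $T$, so $O_p(N_{\F_S(\L)}(P))=P$ projects to an upper bound on $O_p(N_{\F_T(\N)}(Q))$, forcing equality with $Q$.

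For the converse, take $Q\in\F_T(\N)^{cr}$, chosen fully $\F_T(\N)$-normalized, and set $P:=QC_S(Q)$. A short calculation yields $P\cap\N=Q$: if $qc\in P\cap\N$ with $q\in Q$ and $c\in C_S(Q)$, then $c=q^{-1}(qc)$ lies in the subgroup $T=\N\cap S$, hence $c\in C_T(Q)\leq Q$ by centricity of $Q$, so $qc\in Q$. To finish I must prove $P\in\F_S(\L)^{cr}$: for centricity, every $\F$-conjugate of $P$ is contained in $Q^gC_S(Q^g)$, and full $\F_T(\N)$-normalization of $Q$ pins the sizes and forces equality, so $C_S(P^g)\leq C_S(Q^g)\leq P^g$. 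The main obstacle of the whole proof is radicality of $P$: using $Q=P\cap T\norm P$ (since $C_S(Q)$ centralizes $Q$), restriction defines a homomorphism $\Aut_\F(P)\to\Aut_\F(Q)$ whose kernel is a $p$-group contained in $\Inn(P)\cdot\Hom(P/Q,Z(Q))$, and combining this with $O_p(N_{\F_T(\N)}(Q))=Q$ extracts $O_p(N_{\F_S(\L)}(P))=P$. Once $P\in\F^{cr}\subseteq\Delta$ by the cr-completeness hypothesis, we get $Q=P\cap\N\in\Gamma$.

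Finally, with the equivalence in hand, $\F_S(\L)$ is saturated by Proposition~\ref{P:crCompleteSaturated}(c); alternatively one could appeal to Theorem~\ref{T:FInvariantOliverExtensions2} with $\E=\F_T(\N)$, since $\E^{cr}\subseteq\Gamma\subseteq\Delta$. To conclude $\F_T(\N)\norm\F_S(\L)$ it remains to verify three standard items: $T\norm S$ (done); $\F_T(\N)$ is $\F$-invariant (a direct consequence of $\N\norm\L$); and the extension axiom for $\Aut_{\F_T(\N)}(T)$, which holds because each such automorphism is realized by conjugation by some $n\in N_\N(T)$, which already lies in $N_\L(TC_S(T))$ and thus extends to an element of $\Aut_\F(TC_S(T))$ whose commutator with $C_S(T)$ lies in $Z(T)$, the last fact following from the action of $\N$ on $T$ provided by the kernel structure.
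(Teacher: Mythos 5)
There is a genuine gap in your converse direction (``$\L$ cr-complete $\Rightarrow\N$ cr-complete''). You take $Q\in\E^{cr}$ fully $\E$-normalized, set $P:=QC_S(Q)$, and claim $P\in\F^{cr}$. This is false in general, and the failure is precisely at radicality. Concretely: let $S$ be a nonabelian $p$-group of order $p^3$ and $T\unlhd S$ a maximal abelian normal subgroup with $C_S(T)=T$ (for example $S=D_8$, $T=\langle r\rangle\cong C_4$). Take $\L=S$ viewed as a locality with $\Delta$ all subgroups of $S$, and $\N=T$; then $\Gamma$ is all subgroups of $T$ and $(\N,\Gamma,T)$ is a kernel. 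Here $\E^{cr}=\{T\}$ and $\F^{cr}=\{S\}$, and for $Q=T$ your candidate is $P=QC_S(Q)=T\notin\F^{cr}$, since $O_p(N_\F(T))=S\neq T$. So the step ``$P\in\F^{cr}\subseteq\Delta$, hence $Q=P\cap\N\in\Gamma$'' breaks down. The paper instead invokes Lemma~\ref{L:AOV} (a special case of \cite[Lemma~1.19]{AOV1}), whose construction is substantially more delicate; in the example above it produces $R=S$, not $T$.

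Your other direction has a subtler but equally real issue: the transfer $P\in\F^{cr}\Rightarrow P\cap T\in\E^{cr}$ is essentially Lemma~\ref{L:RadicalIntersect}, whose proof passes to a conjugate making $P\cap T$ fully $\E$-normalized via Corollary~\ref{C:FullyNormalizedConjugate} and hence requires $\F$ to be \emph{saturated}. You defer saturation of $\F$ to ``after the equivalence'', but it is needed inside this implication; the paper first applies Theorem~\ref{T:FInvariantOliverExtensions2} (using that $\E$ is saturated, $\F$-invariant, $\E^{cr}\subseteq\Gamma\subseteq\Delta$, and $\F$ is $\Delta$-generated and $\Delta$-saturated) and only then uses Lemma~\ref{L:RadicalIntersect}. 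Your direct claim that $O_p(N_\F(P))=P$ ``projects to an upper bound on $O_p(N_\E(Q))$'' is not a valid inference; the actual transfer (Lemma~\ref{L:FinvariantCritical}) is a coprime-action argument, not a monotonicity of $O_p$. Finally, your normality argument asserts $N_\N(T)\subseteq N_\L(TC_S(T))$, which is Lemma~\ref{L:KernelBasiccd}(a) and requires the hypothesis $C_\N(T)\leq T$; this need not hold for a general cr-complete kernel, and the paper obtains it by first passing to a kernel which is a linking locality via Lemma~\ref{L:ProjectionPartialNormal} inside Proposition~\ref{P:KernelcrLinking} -- a reduction your outline omits.
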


\begin{Th}\label{T:KernelLinkingLocality}
Let $(\L,\Delta,S)$ be a locality with a kernel $(\N,\Gamma,T)$. Then the following conditions are equivalent:
\begin{itemize}
 \item [(i)] $(\L,\Delta,S)$ is a linking locality;
 \item [(ii)] $(\N,\Gamma,T)$ is a linking locality and $N_\L(T)$ is of characteristic $p$;
 \item [(iii)] $(\N,\Gamma,T)$ is a linking locality and $C_\L(T)$ is of characteristic $p$.
\end{itemize}
\end{Th}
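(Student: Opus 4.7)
The plan is to first apply Theorem~\ref{T:crComplete} to identify the cr-complete halves of conditions (i), (ii), and (iii) with each other, reducing the theorem to the equivalence of the three characteristic-$p$ statements on the normalizers and on $C_\L(T)$. I would then prove the cycle (iii) $\Rightarrow$ (ii) $\Rightarrow$ (i) $\Rightarrow$ (iii).

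For (iii) $\Rightarrow$ (ii), only the transfer $C_\L(T)$ of characteristic $p$ $\Rightarrow$ $N_\L(T)$ of characteristic $p$ needs a new argument. Since $C_\L(T) \unlhd N_\L(T)$, the subgroup $O_p(C_\L(T))$ is characteristic in $C_\L(T)$, hence normal in $N_\L(T)$, and so $O_p(C_\L(T)) \leq O_p(N_\L(T))$. Any $x \in C_{N_\L(T)}(O_p(N_\L(T)))$ centralizes $T \leq O_p(N_\L(T))$, so belongs to $C_\L(T)$ and moreover centralizes $O_p(C_\L(T))$; the hypothesis then forces $x \in O_p(C_\L(T)) \leq O_p(N_\L(T))$, giving characteristic $p$ of $N_\L(T)$.

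For (ii) $\Rightarrow$ (i), fix $P \in \Delta$ and set $Q = P \cap \N$. Then $Q \in \Gamma \subseteq \Delta$, and by partial normality of $\N$, conjugation by any $g \in N_\L(P)$ fixes $Q$, so $N_\L(P) \leq N_\L(Q)$ and $N_\N(P) = N_\L(P) \cap \N$ is normal in $N_\L(P)$. I would analyze $N_\L(P)$ through this normal subgroup (closely related to the characteristic-$p$ group $N_\N(Q)$) together with the quotient, which embeds into $\L/\N \cong N_\L(T)/N_\N(T)$. A Frattini-type lifting of coset representatives into the characteristic-$p$ group $N_\L(T)$ should then yield characteristic $p$ of $N_\L(P)$.

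For (i) $\Rightarrow$ (iii), two assertions need proof. To show $N_\N(Q)$ is of characteristic $p$ for each $Q \in \Gamma \subseteq \Delta$, I would use that $N_\N(Q) \unlhd N_\L(Q)$ (by partial normality) together with the identity $O_p(N_\N(Q)) = O_p(N_\L(Q)) \cap T$, and then control centralizers by conjugating $Q$ to a fully $\F_T(\N)$-normalized representative where the Sylow $N_T(Q)$ coincides with $O_p(N_\N(Q))$. To show $C_\L(T)$ is of characteristic $p$: since $T$ is a normal Sylow in $N_\N(T)$, characteristic $p$ of $N_\N(T)$ forces $O_p(N_\N(T)) = T$ and hence $C_\N(T) = Z(T)$, so the embedding $C_\L(T)/Z(T) \hookrightarrow \L/\N$ combined with characteristic $p$ of $N_\L(T)$ gives the conclusion. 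The primary obstacle is the direction (ii) $\Rightarrow$ (i): transferring the characteristic-$p$ property from the kernel-side normalizers and $N_\L(T)$ up to the normalizer of an arbitrary object $P \in \Delta$. This is the locality analog of the fact that a suitable extension of a characteristic-$p$ group by a characteristic-$p$ group is again characteristic $p$, and it is where the structural analysis of kernels will do most of the work.
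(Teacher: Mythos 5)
Your overall plan — factor off the cr-complete part via Theorem~\ref{T:crComplete} and then cycle the three characteristic-$p$ conditions — matches the paper, which derives the theorem from Propositions~\ref{Prop:kernel.cr.complete} and~\ref{P:ObjectiveCharpCharacterize}. Your direct argument for (iii)~$\Rightarrow$~(ii) is correct: $T\leq O_p(N_\L(T))$ because $T\unlhd N_\L(T)$, and the rest follows. Your (i)~$\Rightarrow$~(iii) works in essence, but you are overcomplicating it: $N_\N(Q)\unlhd N_\L(Q)$ is characteristic $p$ simply because subnormal subgroups of characteristic-$p$ groups are characteristic $p$ (Lemma~\ref{prop.char.p.group}(b)); the identity $O_p(N_\N(Q))=O_p(N_\L(Q))\cap T$ and the claim that $N_T(Q)$ equals $O_p(N_\N(Q))$ for fully normalized $Q$ are both unneeded and the second is not generally true — a normal Sylow $p$-subgroup is not the same thing as the $p$-core. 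Similarly $C_\L(T)$ is normal in the characteristic-$p$ group $N_\L(T)$, so char $p$ immediately; you do not need the embedding of $C_\L(T)/Z(T)$ into $\L/\N$.

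The genuine gap is (ii)~$\Rightarrow$~(i). You correctly observe that for $P\in\Delta$ and $Q:=P\cap\N$ one has $N_\L(P)\leq N_\L(Q)$ and that $N_\N(P)$ is a characteristic-$p$ normal subgroup of $N_\L(P)$ with quotient embedding in $\L/\N\cong N_\L(T)/N_\N(T)$. But a group $G$ with a characteristic-$p$ normal subgroup $N$ and $G/N$ a section of a characteristic-$p$ group need \emph{not} be characteristic $p$ — you must also control either the centralizer $C_G(N)$ or the normalizer $N_G(T_N)$ of a Sylow $p$-subgroup $T_N$ of $N$. This is exactly what Lemma~\ref{L:CharpCharacterize} provides, and it is the engine of the paper's proof. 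The paper first establishes the intermediate condition (iv): $N_\L(Q)$ has characteristic $p$ for all $Q\in\Gamma$. This is done by taking a maximal counterexample $Q\in\Gamma$, replacing it (via Lemma~\ref{L:LocalitiesProp}(b)) by a conjugate with $N_S(Q)\in\Syl_p(N_\L(Q))$, noting $Q<N_T(Q)$, invoking maximality to get $N_\L(N_T(Q))$ char $p$, hence $N_{N_\L(Q)}(N_T(Q))$ char $p$, and then applying Lemma~\ref{L:CharpCharacterize} with $N=N_\N(Q)$ (char $p$ by hypothesis) and $T_N=N_T(Q)$ to derive a contradiction. Then (iv)~$\Rightarrow$~(i) follows from $N_\L(P)=N_{N_\L(P\cap\N)}(P)$ and Lemma~\ref{prop.char.p.group}(a). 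Your ``Frattini-type lifting'' sketch does not supply the needed control on $N_G(T_N)$ or $C_G(N)$ and gives no mechanism for induction; as written, the step would fail.
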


We now want to consider special kinds of linking localities. The object set of any linking locality over $\F$ is always contained in the set $\F^s$ of $\F$-subcentric subgroups (defined in  Definition~\ref{D:Subcentric}). If $\F$ is saturated, then the existence and uniqueness of centric linking systems implies conversely that, for every $\F$-closed set $\Delta$ of subgroups of $\F^s$, there is an essentially unique linking locality over $\F$ with object set $\Delta$.  Chermak introduced an $\F$-closed set $\delta(\F)\subseteq\F^s$, which by \cite[Lemma~7.21]{normal} can be described as the set of all subgroups of $S$ containing an element of $F^*(\F)^s$ (where $F^*(\F)$ is the generalized Fitting subsystem of $\F$ introduced by Aschbacher \cite{AschbacherGeneralized}). Notice that there exists always an essentially unique linking locality over $\F$ whose object set is the set $\delta(\F)$. Such a linking locality is called a \emph{regular locality}.

\smallskip

For an arbitrary locality $(\L,\Delta,S)$, there is a largest subgroup $R$ of $S$ with $\L=N_\L(R)$. This subgroup is denoted by $O_p(\L)$. Setting $\tDelta:=\{P\leq S\colon PO_p(\L)\in\Delta\}$, the triple $(\L,\tDelta,S)$ is also a locality (but with a possibly larger object set). It turns out that $(\L,\Delta,S)$ is a linking locality if and only if $(\L,\tDelta,S)$ is a linking locality (cf. \cite[Lemma~3.28]{normal}). This  flexibility in the choice of object sets makes it possible to formulate a result similar to Theorem~\ref{T:KernelLinkingLocality} for regular localities.

\begin{Th}\label{T:Regular}
Let $(\L,\Delta,S)$ be a locality with a kernel $(\N,\Gamma,T)$. Set 
\[\tDelta:=\{P\leq S\colon PO_p(\L)\in\Delta\}\mbox{ and }\tGamma:=\{Q\leq T\colon QO_p(\N)\in\Gamma\}.\] Then the following conditions are equivalent:
\begin{itemize}
 \item[(i)] $(\L,\tDelta,S)$ is a regular locality;
 \item[(ii)] $(\N,\tGamma,T)$ is a regular locality and $N_\L(T)$ is of characteristic $p$;
 \item[(iii)] $(\N,\tGamma,T)$ is a regular locality and $C_\L(T)$ is of characteristic $p$.
\end{itemize}
Moreover, if these conditions hold, then $E(\L)=E(\N)$.
\end{Th}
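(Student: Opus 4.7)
The plan is to bootstrap Theorem~\ref{T:Regular} off of Theorem~\ref{T:KernelLinkingLocality} by peeling off the only additional requirement that distinguishes a regular locality from a general linking locality, namely that its object set coincide with $\delta(\F)$; the extra claim $E(\L)=E(\N)$ is handled in parallel with this object-set analysis.

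First I would verify that $(\N,\tGamma,T)$ is indeed the kernel of $(\L,\tDelta,S)$ in the sense of Definition~\ref{D:Kernel}, that is, $\tGamma=\{P\cap\N : P\in\tDelta\}$. Since $\N\norm\L$, the subgroup $O_p(\N)$ is characteristic in $\N$ and hence normal in $\L$, so $O_p(\N)\leq O_p(\L)\cap T$. Using this containment together with the original kernel relation $\Gamma=\{P\cap\N : P\in\Delta\}$, one checks that, for $P\leq S$, one has $PO_p(\L)\in\Delta$ if and only if $(P\cap\N)O_p(\N)\in\Gamma$, and that intersecting with $\N$ is compatible with the thickening operation. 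Once this bookkeeping is done, Theorem~\ref{T:KernelLinkingLocality} applied to $(\L,\tDelta,S)$ with kernel $(\N,\tGamma,T)$ gives the equivalence between ``$(\L,\tDelta,S)$ is a linking locality'' and each of the linking-locality parts in (ii) and (iii).

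It then remains to show that, under the linking-locality assumption, $\tDelta=\delta(\F_S(\L))$ if and only if $\tGamma=\delta(\F_T(\N))$. Using Chermak's description $\delta(\F)=\{P\leq S : P\supseteq Q \text{ for some }Q\in F^*(\F)^s\}$ from \cite[Lemma~7.21]{normal} and the fact that the thickening $\tDelta=\{P\leq S:PO_p(\L)\in\Delta\}$ (respectively $\tGamma$) precisely implements the overgroup-closure, this comes down to comparing $F^*(\F_S(\L))^s$ with $F^*(\F_T(\N))^s$ modulo passage to $O_p(\L)$. Since $F^*$ decomposes as $E\cdot O_p$ in the fusion-system setting and the $O_p$-contribution is absorbed into the thickening step itself, the essential content is the identity $E(\F_S(\L))=E(\F_T(\N))$, or equivalently on the locality side $E(\L)=E(\N)$, which is exactly the additional claim of the theorem.

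The main obstacle is therefore establishing $E(\L)=E(\N)$ from the locality data alone. The containment $E(\L)\subseteq\N$ should follow from the fact that $\L/\N\cong N_\L(T)/N_\N(T)$ is an ordinary finite group (so the image of any component of $\L$ is a component of this quotient), combined with the characteristic-$p$ hypothesis on $N_\L(T)$, which under the linking hypotheses forces the quotient to be $p$-constrained and hence component-free. For the reverse inclusion, partial normality of $\N$ in $\L$ propagates subnormality, so components of $\N$ remain subnormal quasisimple partial subgroups of $\L$ and are thus components of $\L$. I expect this step to draw most heavily on the Aschbacher--Chermak theory of the generalized Fitting subsystem and its incarnation inside a regular locality, and to be the technical heart of the argument; once it is in place, everything else reduces cleanly to Theorem~\ref{T:KernelLinkingLocality}.
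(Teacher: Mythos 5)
Your overall strategy of bootstrapping off Theorem~\ref{T:KernelLinkingLocality} is sound in spirit, but two of the concrete steps you propose have genuine gaps.

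First, your step of verifying that $(\N,\tGamma,T)$ is a kernel of $(\L,\tDelta,S)$ is not as routine as you suggest, and in fact your claimed equivalence ``$PO_p(\L)\in\Delta$ iff $(P\cap\N)O_p(\N)\in\Gamma$'' is problematic in the forward direction. If $PO_p(\L)\in\Delta$, the kernel relation for $(\N,\Gamma,T)$ gives $PO_p(\L)\cap\N\in\Gamma$, but you would then need $(P\cap\N)O_p(\N)=PO_p(\L)\cap\N$, and this Dedekind-type identity can fail: in general $(P\cap T)(O_p(\L)\cap T)$ is only a subgroup of $PO_p(\L)\cap T$, and $\Gamma$ is closed only under \emph{overgroups} in $T$, so you cannot descend. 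The paper sidesteps this entirely: it uses \cite[Lemma~3.27]{normal} to say that $(\L,\Delta,S)$ is a linking locality iff $(\L,\tDelta,S)$ is, and similarly for $\N$; it then applies Theorem~\ref{T:KernelLinkingLocality} to the \emph{original} kernel $(\N,\Gamma,T)$ of $(\L,\Delta,S)$, so it never needs the thickened kernel structure. If you want to pursue your route, you would first need to establish that the thickened triple really is a kernel, and that is not obvious without already being in the linking/regular setting.

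Second, and more seriously, your argument for $E(\L)\subseteq\N$ is wrong. You claim that $\L/\N\cong N_\L(T)/N_\N(T)$ is of characteristic $p$ because $N_\L(T)$ is, and hence has no components. But quotients of characteristic-$p$ groups by normal subgroups are \emph{not} of characteristic~$p$ in general, and can have components: for example, take $G=\SL_2(p)\ltimes V$ with $V$ the natural module (here $C_G(O_p(G))=C_G(V)=V=O_p(G)$, so $G$ is of characteristic $p$), and observe that $G/V\cong\SL_2(p)$ is quasisimple for $p\geq 5$. So ``image of component lands in a component-free quotient'' does not follow from the hypotheses. The paper instead argues at the fusion-system level: since $N_\L(T)$ is of characteristic $p$, the fusion system $N_\F(T)=\F_S(N_\L(T))$ is constrained, and then \cite[Corollary~7.18, Lemma~7.19]{normal} give that $E(C_\F(\E))=E(N_\F(T))$ is trivial; feeding this into the factorization $E(\F)=E(\E)*E(C_\F(\E))$ from \cite[Lemma~7.13(c)]{normal} yields $E(\F)=E(\E)$, and then the bijection in \cite[Theorem~E(d)]{normal} between partial normal subgroups and normal subsystems gives $E(\L)=E(\N)$. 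Your idea for the reverse inclusion (components of $\N$ stay subnormal quasisimple in $\L$) is reasonable in spirit, but it too needs to be anchored in the cited results (e.g. \cite[Lemma~11.13]{Henke:Regular}), and the paper's bijection argument handles both inclusions at once. Finally, your sketch of the $\delta$-comparison glosses over the careful chain of equivalences the paper does via Lemma~\ref{L:deltaFR}(a) and \cite[Lemma~10.6]{Henke:Regular}; that part is more filling-in-detail than a gap, but the two items above need to be fixed.
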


In an unpublished preprint, Chermak defined a locality $(\L,\Delta,S)$ to be \emph{semiregular} if (in our language) it has a kernel $(\N,\Gamma,T)$ which is a  regular locality. He  observed furthermore that a locality is semiregular if and only if it is an image of a regular locality under a projection of localities. As a consequence, images of semiregular localities under projections are semiregular. Moreover, since partial normal subgroups of regular localities form regular localities, it follows that partial normal subgroups of semiregular localities form semiregular localities. Thus, the category of semiregular localities and projections might provide a good framework to study extensions. This is one of our motivations to study kernels of localities more generally.

\begin{Rem}
Extensions of partial groups and localities have already been studied by Gonzalez \cite{Gonzalez}. He starts by giving important insights into the existence of extensions of partial groups. Basically, Gonzalez considers partial groups as simplicial sets and uses the concept of a simplicial fibre bundle. Gonzalez then proceeds to stating some results about extensions of localities in Section 7 of his paper. He calls a locality ``saturated'' if it is cr-complete in our sense. Under certain conditions it is shown that extensions of localities lead to (saturated) localities. To summarize, Gonzalez starts by defining \emph{isotypical extensions} (cf. \cite[Definition~7.1]{Gonzalez}) and shows then that an isotypical extension of a locality $(\mathtt{L}'',\Delta'',S'')$ by a locality $(\mathtt{L}',\Delta',S')$ leads to a locality $(\mathtt{T},\Delta,S)$ (cf. \cite[Proposition~7.6]{Gonzalez}). Slightly more precisely, we have $\mathtt{T}\subseteq\mathtt{L}$ for an extension $\mathtt{L}$ of the partial group $\mathtt{L}''$ by the partial group $\mathtt{L}'$.

\smallskip

The situation Gonzalez studies is principally different from ours. However, in \cite[Example~7.9, Corollary~7.10]{Gonzalez}, he considers a setup where $\mathtt{L}=\mathtt{T}$ (with $\mathtt{L}$ and $\mathtt{T}$ as above). In this situation one can observe easily that $(\mathtt{L}',\Delta',S')$ is a kernel of $(\mathtt{L},\Delta,S)$. Indeed, our Theorem~\ref{T:crComplete} shows that the assumption in \cite[Corollary~7.10]{Gonzalez} that $\Delta'$ contains all $\F'$-centric subgroups is redundant. It would be the subject of further research to see in how far our results have other interesting applications in the context of Gonzalez' work.
\end{Rem}


\subsection{Products in regular localities and in fusion systems} We now demonstrate that the theory of kernels can be used to study certain products in regular localities and thereby construct saturated subsystems of saturated fusion systems.

\smallskip

We study regular localities rather than arbitrary (linking) localities, mainly because every partial normal subgroup of a regular locality can be given the structure of a regular locality. To be exact, if $(\L,\Delta,S)$ is a regular locality and $\N\unlhd\L$, then $\E:=\F_{S\cap\N}(\N)$ is saturated and $(\N,\delta(\E),S\cap\N)$ is a regular locality.

\smallskip

In localities or, more generally, in partial groups, there is a natural notion of products of subsets. More precisely, if $\L$ is a partial group with product $\Pi\colon \D\rightarrow\L$, then
for $\X,\Y\subseteq\L$ we set
\[\X\Y:=\{\Pi(x,y)\colon x\in\X,\;y\in\Y,\;(x,y)\in\D\}.\]
The product of a partial normal subgroup with another partial subgroup is only in special cases known to be a partial subgroup. For example, the product of two partial normal subgroups of a locality $(\L,\Delta,S)$ is a partial normal subgroup (and forms thus a regular locality if $(\L,\Delta,S)$ is regular). Our next theorem gives a further example of a product which is a partial subgroup and can be given the structure of a regular locality.

\smallskip

For the theorem below to be comprehensible, a few preliminary remarks may be useful. For any linking locality $(\L,\Delta,S)$, one can define the generalized Fitting subgroup $F^*(\L)$ as a certain partial normal subgroup of $\L$ (see \cite[Definition~3]{Henke:Regular}). If $(\L,\Delta,S)$ is a regular locality, then $F^*(\L)$ is a kernel of $(\L,\Delta,S)$ and thus $T^*:=S\cap F^*(\L)$ is an element of $\Delta$. In particular, $N_\L(T^*)$ forms a group of characteristic $p$. 
If $\N\unlhd \L$ is a partial normal subgroup of $\L$, then $N_\N(T^*)$ is a normal subgroup of $N_\L(T^*)$. Hence, for any subgroup $H$ of $N_\L(T^*)$, the product $N_\N(T^*)H$ is a subgroup of $N_\L(T^*)$.

\begin{Th}\label{T:ProductMain}
Let $(\L,\Delta,S)$ be a regular locality. Fix moreover
\[\N\unlhd\L,\; T:=\N\cap S,\;\E:=\F_T(\N),\;T^*:=F^*(\L)\cap S\mbox{ and }H\leq N_\L(T^*).\]
Then $\N H$ is a partial subgroup of $\L$. Moreover, for every Sylow $p$-subgroup $S_0$ of $N_\N(T^*)H$ with $T\leq S_0$ the following hold:
\begin{itemize}
 \item [(a)] There exists a unique set $\Delta_0$ of subgroups of $S_0$ such that $(\N H,\Delta_0,S_0)$ is a cr-complete locality with kernel $(\N,\delta(\E),T)$.
 \item [(b)] Let $\Delta_0$ be as in (a) and set $\tDelta_0:=\{P\leq S\colon PO_p(\N H)\in\Delta_0\}$. Then $(\N H,\tDelta_0,S_0)$ is a regular locality  if and only if $N_\N(T^*)H$ is a group of characteristic $p$.
\end{itemize}
\end{Th}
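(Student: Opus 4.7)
The proof handles the three assertions in sequence: $\N H$ is a partial subgroup, the construction and uniqueness of $\Delta_0$ in (a), and the regularity characterization in (b).

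For the first assertion, the plan is to use partial normality of $\N$ to rewrite any word $w = (n_1 h_1, \ldots, n_k h_k) \in \D$ by iteratively pushing $H$-factors to the right: replace each adjacent pair $h_i n_{i+1}$ by $(h_i n_{i+1} h_i^{-1}) h_i$, where the conjugate $h_i n_{i+1} h_i^{-1}$ is a well-defined element of $\N$ because $h_i$ lies in the genuine group $N_\L(T^*)$ and $\N$ is partial normal in $\L$. Iterating yields $\Pi(w) = n \cdot (h_1 \cdots h_k) \in \N H$. Inverses are handled by $(nh)^{-1} = h^{-1}(h n^{-1} h^{-1})$. The verifications that each rewrite produces a word in $\D$ with the same product are standard partial-group manipulations.

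For part~(a), note first that $T^* = F^*(\L) \cap S$ is strongly closed in $\F_S(\L)$, so every element of $S$, and in particular of $T$, normalizes $T^*$; hence $T \leq N_\N(T^*) \leq N_\N(T^*) H \leq N_\L(T^*)$, validating the choice of $S_0$. The natural object set is
\[\Delta_0 := \{P \leq S_0 : P \cap T \in \delta(\E)\}.\]
Using the description $\delta(\E) = \{Q \leq T : Q \supseteq R \text{ for some } R \in F^*(\E)^s\}$, one sees that $\Delta_0$ is overgroup-closed in $S_0$ and closed under $\F_{S_0}(\N H)$-conjugacy. The locality axioms for $(\N H, \Delta_0, S_0)$ will follow from Step~1, the locality structure of $\L$, and the built-in kernel condition. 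Saturation of the resulting fusion system $\F_0 := \F_{S_0}(\N H)$ is then obtained from Theorem~\ref{T:FInvariantOliverExtensions2}, applied with the saturated $\F_0$-invariant subsystem $\E = \F_T(\N)$ and the $\F_0$-closed set $\Delta_0$, noting $\E^{cr} \subseteq \delta(\E) \subseteq \Delta_0$. Invariance of $\E$ in $\F_0$ comes from $\N \unlhd \L$ together with the $H$-action on $\N$ inside $N_\L(T^*)$; Theorem~\ref{T:crComplete} then confirms that $(\N, \delta(\E), T)$ is the kernel. Uniqueness of $\Delta_0$ follows because the kernel condition forces any valid choice into the displayed set, while cr-completeness combined with $\F_0$-closure forces the reverse inclusion.

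For part~(b), Theorem~\ref{T:Regular} applied to $(\N H, \Delta_0, S_0)$ with kernel $(\N, \delta(\E), T)$ reduces the regularity of $(\N H, \tDelta_0, S_0)$ to two conditions: that $(\N, \delta(\E), T)$ admits a regular extension (automatic, since $\E$ is saturated and $\delta(\E)$ is already the object set of the regular locality over $\E$), and that $N_{\N H}(T)$ is of characteristic $p$. The main obstacle is therefore to show that $N_{\N H}(T)$ is of characteristic $p$ if and only if $N_\N(T^*) H$ is of characteristic $p$. The plan is to decompose elements of $N_{\N H}(T)$ via Step~1 into products $nh$ with $n \in N_\N(T)$ and $h \in N_H(T)$ (up to a controlled error), and then to compare their $O_p$-subgroups by exploiting the strong closure of $T^*$ in $\F_S(\L)$ together with the Sylow structure of $N_\N(T^*) H$ (specifically, that $S_0$ is simultaneously a Sylow $p$-subgroup of both normalizers). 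Once this equivalence of characteristic-$p$ conditions is verified, part~(b) follows immediately from Theorem~\ref{T:Regular}.
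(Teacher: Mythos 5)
Your overall architecture matches the paper's: construct the object set $\Delta_0 := \{P\leq S_0 : P\cap T\in\delta(\E)\}$, verify the locality axioms with kernel $(\N,\delta(\E),T)$, import saturation from the kernel, and reduce (b) to a characteristic-$p$ condition. But two of your steps contain real gaps.

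\textbf{Step 1 (partial subgroup).} Your iterative rewriting, replacing $(h_i,n_{i+1})$ by $(h_in_{i+1}h_i^{-1},h_i)$, is not a ``standard partial-group manipulation.'' The partial-group axioms permit contraction of subwords (replacing a subword $v$ with $\Pi(v)$) but not expansion; in an arbitrary locality one cannot freely insert $h_i^{-1}h_i$ or lengthen a word and remain inside $\D$. What makes this work here is a fact you never state: in a regular locality, $u\in\D$ if and only if $S_u\cap T^*\in\Delta$, and for $g\in N_\L(T^*)$ and $n\in\N$ one has $S_{(g,n)}\cap T^*=S_{gn}\cap T^*$. These identities (the paper's displays \eqref{eq0} and \eqref{eq1} in the proof of Theorem~\ref{T:ProductDetails}) are precisely what justify passing from a word in $\W(\N H)\cap\D$ to a decomposed word in $\D$ and applying \cite[Lemma~3.4]{loc1}. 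Without them, ``the verifications\ldots are standard'' is the entire difficulty, not a footnote.

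\textbf{Part (b).} Your reduction via Theorem~\ref{T:Regular} to the claim that $N_{\N H}(T)$ is of characteristic $p$ if and only if $\tH:=N_\N(T^*)H$ is, is the right target, but the proposed route does not work. If $f=nh\in N_{\N H}(T)$, it does \emph{not} follow that the factors can be chosen with $n\in N_\N(T)$ and $h\in N_H(T)$; your ``controlled error'' is not controlled by anything you have set up, and $S_0\in\Syl_p(N_{\N H}(T))$ is not established. The paper's proof hinges on the auxiliary subgroup $T_0:=E(\N)\cap S$, which you never mention. One proves $N_\N(T_0)=N_\N(T^*)$ (Lemma~\ref{L:T0R}(a)), then $N_{\N H}(T_0)=N_\N(T_0)H=\tH$ by the Dedekind Lemma (possible because $H\leq N_\L(T^*)\subseteq N_\L(T_0)$), and finally invokes Lemma~\ref{L:KernelRegularNLT0} to trade the characteristic-$p$ condition at $T$ for the same condition at $T_0$. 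The subgroup $T_0$ and those two identifications are the missing ideas; without them the equivalence you need has no proof.

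The parts you get right: the definition of $\Delta_0$, the use of Lemma~\ref{L:ConstructLocality}-style reasoning and Theorem~\ref{T:crComplete} to obtain cr-completeness from the regular kernel, and the uniqueness argument (though uniqueness already follows from the kernel condition plus overgroup-closure of $\Delta_0$ alone; cr-completeness is not what forces the reverse inclusion). Minor point: citing ``strong closure of $T^*$'' to get $T^*\unlhd S$ is a detour; $T^*=S\cap F^*(\L)$ is normal in $S$ directly because $F^*(\L)\unlhd\L$ and $S$ is a group.
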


If the hypothesis of Theorem~\ref{T:ProductMain} holds and $S\cap H$ is a Sylow $p$-subgroup of $H$, then
\[S_0:=T(S\cap H)\]
is a Sylow $p$-subgroup of $N_\N(T^*)H$ which is contained in $S$ (cf. Lemma~\ref{P:ChooseS0}). Thus, the cr-complete locality $(\N H,\Delta_0,S_0)$ from Theorem~\ref{T:ProductMain}(a) gives rise to a saturated subsystem $\F_{S_0}(\N H)=\F_{T(H\cap S)}(\N H)$ of $\F$. This leads us to a statement that can be formulated purely in terms of fusion systems. We use here that, for every regular locality $(\L,\Delta,S)$ over $\F$, there is by \cite[Theorem~A]{normal} a bijection from the set of partial normal subgroups of $\L$ to the set of normal subsystems of $\F$ given by $\N\mapsto\F_{S\cap\N}(\N)$. By \cite[Theorem~E(d)]{normal}, this bijection takes $F^*(\L)$ to $F^*(\F)$. In particular, $F^*(\F)$ is a fusion system over $F^*(\L)\cap S$.

\smallskip

To formulate the result we obtain, we rely on the fact that every constrained fusion system is realized by a model, i.e. by a finite group of characteristic $p$. Furthermore, if $\F$ is constrained and $G$ is a model for $\F$, then every normal subsystem of  $\F$ is realized by a normal subgroup of $G$. We also use that, for every saturated fusion system $\F$ over $S$, every normal subsystem $\E$ of $\F$ and every subgroup $R$ of $S$, there is a product subsystem $\E R$ defined (cf. \cite[Chapter~8]{AschbacherGeneralized} or \cite{Henke.products}).

\begin{Cor}\label{C:ProductsFusionSystems}
 Let $\F$ be a saturated fusion system over $S$ and $\E\unlhd \F$ over $T\leq S$. Let $T^*,T_0\leq S$ such that
 \[F^*(\F)\mbox{ is a fusion system over }T^*\mbox{ and $E(\E)$ is a fusion system over $T_0$}.\]
 Then $N_\F(T^*)$ is constrained and $N_\E(T_0)\unlhd N_\F(T^*)$. Thus we may choose a model $G$ for $N_\F(T^*)$ and $N\unlhd G$ with $\F_{S\cap N}(N)=N_\E(T_0)$. Let $H\leq G$ with $S\cap H\in\Syl_p(H)$. Set
\[S_0:=T(S\cap H)\mbox{ and }\E H:=\<E(\E)S_0,\F_{S_0}(NH)\>.\]
Then the following hold:
\begin{itemize}
 \item [(a)] $\E H$ is a saturated fusion system over $S_0$ with $\E\unlhd \E H$.
 \item [(b)] If $\mD$ is a saturated subsystem of $\F$ with $E(\E)\unlhd \mD$ and $\F_{S_0}(N H)\subseteq\mD$, then $\E H\subseteq \mD$.
\end{itemize}
\end{Cor}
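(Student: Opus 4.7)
The plan is to deduce the corollary from Theorem~\ref{T:ProductMain} via the bijection between partial normal subgroups of a regular locality and normal subsystems of its fusion system (\cite[Theorem~A]{normal}). The first step is to fix a regular locality $(\L,\Delta,S)$ over $\F$ and let $\N\unlhd\L$ be the partial normal subgroup realizing $\E$, so that $T=\N\cap S$ and $\F_T(\N)=\E$. By \cite[Theorem~E(d)]{normal} we have $F^*(\L)\cap S=T^*$, and since $E(\E)$ is characteristic in $\E\unlhd\F$ the partial normal subgroup $E(\N)\unlhd\L$ realizes $E(\E)$, so that $E(\N)\cap S=T_0$. Because $F^*(\L)$ is a kernel of $(\L,\Delta,S)$ (as recalled in the paragraph preceding Theorem~\ref{T:ProductMain}), $N_\L(T^*)$ is a group of characteristic $p$ and hence a model for $N_\F(T^*)$. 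In particular, $N_\F(T^*)$ is constrained and by the uniqueness of models I may identify $G$ with $N_\L(T^*)$. For the normality $N_\E(T_0)\unlhd N_\F(T^*)$, I would observe that $N_\N(T^*)=\N\cap N_\L(T^*)$ is a normal subgroup of $N_\L(T^*)$ (as intersection of a partial normal subgroup with a full subgroup) whose fusion system realizes $N_\E(T_0)$, and use the equality $N_\N(T^*)=N_\N(T_0)$ (which in turn follows from $T^*=T_0\,O_p(\L)$ modulo layer/Fitting bookkeeping) to identify $N$ with $N_\N(T^*)$.

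With these identifications, I would apply Theorem~\ref{T:ProductMain} to $H\leq G=N_\L(T^*)$. Lemma~\ref{P:ChooseS0} guarantees $S_0:=T(S\cap H)$ is a Sylow $p$-subgroup of $N_\N(T^*)H=NH$ contained in $S$, so Theorem~\ref{T:ProductMain}(a) produces a set $\Delta_0$ making $(\N H,\Delta_0,S_0)$ a cr-complete locality with kernel $(\N,\delta(\E),T)$. Theorem~\ref{T:crComplete} then yields that $\F_{S_0}(\N H)$ is saturated and $\E\unlhd\F_{S_0}(\N H)$. The substance of part~(a) is then the identification $\F_{S_0}(\N H)=\E H$. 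The inclusion $\E H\subseteq\F_{S_0}(\N H)$ is routine: $NH\leq\N H$ as a subgroup gives $\F_{S_0}(NH)\subseteq\F_{S_0}(\N H)$, and $E(\E)\unlhd\E\unlhd\F_{S_0}(\N H)$ together with $S_0$ being the underlying $p$-group of $\F_{S_0}(\N H)$ gives $E(\E)S_0\subseteq\F_{S_0}(\N H)$.

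The reverse inclusion $\F_{S_0}(\N H)\subseteq\E H$ is the step I expect to be the main technical obstacle. The strategy is to decompose an arbitrary element $g\in\N H$ as $g=nh$ with $n\in\N$, $h\in H$, and to factor the corresponding conjugation morphism as a composite of a piece coming from $\F_{S_0}(NH)$ (absorbing the $h$-part together with an $N$-conjugation) and a piece coming from $E(\E)S_0$ (capturing the residual $\N$-conjugation modulo $N$); the crux is to show that the portion of $\N$-conjugation not already visible in $NH$ is, on subgroups of $S_0$, generated by morphisms in $E(\E)$. Part~(b) is then formal given the generating expression for $\E H$: any saturated subsystem $\mD$ of $\F$ satisfying $E(\E)\unlhd\mD$ and $\F_{S_0}(NH)\subseteq\mD$ contains both $E(\E)S_0$ (by the normality of $E(\E)$ in $\mD$ and $S_0$ lying in the Sylow of $\mD$) and $\F_{S_0}(NH)$ by assumption, hence contains their join $\E H=\<E(\E)S_0,\F_{S_0}(NH)\>$.
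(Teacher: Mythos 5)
Your overall approach is the same as the paper's: pass to a regular locality $(\L,\Delta,S)$, identify $\N\unlhd\L$ realizing $\E$, note $T^*=F^*(\L)\cap S$ and $T_0=E(\N)\cap S$, show $N_\L(T^*)$ is a model for $N_\F(T^*)$, reduce $N$ to $N_\N(T^*)$, and apply the product machinery from Theorem~\ref{T:ProductMain}. The preliminary identifications are correct; the equality $N_\N(T_0)=N_\N(T^*)$ you wave at is exactly Lemma~\ref{L:T0R}(a) (the real reason being that $\N$ centralizes $R:=E(C_\L(\N))\cap S$, not a literal identity $T^*=T_0O_p(\L)$, which is false in general).

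Where you have a genuine gap is in the identification $\F_{S_0}(\N H)=\E H$, which you correctly flag as the main obstacle. Theorem~\ref{T:ProductMain} together with Theorem~\ref{T:crComplete} only give that $\F_{S_0}(\N H)$ is saturated with $\E\unlhd\F_{S_0}(\N H)$; they say nothing about the generating expression $\<E(\E)S_0,\F_{S_0}(NH)\>$. Your proposed decomposition $g=nh$ with $n\in\N$, $h\in H$ is not the one that works: $\F_{S_0}(NH)$ sees precisely $N_\N(T^*)H=\tH\subseteq N_\L(T_0)$, and the residual $\N$-conjugation is not in general captured by $E(\E)S_0$ morphisms alone. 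The paper's proof instead invokes the Frattini-type generation statement for fusion systems, Lemma~\ref{L:FusionFrattini}, applied inside $\F_0:=\F_{S_0}(\N H)$ with $E(\E)$ playing the role of the normal subsystem: $\F_0=\<(E(\E)S_0)_{\F_0},N_{\F_0}(T_0)\>$. The correct element-level decomposition behind that lemma (via the Frattini Lemma for localities) is $g=mf$ with $m\in E(\N)$ and $f\in N_{\N H}(T_0)=\tH$, so that the normalizer piece lands exactly in $\F_{S_0}(\tH)=\F_{S_0}(NH)$ and the residual piece lies in $E(\N)$. One then uses Lemma~\ref{L:NDPNHP} to identify $N_{\F_0}(T_0)=\F_{S_0}(\tH)$ and \cite[Remark~2.27]{normal} to see $(E(\E)S_0)_{\F_0}=(E(\E)S_0)_\F$. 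In the paper this packaging is done once in Theorem~\ref{T:ProductDetails}(e), which your proof would need to prove or cite. Finally, identifying $G$ directly with $N_\L(T^*)$ is a harmless simplification in this setting, but the paper's slightly more careful route via an isomorphism $\alpha\colon G\to N_\L(T^*)$ with $\alpha|_S=\id_S$ avoids the implicit assumption that the specific model chosen is the one arising from the locality. Part (b) of your argument is correct and matches the paper.
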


For a saturated fusion system $\F$ with $\E\unlhd\F$ it is shown e.g. in  \cite[Lemma~7.13(a)]{normal} that $E(\E)\unlhd\F$. Hence, it makes indeed sense in the situation above to form the product subsystem $E(\E)S_0$. Moreover, part (b) of Corollary~\ref{C:ProductsFusionSystems} implies the following statement: If $\mD$ is a saturated subsystem of $\F$ with $\E\unlhd\mD$ and $\F_{S_0}(N H)\subseteq\mD$, then $\E H\subseteq \mD$.

\subsubsection*{Organization of the paper} We start by proving our saturation criterion (Theorem~\ref{T:FInvariantOliverExtensions2}) in Section~\ref{S:Saturation}. An introduction to partial groups and localities is then given in Section~\ref{S:Localities}. After proving some preliminary results, we study kernels of localities in Section~\ref{S:Kernels}. This is used in Section~\ref{S:Products} to prove Theorem~\ref{T:ProductMain} and Corollary~\ref{C:ProductsFusionSystems} as well as some more detailed results on products.

\section{Proving Saturation}\label{S:Saturation}

\textbf{Throughout this section let $\F$ be a fusion system over $S$.} 

\smallskip

In this section we prove Theorem~\ref{T:FInvariantOliverExtensions2}. The reader is referred to \cite[Chapter~I]{AKO} for an introduction to fusion systems. We will adopt the notation and terminology from there with the following two caveats: firstly, we write homomorphisms on the right hand side of the arguments similarly as in \cite[Chapter~II]{AKO}. Secondly, we define the set $\F^r$ of $\F$-radical subgroups differently, namely as in Definition~\ref{D:FcrCritical} below.

\begin{definition}
A subgroup $P\leq S$ is said to respect $\F$-saturation if there exists an element of $P^\F$ which is fully automized and receptive in $\F$ (as defined in \cite[Definition~I.2.2]{AKO}).
\end{definition}

If $\Delta$ is a set of subgroups of $S$ which is closed under $\F$-conjugacy, then notice that $\F$ is $\Delta$-saturated (as defined in the introduction) if and only if every element of $\Delta$ respects $\F$-saturation. On the other hand, $P$ respects $\F$-saturation if $\F$ is $P^\F$-saturated. Observe also that a fusion system is saturated if every subgroup of $S$ respects $\F$-saturation, or equivalently if $\F$ is $\Delta$-saturated, where $\Delta$ is the set of all subgroups of $S$. Roberts and Shpectorov \cite{RS} proved the following lemma, which we will use from now on, most of the time without reference.

\begin{lemma}\label{L:SylowExtensionAxiom}
Let $\m{C}$ be an $\F$-conjugacy class of $\F$. Then $\F$ is $\m{C}$-saturated if and only if the following two conditions hold:
\begin{itemize}
 \item [(I)] \emph{(Sylow axiom)} Each subgroup $P\in\m{C}$ which is fully $\F$-normalized is also fully $\F$-centralized and fully automized in $\F$.
 \item [(II)] \emph{(Extension axiom)} Each subgroup $P\in \m{C}$ which is fully $\F$-centralized is also receptive in $\F$.
\end{itemize}
Furthermore, if $\F$ is $\m{C}$-saturated, then for every fully $\F$-normalized $P\in\m{C}$ and every $Q\in P^\F$, there exists $\alpha\in\Hom_\F(N_S(Q),N_S(P))$ such that $Q\alpha=P$.
\end{lemma}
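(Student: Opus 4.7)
The plan is to prove the two implications separately, and to extract the ``furthermore'' statement as a by-product of the forward direction. The direction (I)+(II)$\Rightarrow$ $\m{C}$ respects $\F$-saturation is immediate: take any fully $\F$-normalized $P\in\m{C}$; by (I), $P$ is fully $\F$-centralized and fully automized, and then (II) applied to $P$ yields receptivity.

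For the converse, fix $P_0\in\m{C}$ fully automized and receptive. The central step to establish is: for every $Q\in\m{C}=P_0^\F$, there is an $\F$-morphism $\alpha_Q\colon N_S(Q)\to N_S(P_0)$ with $Q\alpha_Q=P_0$. Choose any $\psi\in\Iso_\F(Q,P_0)$; then $\psi^{-1}\Aut_S(Q)\psi$ is a $p$-subgroup of $\Aut_\F(P_0)$, and Sylow's theorem (applicable because $\Aut_S(P_0)\in\Syl_p(\Aut_\F(P_0))$) provides $\chi\in\Aut_\F(P_0)$ with $(\psi\chi)^{-1}\Aut_S(Q)(\psi\chi)\le\Aut_S(P_0)$, so $N_{\psi\chi}=N_S(Q)$; receptivity of $P_0$ then extends $\psi\chi$ to the required $\alpha_Q$. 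Injectivity of $\alpha_Q$ forces $|N_S(Q)|\le|N_S(P_0)|$ and, via $C_S(Q)\hookrightarrow C_S(P_0)$, also $|C_S(Q)|\le|C_S(P_0)|$; hence $P_0$ is itself fully $\F$-normalized and fully $\F$-centralized. If $P\in\m{C}$ is fully $\F$-normalized, then equality of orders forces $\alpha_P$ to be an $\F$-isomorphism, and $\alpha_Q\alpha_P^{-1}\colon N_S(Q)\to N_S(P)$ is the morphism promised in the ``furthermore''. Moreover $\alpha_P$ restricts to an iso $C_S(P)\to C_S(P_0)$ and induces a group isomorphism $\Aut_\F(P)\cong\Aut_\F(P_0)$ that maps $\Aut_S(P)$ onto $\Aut_S(P_0)$; hence $P$ is fully $\F$-centralized and fully automized, which is axiom (I).

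The main obstacle will be axiom (II). Given $P\in\m{C}$ fully $\F$-centralized (possibly not fully normalized) and $\phi\in\Iso_\F(Q,P)$, we must construct an $\F$-extension $\overline\phi\colon N_\phi\to N_S(P)$ of $\phi$. The plan is to pick $P'\in\m{C}$ fully $\F$-normalized and to apply the ``furthermore'' to obtain $\beta\colon N_S(P)\to N_S(P')$ with $P\beta=P'$; since $P$ is fully centralized and $\beta$ is injective on $C_S(P)$, one deduces $\beta(C_S(P))=C_S(P')$. Let $\pi\colon N_S(P')\to N_S(P_0)$ be the isomorphism coming from the fully normalized case for $P'$. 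Applying receptivity of $P_0$ to $\phi\beta|_P\pi\in\Iso_\F(Q,P_0)$ and composing with $\pi^{-1}$ produces an $\F$-extension $\gamma$ of $\phi\beta|_P$ on its natural domain $N_{\phi\beta|_P}$. A direct computation using $\beta|_P^{-1}\Aut_S(P)\beta|_P\le\Aut_S(P')$ shows $N_\phi\subseteq N_{\phi\beta|_P}$. The delicate step will be to verify that $\gamma(N_\phi)\subseteq\beta(N_S(P))$: for $x\in N_\phi$, choose $z\in N_S(P)$ with $\phi^{-1}c_x\phi=c_z$ in $\Aut(P)$; then the homomorphism properties of $\beta$ and $\gamma$ force $c_{\gamma(x)}=c_{\beta(z)}$ in $\Aut(P')$, so $\gamma(x)\beta(z)^{-1}\in C_S(P')=\beta(C_S(P))$ and hence $\gamma(x)\in\beta(N_S(P))$. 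The required extension is then $\overline\phi:=\beta^{-1}\gamma|_{N_\phi}$, an $\F$-morphism $N_\phi\to N_S(P)$ with $\overline\phi|_Q=\phi$.
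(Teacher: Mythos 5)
Your proof is correct, and it is genuinely different from what appears in the paper: the paper dispatches the forward direction in one sentence (exactly as you do) and then simply cites \cite[Lemma~2.6(c)]{AKO} for the converse and for the ``furthermore'' clause, whereas you reconstruct that argument from scratch. Your reconstruction is sound. Picking $P_0$ fully automized and receptive, you use Sylow's theorem in $\Aut_\F(P_0)$ to adjust any $\psi\in\Iso_\F(Q,P_0)$ so that $N_{\psi\chi}=N_S(Q)$, then extend by receptivity to get $\alpha_Q\colon N_S(Q)\to N_S(P_0)$ with $Q\alpha_Q=P_0$; this immediately shows $P_0$ is fully normalized and fully centralized, shows each $\alpha_P$ with $P$ fully normalized is an $\F$-isomorphism (giving axiom (I) and the ``furthermore'' via $\alpha_Q\alpha_P^{-1}$), and you verify the implicit claim that the receptivity extension actually lands in $N_S(P_0)$ (which it does automatically, since the extension conjugates $Q$ to $P_0$). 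For axiom (II) you correctly transport the extension problem through a fully normalized $P'$: the inclusion $N_\phi\subseteq N_{\phi\beta|_P}$ follows from $(\beta|_P)^{-1}\Aut_S(P)(\beta|_P)\le\Aut_S(P')$, the containment $\gamma(N_\phi)\subseteq\beta(N_S(P))$ is justified by the centralizer computation $C_S(P')=\beta(C_S(P))$ (which uses that $P$ and $P'$ are both fully centralized), and then $\gamma|_{N_\phi}\beta^{-1}$ is the desired extension of $\phi$ landing in $N_S(P)$. The only difference from the paper is that the paper outsources the whole converse to the Roberts--Shpectorov lemma as stated in AKO; what your proof buys is self-containedness, and it essentially reproduces the argument of the cited source.
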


\begin{proof}
If (I) and (II) hold, then every fully $\F$-normalized subgroup $P\in\m{C}$ is fully automized and receptive, and thus $\F$ is $\m{C}$-saturated. On the other hand, if $\F$ is $\m{C}$-saturated, it follows from \cite[Lemma~I.2.6(c)]{AKO} that (I) and (II) and the statement of the lemma hold.
\end{proof}

\begin{cor}\label{C:FullyNormalizedConjugate}
If $\F$ is saturated and $P\leq S$ is fully $\F$-normalized, then for every $Q\in P^\F$ there exists $\alpha\in\Hom_\F(N_S(Q),N_S(P))$ such that $Q\alpha=P$.
\end{cor}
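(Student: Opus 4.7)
The plan is to observe that this corollary is essentially an immediate consequence of the last sentence of Lemma~\ref{L:SylowExtensionAxiom}, so no genuinely new argument is required. First I would recall that, as noted in the paragraph following Definition~\ref{D:SaturationCriterion} (and as reiterated just before Lemma~\ref{L:SylowExtensionAxiom}), a fusion system $\F$ is saturated precisely when every $\F$-conjugacy class respects $\F$-saturation. In particular, under the hypothesis that $\F$ is saturated, the class $\m{C} := P^\F$ respects $\F$-saturation.

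Then I would simply apply the final assertion of Lemma~\ref{L:SylowExtensionAxiom} to this class $\m{C}$: since $P\in\m{C}$ is fully $\F$-normalized and $\m{C}$ respects $\F$-saturation, the lemma produces, for every $Q\in P^\F$, a morphism $\alpha\in\Hom_\F(N_S(Q),N_S(P))$ with $Q\alpha=P$. This is exactly the conclusion of the corollary.

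There is no real obstacle; the only mild subtlety is to make explicit the equivalence between ``$\F$ is saturated'' and ``every $\F$-conjugacy class respects $\F$-saturation'', which is built into the terminology introduced just above Lemma~\ref{L:SylowExtensionAxiom}. Thus the entire proof can be written in one or two sentences that invoke Lemma~\ref{L:SylowExtensionAxiom} with $\m{C}=P^\F$.
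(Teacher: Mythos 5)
Your proof is correct and follows exactly the route the paper intends: the corollary is left without a written proof precisely because it is the final assertion of Lemma~\ref{L:SylowExtensionAxiom} applied to $\m{C}=P^\F$, together with the observation (built into Definition~I.2.2 of \cite{AKO}) that $\F$ being saturated means every $\F$-conjugacy class respects $\F$-saturation.
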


Recall that a subgroup $P\leq S$ is called \emph{$\F$-centric} if $C_S(Q)\leq Q$ for every $Q\in P^\F$. Write $\F^c$ for the set of $\F$-centric subgroups of $S$. We will need the following lemma.

\begin{lemma}\label{L:FcFqCharacterize}
 Let $\F$ be a fusion system over $S$ and let $P\leq S$ be fully $\F$-centralized. Then $P$ is $\F$-centric if and only if $C_S(P)\leq P$.
\end{lemma}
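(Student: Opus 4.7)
The forward implication is essentially by definition: if $P$ is $\F$-centric then $C_S(Q) \leq Q$ for all $Q \in P^\F$, and specializing to $Q=P$ gives $C_S(P) \leq P$. The substance is in the reverse implication, which is where the hypothesis of full $\F$-centralization has to be used.

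The plan is to first observe that $C_S(P) \leq P$ combined with the standing inclusion $Z(P) \leq C_S(P)$ forces $C_S(P) = Z(P)$, simply because $C_S(P) \leq P \cap C_S(P) = Z(P)$. So we start by replacing the hypothesis ``$C_S(P) \leq P$'' with the cleaner equality $C_S(P) = Z(P)$.

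Now fix any $\F$-conjugate $Q$ of $P$ and any $\F$-isomorphism $\varphi \colon P \to Q$. Since $\varphi$ is a group isomorphism, it carries $Z(P)$ onto $Z(Q)$, whence $|Z(P)| = |Z(Q)|$. The key step is then the string of inequalities
\[
|C_S(Q)| \;\leq\; |C_S(P)| \;=\; |Z(P)| \;=\; |Z(Q)| \;\leq\; |C_S(Q)|,
\]
where the first inequality uses that $P$ is fully $\F$-centralized and the last inequality uses the trivial inclusion $Z(Q) \leq C_S(Q)$. This forces equality throughout, so $Z(Q) = C_S(Q)$, and in particular $C_S(Q) \leq Q$. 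Since $Q \in P^\F$ was arbitrary, $P$ is $\F$-centric.

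I do not expect any real obstacle here; the only subtle point is ensuring nothing like the extension or Sylow axiom is silently invoked, since $\F$ is not assumed saturated. The entire argument rests only on (i) the fact that $\F$-isomorphisms are group isomorphisms, so they preserve centers and their orders, and (ii) the definition of fully $\F$-centralized as maximizing $|C_S(-)|$ within the $\F$-conjugacy class. Both are available in any fusion system.
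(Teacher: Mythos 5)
Your proof is correct and follows essentially the same route as the paper's: both first reduce the hypothesis to $C_S(P)=Z(P)$, then for any $\F$-conjugate $Q$ use $Z(Q)\cong Z(P)$ together with full $\F$-centralization to pin $C_S(Q)=Z(Q)\leq Q$. No gaps.
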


\begin{proof}
If $P$ is $\F$-centric, then clearly $C_S(P)\leq P$. Suppose now $C_S(P)\leq P$. Then for any $Q\in P^\F$, we have $C_S(Q)\geq Z(Q)\cong Z(P)=C_S(P)$. Hence, as $P$ is fully  $\F$-centralized, we have $C_S(Q)=Z(Q)\leq Q$ for all $Q\in P^\F$, i.e. $P$ is  $\F$-centric.
\end{proof}

We recall that $O_p(\F)$ denotes the largest subgroup of $S$ that is normal in $\F$ (\cite[Definition I.4.3]{AKO}). Moreover, if $\F$ is saturated and $O_p(\F) \in \F^c$, then $\F$ is called \emph{constrained} (\cite[Definition I.4.8]{AKO}). The Model Theorem for constrained fusion systems \cite[Theorem III.5.10]{AKO} guarantees that every constrained fusion system $\F$ over $S$ has a model, that is, there is a finite group $G$ such that  $S \in \Syl_p(G)$, $\F_S(G) = \F$ and  $C_G(O_p(G)) \leq O_p(G)$.

\begin{definition}\label{D:FcrCritical}~
\begin{itemize}
 \item A subgroup $P\leq S$ is called \emph{$\F$-radical} if there exists a fully $\F$-normalized $\F$-conjugate $Q$ of $P$ such that  $O_p(N_\F(Q)) = Q$. We denote by  $\F^r$ the set of $\F$-radical subgroups of $S$.
\item Set $\F^{cr} = \F^c \cap \F^r$ and call the elements of $\F^{cr}$ the $\F$-centric radical subgroups of $S$.
\item A subgroup $P\leq S$ is called \emph{$\F$-critical} if $P$ is  $\F$-centric and, for every $\F$-conjugate $Q$ of $P$, we have
\[\Out_S(Q)\cap O_p(\Out_\F(Q))=1.\] 
\end{itemize}
\end{definition}

As remarked before, our definition of radical subgroups differs from the usual one given for example in \cite[Definition~I.3.1]{AKO}. We show in part (b) of our next lemma that, for a saturated fusion system $\F$, the set $\F^{cr}$ equals the set of $\F$-centric radical subgroups in the usual definition. 

\begin{lemma}\label{L:radical}~
\begin{itemize}
\item [(a)] For every $R\leq S$ the following implications hold:
\[R\in\F^c\mbox{ and }O_p(\Aut_\F(R))=\Inn(R)\Longrightarrow R\mbox{ is $\F$-critical }\Longrightarrow R\in\F^{cr}.\]
\item [(b)] If $\F$ is saturated, then we have 
\[\F^{cr}=\{R\in\F^c\colon O_p(\Aut_\F(R))=\Inn(R)\}=\{R\leq S\colon R\mbox{ is $\F$-critical }\}.\] 
\end{itemize}
\end{lemma}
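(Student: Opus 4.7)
The plan is to prove the two implications of (a) directly and then close the chain of inclusions in (b) by invoking the Model Theorem for constrained fusion systems. The main obstacle will be the second implication of (a), which requires careful handling of how normality in a fusion system interacts with the automizer on a normal subgroup.

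For the first implication of (a), observe that the pair $(\Aut_\F(\cdot),\Inn(\cdot))$ is an $\F$-isomorphism invariant, so $O_p(\Aut_\F(R)) = \Inn(R)$ transfers to $O_p(\Aut_\F(Q)) = \Inn(Q)$ for every $Q \in R^\F$. In particular $O_p(\Out_\F(Q)) = 1$ and hence trivially $\Out_S(Q) \cap O_p(\Out_\F(Q)) = 1$; combined with $R \in \F^c$ this says $R$ is $\F$-critical. For the second implication of (a), let $Q \in R^\F$ be fully $\F$-normalized and set $P := O_p(N_\F(Q))$, so that $Q \leq P \leq N_S(Q)$. The key observations are that $\Aut_\F(Q) = \Aut_{N_\F(Q)}(Q)$ directly from the definition of $N_\F(Q)$, and that the normality of $P$ in $N_\F(Q)$ forces every $\phi \in \Aut_\F(Q)$ to extend to an $N_\F(Q)$-automorphism of $P$. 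Conjugation by these extensions yields $\Aut_P(Q) \unlhd \Aut_\F(Q)$, and since $\Aut_P(Q)$ is a $p$-group we conclude $\Aut_P(Q) \leq O_p(\Aut_\F(Q))$. Because $Q \in \F^c$ gives $C_S(Q) \leq Q$ and hence $\Aut_S(Q)/\Inn(Q) = \Out_S(Q)$, we deduce
\[
\Aut_P(Q)/\Inn(Q) \leq \Out_S(Q) \cap O_p(\Out_\F(Q)) = 1
\]
by $\F$-criticality. Thus $\Aut_P(Q) = \Inn(Q)$, and a further application of $Q \in \F^c$ (which gives $C_P(Q) = Z(Q)$) forces $P = Q$.

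For part (b), the two containments $\{R \in \F^c : O_p(\Aut_\F(R)) = \Inn(R)\} \subseteq \{R : R \text{ is } \F\text{-critical}\} \subseteq \F^{cr}$ are immediate from (a). It remains to show that every $R \in \F^{cr}$ satisfies $O_p(\Aut_\F(R)) = \Inn(R)$. Fix $R \in \F^{cr}$ and a fully $\F$-normalized $Q \in R^\F$, so that $O_p(N_\F(Q)) = Q$. Since $\F$ is saturated and $Q$ is fully normalized, $N_\F(Q)$ is a saturated fusion system over $N_S(Q)$; because $Q$ is $\F$-centric it is also $N_\F(Q)$-centric, so $N_\F(Q)$ is constrained with $O_p(N_\F(Q)) = Q$. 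The Model Theorem then supplies a model $G$ for $N_\F(Q)$ with $O_p(G) = Q$ and $C_G(Q) \leq O_p(G) = Q$, which forces $C_G(Q) = Z(Q)$. Hence $\Aut_\F(Q) = \Aut_G(Q) \cong G/Z(Q)$ with $\Inn(Q)$ corresponding to $Q/Z(Q)$. If $\widetilde{V} \unlhd G$ denotes the preimage of $O_p(G/Z(Q))$, then $\widetilde{V}/Z(Q)$ is a $p$-group and $Z(Q)$ is a $p$-group, so $\widetilde{V}$ itself is a $p$-group; thus $\widetilde{V} \leq O_p(G) = Q$. Conversely $Q \leq \widetilde{V}$ since $Q/Z(Q)$ is a normal $p$-subgroup of $G/Z(Q)$. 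Therefore $O_p(\Aut_\F(Q)) = Q/Z(Q) = \Inn(Q)$, and this transfers back to $R$ via any $\F$-isomorphism $R \to Q$.
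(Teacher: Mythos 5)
Your proof is correct and follows essentially the same route as the paper's: part (a) is proved by hand from the definitions, and the nontrivial inclusion in (b) is established via the Model Theorem and the identification $O_p(\Aut_\F(Q))\cong O_p(G/Z(Q))=Q/Z(Q)$ for a model $G$ of $N_\F(Q)$.

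A few small remarks. For the second implication of (a) you give a direct argument (show $P:=O_p(N_\F(Q))$ equals $Q$ for a fully normalized conjugate $Q$), whereas the paper proves the contrapositive (if $Q<Q^*$ then $\Aut_{Q^*}(Q)$ witnesses failure of $\F$-criticality); the content is identical. In (b), the phrase ``Fix $R\in\F^{cr}$ and a fully $\F$-normalized $Q\in R^\F$, so that $O_p(N_\F(Q))=Q$'' should be read as \emph{choosing} such a $Q$, whose existence is exactly the definition of $R\in\F^r$ — as written it could suggest the stronger (also true, but not needed) claim that \emph{every} fully normalized conjugate works. Finally, like the paper you implicitly use the standard fact that a model $G$ for a constrained system $\mathcal{G}$ satisfies $O_p(G)=O_p(\mathcal{G})$; this is fine but worth being aware is an additional property of models beyond the bare statement of \cite[Theorem~III.5.10]{AKO}.
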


\begin{proof}
If $\Inn(R)=O_p(\Aut_\F(R))$, then $\Inn(Q)=O_p(\Aut_\F(Q))=\Aut_S(Q)\cap O_p(\Aut_\F(Q))$ for every $\F$-conjugate $Q$ of $R$, so $R$ is $\F$-critical if in addition $R\in\F^c$. This shows the first implication in (a).

\smallskip
 
Let now $R\in\F^c$ such that $R\not\in\F^r$. If we pick a fully  $\F$-normalized $\F$-conjugate $Q$ of $R$, we have then $Q<Q^*:=O_p(N_\F(Q))$. So  $\Inn(Q)<\Aut_{Q^*}(Q)$ as $Q\in\F^c$. Moreover, $\Aut_{Q^*}(Q)$ is normal in $\Aut_\F(Q)$, as by definition of $Q^*$ every element of $\Aut_\F(Q)$ extends to an element of $\Aut_\F(Q^*)$. Hence, $\Inn(Q)<\Aut_{Q^*}(Q)\leq \Aut_S(Q)\cap O_p(\Aut_\F(Q))$. This shows that $R$ is not $\F$-critical, so (a) holds. In particular 
\[ \F^{cr} \supseteq \{R\leq S\colon R\mbox{ is $\F$-critical }\} \supseteq \{R\in\F^c\colon O_p(\Aut_\F(R))=\Inn(R)\}.\]
For the proof of (b), it is thus sufficient to show that $O_p(\Aut_\F(R))=\Inn(R)$ for every $R\in\F^{cr}$. Fix now $R\in\F^{cr}$. Since the property $O_p(\Aut_\F(R))=\Inn(R)$ is preserved if $R$ is replaced by an $\F$-conjugate, we may assume without loss of generality that $R$ is fully  $\F$-normalized and $R=O_p(N_\F(R))$. Note that $N_\F(R)$ is saturated. So as $R\in\F^c$, the subsystem $N_\F(R)$ is constrained. Thus we may choose a model $G$ for $N_\F(R)$. Then $O_p(G)=R=O_p(N_\F(R))$ and
\[\Aut_\F(R)\cong G/C_G(R)\cong G/Z(R).\]
Hence, $O_p(\Aut_\F(R))\cong O_p(G/Z(R))=R/Z(R)\cong\Inn(R)$. Thus $O_p(\Aut_\F(R))= \Inn(R)$. This proves that $\F^{cr} \subseteq  \{R\in\F^c\colon O_p(\Aut_\F(R))=\Inn(R)\}\}$ and so (b) holds.
\end{proof}

The $\F$-critical subgroups play a crucial role in showing that a fusion system is saturated. This is made precise in the following theorem which we restate for the reader's convenience.

\begin{theorem}\label{T:BCGLOControlling}
Suppose $\Delta$ is a set of subgroups of $S$ which is closed under $\F$-conjugacy and contains every $\F$-critical subgroup. If $\F$ is $\Delta$-generated and $\Delta$-saturated, then $\F$ is saturated. 
\end{theorem}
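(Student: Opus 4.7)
The plan is to prove Theorem~\ref{T:BCGLOControlling} by downward induction on $|P|$, showing that every $\F$-conjugacy class $P^\F$ respects $\F$-saturation; by Lemma~\ref{L:SylowExtensionAxiom}, this amounts to verifying the Sylow axiom (I) and the Extension axiom (II) for $P^\F$. The base case $P=S$ follows from the hypothesis, since (barring the degenerate case $\Delta=\emptyset$, where $\F$ has no non-trivial morphisms and saturation is trivial) $\Delta$ is overgroup-closed and nonempty, hence contains $S$, and so $\Delta$-saturation applies.

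For the inductive step, fix $P<S$ and, after passing to an $\F$-conjugate, assume $P$ is fully $\F$-normalized. Assume (I) and (II) hold for every $\F$-conjugacy class of subgroups of strictly larger order than $|P|$. If $P\in\Delta$, the conclusion is immediate, so assume $P\notin\Delta$; then $P$ is not $\F$-critical, and by Definition~\ref{D:FcrCritical} either (i) $P\notin\F^c$, or (ii) $P\in\F^c$ and some $Q\in P^\F$ satisfies $\Out_S(Q)\cap O_p(\Out_\F(Q))\neq 1$. In case (i), Lemma~\ref{L:FcFqCharacterize} yields $C_S(P)\not\leq P$, and I set $P^*:=PC_S(P)>P$. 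In case (ii), by transporting the obstructing element from $Q$ to $P$ via an $\F$-isomorphism (whose existence is ensured by the inductive hypothesis applied to a strict overgroup of $P$), I produce $P<P^*\leq N_S(P)$ with $\Aut_{P^*}(P)\leq O_p(\Aut_\F(P))$. In both cases $P^*$ is a proper overgroup of $P$, and the inductive hypothesis supplies saturation of $(P^*)^\F$.

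The core step is to promote the saturation of $(P^*)^\F$ to that of $P^\F$. For the Sylow axiom, every $\alpha\in\Aut_\F(P)$ lifts, after composition with an inner automorphism of $P$, to an $\F$-automorphism of $P^*$: in case (i) because $\alpha$ permutes $C_S(P)$ up to inner automorphisms, and in case (ii) because $\Aut_{P^*}(P)$, lying in the characteristic subgroup $O_p(\Aut_\F(P))$ of $\Aut_\F(P)$, is preserved by $\alpha$. Applying the inductive saturation of $(P^*)^\F$ then produces a fully $\F$-normalized conjugate of $P$ inside $N_S(P^*)$ with the correct Sylow and centralizer data. The Extension axiom is handled analogously: any $\phi\in\Hom_\F(R,P)$ with $P$ fully $\F$-centralized is factored, via $\Delta$-generation, as a composite of restrictions of $\F$-morphisms between $\Delta$-subgroups; each factor is extended using $\Delta$-saturation; and the extensions are composed to yield an extension of $\phi$ with the required domain.

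The main obstacle is the decomposition-and-gluing step for the Extension axiom: when $\phi$ is factored through $\Delta$-subgroups, the intermediate domains and targets need not lie in any predictable position relative to the overgroup $P^*$, so one must track compatibility inductively, using the inductive hypothesis on strict overgroups together with the fact that $\Delta$ contains every $\F$-critical subgroup to ensure that each extension step is available and that the composite truly extends $\phi$ beyond $P$. This is the technical heart of the theorem, following closely the pattern of Oliver's argument in \cite{OliverExtensions}, and it is precisely the critical-subgroups hypothesis that makes the gluing produce an extension landing in the prescribed target.
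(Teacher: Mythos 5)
The paper's proof of Theorem~\ref{T:BCGLOControlling} is a one-line citation: it is a reformulation of \cite[Theorem~2.2]{controlling}, and alternatively follows from Lemma~\ref{L:BCGLOMainArgument} (take $\H$ to be the set of subgroups all of whose overgroups, up to $\F$-conjugacy, respect $\F$-saturation; a $\preceq$-maximal class $\m{P}$ outside $\H$ consists of $\F$-critical subgroups, which then lie in $\Delta\subseteq\H$, a contradiction). You instead attempt a self-contained downward induction on $|P|$. That is a legitimate alternative route in spirit, since it is essentially how the cited BCGLO theorem is proved; but your sketch has genuine gaps in exactly the places that make the BCGLO argument nontrivial.

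First, in case~(i) you deduce $C_S(P)\not\leq P$ from $P\notin\F^c$ via Lemma~\ref{L:FcFqCharacterize}; but that lemma requires $P$ to be \emph{fully $\F$-centralized}, and you have only arranged $P$ to be fully $\F$-normalized. Without already knowing (I) for $P^\F$, you cannot pass freely between these two properties.

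Second, and more seriously, the ``core step'' rests on the unsupported claim that every $\alpha\in\Aut_\F(P)$ lifts, up to $\Inn(P)$, to an $\F$-automorphism of $P^*$. In case~(i) the stated justification (``$\alpha$ permutes $C_S(P)$ up to inner automorphisms'') does not make sense: $\alpha$ is a self-map of $P$ and does not act on $C_S(P)$ or on $P^*=PC_S(P)$ at all; whether $\alpha$ extends to $PC_S(P)$ is precisely a receptivity statement for $P$, i.e.\ part of the axiom (II) you are trying to prove. In case~(ii) the observation that $\alpha$ normalizes $\Aut_{P^*}(P)\leq O_p(\Aut_\F(P))$ gives a candidate abstract automorphism of $P^*/Z(P)$, but does not by itself produce an $\F$-morphism on $P^*$; again, converting ``$\alpha$ normalizes $N_{\rho}$'' into an actual extension of $\rho$ is exactly what receptivity is for. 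The machinery that replaces this step in \cite{controlling} works instead with the normalizer subsystem $N_\F(P)$: Lemma~\ref{L:BCGLOLemma1}(a),(b) establishes that $N_\F(P)$ is $\m{S}_{>P}$-generated and $\m{S}_{>P}$-saturated, and \cite[Lemma~2.5]{controlling} then forces $P\in N_\F(P)^c$ and $\Out_S(P)\cap O_p(\Out_\F(P))=1$ whenever $N_\F(P)$ fails to be $\m{S}_{\geq P}$-saturated. Your sketch replaces this with a direct lifting claim that is not true without further argument.

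Third, you flag the extension-axiom gluing as ``the main obstacle'' and ``the technical heart'' but do not resolve it. Asserting that the composite of the local extensions ``truly extends $\phi$'' begs the question: after each extension the intermediate images wander and the domains need not be compatible with $N_\phi$, and controlling this is precisely what the delicate bookkeeping of \cite[Lemma~2.4]{controlling} accomplishes (including property~(*) in the paper's restatement in Lemma~\ref{L:BCGLOLemma1}). Also note your related assertion that, in case~(ii), one can ``transport the obstructing element from $Q$ to $P$ via an $\F$-isomorphism ensured by the inductive hypothesis applied to a strict overgroup of $P$'': the existence of a morphism $N_S(Q)\to N_S(P)$ carrying $Q$ to $P$ for the maximal unsaturated class is not a direct consequence of saturation of strict overgroups; it is itself part~(c) of Lemma~\ref{L:BCGLOLemma1}, whose proof relies on the $\H$-generation and $\H$-saturation hypotheses in a nontrivial way. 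In short: your high-level plan (downward induction, produce $P^*$ from non-criticality, invoke saturation for larger subgroups) points in the right direction, but the mechanism by which saturation of larger classes is transferred to $P^\F$ is missing, and this mechanism is where essentially all the content of the theorem lives. The cleanest fix is to invoke Lemma~\ref{L:BCGLOMainArgument} (or \cite[Theorem~2.2]{controlling}) as the paper does, rather than reproving it from scratch.
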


\begin{proof}
This is a reformulation of \cite[Theorem~2.2]{controlling}. The reader might also want to note that the theorem follows from Lemma~\ref{L:BCGLOMainArgument} below.   
\end{proof}

Later on, we will need to prove saturation in a situation where it appears impossible to apply Theorem~\ref{T:BCGLOControlling} directly. We will therefore now have a closer look at the arguments used in the proof of that theorem. 

\smallskip

Define a partial order $\preceq$ on the set of $\F$-conjugacy classes by writing $\m{P}\preceq\m{Q}$ if some (and thus every) element of $\m{Q}$ contains an element of $\m{P}$.

\begin{lemma}\label{L:BCGLOLemma1}
Let $\H$ be a set of subgroups of $S$ closed under $\F$-conjugacy such that $\F$ is $\H$-generated and $\H$-saturated. Let $\m{P}$ be an $\F$-conjugacy class which is maximal with respect to $\preceq$ among those $\F$-conjugacy classes which are not contained in $\H$.

\smallskip

\noindent Write $\m{S}_{\geq P}\supseteq\m{S}_{>P}$ for the sets of subgroups of $N_S(P)$ which contain, or properly contain, an element $P\in\m{P}$. Then the following hold for every $P\in\m{P}$ which is fully  $\F$-normalized.
\begin{itemize}
 \item [(a)] The subsystem $N_\F(P)$ is $\m{S}_{>P}$-generated and $\m{S}_{>P}$-saturated.
 \item [(b)] If $N_\F(P)$ is $\m{S}_{\geq P}$-saturated, then $\F$ is $\m{H}\cup\m{P}$-saturated.
 \item [(c)] $P$ is fully $\F$-centralized. Moreover, for every $Q\in \m{P}$, there exists $\alpha\in\Hom_\F(N_S(Q),N_S(P))$ such that $Q\alpha=P$.
 \item [(d)] If $C_S(P)\leq P$, then $P$ is $\F$-centric, and if $\Out_S(P)\cap O_p(\Out_\F(P))=1$, then $\Out_S(Q)\cap O_p(\Out_\F(Q))=1$ for all $Q\in \m{P}$. In particular, if $C_S(P)\leq P$ and  $\Out_S(P)\cap O_p(\Out_\F(P))=1$, then $P$ is $\F$-critical.
\end{itemize}
\end{lemma}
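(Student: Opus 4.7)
The plan rests on the observation that by maximality of $\m{P}$ under $\preceq$, every subgroup of $S$ properly containing an $\F$-conjugate of $P$ has its $\F$-conjugacy class contained in $\H$. In particular $\m{S}_{>P}\subseteq\H$, so every element of $\m{S}_{>P}$ respects $\F$-saturation. I would handle the parts in the order (c), (d), (a), (b), since (c) is the ``bridge morphism'' needed for (b) and (d), while (b) in addition uses the saturation statement from (a).

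For (c), I would induct on $|N_S(P)|-|N_S(Q)|\geq 0$. If $N_S(Q)=Q$ then $Q=S=P$ and $\alpha=\id$ works; otherwise $N_S(Q)$ strictly contains $Q$, hence lies in $\H$ and its $\F$-class respects $\F$-saturation. Pick an $\F$-isomorphism $\phi\colon Q\to P$; using $\H$-generation write it as a composition of restrictions of $\H$-morphisms, and apply the ``furthermore'' part of Lemma~\ref{L:SylowExtensionAxiom} at the class of $N_S(Q)$ to lift the first factor to a morphism defined on $N_S(Q)$. The image of $Q$ under this lift has strictly larger normalizer in $S$, so the induction hypothesis applied to this image produces the desired $\alpha\in\Hom_\F(N_S(Q),N_S(P))$ with $Q\alpha=P$. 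Part (d) follows at once: given $C_S(P)\leq P$, the map $\alpha$ from (c) applied to any $Q\in P^\F$ sends $C_S(Q)\leq N_S(Q)$ into $C_{N_S(P)}(P)=C_S(P)$, so $|C_S(Q)|\leq|C_S(P)|$ and $P$ is fully $\F$-centralized; Lemma~\ref{L:FcFqCharacterize} then gives $P\in\F^c$. The injection $\Aut_S(Q)\hookrightarrow\Aut_S(P)$ and the compatible isomorphism $\Aut_\F(Q)\cong\Aut_\F(P)$ induced by $\alpha$ carry $\Out_S(Q)\cap O_p(\Out_\F(Q))$ injectively into $\Out_S(P)\cap O_p(\Out_\F(P))$, which yields the last assertion of (d).

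Part (a) has two halves. For $\m{S}_{>P}$-generation, I would extend a given $\phi\in\Hom_{N_\F(P)}(A,B)$ to $\bar\phi\colon AP\to BP$ with $P\bar\phi=P$, decompose $\bar\phi$ via $\H$-generation of $\F$, and use maximality of $\m{P}$ to refine the decomposition so that every constituent is an $N_\F(P)$-morphism between subgroups in $\m{S}_{>P}$ (the corner case $A\leq P$ being handled by factoring elements of $\Aut_\F(P)$ through a fixed $T\in\m{S}_{>P}$, which exists provided $P<N_S(P)$; the case $P=S$ is vacuous). For $\m{S}_{>P}$-saturation of $N_\F(P)$, fix $T\in\m{S}_{>P}$; since $T\in\H$, its $\F$-conjugacy class respects $\F$-saturation, and using (c) I would choose a fully $\F$-normalized representative that lies above $P$ (hence in $\m{S}_{\geq P}$). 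The $N_\F(P)$-Sylow and Extension axioms for this representative then follow from their $\F$-counterparts by restricting automorphism and extension data to the stabilizer of $P$.

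Part (b) assumes $N_\F(P)$ is $\m{S}_{\geq P}$-saturated, so $P\in\m{S}_{\geq P}$ respects $N_\F(P)$-saturation. From $\Aut_{N_\F(P)}(P)=\Aut_\F(P)$ and $\Aut_{N_S(P)}(P)=\Aut_S(P)$ we deduce $P$ fully $\F$-automized; (c) gives $P$ fully $\F$-centralized as in (d). For the $\F$-extension axiom on $P$, given $\psi\colon Q\to P$ an $\F$-isomorphism, factor $\psi=\alpha|_Q\circ\rho$ with $\alpha$ from (c) and $\rho\in\Aut_\F(P)=\Aut_{N_\F(P)}(P)$; a direct computation shows that $\alpha$ maps $N_\psi$ into the $N_\F(P)$-extension subgroup of $\rho$, so applying $N_\F(P)$-receptivity of $P$ to $\rho$ and precomposing with $\alpha$ extends $\psi$ to $N_\psi\to N_S(P)$. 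Combined with $\H$-saturation on the other $\F$-classes, this gives $\F$ is $\H\cup\m{P}$-saturated. The main technical obstacle is the domain-bookkeeping in parts (a) and (c): the $\H$-generated decompositions must be routed through subgroups containing $P$, which is exactly what maximality of $\m{P}$ enables.
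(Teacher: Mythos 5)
Your approach is genuinely different from the paper's: the paper proves this lemma almost entirely by citation to \cite[Lemma~2.4]{controlling} and its proof, only adding a self-contained argument for the $\m{S}_{>P}$-generation half of (a) and extracting property~(*) from \cite{controlling} for (c). You instead attempt a from-scratch reconstruction. That is a legitimate thing to want, but two of your steps have real gaps.

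The central problem is part (c). Your induction on $|N_S(P)|-|N_S(Q)|$ is not coherently set up: the base case of that induction is $|N_S(Q)|=|N_S(P)|$, i.e.\ $Q$ fully $\F$-normalized, but you only treat the much more special case $N_S(Q)=Q$ (which forces $Q=S$). More seriously, the inductive step does not go through as described. Applying the ``furthermore'' clause of Lemma~\ref{L:SylowExtensionAxiom} to the class of $N_S(Q)$ yields a morphism $\beta\colon N_S(N_S(Q))\to N_S(N')$ with $N_S(Q)\beta=N'$ for a fully $\F$-normalized conjugate $N'$ of $N_S(Q)$. Restricting to $N_S(Q)$ gives $Q\beta\leq N'\leq N_S(Q\beta)$, so $|N_S(Q\beta)|\geq|N_S(Q)|$ --- but there is no reason for strict inequality; if $N_S(Q)$ already happens to be fully normalized, $\beta$ gains nothing. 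So ``the image of $Q$ under this lift has strictly larger normalizer'' is unjustified, and the induction can stall. This is exactly the delicate point that the BCGLO argument handles by choosing a specific $\hat P\in\m{P}$ with $|C_S(\hat P)|$ maximal and working with $QC_S(Q)$ rather than $N_S(Q)$; that mechanism does not appear in your sketch, and nothing in it plays its role.

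There is also a gap in your treatment of the $A\leq P$ case in the $\m{S}_{>P}$-generation half of (a). ``Factoring elements of $\Aut_\F(P)$ through a fixed $T\in\m{S}_{>P}$'' does not work: for a fixed $T$, the automorphisms of $P$ that extend to $T$ form a proper subgroup of $\Aut_\F(P)$ in general, so a single $T$ cannot serve. What is actually needed is that every $\phi\in\Aut_\F(P)$ is a composite of restrictions of $N_\F(P)$-morphisms between subgroups in $\m{S}_{>P}$, which is precisely \cite[Lemma~2.4(b)]{controlling}; the proof there decomposes $\phi$ via $\H$-generation and then inductively conjugates the pieces so that they normalize $P$, a significantly more involved argument (and the one where the paper flags the $\chi_i$ vs.\ $\chi_{i+1}$ typo). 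Your sketch of the $\m{S}_{>P}$-saturation half of (a) and of (b) looks essentially right in spirit, and your deduction of (d) from (c) is fine, but without repaired versions of (c) and the $A\leq P$ case of (a) the overall argument does not close.
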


\begin{proof}
 Basically, this follows from \cite[Lemma~2.4]{controlling} and its proof. We take the opportunity to point out the following small error in the proof of part (b) of that lemma: in l.9 on p.334 of \cite{controlling} it says ``replacing each $\phi_i$ by $\chi_i\circ\phi_i\circ\chi_i^{-1}\in\Hom_\F(\chi_i(Q_i),S)$''. However, it should be ``replacing each $\phi_i$ by $\chi_{i+1}\circ\phi_i\circ\chi_i^{-1}\in\Hom_\F(\chi_i(Q_i),S)$''.

\smallskip

Let us now explain in detail how the assertion follows. By \cite[Lemma~2.4(a),(c)]{controlling}, $N_\F(P)$ is $\m{S}_{>P}$-saturated and (b) holds. To prove (a), one needs to argue that $N_\F(P)$ is also $\m{S}_{>P}$-generated. If $\phi\in\Hom_{N_\F(P)}(A,B)$, then $\phi$ extends to a morphism  $\hat{\phi}\in\Hom_\F(AP,BP)$ which normalizes $P$. If $A\not\leq P$, then $AP\in\m{S}_{>P}$ and thus $\phi$ is the restriction of a morphism between subgroups in $\m{S}_{>P}$. If $A\leq P$, then $\hat{\phi}\in\Aut_\F(P)$, and by \cite[Lemma~2.4(b)]{controlling} $\hat{\phi}$ (and thus $\phi$) is a composite of restrictions of morphisms in $N_\F(P)$ between subgroups in $\m{S}_{>P}$. Hence $N_\F(P)$ is $\m{S}_{>P}$ generated and (a) holds. 

\smallskip

The following property is shown in the proof of Lemma~2.4 in \cite{controlling} (see p.333, property (3)).
\begin{itemize} 
\item [(*)] There is a subgroup $\hat{P}\in\m{P}$ fully  $\F$-centralized such that, for all $Q\in\m{P}$, there exists a morphism $\phi\in\Hom_\F(N_S(Q),N_S(\hat{P}))$ with $Q\phi=\hat{P}$. 
\end{itemize}
In particular, there exists $\psi\in\Hom_\F(N_S(P),N_S(\hat{P}))$ such that $P\psi=\hat{P}$. Since $P$ is fully  $\F$-normalized, the map $\psi$ is an isomorphism. In particular, $P$ is fully  $\F$-centralized as $\hat{P}$ is fully  $\F$-centralized. Moreover, if $Q\in\m{P}$, then by (*), there exists $\phi\in\Hom_\F(N_S(Q),N_S(\hat{P}))$ with $Q\phi=\hat{P}$, and we have then $\alpha:=\phi\psi^{-1}\in\Hom_\F(N_S(Q),N_S(P))$ with $Q\alpha=\hat{P}\psi^{-1}=P$. Hence, (c) holds.

\smallskip

It follows from Lemma~\ref{L:FcFqCharacterize} and the first part of (c) that $P$ is $\F$-centric if $C_S(P)\leq P$. If $Q\in\m{P}$, then the second part of (c) allows us to choose $\alpha\in\Hom_\F(N_S(Q),N_S(P))$ with $Q\alpha=P$. Notice that the map $\alpha^*\colon\Aut_\F(Q)\rightarrow \Aut_\F(P),\gamma\mapsto \alpha^{-1}\gamma\alpha$ is a group isomorphism with $\Inn(Q)\alpha^*=\Inn(P)$ and $\Aut_S(Q)\alpha^*\leq \Aut_S(P)$. So $\alpha^*$ induces an isomorphism from $\Out_\F(Q)$ to $\Out_\F(P)$ which maps $\Out_S(Q)\cap O_p(\Out_\F(Q))$ into $\Out_S(P)\cap O_p(\Out_\F(P))$. This implies (d).
\end{proof}

\begin{lemma}\label{L:BCGLOMainArgument}
Suppose that $\H$ is one of the following two sets:
\begin{itemize}
\item [(i)] the set of all subgroups of $S$ respecting $\F$-saturation; or 
\item [(ii)] the set of all $P\leq S$ such that every subgroup of $S$ containing an $\F$-conjugate of $P$ respects $\F$-saturation.  
\end{itemize}
Assume that $\F$ is $\H$-generated. Let $\m{P}$ be an $\F$-conjugacy class, which is maximal with respect to $\preceq$ among the $\F$-conjugacy classes not contained in $\H$. Then the elements of $\m{P}$ are $\F$-critical. 
\end{lemma}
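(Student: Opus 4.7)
We argue by contradiction: suppose some (equivalently, every) $P\in\m{P}$ is not $\F$-critical. Choose $P\in\m{P}$ to be fully $\F$-normalized. By Lemma~\ref{L:BCGLOLemma1}(d), non-criticality of $P$ forces $C_S(P)\not\leq P$ or $\Out_S(P)\cap O_p(\Out_\F(P))\neq 1$. By Lemma~\ref{L:BCGLOLemma1}(a), the subsystem $N_\F(P)$ is already $\m{S}_{>P}$-saturated. Since $P\unlhd N_\F(P)$ its $N_\F(P)$-conjugacy class is the singleton $\{P\}$, which is the only $N_\F(P)$-conjugacy class in $\m{S}_{\geq P}$ but not in $\m{S}_{>P}$. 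Hence it suffices to verify that $P$ itself is fully automized and receptive in $N_\F(P)$: Lemma~\ref{L:BCGLOLemma1}(b) will then give that $\F$ is $\m{H}\cup\m{P}$-saturated.

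\textbf{From this to the contradiction.} Once $\F$ is $\m{H}\cup\m{P}$-saturated, the class $\m{P}$ respects $\F$-saturation. In case~(i) this is by definition the statement $\m{P}\subseteq\H$, contradicting the choice of $\m{P}$. In case~(ii) we additionally need every subgroup of $S$ containing an $\F$-conjugate of an element of $\m{P}$ to respect $\F$-saturation: elements of $\m{P}$ themselves do so by what was just shown, while any strict overgroup of an element of $\m{P}$ belongs to an $\F$-class strictly above $\m{P}$ under $\preceq$, which by maximality of $\m{P}$ is contained in $\H$, and in case~(ii) the defining property of $\H$ then directly yields that such overgroups respect $\F$-saturation. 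Either way $\m{P}\subseteq\H$, again contradicting the choice of $\m{P}$.

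\textbf{Heart of the argument and main obstacle.} It remains to show that $P$ is fully automized and receptive in $N_\F(P)$, and this is where the non-criticality of $P$ is essential. In either case one produces a proper overgroup $P<R\leq N_S(P)$ encoding the failure of criticality: set $R=PC_S(P)$ in the first case, and in the second case choose $R$ so that $\Aut_R(P)/\Inn(P)$ is a nontrivial subgroup of $\Out_S(P)\cap O_p(\Out_\F(P))$. Since $R\in\m{S}_{>P}$, the $\F$-class $\m{R}$ lies in $\H$ by maximality of $\m{P}$, so $R$ and each of its $\F$-conjugates respects $\F$-saturation. Given any $\phi\in\Aut_\F(P)$, $\H$-generation lets us decompose $\phi$ as a composite of restrictions of $\F$-morphisms between subgroups in $\H$; tracking an appropriate transport of $R$ along this decomposition and applying the extension axiom at each step (via the receptivity of a fully $\F$-normalized representative of $\m{R}$), one lifts the partial morphisms to morphisms whose domains contain the transported copies of $R$. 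Reassembling these lifts yields an extension of $\phi$ to $N_\phi$ that normalises $P$, verifying receptivity; the same machinery applied to a Sylow $p$-subgroup of $\Aut_\F(P)$ containing $\Aut_S(P)$ forces equality, verifying full automization. The main obstacle is precisely this careful bookkeeping along the $\H$-decomposition, ensuring that the individual extensions provided by the extension axiom glue into a single $\F$-morphism normalising $P$; this is essentially an adaptation of Oliver's extension argument to the more flexible choice of $\H$ used here.
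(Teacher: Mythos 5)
Your high-level plan is essentially the contrapositive of the paper's argument: the paper shows directly that $N_\F(P)$ is $\m{S}_{>P}$-generated, $\m{S}_{>P}$-saturated, but \emph{not} $\m{S}_{\geq P}$-saturated, invokes \cite[Lemma~2.5]{controlling} to deduce $P\in N_\F(P)^c$ and $\Out_S(P)\cap O_p(\Out_\F(P))=1$, and then applies Lemma~\ref{L:BCGLOLemma1}(d); you instead assume the critical conditions fail and try to prove $N_\F(P)$ \emph{is} $\m{S}_{\geq P}$-saturated, arriving at a contradiction via Lemma~\ref{L:BCGLOLemma1}(b). Your bookkeeping in the two cases (i) and (ii) for getting from ``$\F$ is $\H\cup\m{P}$-saturated'' to ``$\m{P}\subseteq\H$'' matches the observation the paper makes at the start of its proof.

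The problem is that the ``heart of the argument'' is exactly \cite[Lemma~2.5]{controlling}, which the paper simply cites; you attempt to re-derive it and the re-derivation is not a proof. You name the obstacle yourself (``careful bookkeeping \dots\ ensuring that the individual extensions provided by the extension axiom glue into a single $\F$-morphism normalising $P$'') but then offer no resolution, only the assertion that an ``adaptation of Oliver's extension argument'' works. Worse, the decomposition you propose is via $\H$-generation of $\F$, so the intermediate subgroups in the chain are arbitrary members of $\H$; they need not contain $P$ (nor any ``transported copy of $R$''), so there is no reason the transport you describe can even be carried along the chain, and the reassembled morphism has no reason to normalise $P$. The generation statement you should be using is the $\m{S}_{>P}$-generation of $N_\F(P)$ supplied by Lemma~\ref{L:BCGLOLemma1}(a) --- you record that clause but never use it --- which keeps every intermediate subgroup a proper overgroup of $P$ and is what makes the argument in \cite[Lemma~2.5]{controlling} go through. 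A secondary issue: you assert that $\{P\}$ is the \emph{only} $N_\F(P)$-conjugacy class in $\m{S}_{\geq P}\setminus\m{S}_{>P}$, but this set consists of all members of $\m{P}$ lying in $N_S(P)$, and full $\F$-normalization of $P$ does not force it to be the unique conjugate there; if you argue via Lemma~\ref{L:BCGLOLemma1}(b) you must handle every such class, not just $\{P\}$. Either supply a full proof of the \cite[Lemma~2.5]{controlling}-step along the correct $\m{S}_{>P}$-decomposition, or simply cite that lemma as the paper does.
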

 
\begin{proof}
Observe first that $\H$ is in either case closed under $\F$-conjugacy and $\F$ is $\H$-saturated. Moreover, note that $\F$ is not $\H\cup\m{P}$-saturated; this is clear if $\H$ is as in (i); if $\H$ is as in (ii) then note that, because of the maximal choice of $\m{P}$, $\F$ being $\H\cup\m{P}$-saturated would imply that every subgroup containing an element of $\m{P}$ would be either in $\m{P}$ or in $\H$ and thus respect $\F$-saturation.

\smallskip

Fix now $P\in\m{P}$ fully  $\F$-normalized. By Lemma~\ref{L:BCGLOLemma1}(a),(b) (using the notation introduced in that lemma), $N_\F(P)$ is $\m{S}_{>P}$ generated and $\m{S}_{>P}$-saturated, but not $\m{S}_{\geq P}$-saturated as $\F$ is not $\H\cup\m{P}$-saturated. Thus, by \cite[Lemma~2.5]{controlling} applied with $N_\F(P)$ in place of $\F$, we have $\Out_S(P)\cap O_p(\Out_\F(P))=1$ and $P\in N_\F(P)^c$. It follows now from Lemma~\ref{L:BCGLOLemma1}(d) that $P$ is $\F$-critical. 
\end{proof}

\begin{lemma}\label{L:FinvariantCritical}
Let $\E$ be an $\F$-invariant subsystem of $\F$ over $T\leq S$. Let $P\leq S$ and set $P_0:=P\cap T$. Then the following hold:
\begin{itemize}
 \item [(a)] If $C_S(P)\leq P$ and $\Out_S(P)\cap O_p(\Out_\F(P))=1$, then $C_T(P_0)\leq P_0$ and $\Out_T(P_0)\cap O_p(\Out_\E(P_0))=1$.
 \item [(b)] If $\E$ is saturated, $P_0$ is fully $\E$-normalized and $P$ is  $\F$-critical, then $P_0\in\E^{cr}$. 
\end{itemize}
\end{lemma}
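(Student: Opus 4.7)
For part (a), I will handle both conclusions by the same strategy: assuming one fails, produce a non-inner element of $\Aut_S(P)\cap O_p(\Aut_\F(P))$, contradicting the hypothesis $\Out_S(P)\cap O_p(\Out_\F(P))=1$. The key preparatory observation is that $\F$-invariance provides two crucial normal structures inside $\Aut_\F(P)$. First, since $T$ is strongly closed in $\F$, every $\phi\in\Aut_\F(P)$ preserves $P_0=P\cap T$, so the restriction map $r\colon\Aut_\F(P)\to\Aut_\F(P_0)$ is a well-defined homomorphism. Second, $\Aut_\E(P_0)\unlhd\Aut_\F(P_0)$, so the preimage $K_\E$ in $\Aut_\E(P_0)$ of $O_p(\Out_\E(P_0))$, being characteristic in $\Aut_\E(P_0)$, is normal in $\Aut_\F(P_0)$; hence $r^{-1}(K_\E)$ is normal in $\Aut_\F(P)$. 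Let $N'$ denote the kernel of the natural action $\Aut_\F(P)\to\Aut(P/P_0)$, which is also normal in $\Aut_\F(P)$. By the standard fact that the group of automorphisms of a finite $p$-group trivial on each factor of a normal $p$-series is a $p$-group, together with $K_\E$ being a $p$-group, the subgroup $M:=r^{-1}(K_\E)\cap N'$ of $\Aut_\F(P)$ is a normal $p$-subgroup, hence $M\le O_p(\Aut_\F(P))$.

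For $C_T(P_0)\le P_0$, I argue by contradiction. Since $P_0$ centralizes $C_T(P_0)$ by definition and $P$ normalizes $C_T(P_0)$, the quotient $P/P_0$ acts on the (assumed non-trivial) $p$-group $C_T(P_0)/Z(P_0)$; a $p$-group fixed-point argument produces $x\in C_T(P_0)\setminus P_0$ with $[x,P]\le Z(P_0)\le P$, so $x\in N_S(P)$, and the conjugation $c_x\in\Aut_S(P)$ is trivial on both $P_0$ and $P/P_0$, i.e.\ $c_x\in M\le O_p(\Aut_\F(P))$. The hypothesis then forces $c_x\in\Inn(P)$, whence $x\in P\cdot C_S(P)=P$ (using $C_S(P)\le P$), contradicting $x\notin P_0$. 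The second conclusion $\Out_T(P_0)\cap O_p(\Out_\E(P_0))=1$ is proved in parallel: having $C_T(P_0)\le P_0$ now in hand we get $\Out_T(P_0)=N_T(P_0)/P_0$, and $P/P_0$ acts on its $p$-subgroup $\Out_T(P_0)\cap O_p(\Out_\E(P_0))$; a fixed non-trivial class represented by $t\in N_T(P_0)$ satisfies $[t,P]\le P_0\le P$, so $t\in N_S(P)$ and $c_t\in M$, yielding the same contradiction.

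For part (b), since $P$ is $\F$-critical both hypotheses of (a) hold, yielding $C_T(P_0)\le P_0$ and $\Out_T(P_0)\cap O_p(\Out_\E(P_0))=1$. Because $\E$ is saturated and $P_0$ is fully $\E$-normalized, $P_0$ is fully $\E$-centralized (Sylow axiom), so combined with $C_T(P_0)\le P_0$ and Lemma~\ref{L:FcFqCharacterize} we obtain $P_0\in\E^c$. To upgrade this to $P_0\in\E^{cr}$ via Lemma~\ref{L:radical}(b), I will verify that $P_0$ is $\E$-critical, i.e.\ $\Out_T(Q)\cap O_p(\Out_\E(Q))=1$ for every $Q\in P_0^\E$. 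For each such $Q$, Corollary~\ref{C:FullyNormalizedConjugate} supplies $\alpha\in\Hom_\E(N_T(Q),N_T(P_0))$ with $Q\alpha=P_0$; conjugation by $\alpha$ embeds $\Out_T(Q)\hookrightarrow\Out_T(P_0)$ and sends $O_p(\Out_\E(Q))$ isomorphically onto $O_p(\Out_\E(P_0))$, so the intersection at $Q$ injects into the trivial intersection at $P_0$.

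The main obstacle is the fixed-point step in part (a): ensuring that the element $x$ or $t$ extracted from the $P/P_0$-action actually normalizes $P$, not merely $P_0$, so that the resulting $S$-conjugation automorphism of $P$ exists and can be played against the hypothesis. This works because the fixed-point condition in the appropriate quotient translates precisely into $[x,P]\le P_0\le P$ (or $[t,P]\le P_0\le P$), which forces membership in $N_S(P)$.
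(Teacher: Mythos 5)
Your proof is correct, and it takes a genuinely different route from the paper's in both parts.

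In part (a), the paper runs a single argument: it assumes at least one of the two conclusions fails, takes $Q$ to be the preimage in $N_T(P_0)$ of $\Out_T(P_0)\cap O_p(\Out_\E(P_0))$ (which automatically contains $C_T(P_0)$, so $Q\not\leq P$ in either case), observes that $N_Q(P)\not\leq P$, and then shows the normal closure $X:=\langle\Aut_Q(P)^{\Aut_\F(P)}\rangle$ is a $p$-group, from which $\Out_Q(P)\leq\Out_S(P)\cap O_p(\Out_\F(P))$ or $C_S(P)\not\leq P$. You instead split the two conclusions into two separate fixed-point arguments -- first applying $P/P_0$ to $C_T(P_0)/Z(P_0)$, then (with $C_T(P_0)\leq P_0$ in hand) to $\Out_T(P_0)\cap O_p(\Out_\E(P_0))$ -- and in each case extract a single element $x$ or $t$ of $N_S(P)$ whose conjugation lands in the normal $p$-subgroup $M=r^{-1}(K_\E)\cap N'$. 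The underlying mechanism is the same (produce a non-inner element of $\Aut_S(P)\cap O_p(\Aut_\F(P))$ by controlling the action on $P_0$ and $P/P_0$), but the paper's version is more economical since it treats both cases at once and requires only one appeal to the ``automorphisms stabilizing a series of a $p$-group form a $p$-group'' principle. In part (b), the paper uses the Sylow axiom twice: fully centralized together with $C_T(P_0)\leq P_0$ and Lemma~\ref{L:FcFqCharacterize} gives $P_0\in\E^c$, and fully automized gives $O_p(\Out_\E(P_0))\leq\Out_T(P_0)$, whence $O_p(\Out_\E(P_0))=1$ and Lemma~\ref{L:radical}(a) finishes. You instead keep only the centric step, then verify $\E$-criticality directly by transporting the vanishing of $\Out_T(Q)\cap O_p(\Out_\E(Q))$ to all $\E$-conjugates $Q$ via Corollary~\ref{C:FullyNormalizedConjugate}, and invoke Lemma~\ref{L:radical}(b). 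Both routes are sound; the paper's is slightly shorter because it avoids quantifying over conjugates, while yours has the small advantage of not needing fully automized.
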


\begin{proof}
Assume that $C_T(P_0)\not\leq P_0$ or $\Out_T(P_0)\cap O_p(\Out_\E(P_0))\neq 1$. Let $Q$ be the preimage of $\Out_T(P_0)\cap O_p(\Out_\E(P_0))$ in $N_T(P_0)$. Note that $C_T(P_0)\leq Q$, so our assumption implies in any case that $P_0<Q$ and thus $Q\not\leq P$. As $P$ normalizes $P_0$, $P$ also normalizes $Q$. Hence, $PQ$ is a $p$-group with $P<PQ$. This yields $P<N_{PQ}(P)=PN_Q(P)$ and thus $N_Q(P)\not\leq P$. Note that $[P,N_Q(P)]\leq P\cap T=P_0$. So $X:=\<\Aut_Q(P)^{\Aut_\F(P)}\>$ acts trivially on $P/P_0$. At the same time, by definition of $Q$, the elements of $\Aut_Q(P)$ and thus the elements of $X$ restrict to automorphisms of $P_0$ which lie in $O_p(\Out_\E(P_0))\unlhd \Aut_\F(P_0)$. Hence, a $p^\prime$-element of $X$ acts trivially on $P/P_0$ and on $P_0$, therefore it is trivial by properties of coprime action (cf. \cite[8.2.2(b)]{KS} or \cite[Lemma~A.2]{AKO}). This shows that $X$ is a $p$-group and so $\Aut_Q(P)\leq \Aut_S(P)\cap O_p(\Aut_\F(P))$. As $N_Q(P)\not\leq P$, this implies either $C_S(P)\not\leq P$ or $1\neq \Out_Q(P)\leq \Out_S(P)\cap O_p(\Out_\F(P))$. This proves (a).

\smallskip

Assume now that $\E$ is saturated, $P_0$ is fully $\E$-normalized, and $P$ is  $\F$-critical. The latter condition implies $C_S(P)\leq P$ and $\Out_S(P)\cap O_p(\Out_\F(P))=1$. So by (a), $C_T(P_0)\leq P_0$ and $\Out_T(P_0)\cap O_p(\Out_\E(P_0))=1$. As $\E$ is saturated and $P_0$ is fully $\E$-normalized, $P_0$ is fully $\E$-centralized and fully $\E$-automized. Hence, $P_0$ is $\E$-centric by Lemma~\ref{L:FcFqCharacterize} and $O_p(\Out_\E(P_0))\leq \Out_T(P_0)$. The latter condition implies $O_p(\Out_\E(P_0))=1$. Using Lemma~\ref{L:radical}(a) we conclude that $P_0\in\E^{cr}$. 
\end{proof}

The basic idea in the proof of the following theorem is taken from Step~5 of the proof of \cite[Theorem~9]{OliverExtensions}. As we argue afterwards, it easily implies Theorem~\ref{T:FInvariantOliverExtensions2}. If $P,Q\leq S$, $\phi\in\Hom_\F(P,Q)$ and $A\leq \Aut_\F(P)$, note that $A^\phi:=\phi^{-1}A\phi\leq \Aut_\F(P\phi)$.

\begin{theorem}\label{T:FInvariantOliverExtensions}
Let $\H$ be the set of all subgroups $P\leq S$ such that every subgroup of $S$ containing an $\F$-conjugate of $P$ respects $\F$-saturation. Assume that $\F$ is $\H$-generated. Suppose furthermore that there exists an $\F$-invariant saturated subsystem $\E$ of $\F$ with $\E^{cr}\subseteq\H$. Then $\F$ is saturated.
\end{theorem}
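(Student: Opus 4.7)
I would prove this by contradiction, following the strategy of Step~5 in Oliver's proof in \cite{OliverExtensions}. Suppose $\F$ is not saturated. Since $P\in\H$ implies (by applying the defining condition to $K=P$ and the trivial $\F$-conjugate) that $P$ itself respects $\F$-saturation, not every subgroup of $S$ lies in $\H$. As $\H$ is visibly closed under $\F$-conjugacy, I may choose an $\F$-conjugacy class $\m{P}$ that is maximal with respect to $\preceq$ among the $\F$-conjugacy classes not contained in $\H$. The set $\H$ in the statement coincides with the one appearing in case~(ii) of Lemma~\ref{L:BCGLOMainArgument}, so that lemma yields that every element of $\m{P}$ is $\F$-critical.

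The crucial step is to produce a representative $P^{*}\in\m{P}$ whose intersection $P^{*}_0:=P^{*}\cap T$ is fully $\E$-normalized. Starting from any $P\in\m{P}$, set $P_0:=P\cap T$, and let $R_0$ be a fully $\E$-normalized $\E$-conjugate of $P_0$. By saturation of $\E$ together with Corollary~\ref{C:FullyNormalizedConjugate} there is $\psi\in\Hom_\E(N_T(P_0),N_T(R_0))$ with $P_0\psi=R_0$. Using $\F$-invariance of $\E$ (which ensures that the conjugation action of $P$ on $P_0$ is compatible with $\psi$, and that receptivity of $R_0$ in $\E$ lifts to an extension property in $\F$), one extends $\psi|_{P_0}$ to an $\F$-morphism $\tilde\psi$ whose domain contains $P$; strong $\F$-closure of $T$ then gives $P^{*}:=P\tilde\psi\in\m{P}$ with $P^{*}\cap T=P_0\tilde\psi=R_0$, which is fully $\E$-normalized.

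Given such a $P^{*}$, Lemma~\ref{L:FinvariantCritical}(b) applies with $\E$ saturated, $P^{*}_0$ fully $\E$-normalized, and $P^{*}$ being $\F$-critical, hence $P^{*}_0\in\E^{cr}\subseteq\H$. I now derive the contradiction by verifying $P^{*}\in\H$. Let $K\leq S$ contain an $\F$-conjugate $(P^{*})\phi$. If $K=(P^{*})\phi$, then strong $\F$-closure of $T$ gives $K\cap T=P^{*}_0\phi$, which is an $\F$-conjugate of $P^{*}_0\in\H$, so $K$ respects $\F$-saturation by the very definition of $\H$. If instead $K\supsetneq(P^{*})\phi$, then $K$ lies in an $\F$-class strictly above $\m{P}$ in the order $\preceq$, so by maximality of $\m{P}$ it lies in $\H$, and in particular respects $\F$-saturation. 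Either way $K$ respects $\F$-saturation, so $P^{*}\in\H$, contradicting $\m{P}\not\subseteq\H$.

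The main obstacle is the extension step producing $\tilde\psi$: one must lift an $\E$-morphism defined only on the $T$-normalizer of $P_0$ to an $\F$-morphism whose domain contains the full $p$-subgroup $P$, despite $\F$ potentially containing more automorphisms than $\E$. This is exactly the point where Oliver's linking-system argument needs to be translated to the purely fusion-theoretic setting, and $\F$-invariance of $\E$ is the key hypothesis that makes it go through. Should the direct extension be inconvenient, a backup is to start with $P$ fully $\F$-normalized in $\m{P}$, invoke Lemma~\ref{L:BCGLOLemma1}(c) to relate $T$-normalizers across $\m{P}$ via $\F$-morphisms on full $S$-normalizers, and then combine with $\F$-invariance to arrange that $P_0$ itself is fully $\E$-normalized.
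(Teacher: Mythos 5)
Your overall scaffolding is sound and matches the paper's high-level strategy: choose a suitable maximal $\F$-conjugacy class $\m{P}$ outside $\H$, use Lemma~\ref{L:BCGLOMainArgument} to show its elements are $\F$-critical, produce a representative whose intersection with $T$ is fully $\E$-normalized, and then invoke Lemma~\ref{L:FinvariantCritical}(b) with $\E^{cr}\subseteq\H$ to get a contradiction. However, the step you yourself flag as the "main obstacle" is a genuine gap, not merely an inconvenience, and your suggested route through it does not work.

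The unsupported claim is that "receptivity of $R_0$ in $\E$ lifts to an extension property in $\F$" via $\F$-invariance of $\E$. This is not true in general: $\F$-invariance gives a weak extension property at the level of $\Aut_\E(T)$, but it provides no mechanism to extend a morphism defined on $N_T(P_0)$ to one whose domain contains a subgroup $P\not\leq T$, and receptivity in $\E$ controls only extensions within $\E$. The whole difficulty is precisely that $\F$ is not known to be saturated, so one cannot appeal to receptivity in $\F$ of $R_0$ or its conjugates. The paper resolves this not by lifting $\E$-receptivity, but by a different mechanism: it selects $\m{P}$ by first maximizing $|P\cap T|$ among subgroups in $\H_0^\perp$ (and only then maximizing $|P|$), so that the strictly larger subgroup $R:=N_T(P\cap T)$ already lies in $\H$ and hence respects $\F$-saturation. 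It is receptivity in $\F$ of a suitable conjugate $R''$ of this overgroup $R$ (not of $P_0$ or $R_0$) that allows the extension, after a careful Sylow adjustment. Your choice of $\m{P}$ as merely $\preceq$-maximal among classes outside $\H$ does not guarantee that $|P\cap T|$ is maximal, so you have no access to this key fact that $N_T(P\cap T)\in\H$, and the extension step cannot be repaired along the lines you indicate. The paper also needs an iterative argument (property~(\ref{*}) iterated to get (\ref{**})) because a single extension step only increases $|N_T(P\cap T)|$, rather than directly producing a fully $\E$-normalized intersection. Your "backup" using Lemma~\ref{L:BCGLOLemma1}(c) has the same problem: maps between full $S$-normalizers do not by themselves arrange that $P\cap T$ becomes fully $\E$-normalized, and there is no obvious way to combine this with $\F$-invariance to close the gap.
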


\begin{proof}
Let $T\leq S$ be such that $\E$ is a subsystem of $\F$ over $T$. By $\H_0$ denote the set of subgroups of $T$ which are elements of $\H$. Write $\H^\perp$ for the set of subgroups of $S$ not in $\H$, and $\H_0^\perp$ for the set of elements of $\H^\perp$ which are subgroups of $T$. If $P\leq S$, write $P_0$ for $P\cap T$, and similarly for subgroups of $S$ with different names.

\smallskip

For the proof of the assertion we will proceed in three steps. In the first two steps we will argue that property (\ref{*}) below implies the assertion, and in the third step we will prove that (\ref{*}) holds.
\begin{eqnarray}\label{*}
&&\mbox{Let }P\leq S\mbox{ such that }P_0\mbox{ is an element of }\H_0^\perp\mbox{ of maximal order. If }P_0\mbox{ is not fully}\\
&&\mbox{$\E$-normalized, then there exists $P''\in P^\F$ such that }|N_T(P_0)|<|N_T(P_0'')|.\nonumber 
\end{eqnarray}

\smallskip

\noindent\emph{Step~1:} We argue that (\ref{*}) implies the following property:
\begin{eqnarray}\label{**}
&&\mbox{If $P\leq S$ such that $P_0$ is an element of $\H_0^\perp$ of maximal order, then there exists an}\\
&&\mbox{$\F$-conjugate $Q$ of $P$ such that $Q_0$ is fully $\E$-normalized.}\nonumber
\end{eqnarray}
This can be seen as follows. If $P$ is as in (\ref{**}) and $P_0$ is fully $\E$-normalized, then we are done. If not, then by (\ref{*}), we can pass on to an $\F$-conjugate $P''$ of $P$ such that $|N_T(P_0)|<|N_T(P_0'')|$. Again, if $P_0''$ is fully $\E$-normalized, then we are done, otherwise we can repeat the process. Since $T$ is finite, we will eventually end up with an $\F$-conjugate $Q$ of $P$ such that $Q_0$ is fully $\E$-normalized. So (\ref{*}) implies (\ref{**}).

\smallskip

\noindent\emph{Step~2:} We show that property (\ref{**}) implies the assertion. Assume that (\ref{**}) holds and $\F$ is not saturated. Then $\H^\perp\neq\emptyset$. By construction, $\H\supseteq\H_0$ is $\F$-closed. So, for every subgroup $P\leq S$, $P_0\in\H_0$ implies $P\in\H$. In particular, as $\H^\perp\neq\emptyset$, we can conclude $\H_0^\perp\neq \emptyset$. Set 
\begin{eqnarray*}
 m&:=&\max\{|Q|\colon Q\in\H_0^\perp\},\\
\m{M}&:=&\{P\in\H^\perp\colon |P_0|=m\}.
\end{eqnarray*}
For any $Q\in\H_0^\perp$ with $|Q|=m$, we have $Q=Q_0$ and thus $Q\in\m{M}$. Hence, $\m{M}\neq\emptyset$. As $T$ is strongly closed, $\m{M}$ is closed under $\F$-conjugacy. Thus, we can choose an $\F$-conjugacy class $\m{P}\subseteq\m{M}$ such that the elements of $\m{P}$ are of maximal order among the elements of $\m{M}$.

\smallskip

We argue first that $\m{P}$ is maximal with respect to $\preceq$ among the $\F$-conjugacy classes contained in $\H^\perp$. For that, let $P\in\m{P}$ and $P\leq R\leq S$. We need to show that $R=P$ or $R\in\H$. Notice that $m=|P_0|\leq |R_0|$. If $m<|R_0|$, then by definition of $m$ we have $R_0\in\H_0\subseteq\H$ and thus $R\in\H$. If $m=|R_0|$, then $R\in\m{M}$ or $R\in\H$. So the maximality of $|P|$ yields $R=P$ or $R\in\H$. Hence, $\m{P}$ is maximal with respect to $\preceq$ among the $\F$-conjugacy classes contained in $\H^\perp$. Thus Lemma~\ref{L:BCGLOMainArgument} implies that the elements of $\m{P}$ are $\F$-critical. By (\ref{**}), there exists $P\in\m{P}$ such that $P_0$ is fully $\E$-normalized. Then by Lemma~\ref{L:FinvariantCritical}(b), we have $P_0\in\E^{cr}$. Since by assumption $\E^{cr}\subseteq\H$, it follows that $P_0\in\H$ and so $P\in\H$. This contradicts the choice of $\m{P}\subseteq\m{M}\subseteq\H^\perp$. Hence, (\ref{**}) implies that $\F$ is saturated. 

\smallskip

\noindent\emph{Step~3:} We complete the proof by proving (\ref{*}). Whenever we have subgroups $Q_1\leq Q\leq S$, we set $\Aut_\F(Q:Q_1):=N_{\Aut_\F(Q)}(Q_1)$, $\Aut_S(Q:Q_1):=N_{\Aut_S(Q)}(Q_1)$, $N_S(Q:Q_1):=N_S(Q)\cap N_S(Q_1)$ et cetera. Let now $P\leq S$ be such that $P_0$ is an element of $\H_0^\perp$ of maximal order which is not fully $\E$-normalized. Note that $T\in\E^{cr}\subseteq\H$ and thus $P_0\neq T$. Hence, $P_0<N_T(P_0)$. So the maximality of $|P_0|$ yields $R:=N_T(P_0)\in\H_0$. In particular, every $\F$-conjugate of $R$ respects $\F$-saturation. Since $\E$ is saturated, there exists $\rho\in\Hom_\E(R,T)$ such that $P_0':=P_0\rho$ is fully $\E$-normalized. As $R':=R\rho$ respects $\F$-saturation, there exists $\sigma\in\Hom_\F(R',S)$ such that $R'':=R'\sigma$ is fully automized and receptive. Set $P_0'':=P_0' \sigma=P_0 \rho\sigma $. As $R''$ is fully automized, by Sylow's Theorem there exists $\xi\in\Aut_\F(R'')$ such that
\[\Aut_\F(R'':P_0'')^\xi\cap\Aut_S(R'')=\Aut_S(R'': P_0'' \xi)\]
is a Sylow $p$-subgroup of $\Aut_\F(R'':P_0'')^\xi=\Aut_\F(R'': P_0'' \xi)$. So replacing $\sigma$ by $\sigma\xi$, we may assume
\[\Aut_S(R'':P_0'')\in\Syl_p(\Aut_\F(R'':P_0'')).\]
Notice that $\Aut_S(R:P_0)^{\rho\sigma}$ and $\Aut_S(R':P_0')^\sigma$ are $p$-subgroups of $\Aut_\F(R'':P_0'')$. Hence, again by Sylow's Theorem, there exist $\gamma,\delta\in\Aut_\F(R'':P_0'')$ such that $\Aut_S(R:P_0)^{\rho\sigma\gamma}$ and $\Aut_S(R':P_0')^{\sigma\delta}$ are contained in $\Aut_S(R'':P_0'')$. This implies $N_S(R:P_0)\leq N_{\rho\sigma\gamma}$ and $N_S(R':P_0')\leq N_{\sigma\delta}$. Hence, as $R''$ is receptive, the map $\rho\sigma\gamma\in \Hom_\F(R,R'')$ extends to $\phi\in\Hom_\F(N_S(R:P_0),S)$, and similarly $\sigma\delta\in\Hom_\F(R',R'')$ extends to $\psi\in\Hom_\F(N_S(R':P_0'),S)$. Notice that
\[P_0 \phi=P_0 {\rho\sigma\gamma}=P_0'' \gamma=P_0''\mbox{ and } P_0' \psi= P_0' {\sigma\delta}=P_0'' \delta=P_0''.\]
Moreover, since $P_0=P\cap T$, $R=N_T(P_0)$ and $T$ is strongly closed, we have $P\leq N_S(P_0)=N_S(R:P_0)$. Hence
\[P'':=P\phi\]
is a well defined $\F$-conjugate of $P$, for which we have indeed that $P''\cap T=P \phi\cap T=(P\cap T)\phi=P_0 \phi=P_0''$ as our notation suggests. So it remains only to show that $|N_T(P_0)|<|N_T(P_0'')|$. We will show this by arguing that
\begin{eqnarray}\label{10}
|N_T(P_0)|=|R|<|N_T(R':P_0')|\leq |N_T(P_0'')|.
\end{eqnarray}
Recall that $P_0$ is by assumption not fully $\E$-normalized, whereas $P_0'=P_0 \rho$ is fully $\E$-normalized. Therefore, we have $R'=R \rho=N_T(P_0) \rho<N_T(P_0')$. Thus $R'<N_{N_T(P_0')}(R')=N_T(R':P_0')$. As $|R|=|R'|$, this shows the first inequality in (\ref{10}). As $P_0' \psi=P_0''$ and $T$ is strongly closed, we have $N_T(R':P_0') \psi\leq N_T(P_0'')$. Since $\psi$ is injective, this shows the second inequality in (\ref{10}). So the proof of (\ref{*}) is complete. As argued in Steps~1 and 2, this proves the assertion.
\end{proof}

\begin{proof}[Proof of Theorem~\ref{T:FInvariantOliverExtensions2}]
 Write $\H$ for the set of all subgroups $P\leq S$ such that every subgroup of $S$ containing an $\F$-conjugate of $P$ respects $\F$-saturation. Since $\Delta$ is $\F$-closed and $\F$ is $\Delta$-saturated, it follows that $\Delta\subseteq\H$. Hence, as $\F$ is $\Delta$-generated, $\F$ is also $\H$-generated. Moreover, $\E^{cr}\subseteq\Delta\subseteq\H$. Thus, the assertion follows from Theorem~\ref{T:FInvariantOliverExtensions} above.
\end{proof}

\section{Partial groups and localities}\label{S:Localities}

In this section we introduce some basic definitions and notations that will be used in the remainder of this paper.  We refer the reader to \cite{loc1} for a more comprehensive introduction to partial groups and localities.

\subsection{Partial groups}

Following the notation introduced in \cite{loc1}, we will write $\W(\L)$ for the set of words in a set $\L$. The elements of $\L$ will be identified with the words of length one, and $\emptyset$ denotes the empty word. The concatenation of words $u_1,u_2,\dots,u_k\in\W(\L)$ will be denoted $u_1\circ u_2\circ\cdots\circ u_k$.

\begin{definition}[{\cite[Definition~1.1]{loc1}}]\label{partial}
Suppose $\L$ is a non-empty set and $\D\subseteq\W(\L)$. Let $\Pi \colon  \D \longrightarrow \L$ be a map, and let $(-)^{-1} \colon \L \longrightarrow \L$ be an involutory bijection, which we extend to a map 
\[(-)^{-1} \colon \W(\L) \longrightarrow \W(\L), w = (g_1, \dots, g_k) \mapsto w^{-1} = (g_k^{-1}, \dots, g_1^{-1}).\]
Then $\L$ is called a \emph{partial group} with product $\Pi$ and inversion $(-)^{-1}$ if the following hold for all words $u,v,w\in\W(\L)$:
\begin{itemize}
\item  $\L \subseteq \D$ and
\[  u \circ v \in \D \Longrightarrow u,v \in \D.\]
(So in particular, $\emptyset\in\D$.)
\item $\Pi$ restricts to the identity map on $\L$.
\item $u \circ v \circ w \in \D \Longrightarrow u \circ (\Pi(v)) \circ w \in \D$, and $\Pi(u \circ v \circ w) = \Pi(u \circ (\Pi(v)) \circ w)$.
\item $w \in  \D \Longrightarrow  w^{-1} \circ w\in \D$ and $\Pi(w^{-1} \circ w) = \One$ where $\One:=\Pi(\emptyset)$.
\end{itemize}
\end{definition}

\textbf{For the remainder of this section let $\L$ always be a partial group with product $\Pi\colon\D\rightarrow\L$. As above set $\One:=\Pi(\emptyset)$.}

\smallskip

If $w=(f_1,\dots,f_n)\in\D$, then we write sometimes $f_1f_2\cdots f_n$ for $\Pi(f_1,\dots,f_n)$. In particular, if $(x,y)\in\D$, then $xy$ denotes the product $\Pi(x,y)$.

\begin{notation}~
 \begin{itemize}
 \item For every $f\in\L$, write $\D(f):=\{x\in\L\colon (f^{-1},x,f)\in\D\}$ for the set of all $x$ such that the conjugate $x^f:=\Pi(f^{-1},x,f)$ is defined. 
 \item By $c_f$ denote the conjugation map $c_f\colon \D(f)\rightarrow \L,x\mapsto x^f$.
 \item For $f\in\L$ and  $\X\subseteq\D(f)$, set $\X^f:=\{x^f\colon x\in\X\}$.
\item Given $\X\subseteq\L$, set
\[N_\L(\X):=\{f\in\L\colon \X\subseteq\D(f)\mbox{ and }\X^f=\X\}\]
and
\[C_\L(\X):=\{f\in\L\colon x\in\D(f)\mbox{ and }x^f=x\mbox{ for all }x\in\X\}.\]
\item For $\X,\Y\subseteq\L$ define $N_\Y(\X)=\Y\cap N_\L(\X)$ and $C_\Y(\X)=\Y\cap C_\L(\X)$.  
\end{itemize}
\end{notation}

We call $N_\Y(\X)$ the \emph{normalizer} of $\X$ in $\Y$ and $C_\Y(\X)$ the \emph{centralizer} of $\X$ in $\Y$. 

\begin{definition}\label{D:PartialSubgroup}~
\begin{itemize}
 \item A subset $\H\subseteq\L$ is called a \emph{partial subgroup} of $\L$ if $h^{-1}\in\H$ for all $h\in\H$, and moreover $\Pi(w)\in\H$ for all $w\in\D\cap\W(\H)$.
 \item A partial subgroup $\H$ of $\L$ is a called a \emph{subgroup} of $\L$ if $\W(\H)\subseteq\D(\L)$.
 \item By a \emph{$p$-subgroup} of $\L$ we mean a subgroup $S$ of $\L$ such that $|S|$ is a power of $p$. 
 \item Let $\N$ be a partial subgroup of $\L$. Then we call $\N$ a \emph{partial normal subgroup} of $\L$ (and write $\N\unlhd\L$) if $n^f\in\N$ for all $f\in\L$ and all $n\in\N\cap\D(f)$.
 \item A partial subgroup $\H$ of $\L$ is called a \emph{partial subnormal subgroup} of $\L$ 
if there exists a series $\H=\H_0\unlhd\H_1\unlhd\cdots\unlhd \H_k=\L$ of partial subgroups of $\L$.
\end{itemize}
\end{definition}

If $\H$ is a partial subgroup of $\L$, then notice that $\H$ is itself a partial group with product $\Pi|_{\W(\H)\cap\D}$. If $\H$ is a subgroup of $\L$, then $\H$ forms actually a group with binary product defined by $x\cdot y:=\Pi(x,y)$ for all $x,y\in\H$. In particular, every $p$-subgroup of $\L$ forms a $p$-group. The following definition will be crucial in the definition of a locality. 

\begin{definition}
Let $\L$ be a partial group and let $\Delta$ be a collection of subgroups of $\L$. Define $\D_\Delta$ to be the set of words $w=(g_1, \dots, g_k) \in \W(\L)$ such that there exist $P_0, \dots ,P_k \in \Delta$ with $P_{i-1} \subseteq \D(g_i)$ and $P_{i-1}^{g_i} = P_i$ for all $1 \leq  i \leq k$. If such $w$ and $P_0,\dots,P_k$ are given, then we say also that $w\in\D_\Delta$ via $P_0,P_1,\dots,P_k$, or just that $w\in\D_\Delta$ via $P_0$.
\end{definition}

\begin{lemma}\label{L:DDeltaDGamma}
 Let $\L$ be a partial group and $\N\unlhd\L$.
\begin{itemize}
 \item [(a)] If $f\in\L$ and $P\subseteq\D(f)$, then $(P\cap\N)^f=P^f\cap\N$.
 \item [(b)] Let $S$ be a subgroup of $\L$ and $\Gamma$ a set of subgroups of $\N\cap S$. Set 
\[\Delta:=\{P\leq S\colon P\cap \N\in\Gamma\}.\]
Then $\D_\Gamma=\D_\Delta$. 
\end{itemize}
\end{lemma}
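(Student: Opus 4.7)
\medskip

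\noindent\textbf{Proof plan for Lemma \ref{L:DDeltaDGamma}.}
The plan is to establish part (a) first and then reduce part (b) to a routine bookkeeping argument based on (a).

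For part (a), the inclusion $(P\cap\N)^f\subseteq P^f\cap\N$ is immediate: if $x\in P\cap\N$, then $x\in P\subseteq\D(f)$, so $x^f$ is defined and lies in $P^f$, and $x^f\in\N$ because $\N\unlhd\L$. For the reverse inclusion, suppose $y=x^f\in P^f\cap\N$ with $x\in P$. I would invoke the standard fact (provable directly from the partial group axioms in Definition \ref{partial}, and recorded in \cite{loc1}) that conjugation by $f^{-1}$ is the inverse of conjugation by $f$ on its natural domain; that is, $x^f\in\D(f^{-1})$ and $(x^f)^{f^{-1}}=x$. Then $x=y^{f^{-1}}$ with $y\in\N$, so $x\in\N$ by partial normality, giving $y\in(P\cap\N)^f$.

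For part (b), first observe $\Gamma\subseteq\Delta$: any $Q\in\Gamma$ is a subgroup of $\N\cap S$, hence of $S$, and $Q\cap\N=Q\in\Gamma$. Thus if $w=(g_1,\dots,g_k)\in\D_\Gamma$ via $Q_0,\dots,Q_k\in\Gamma$, the same chain witnesses $w\in\D_\Delta$, giving $\D_\Gamma\subseteq\D_\Delta$. For the opposite containment, let $w=(g_1,\dots,g_k)\in\D_\Delta$ via $P_0,\dots,P_k\in\Delta$ and set $Q_i:=P_i\cap\N\in\Gamma$. Then $Q_{i-1}\subseteq P_{i-1}\subseteq\D(g_i)$, and part (a) applied to $g_i$ and $P_{i-1}$ yields
\[
Q_{i-1}^{g_i}=(P_{i-1}\cap\N)^{g_i}=P_{i-1}^{g_i}\cap\N=P_i\cap\N=Q_i,
\]
so $w\in\D_\Gamma$ via $Q_0,\dots,Q_k$.

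The only step with any subtlety is the reverse inclusion in (a), which depends on the invertibility of the conjugation map $c_f$ by $c_{f^{-1}}$; this is a standard preliminary about partial groups from \cite{loc1} that I would cite rather than reprove. Everything else is a direct unravelling of definitions.
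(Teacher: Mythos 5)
Your proof is correct and follows essentially the same route as the paper's: the forward inclusion in (a) follows from partial normality, the reverse inclusion uses $c_{f^{-1}}=c_f^{-1}$ (the paper cites \cite[Lemma~1.6(c)]{loc1}), and (b) is the same straightforward conversion of witnessing chains between $\Gamma$ and $\Delta$. The only cosmetic difference is that the paper phrases the reverse inclusion in (a) as an application of the first inclusion to $f^{-1}$ followed by conjugating back, whereas you argue elementwise; the content is identical.
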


\begin{proof}
\textbf{(a)} Notice that $(P\cap \N)^f\subseteq P^f\cap\N$ as $\N\unlhd\L$. Since $(c_f)^{-1}=c_{f^{-1}}$ by \cite[Lemma~1.6(c)]{loc1}, we have $(P^f)^{f^{-1}}=P$. So similarly, $(P^f\cap \N)^{f^{-1}}\subseteq P\cap \N$ and thus $P^f\cap \N\subseteq (P\cap \N)^f$. This shows $(P\cap \N)^f=P^f\cap \N$ as required. 

\smallskip

\textbf{(b)} As $\Gamma\subseteq\Delta$, we have $\D_\Gamma\subseteq \D_\Delta$. Conversely, if $w=(f_1,\dots,f_n)\in\D_\Delta$ via $P_0,P_1,\dots,P_n\in\Delta$, then (a) yields that $w\in \D_\Gamma$ via $P_0\cap\N,P_1\cap\N,\dots,P_n\cap \N$. Thus $\D_\Delta \subseteq \D_\Gamma$, implying $\D_\Gamma=\D_\Delta$. 
\end{proof}

\subsection{Localities}

\begin{definition}
Let $\L$ be a partial group and let $S$ be a $p$-subgroup of $\L$. For $f\in\L$ set
\[S_f:=\{x\in S\colon x\in\D(f)\mbox{ and }x^f\in S\}.\]
More generally, if $w=(f_1,\dots,f_k)\in\W(\L)$, then write $S_w$ for the set of $s\in S$ such that there exists a sequence $s=s_0,\dots,s_k$ of elements of $S$ with $s_{i-1}\in\D(f_i)$ and $s_{i-1}^{f_i}=s_i$ for $i=1,\dots,k$.
\end{definition}

\begin{definition}\label{locality}
Let $\L$ be a finite partial group with product $\Pi\colon\D\rightarrow\L$, let $S$ be a $p$-subgroup of $\L$ and let $\Delta$ be a non-empty set of subgroups of $S$. We say that $(\L, \Delta, S)$ is a \emph{locality} if the following hold:
\begin{enumerate}
\item $S$ is maximal with respect to inclusion among the $p$-subgroups of $\L$;
\item $\D = \D_\Delta$;
\item $\Delta$ is closed under passing to $\L$-conjugates and overgroups in $S$; i.e. $\Delta$ is overgroup-closed in $S$ and $P^f\in \Delta$ for all $P\in\Delta$ and $f\in \L$ with $P \subseteq S_f$.
\end{enumerate}
If $(\L,\Delta,S)$ is a locality, then $\Delta$ is also called the set of \emph{objects} of $(\L,\Delta,S)$.
\end{definition}

As argued in \cite[Remark~5.2]{subcentric}, Definition~\ref{locality} is equivalent to the definition of a locality given by Chermak \cite[Definition 2.7]{loc1}. We will use this fact throughout.

\smallskip

\textbf{For the remainder of this subsection let $(\L,\Delta,S)$ be a locality.}

\begin{lemma}[Important properties of localities]\label{L:LocalitiesProp}
The following hold:
\begin{itemize}
\item [(a)] $N_\L(P)$ is a subgroup of $\L$ for each $P\in\Delta$.
\item [(b)] Let $P\in\Delta$ and $g\in\L$ with $P\subseteq S_g$. Then $Q:=P^g\in\Delta$, $N_\L(P)\subseteq \D(g)$ and 
$$c_g\colon N_\L(P)\rightarrow N_\L(Q)$$
is an isomorphism of groups. 
\item [(c)] Let $w=(g_1,\dots,g_n)\in\D$ via $(X_0,\dots,X_n)$. Then 
$$c_{g_1}\circ \dots \circ c_{g_n}=c_{\Pi(w)}$$
 is a group isomorphism $N_\L(X_0)\rightarrow N_\L(X_n)$.
\item [(d)] $S_g\in\Delta$ for every $g\in\L$. In particular, $S_g$ is a subgroup of $S$. Moreover, $S_g^g=S_{g^{-1}}$ and $c_g\colon S_g\rightarrow S$ is an injective group homomorphism.
\item [(e)] For every $w\in\W(\L)$, $S_w$ is a subgroup of $S$. Moreover, $S_w\in\Delta$ if and only if $w\in\D$. 
\item [(f)] If $w\in\D$, then $S_w\leq S_{\Pi(w)}$. 
\end{itemize}
\end{lemma}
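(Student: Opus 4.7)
The lemma is a compilation of foundational properties of localities, essentially all due to Chermak \cite{loc1}; the plan is to indicate how each part reduces to the definition of a locality together with the partial group axioms, citing \cite{loc1} for the underlying structural results.

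For (a), the key observation is that for any word $w=(f_1,\dots,f_n)\in\W(N_\L(P))$ the constant chain $(P,P,\dots,P)$ witnesses $w\in\D_\Delta=\D$. Hence $\W(N_\L(P))\subseteq\D$, so closure of $N_\L(P)$ under the product (using the associativity axiom $\Pi(u\circ v\circ w)=\Pi(u\circ(\Pi(v))\circ w)$ to get $P^{\Pi(w)}=P$), together with the obvious closure under inversion (via $c_{f^{-1}}=(c_f)^{-1}$), will show that $N_\L(P)$ is a subgroup in the sense of Definition~\ref{D:PartialSubgroup}. For (b), the group $Q=P^g$ lies in $\Delta$ by Definition~\ref{locality}(3), and for any $h\in N_\L(P)$ the chain $(Q,P,P,Q)$ puts $(g^{-1},h,g)$ into $\D_\Delta$, whence $N_\L(P)\subseteq\D(g)$; associativity then shows that $c_g$ is a homomorphism $N_\L(P)\to N_\L(Q)$, with two-sided inverse $c_{g^{-1}}$.

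Part (c) follows by induction on $n$ from (b) together with iterated use of $\Pi(u\circ(\Pi(v))\circ w)=\Pi(u\circ v\circ w)$. For (d), the word $(g)$ lies in $\D=\D_\Delta$, so there exist $P_0,P_1\in\Delta$ with $P_0\subseteq S_g$ and $P_0^g=P_1$; one checks directly from the axioms that $S_g$ is a subgroup of $S$ and that $c_g|_{S_g}$ is an injective group homomorphism, and then the overgroup-closure of $\Delta$ in $S$ forces $S_g\in\Delta$. The identity $S_g^g=S_{g^{-1}}$ is immediate from $c_{g^{-1}}=(c_g)^{-1}$. Parts (e) and (f) then follow by induction on $|w|$: for $w=(g_1,\dots,g_k)$ one considers the chain of subgroups of $S$ obtained from $S_w$ by iterated conjugation along $g_1,\dots,g_k$, and uses (d) to argue that $w\in\D$ is equivalent to this chain being a chain of objects in $\Delta$; once $w\in\D$, the inclusion $S_w\leq S_{\Pi(w)}$ of (f) is immediate from the compatibility of conjugation with the product given by (c).

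The main obstacle is the ``if'' direction of the last clause of (e), namely deducing $w\in\D$ from $S_w\in\Delta$. The point is to upgrade an \emph{a priori} chain of subgroups of $S$ to a genuine object chain witnessing membership in $\D_\Delta$; this is the heart of the structural results in \cite[Section~2]{loc1} and uses both the overgroup-closure of $\Delta$ and the maximality of $S$ as a $p$-subgroup of $\L$. Everything else amounts to a careful but routine manipulation of the partial group axioms.
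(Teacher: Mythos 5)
Your proposal is correct and takes essentially the same approach as the paper: both defer to Chermak's foundational results in \cite{loc1} (specifically Lemma~2.3, Proposition~2.6, and Corollary~2.7), with the sketch you provide being a reasonable gloss on how those underlying arguments go. One small caveat: you attribute the ``if'' direction of (e) to the maximality of $S$ as a $p$-subgroup, but that part actually only uses the closure properties of $\Delta$ together with (d); the maximality of $S$ plays its role elsewhere in the theory.
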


\begin{proof}
Properties (a),(b) and (c) correspond to the statements (a),(b) and (c) in \cite[Lemma~2.3]{loc1} except for the fact stated in (b) that $Q\in\Delta$, which is however clearly true if one uses the definition of a locality above. Property (d) holds by \cite[Proposition~2.5(a),(b)]{loc1} and property (e) is stated in \cite[Corollary~2.6]{loc1}. Property (f) follows from (e) and (c).
\end{proof}

We use from now on without further reference that $c_g\colon S_g\rightarrow S$ is an injective group homomorphism for all $g\in\L$. By $\F_S(\L)$ we denote the fusion system over $S$ generated by all maps of this form. More generally, we have the following definition.

\begin{definition}
Let $\H$ be a partial subgroup of $\L$. Write $\F_{S\cap \H}(\H)$ for the fusion system over $S\cap \H$ generated by the injective group homomorphisms of the form $c_h\colon S_h\cap \H\rightarrow S\cap\H$ with $h\in\H$. 
\end{definition}

\begin{lemma}\label{L:NDPNHP}
Let $\H$ be a partial subgroup of $\L$ and $P\in\Delta$ with $P\leq S\cap \H$. Set $\mD:=\F_{S\cap \H}(\H)$. Then the following hold:
\begin{itemize}
 \item [(a)] For every $\phi\in\Hom_\mD(P,S\cap\H)$ there exists $h\in\H$ with $P\leq S_h$ and $\phi=c_h$.
 \item [(b)] $N_\mD(P)=\F_{N_{S\cap\H}(P)}(N_\H(P))$. 
\end{itemize}
\end{lemma}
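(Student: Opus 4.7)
The plan is to prove (a) first as the technical heart of the lemma, and then derive (b) as an essentially formal consequence using the standard characterization of normalizer subsystems.

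For (a), I will expand $\phi \in \Hom_\mD(P, S\cap\H)$ as a composition of restrictions of the generating maps of $\mD$: there exist $h_1, \ldots, h_n \in \H$ and subgroups $P = Q_0, Q_1, \ldots, Q_n \leq S \cap \H$ such that $\phi$ is the composite of restrictions $c_{h_i}|_{Q_{i-1}} \colon Q_{i-1} \to Q_i$, with $Q_{i-1} \leq S_{h_i} \cap \H$ and $Q_{i-1}^{h_i} \leq Q_i$. Setting $P_0 := P$ and $P_i := P_{i-1}^{h_i}$ inductively (so $P_i \leq Q_i$), the closure of $\Delta$ under $\L$-conjugation (Lemma~\ref{L:LocalitiesProp}(b)), together with $P_0 = P \in \Delta$, forces every $P_i \in \Delta$. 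Hence $w := (h_1, \ldots, h_n) \in \D_\Delta = \D$ via $(P_0, \ldots, P_n)$, and since $\H$ is a partial subgroup of $\L$ and $w \in \W(\H) \cap \D$, the element $h := \Pi(w)$ lies in $\H$. Now Lemma~\ref{L:LocalitiesProp}(c) identifies the composite of the full maps $c_{h_1}, \ldots, c_{h_n}$ with $c_h$ on $N_\L(P)$ (and $P \leq N_\L(P)$), while Lemma~\ref{L:LocalitiesProp}(f) gives $P \leq S_w \leq S_h$. Restricting to $P$ yields $\phi = c_h|_P$, as required.

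For (b), set $T := S \cap \H$ and $T_P := N_T(P)$. First note that $N_\H(P) = \H \cap N_\L(P)$ is a subgroup of $\L$: by Lemma~\ref{L:LocalitiesProp}(a), $N_\L(P)$ is a subgroup of $\L$, so $\W(N_\L(P)) \subseteq \D$, and intersecting with the partial subgroup $\H$ preserves closure under products and inverses. Thus the fusion system $\F_{T_P}(N_\H(P))$ is well-defined. For the inclusion $\F_{T_P}(N_\H(P)) \subseteq N_\mD(P)$: any generating map $c_h \colon Q \to R$ with $h \in N_\H(P)$ and $Q, R \leq T_P$ extends via the same $h$ to a morphism $c_h \colon QP \to RP$ of $\mD$ that fixes $P$ setwise, hence it is a morphism of $N_\mD(P)$. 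For the reverse inclusion: given $\phi \in \Hom_{N_\mD(P)}(Q, R)$, by definition there is an extension $\hat\phi \in \Hom_\mD(QP, RP)$ with $P\hat\phi = P$; since $\Delta$ is overgroup-closed and $P \in \Delta$, we have $QP \in \Delta$, so part (a) applied with $QP$ in place of $P$ produces $h \in \H$ with $QP \leq S_h$ and $\hat\phi = c_h|_{QP}$. The condition $P^h = P\hat\phi = P$ places $h \in N_\L(P) \cap \H = N_\H(P)$, whence $\phi = c_h|_Q$ is a morphism of $\F_{T_P}(N_\H(P))$.

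The main obstacle is the inductive verification in (a) that all iterated conjugates $P_i$ remain in $\Delta$, since this is precisely what is needed to lift the fusion-system-level composition of the $c_{h_i}$ to a single element $h = \Pi(w) \in \H$. This is where the hypotheses $P \in \Delta$ and $\L$-conjugation closure of $\Delta$ are essential; once (a) is in hand, (b) reduces to a routine translation between a morphism of $N_\mD(P)$ and an element of $\L$ normalizing $P$.
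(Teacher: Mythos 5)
Your proof is correct. For part (b), your argument agrees with the paper's: extend a morphism of $N_\mD(P)$ over $P$, apply part (a) to the extension, and observe that the resulting $h\in\H$ normalizes $P$. For part (a), the paper simply cites Lemma~3.11(b) of \cite{Henke:Regular} rather than reproving it; your direct argument---tracking the iterated conjugates $P_i$ of $P$, using Lemma~\ref{L:LocalitiesProp}(b) to keep them in $\Delta$ so that $w=(h_1,\dots,h_n)\in\D_\Delta=\D$ and hence $h:=\Pi(w)\in\H$, then invoking Lemma~\ref{L:LocalitiesProp}(c) and (f) to get $\phi=c_h|_P$ with $P\leq S_h$---is precisely the standard proof of that cited fact and is sound.
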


\begin{proof}
For part (a) see \cite[Lemma~3.11(b)]{Henke:Regular}. Set $T:=S\cap \H$. Clearly $\F_{N_T(P)}(N_\H(P))\subseteq N_\mD(P)$ and $N_\mD(P)$ is a fusion system over $N_T(P)$. If $\phi\in\Hom_{N_\mD(P)}(X,Y)$ where $X,Y\leq N_T(P)$ is a morphism in $N_\mD(P)$, then we may assume that $P\leq X\cap Y$ and $P\phi=P$. As $P\in\Delta$, we have $X,Y\in\Delta$. Hence, by (a), there exists $h\in\H$ such that $X\leq S_h$ and   $\phi=c_h|_X$. As $P\phi=P$, it follows $h\in N_\H(P)$ and thus $\phi$ is a morphism in $\F_{N_T(P)}(N_\H(P))$ as required.
\end{proof}

\begin{definition}
If $(\L,\Delta,S)$ is a locality, then set
\[O_p(\L):=\bigcap\{S_w\colon w\in\W(\L)\}\] 
\end{definition}

\begin{lemma}\label{L:OpL}
If $(\L,\Delta,S)$ is a locality, then $O_p(\L)$ is the unique largest $p$-subgroup of $\L$ which is a partial normal subgroup of $\L$. Moreover, a subgroup $P\leq S$ is a partial normal subgroup of $\L$ if and only if $N_\L(P)=\L$.
\end{lemma}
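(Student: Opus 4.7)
The lemma has two claims; I would handle the equivalence ``$P\unlhd\L\iff N_\L(P)=\L$'' for $P\leq S$ first, then use it to prove maximality in the statement about $O_p(\L)$. The ``$\Leftarrow$'' direction is immediate from Definition~\ref{D:PartialSubgroup}: if $N_\L(P)=\L$ then $P\subseteq\D(f)$ and $P^f=P$ for every $f\in\L$, so $n^f\in P$ whenever $n\in P\cap\D(f)$. For the converse, suppose $P\leq S$ and $P\unlhd\L$. Applying partial normality with $s\in S\subseteq\L$, and noting $P\subseteq S\subseteq\D(s)$ since $\W(S)\subseteq\D$, I first obtain that $P$ is normal in the finite group $S$. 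Given $f\in\L$, the crucial step is to prove $P\leq S_f$: then partial normality yields $P^f\leq P\leq S$, and applying the same reasoning to $f^{-1}$ gives the reverse inclusion, so $f\in N_\L(P)$.

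To show $P\leq S_f$ I would consider $R:=PS_f$, a subgroup of $S$ (using $P\unlhd S$) containing $S_f\in\Delta$; it therefore lies in $\Delta$ by overgroup-closure. By Lemma~\ref{L:LocalitiesProp}(d), $S_f$ is the largest subgroup $X\leq S$ with $X\subseteq\D(f)$ and $X^f\leq S$, so it suffices to exhibit, for every $r=ps\in R$ with $p\in P$ and $s\in S_f$, an object chain in $\D_\Delta$ showing $(f^{-1},p,s,f)\in\D$; the product axiom then gives $(f^{-1},r,f)\in\D$ with $r^f\in S$. The hard part will be constructing this chain: since $\Delta$ is only overgroup-closed in $S$, the intersection $S_f\cap S_f^{(ps)^{-1}}$ that appears under a naive choice of objects need not contain an element of $\Delta$, and one must exploit that $s\in S_f$ normalizes $S_f$ and that $P$ is $S$-normal to arrange each successive object in the chain to lie in $\Delta$.

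For the first claim, each $S_w$ is a subgroup of $S$ by Lemma~\ref{L:LocalitiesProp}(e), so $O_p(\L)$ is a subgroup of $S$ and hence a $p$-subgroup of $\L$. To see $O_p(\L)\unlhd\L$, I would take $n\in O_p(\L)\cap\D(f)$ and an arbitrary $w=(g_1,\dots,g_k)\in\W(\L)$; from $n\in O_p(\L)\subseteq S_{(f)\circ w}$ the defining sequence begins $n,n^f,\dots$ with every entry in $S$, and its tail exhibits $n^f\in S_w$, so $n^f\in O_p(\L)$. For maximality, let $N$ be any partial normal $p$-subgroup of $\L$; the Sylow-type results for localities from \cite{loc1} combined with partial normality give $N\leq S$, and the equivalence just proved then yields $N_\L(N)=\L$. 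A short induction on the length of $w$, using that partial normality keeps every conjugate of $n\in N$ inside $N\leq S$, then shows $N\leq S_w$ for every $w\in\W(\L)$, and hence $N\leq O_p(\L)$.
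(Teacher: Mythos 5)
The paper's own proof is a one-line citation to \cite[Lemma~3.13]{Henke:Regular} and \cite[Lemma~2.14]{loc1}, so there is no in-text argument to match against; I will assess your argument on its own merits.

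Your overall plan is sound and the routine parts are correct: the ``$\Leftarrow$'' direction is indeed immediate from the definitions; $O_p(\L)$ is a subgroup of $S$ as an intersection of subgroups $S_w$; your argument that $O_p(\L)\unlhd\L$ via the defining sequences in $S_{(f)\circ w}$ works; and once the equivalence $P\unlhd\L\iff N_\L(P)=\L$ is established, the maximality of $O_p(\L)$ follows as you describe.

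The problem is that the crux of the whole lemma --- showing that $P\unlhd\L$ and $P\leq S$ imply $P\leq S_f$ for every $f\in\L$ --- is left unfinished. You write ``the hard part will be constructing this chain'' and then describe why a naive choice of objects fails, but you never actually produce a chain. That is not a proof, it is an acknowledgement that the proof is missing. Moreover, the route you sketch (pushing $S_f$ backwards through $(f^{-1},p,s,f)$) runs into exactly the obstruction you identify: after conjugating $S_f$ by $p^{-1}$ you land on a subgroup of $S$ for which there is no a priori reason to be contained in $\D(f)$, and the overgroup-closure of $\Delta$ does not rescue you because $R=PS_f$, while lying in $\Delta$, is not known to be contained in $S_f$ --- which is precisely what you are trying to prove. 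You would also need to justify the minor claim that a partial normal $p$-subgroup of $\L$ actually lies in $S$; this is true, but ``the Sylow-type results from \cite{loc1} combined with partial normality'' is a hand-wave rather than an argument.

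There is a cleaner way to close the gap using a tool already recorded in this paper: Lemma~\ref{L:FrattiniSplitting}. Apply it with $\N:=P$ (so $T:=\N\cap S=P$). Given any $g\in\L$ it produces $n\in P$ and $f\in N_\L(P)$ with $(n,f)\in\D$, $g=nf$ and $S_g=S_{(n,f)}$. Now $n\in P\leq S$, so $P\subseteq\D(n)$ and, since $P\unlhd\L$ and $c_n$ is a bijection, $P^n=P$; and $f\in N_\L(P)$ gives $P\subseteq\D(f)$ with $P^f=P$. Hence $P\leq S_{(n,f)}=S_g$, and since $P^g\subseteq P$ with $c_g$ injective, $P^g=P$, i.e.\ $g\in N_\L(P)$. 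This replaces the missing object-chain construction and is almost certainly the content of the reference the paper cites. As written, however, your proposal has a genuine hole at the step on which everything else depends.
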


\begin{proof}
 This is proved as \cite[Lemma~3.13]{Henke:Regular} based on \cite[Lemma~2.13]{loc1}.
\end{proof}

We will use the characterization of $O_p(\L)$ given in Lemma~\ref{L:OpL} from now on most of the time without further reference.

\begin{lemma}\label{L:FrattiniSplitting}
 Let $(\L,\Delta,S)$ be a locality, $\N\unlhd\L$ and $T:=\N\cap S$. Then for every $g\in\L$ there exist $n\in\N$ and $f\in N_\L(T)$ such that $(n,f)\in\D$, $g=nf$ and $S_g=S_{(n,f)}$.
\end{lemma}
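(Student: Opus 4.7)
My plan is to derive this as a direct consequence of Chermak's Splitting Lemma \cite[Theorem~4.3(b), Corollary~4.5]{loc1}, the principal structural result on partial normal subgroups of localities. That result already asserts that $N_\L(T)$ is a subgroup of $\L$ and that every element of $\L$ decomposes as $nf$ with $n\in\N$, $f\in N_\L(T)$ and $(n,f)\in\D$; the only additional content of Lemma~\ref{L:FrattiniSplitting} is the domain compatibility $S_g=S_{(n,f)}$.

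First I would record two preliminary facts: $S_g\in\Delta$ for every $g\in\L$ by Lemma~\ref{L:LocalitiesProp}(d), and $T = \N\cap S$ is normal in $S$ (hence a subgroup of $\L$) since $\N\unlhd\L$ and $S$ normalizes $\N$. In particular $N_\L(T)$ is well-defined as a set; Chermak's lemma then gives it the structure of a subgroup and furnishes a decomposition $g=nf$ with $n\in\N$, $f\in N_\L(T)$, $(n,f)\in\D$. The inclusion $S_{(n,f)}\subseteq S_g$ is free from Lemma~\ref{L:LocalitiesProp}(f).

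For the reverse inclusion, which is the substantive part, I would pick the decomposition so as to maximize $|S_{(n,f)}|$ and then argue by contradiction that the maximum is $|S_g|$. If $S_{(n,f)}$ is a proper subgroup of $S_g$, select $x\in S_g\setminus S_{(n,f)}$. The idea is to use $x\in\D(g)$, together with $g=\Pi(n,f)$, to promote the length-3 word $(g^{-1},x,g)\in\D$ to the length-5 word $(f^{-1},n^{-1},x,n,f)\in\D$; once this is achieved, the definition of $S_{(n,f)}$ immediately forces $x\in S_{(n,f)}$, contradicting the choice of $x$.

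The main obstacle is precisely this expansion step: the partial-group axioms (Definition~\ref{partial}) permit contracting a sub-word $v$ to $\Pi(v)$ inside a word already in $\D$, but the reverse expansion is not an axiom. To carry it out one must exhibit explicit witnessing objects in $\Delta$ for the expanded word, which I would do via Lemma~\ref{L:LocalitiesProp}(e) (the locality characterization $w\in\D\Leftrightarrow S_w\in\Delta$) combined with the overgroup-closure of $\Delta$ applied to $S_g\in\Delta$ and its $S$-conjugates. This bookkeeping over intermediate objects is where the bulk of the technical work lies; everything else is essentially an invocation of Chermak's splitting.
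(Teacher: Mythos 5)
Your proposal starts from the right decomposition but reproves (or rather, attempts to reprove) from scratch precisely the statement that Chermak's Splitting Lemma \cite[Lemma~3.12]{loc1} already delivers, and the sketch for that part has genuine gaps. The paper's proof is one line: apply the Frattini Lemma \cite[Corollary~3.11]{loc1} to write $g=nf$ with $n\in\N$ and $f$ $\uparrow$-maximal, invoke \cite[Lemma~3.1, Proposition~3.9]{loc1} to see that $\uparrow$-maximal elements lie in $N_\L(T)$, and then apply the Splitting Lemma to conclude $S_g=S_{(n,f)}$. Your route to the decomposition (via \cite[Theorem~4.3(b), Corollary~4.5]{loc1}) is defensible since the surjection $N_\L(T)\twoheadrightarrow\L/\N$ forces every $g$ into some coset $\N f$ with $f\in N_\L(T)$, but it discards exactly the $\uparrow$-maximality property of $f$ that the Splitting Lemma feeds on.

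The problems with your hand-rolled proof of $S_g\subseteq S_{(n,f)}$ are twofold. First, the announced maximization of $|S_{(n,f)}|$ is never actually used: you pass immediately to a contradiction argument that makes no appeal to the maximal choice, so nothing is gained by having made it. Second, even granting the promotion of $(g^{-1},x,g)$ to $(f^{-1},n^{-1},x,n,f)\in\D$, the conclusion ``$x\in S_{(n,f)}$'' does not follow. Membership in $S_{(n,f)}$ requires the intermediate conjugate $x^n$ to lie in $S$ (see the definition of $S_w$), and $(f^{-1},n^{-1},x,n,f)\in\D$ only gives that $x^n$ is defined and can be further conjugated by $f$ along some chain of objects in $\Delta$; it says nothing about $x^n\in S$. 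This is exactly the content the Splitting Lemma supplies, and there is no cheap route around it by invoking Lemma~\ref{L:LocalitiesProp}(e) plus overgroup-closure: those tools tell you when a word lies in $\D$, not that the trajectory of a given $x\in S$ under the letters of the word stays inside $S$. I would recommend simply citing the Frattini and Splitting Lemmas as the paper does.
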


\begin{proof}
By the Frattini Lemma \cite[Corollary~3.11]{loc1}, there exist $n\in\N$ and $f\in\L$ such that $(n,f)\in\D$, $g=nf$ and $f$ is $\uparrow$-maximal (in the sense of \cite[Definition~3.6]{loc1}). By \cite[Proposition~3.9]{loc1} and then \cite[Lemma~3.1(a)]{loc1}, every $\uparrow$-maximal element is in $N_\L(T)$. Moreover, the Splitting Lemma \cite[Lemma~3.12]{loc1} gives $S_g=S_{(n,f)}$.
\end{proof}

\begin{definition}\label{D:RestrictionLocality}
Let $(\L^+,\Delta^+,S)$ be a locality with a partial product $\Pi^+\colon\D^+\longrightarrow\L^+$. Suppose that  $\emptyset\neq \Delta\subseteq \Delta^+$ such that $\Delta$ is $\F_S(\L^+)$-closed. Set 
\[\L^+|_{\Delta}:=\{f\in\L^+\colon S_f\in\Delta\}.\]
Note that $\D:=\D_{\Delta}\subseteq\D^+\cap\W(\L^+|_{\Delta})$ and, by Lemma~\ref{L:LocalitiesProp}(c), $\Pi^+(w)\in\L|_{\Delta}$ for all $w\in\D$. We call $\L:=\L^+|_{\Delta}$ together with the partial product $\Pi^+|_{\D}\colon\D\longrightarrow \L$ and the restriction of the inversion map on $\L^+$ to $\L$ the \emph{restriction} of $\L^+$ to $\Delta$.
\end{definition}

With the hypothesis and notation as in the preceding definition, it turns out that the restriction of $\L^+$ to $\Delta$ forms a partial group and the triple $(\L^+|_\Delta,\Delta,S)$ is a locality (see \cite[Lemma~2.21(a)]{Ch} and \cite[Lemma~2.23(a),(c)]{Henke:2020} for details). It might be worth pointing out that in the definition of the restriction, $S_f$ and $\D_\Delta$ are a priori formed inside of $\L^+$, but as argued in \cite[Lemma~2.23(b)]{Henke:2020}, it does not matter whether one forms $S_f$ and $\D_\Delta$ inside of $\L^+$ or inside of the partial group $\L^+|_\Delta$.

\subsection{Homomorphisms of partial groups and projections}

\begin{definition}\label{D:Homomorphisms}
Let $\L$ and $\L'$ be partial groups with products $\Pi\colon\D\rightarrow \L$ and $\Pi'\colon \D'\rightarrow \L'$ respectively. Let $\alpha\colon \L\rightarrow\L',f\mapsto f\alpha$ be a map. 
\begin{itemize}
\item Write $\alpha^*$ for the induced map 
\[\alpha^*\colon \W(\L)\rightarrow \W(\L'), (f_1,\dots,f_n)\mapsto(f_1\alpha,\dots,f_n\alpha).\] 
\item We call $\alpha$ a \emph{homomorphism of partial groups} if $\D\alpha^*\subseteq\D'$ and $\Pi(w)\alpha=\Pi'(w\alpha^*)$. 
\item The map $\alpha$ is called a \emph{projection of partial groups}, if $\alpha$ is a homomorphism of partial groups and $\D\alpha^*=\D'$. 
\item A bijective projection of partial groups is called an \emph{isomorphism} of partial groups, and an isomorphism from $\L$ to itself is called an \emph{automorphism} of $\L$. Write $\Aut(\L)$ for the set of automorphisms of $\L$. 
\item If $\alpha\colon\L\rightarrow\L'$ is a homomorphism of partial groups and $\One'$ denotes the identity in $\L'$, then $\ker(\alpha):=\{f\in\L\colon f\alpha=\One'\}$ 
is called the kernel of $\alpha$.  
\end{itemize}
\end{definition}

If $\L'$ is a partial group, then $\L'\subseteq\D'$. Hence, a projection of partial groups $\L\rightarrow \L'$ is always surjective.

\smallskip

If $\alpha\colon \L\rightarrow \L'$ is a homomorphism of partial groups, then $\ker(\alpha)\unlhd\L$ by \cite[Lemma~1.14]{loc1}. If $(\L,\Delta,S)$ is a locality and $\N\unlhd\L$, then conversely, one can construct a partial group $\L'$ and a projection of partial groups $\L\rightarrow \L'$ with kernel $\N$. Namely, define a \emph{coset} of $\N$ in $\L$ to be a subset of the form 
\[\N f:=\{\Pi(n,f)\colon n\in\N,\;f\in\L\mbox{ such that }(n,f)\in\D\}.\]
Call a coset \emph{maximal} if it is maximal with respect to inclusion among all cosets of $\N$ in $\L$. Write $\L/\N$ for the set of maximal cosets of $\N$ in $\L$. By \cite[Proposition~3.14(d)]{loc1}, $\L/\N$ forms a partition of $\L$. Thus, there is a natural map
\[\alpha\colon\L\rightarrow \L/\N\]
which sends every element $f\in\L$ to the unique maximal coset containing $f$. It turns out that the set $\L/\N$ can be (in a unique way) given the structure of a partial group such that the map $\alpha$ above is a projection of partial groups (cf. \cite[Lemma~3.16]{loc1}). We call the map $\alpha$ therefore the \emph{natural projection} from $\L$ to $\L/\N$. The identity element of this partial group is the maximal coset $\N=\N\One$. In particular, $\N$ is the kernel of the natural projection.

\smallskip

Given a locality $(\L,\Delta,S)$ and $\N\unlhd\L$, we will sometimes write $\ov{\L}$ for the set $\L/\N$. We mean then implicitly that we use a kind of ``bar notation'' similarly as it is commonly used for groups. Namely, if $X$ is an element or subset of $\L$, then $\ov{X}$ denotes the image of $X$ under the natural projection $\alpha\colon\L\rightarrow \ov{\L}$. Later on, we will consider cases where $\N\cap S=\One$. Since $\N$ equals the kernel of $\alpha$, the restriction $\alpha|_S\colon S\rightarrow \ov{S}$ is then a bijection. Thus, we will be able to identify $S$ with $\ov{S}$.

\begin{definition}
Let $(\L, \Delta, S)$ and $(\L', \Delta', S')$ be localities and let $\alpha \colon \L \rightarrow \L'$ be a projection of partial groups.  We say that $\alpha$ is a \emph{projection of localities} from $(\L,\Delta,S)$ to $(\L',\Delta',S')$ if the set $\Delta\alpha:= \{P\alpha \mid P \in \Delta \}$ equals $\Delta'$.
\end{definition}

If $\alpha$ is a projection of localities from $(\L,\Delta,S)$ to $(\L',\Delta',S')$, then notice that $\alpha$ maps $S$ to $S'$, as $S$ and $S'$ are the unique maximal elements of $\Delta$ and $\Delta'$ respectively.

\subsection{Localities with special properties}

The following definition was partly introduced already in the introduction. Recall that a finite group $G$ is of characteristic $p$ if $C_G(O_p(G))\leq O_p(G)$. 

\begin{definition} Let $(\L,\Delta,S)$ be a locality. 
\begin{itemize}
 \item We call $(\L,\Delta,S)$ \emph{cr-complete} if $\F_S(\L)^{cr}\subseteq \Delta$.
 \item A locality $(\L,\Delta,S)$ is said to be of \emph{objective characteristic $p$} if $N_\L(P)$ is of characteristic $p$ for every $P \in \Delta$.
\item A locality is called a \emph{linking locality} if it is cr-complete and of objective characteristic $p$.
\end{itemize}
\end{definition}

\begin{prop}\label{P:crCompleteSaturated}
Let $(\L,\Delta,S)$ be a locality. 
\begin{itemize}
 \item [(a)] $\F_S(\L)$ is $\Delta$-generated and $\Delta$-saturated.
 \item [(b)] If every $\F_S(\L)$-critical subgroup is an element of $\Delta$, then $\F_S(\L)$ is saturated and $(\L,\Delta,S)$ is cr-complete.
 \item [(c)] If $(\L,\Delta,S)$ is cr-complete, then $\F_S(\L)$ is saturated. Hence, the fusion system of a linking locality is saturated.
\end{itemize}
\end{prop}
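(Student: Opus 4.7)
The plan is to prove (a) directly from the locality axioms, and then derive (b) and (c) by combining (a) with Theorem~\ref{T:BCGLOControlling} and Lemma~\ref{L:radical}. For $\Delta$-generation in (a), each generator $c_g\colon S_g\to S$ of $\F_S(\L)$ is automatically a morphism between subgroups in $\Delta$, since $S_g\in\Delta$ and $S_g^g=S_{g^{-1}}\in\Delta$ by Lemma~\ref{L:LocalitiesProp}(d). Thus any $\F_S(\L)$-morphism, being by definition a composition of restrictions of such $c_g$, is already a composition of restrictions of morphisms between $\Delta$-subgroups.

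The substantive content of (a) is $\Delta$-saturation. I would fix $P\in\Delta$ fully $\F$-normalized and show that $P$ is both fully automized and receptive, with the crucial intermediate claim being the Sylow property
\[N_S(P)\in\Syl_p(N_\L(P)).\]
To establish this, let $T$ be a Sylow $p$-subgroup of the (genuine) group $N_\L(P)$ containing $N_S(P)$; then $T$ is a $p$-subgroup of $\L$ containing $P$, so by the Sylow theorem for localities in \cite{loc1} there exists $g\in\L$ with $T\leq S_g$ and $T^g\leq S$. Since $T$ normalizes $P$, we obtain $T^g\leq N_S(P^g)$, and full $\F$-normalizedness of $P$ forces $|T|=|T^g|\leq|N_S(P)|$, giving $T=N_S(P)$. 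Once this Sylow property is in place, Lemma~\ref{L:NDPNHP}(a) applied with $\H=\L$ identifies $\Aut_\F(P)\cong N_\L(P)/C_\L(P)$ with $\Aut_S(P)$ corresponding to $N_S(P)C_\L(P)/C_\L(P)$, so $\Aut_S(P)\in\Syl_p(\Aut_\F(P))$ and $P$ is fully automized. For receptivity, given $\phi\in\Iso_\F(Q,P)$ I would write $\phi=c_g|_Q$ via Lemma~\ref{L:NDPNHP}(a), and then adjust $g$ within its $C_\L(P)$-coset (using Lemma~\ref{L:FrattiniSplitting} and the Sylow property just established) so that $S_g\supseteq N_\phi$; then $c_g|_{N_\phi}$ is the desired extension. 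I expect this final maximization---producing $g$ with $S_g\supseteq N_\phi$---to be the main technical obstacle, relying on the tight interplay between partial conjugation and the Sylow property.

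For (b), I would combine the hypothesis with (a) and the $\F$-closedness of $\Delta$ (which is built into the definition of a locality) to apply Theorem~\ref{T:BCGLOControlling} and conclude that $\F_S(\L)$ is saturated. Once saturation holds, Lemma~\ref{L:radical}(b) identifies $\F^{cr}$ with the set of $\F$-critical subgroups, which lies in $\Delta$ by hypothesis, yielding cr-completeness. For (c), the cr-completeness hypothesis $\F^{cr}\subseteq\Delta$ combined with Lemma~\ref{L:radical}(a) shows that every $\F$-critical subgroup lies in $\Delta$, so (b) applies and yields saturation. The ``in particular'' statement for linking localities is then immediate, since linking localities are cr-complete by definition.
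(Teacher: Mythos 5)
Your treatment of $\Delta$-generation in (a) and of parts (b) and (c) matches the paper's argument exactly: the paper also observes that $\Delta$-generation is immediate from the definition, then feeds (a) into Theorem~\ref{T:BCGLOControlling} and Lemma~\ref{L:radical}(a),(b) to get (b) and (c). The divergence is in the $\Delta$-saturation half of (a), where the paper simply cites \cite[Lemma~2.10]{loc1} and \cite[Lemma~3.27]{Henke:Regular}, while you attempt to reprove this from scratch.

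That reproof sketch has a genuine gap. You take $T$ to be a Sylow $p$-subgroup of $N_\L(P)$ containing $N_S(P)$ and then claim ``there exists $g\in\L$ with $T\leq S_g$ and $T^g\leq S$.'' But $S_g$ is by definition a subgroup of $S$, so $T\leq S_g$ already forces $T\leq S$ --- and since $T\leq N_\L(P)$, this would give $T\leq N_S(P)$ and finish the argument before it has begun. In other words, as written the step either assumes what you are trying to prove or is vacuous. What you presumably intend is a statement of the form ``$T\subseteq\D(g)$ and $T^g\leq S$ for some $g$''; that is not the same as $T\leq S_g$, and it is not at all clear that the ``Sylow theorem for localities'' you allude to supplies such a $g$ for an arbitrary $p$-subgroup of $\L$ that need not lie in $S$. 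This conjugacy of $p$-subgroups of $\L$ into $S$ is precisely the delicate content that the cited lemmas encapsulate, and its proof is nontrivial. Beyond that, you explicitly flag the receptivity step (producing $g$ with $S_g\supseteq N_\phi$) as an unresolved technical obstacle. Taken together, the $\Delta$-saturation portion of (a) is not actually established by your sketch; it would be both safer and more faithful to the paper's intent to cite the two external lemmas, which is all the paper's proof does.
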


\begin{proof}
The fusion system $\F_S(\L)$ is $\Delta$-generated by definition and $\Delta$-saturated by \cite[Lemma~2.9]{loc1} and \cite[Lemma~3.27]{Henke:Regular}. Thus, Theorem~\ref{T:BCGLOControlling} implies that $\F_S(\L)$ is saturated if $\Delta$ contains every $\F_S(\L)$-critical subgroup. Part (b) follows now from Lemma~\ref{L:radical}(b) and part (c) from  Lemma~\ref{L:radical}(a). Alternatively, (c) is proved in \cite[Theorem~3.26]{Henke:Regular}.
\end{proof}

It is of crucial importance that cr-complete localities (and thus linking localities) lead to saturated fusion systems. This fact was first observed by Chermak. Conversely, it follows from the existence and uniqueness of centric linking systems (see \cite{Ch,Oliver:2013,Glauberman/Lynd}) that there is a linking locality attached to every saturated fusion system. In this context, the following definition plays a crucial role.

\begin{definition}[{\cite[Definition~1, Lemma~3.1]{subcentric}}]\label{D:Subcentric}
Let $\F$ be a saturated fusion system. A subgroup $P\leq S$ is called \emph{$\F$-subcentric} if $N_\F(Q)$ is constrained for every fully $\F$-normalized $\F$-conjugate $Q$ of $P$.  We denote by  $\F^s$ the set of $\F$-subcentric subgroups of $S$.
\end{definition}

It can be shown that, for every $\F$-closed set $\Delta$ of subgroups of $S$ with $\F^{cr}\subseteq\Delta\subseteq \F^s$, there exists an (essentially unique) linking locality over $\F$ with object set $\Delta$. Moreover, $\F^s$ is $\F$-closed and thus there is an (essentially unique) linking locality over $\F$ with object set $\F^s$, which can be seen as the ``largest'' linking locality over $\F$. The reader is referred to  \cite[Theorem~A]{subcentric} for a precise statement of these results.

\subsection{Regular localities}\label{SS:Regular}

Regular localities were introduced by Chermak \cite{loc3}, but we will refer to the treatment of the subject in \cite{Henke:Regular}. If $(\L,\Delta,S)$ is a linking locality over a saturated fusion system $\F$, then there is the \emph{generalized Fitting subgroup} $F^*(\L)$ of $\L$ defined as a certain partial normal subgroup of $\L$ (cf. \cite[Definition~3]{Henke:Regular}). It turns out that the set 
$\{P\leq S\colon P\cap F^*(\L)\in\F^s\}$ depends only on $\F$ and not on the linking locality $(\L,\Delta,S)$ (see \cite[Lemma~10.2]{Henke:Regular}). Thus, the following definition makes sense.
\begin{definition}
If $\F$ is a saturated fusion system and $(\L,\Delta,S)$ is a linking locality over $\F$, then set
\[\delta(\F):=\{P\leq S\colon P\cap F^*(\L)\in\F^s\}.\]
A linking locality $(\L,\Delta,S)$ is called a \emph{regular locality} if $\Delta=\delta(\F)$.  
\end{definition}

For every saturated fusion system $\F$, the set $\delta(\F)$ is $\F$-closed and thus there exists a regular locality over $\F$ (cf. \cite[Lemma~10.4]{Henke:Regular}).

\smallskip

It turns out that regular localities have particularly nice properties. To describe these, suppose that $(\L,\Delta,S)$ is a regular locality. If $\H$ is a partial normal subgroup of $\L$ or, more generally, a partial subnormal subgroup of $\L$, then by \cite[Theorem~3, Corollary~10.19]{Henke:Regular}, $\E:=\F_{S\cap \H}(\H)$ is saturated and $(\H,\delta(\E),S\cap\H)$ is a regular locality over $\E$. This leads to a natural notion of components of $\L$ (see \cite[Definition~11.1]{Henke:Regular}) and to a theory of components of regular localities which mirrors results from finite group theory (see \cite[Chapter~11]{Henke:Regular}). In particular, the product $E(\L)$ of components of $\L$ forms a partial normal subgroup with $F^*(\L)=E(\L)O_p(\L)$. 

\smallskip

If $(\L,\Delta,S)$ is a regular locality over $\F$ (or more generally a linking locality over $\F$ with $\delta(\F)\subseteq\Delta$), then it is shown in \cite[Theorem~A]{normal} that the assignment $\N\mapsto \F_{S\cap\N}(\N)$ defines a bijection $\Phi$ from the set of partial normal subgroups of $\L$ to the set of normal subsystems of $\F$. This bijection sends $F^*(\L)$ to $F^*(\F)$ and $E(\L)$ to $E(\F)$. Moreover, in the case that $(\L,\Delta,S)$ is regular, we have $C_\L(\N)\unlhd\L$ and $\Phi(C_\L(\N))=C_\F(\Psi(\N))$ (see \cite[Theorem~3]{Henke:Regular} and \cite[Proposition~6.7]{normal}). We refer the reader here to \cite[p.3]{AschbacherGeneralized} or \cite[Definition~7.2]{normal} for the definitions of $F^*(\F)$ and $E(\F)$, and to \cite[Chapter~6]{AschbacherGeneralized} or \cite{centralizer.normal} for a definition of the centralizer $C_\F(\E)$ of a normal subsystem $\E$. 

\smallskip

The results we just summarized are used to prove our next lemma, which collects most of the information we need in Section~\ref{S:Products}. It will be convenient to use the following notation.

\begin{notation}\label{N:PcapE}
If $\F$ is a fusion system over $S$ and $\E$ is a subsystem of $\F$ over $T\leq S$, then we set $\E\cap P:=T\cap P$ for every subgroup $P\leq S$.
\end{notation}

\begin{lemma}\label{L:T0R}
Let $(\L,\Delta,S)$ be a regular locality over $\F$ and $\N\unlhd\L$. Set $T:=\N\cap S$, $\E:=\F_T(\N)$, $T^*:=F^*(\L)\cap S$, $T_0:=E(\N)\cap S$ and $R:=E(C_\L(\N))\cap S$. Then the following hold:
\begin{itemize}
 \item [(a)] $N_\L(T^*)=N_\L(T_0)\cap N_\L(R)$ and $N_\N(T^*)=N_\N(T_0)$.
 \item [(b)] We have $T_0=E(\E)\cap S$ and $R=E(C_\F(\E))\cap S$. 
 \item [(c)] $N_\E(T_0)=\F_T(N_\N(T_0))$. 
 \item [(d)] $\delta(\E)$ is closed under passing to $\L$-conjugates in $S$.
\end{itemize}
\end{lemma}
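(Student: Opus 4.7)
The overall strategy will rely on the bijection $\Phi$ from \cite[Theorem~A]{normal} between partial normal subgroups of $\L$ and normal subsystems of $\F$ (which sends $E$ to $E$ and $F^*$ to $F^*$), on the fact that in a regular locality $C_\L(\N)\unlhd\L$ with $\Phi(C_\L(\N))=C_\F(\E)$, and on the component theory for regular localities from \cite[Chapter~11]{Henke:Regular}. I would attack part~(b) first. Since $\N$ is itself a regular locality over $\E$, the analogous bijection $\Phi_\N$ for $(\N,\delta(\E),T)$ sends the partial normal subgroup $E(\N)$ to $E(\E)$, so $E(\E)$ is a fusion system over $E(\N)\cap T=T_0$, giving $E(\E)\cap S=T_0$. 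Applying the same reasoning to the regular locality structure on $C_\L(\N)$ over $C_\F(\E)$, and to its partial normal subgroup $E(C_\L(\N))$, yields that $E(C_\F(\E))$ is a fusion system over $R$, so $E(C_\F(\E))\cap S=R$.

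\smallskip

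For part~(a), the key preparatory step is the decomposition $T^*=T_0RO_p(\L)$. By component theory, every component of $\L$ either lies in $\N$ or centralizes $\N$, so $E(\L)=E(\N)E(C_\L(\N))$ as a central-type product; combined with $F^*(\L)=E(\L)O_p(\L)$ and the fact that Sylow $p$-subgroups of products of commuting partial normal subgroups decompose, the formula $T^*=T_0RO_p(\L)$ follows. For the inclusion $N_\L(T_0)\cap N_\L(R)\subseteq N_\L(T^*)$: if $g$ lies in the intersection then the subgroup $S_g$ contains each of $T_0$, $R$, $O_p(\L)$ (the last since $O_p(\L)\unlhd\L$), hence contains the product $T^*$, and $c_g$ then maps $T_0RO_p(\L)$ to itself. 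For the reverse inclusion, given $g\in N_\L(T^*)$, I would apply Lemma~\ref{L:DDeltaDGamma}(a) to $E(\N)\unlhd\L$ to obtain $(T^*\cap E(\N))^g=(T^*)^g\cap E(\N)=T^*\cap E(\N)$; since $E(\N)\leq F^*(\L)$ we have $T^*\cap E(\N)=E(\N)\cap S=T_0$, so $g\in N_\L(T_0)$, and the argument for $R$ is identical. For $N_\N(T^*)=N_\N(T_0)$, the inclusion $\subseteq$ is immediate from the preceding; conversely, any $n\in N_\N(T_0)$ centralizes $C_\L(\N)$ (hence $R$) and normalizes $O_p(\L)$, so normalizes the product $T^*$.

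\smallskip

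For part~(c), I would first note that $T_0\in\delta(\E)$ because components and their Sylows are $\E$-subcentric, so that $N_\N(T_0)$ is a subgroup of the regular locality $(\N,\delta(\E),T)$; then Lemma~\ref{L:NDPNHP}(b) applied with $P=T_0$ gives $N_\E(T_0)=\F_{N_T(T_0)}(N_\N(T_0))=\F_T(N_\N(T_0))$. For part~(d), let $Q\in\delta(\E)$ and $f\in\L$ with $Q\leq S_f$; then $Q^f\leq \N\cap S=T$ since $\N\unlhd\L$. Using Lemma~\ref{L:FrattiniSplitting} I would decompose $f=n\tilde f$ with $n\in\N$, $\tilde f\in N_\L(T)$, and $S_f=S_{(n,\tilde f)}$. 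Since $c_n$ restricted to $Q$ is an $\E$-morphism and $\delta(\E)$ is $\E$-closed, $Q^n\in\delta(\E)$; the restriction $c_{\tilde f}|_T\in\Aut_\F(T)$ preserves $\delta(\E)$ by the invariance axiom for the $\F$-invariant subsystem $\E$; hence $Q^f=(Q^n)^{\tilde f}\in\delta(\E)$. The hardest part will be verifying rigorously the Sylow product decomposition $T^*=T_0RO_p(\L)$ in the partial-group setting, and justifying the claim $T_0\in\delta(\E)$ invoked in part~(c).
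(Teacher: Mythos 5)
Your proposal is correct and follows essentially the same route the paper takes: part (b) via the canonical bijection (\cite[Theorem~E(d)]{normal}, i.e.\ $\Phi$ sends $E(\N)$ to $E(\E)$ and $E(C_\L(\N))$ to $E(C_\F(\E))$), part (a) via the decomposition $T^*=T_0RO_p(\L)$ coming from $E(\L)=E(\N)E(C_\L(\N))$ and $F^*(\L)=E(\L)O_p(\L)$, part (c) via $T_0\in\delta(\E)$ plus Lemma~\ref{L:NDPNHP}(b), and part (d) via the Frattini splitting $g=nf$ with $n\in\N$, $f\in N_\L(T)$. The two spots you flag as "hardest" are exactly what the paper dispatches by citation: $T_0\in\delta(\E)$ is \cite[Corollary~11.10]{Henke:Regular}, and the Sylow decomposition of the central product rests on \cite[Lemma~11.16]{Henke:Regular}, \cite[Theorem~1]{Henke:2015a} and \cite[Lemma~11.9]{Henke:Regular}; similarly $\N\subseteq C_\L(R)$ (which you phrase loosely as "$n$ centralizes $C_\L(\N)$") is \cite[Lemma~3.5]{Henke:Regular}, and the $N_\L(T)$-invariance of $\delta(\E)$ used in (d) is \cite[Theorem~10.16(f)]{Henke:Regular} rather than a bare appeal to the invariance axiom, since one needs that automorphisms of $\E$ preserve $\delta(\E)$ --- true but not entirely formal. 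So the outline is sound; the remaining work is locating those specific supporting results in the partial-group setting, where statements that are trivial for groups (e.g.\ $\N\subseteq C_\L(C_\L(\N))$, or that a product of commuting partial normal subgroups has a product Sylow) require nontrivial justification.
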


\begin{proof}
By \cite[Theorem~3]{Henke:Regular}, we have $C_\L(\N)\unlhd\L$; moreover $(\N,\delta(\E),T)$ is a regular locality over $\E$ and $C_\L(\N)$ can also be given the structure of a regular locality. In particular, $E(\N)$ and $E(C_\L(\N))$ are well-defined.

\smallskip

\textbf{(a)} By \cite[Lemma~11.13]{Henke:Regular}, $E(\N)\unlhd\L$ and similarly $E(C_\L(\N))\unlhd\L$. Moreover, by \cite[Lemma~11.16]{Henke:Regular}, $E(\L)=E(\N)E(C_\L(\N))$, which  by \cite[Theorem~1]{Henke:2015a} implies $T_0R=E(\L)\cap S\leq T^*$. In particular, $N_\L(T^*)$ normalizes $T_0=T^*\cap E(\N)$ and $R=T^*\cap E(C_\L(\N))$. On the other hand, by \cite[Lemma~11.9]{Henke:Regular}, we have $T^*=(E(\L)\cap S)O_p(\L)$ and so $N_\L(T_0)\cap N_\L(R)\leq N_\L(T^*)$. Thus, $N_\L(T^*)=N_\L(T_0)\cap N_\L(R)$. As $R\subseteq C_\L(\N)$, it follows from \cite[Lemma~3.5]{Henke:Regular} that $\N\subseteq C_\L(R)\subseteq N_\L(R)$. Hence, we can conclude that $N_\N(T_0)=\N\cap N_\L(T_0)\cap N_\L(R)=\N\cap N_\L(T^*)=N_\N(T^*)$. This proves (a). 

\smallskip

\textbf{(b)} By \cite[Proposition~6.7]{normal}, we have $\F_{C_S(\N)}(C_\L(\N))=C_\F(\E)$. This means that $(C_\L(\N),\delta(C_\F(\E)),C_S(\N))$ is a regular locality over $C_\F(\E)$. Recall also that $(\N,\delta(\E),T)$ is a regular locality over $\E$. Hence, \cite[Theorem~E(d)]{normal} yields (b).

\smallskip

\textbf{(c)} By \cite[Corollary~11.10]{Henke:Regular}, $T_0\in\delta(\E)$. Hence, (c) follows from Lemma~\ref{L:NDPNHP}(b) applied with $(\N,\delta(\E),T)$ in place of $(\L,\Delta,S)$.

\smallskip

\textbf{(d)} The set $\delta(\E)$ is $\E$-closed by \cite[Lemma~10.4]{Henke:Regular}. Moreover, it follows from \cite[Theorem~10.16(f)]{Henke:Regular} that  $\delta(\E)$ is closed under passing to $N_\L(T)$-conjugates. By Lemma~\ref{L:FrattiniSplitting}, for every $g\in\L$ there exist $n\in\N$ and $f\in N_\L(T)$ such that $(n,f)\in\D$, $g=nf$ and $S_g=S_{(n,f)}$. Now (d) follows using Lemma~\ref{L:LocalitiesProp}(c),(e).
\end{proof}

\section{Some additional lemmas}

\subsection{Lemmas on groups}

\begin{lemma}\label{lem:sylow.prod}
Let $G$ be a finite group, $S\in \Syl_p(G)$ a Sylow $p$-subgroup of $G$ and $N\norm G$ a normal subgroup of $G$. If $H\leq G$ is such that $S \cap H \in \Syl_p(H)$, then \[(S \cap N)(S \cap H) \in \Syl_p(NH).\]
\end{lemma}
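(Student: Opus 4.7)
The plan is to show both that $(S\cap N)(S\cap H)$ is a $p$-subgroup of $NH$ and that its order equals the $p$-part of $|NH|$.

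First I would verify that $(S\cap N)(S\cap H)$ is a subgroup of $S$. Since $N\trianglelefteq G$, we have $S\cap N\trianglelefteq S$, and in particular $S\cap H$ normalizes $S\cap N$; hence $(S\cap N)(S\cap H)$ is a subgroup of $S$ and thus a $p$-subgroup of $NH$.

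Next I would compute its order via the standard product formula
\[
 |(S\cap N)(S\cap H)| \;=\; \frac{|S\cap N|\,|S\cap H|}{|S\cap N\cap H|},
\]
and identify each factor as the $p$-part of a corresponding subgroup of $G$. Since $N\trianglelefteq G$ and $S\in\Syl_p(G)$, the subgroup $S\cap N$ is a Sylow $p$-subgroup of $N$, so $|S\cap N|=|N|_p$. By hypothesis, $|S\cap H|=|H|_p$. For the denominator I would apply the same principle inside $H$: the subgroup $N\cap H$ is normal in $H$ and $S\cap H$ is a Sylow $p$-subgroup of $H$, so $(S\cap H)\cap (N\cap H)=S\cap N\cap H$ is a Sylow $p$-subgroup of $N\cap H$, giving $|S\cap N\cap H|=|N\cap H|_p$.

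Finally I would combine these with the standard identity $|NH|=|N|\,|H|/|N\cap H|$, obtaining
\[
 |(S\cap N)(S\cap H)| \;=\; \frac{|N|_p\,|H|_p}{|N\cap H|_p} \;=\; |NH|_p.
\]
Thus $(S\cap N)(S\cap H)$ is a $p$-subgroup of $NH$ of order equal to the $p$-part of $|NH|$, and hence a Sylow $p$-subgroup of $NH$. There is no real obstacle: the only step that requires a moment of thought is the identification $|S\cap N\cap H|=|N\cap H|_p$, which reduces to the observation that intersecting a Sylow $p$-subgroup with a normal subgroup produces a Sylow $p$-subgroup, applied inside $H$ rather than $G$.
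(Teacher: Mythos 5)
Your proof is correct and follows essentially the same route as the paper's: both compute $|(S\cap N)(S\cap H)|$ via the product formula, identify $S\cap N$, $S\cap H$, and $S\cap N\cap H$ as Sylow $p$-subgroups of $N$, $H$, and $N\cap H$ respectively (the last using $N\cap H\unlhd H$), and conclude from $|NH|=|N||H|/|N\cap H|$. Your explicit justification that the product is a subgroup (via $S\cap N\unlhd S$) is a small detail the paper leaves implicit, but it is the same argument.
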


\begin{proof}
Notice that $P:=(S \cap N)(S \cap H)\leq S \cap NH$ and $|P| = \frac{|S \cap N||S \cap H|}{|S \cap N \cap H|}$. By assumption $S \cap H \in \Syl_p(H)$. As $N\unlhd G$ and $N\cap H\unlhd H$, it follows $S\cap N\in\Syl_p(N)$ and $S\cap N\cap H\in\Syl_p(N\cap H)$. The assertion follows now from $|NH|=\frac{|N|\;|H|}{|N\cap H|}$. 
\end{proof}

Recall that a finite group $G$ is said to be of characteristic $p$ if $ C_G(O_p(G)) \leq O_p(G)$.

\begin{lemma}\label{prop.char.p.group}
Let $G$ be a finite group of characteristic $p$. Then the following hold:
\begin{itemize}
\item[(a)] $N_G(P)$ and $ C_G(P)$ are of characteristic $p$ for all non-trivial $p$-subgroups $P \leq G$.
\item[(b)] Every subnormal subgroup of $G$ is of characteristic $p$.
\item[(c)] If $N$ is a normal subgroup of $G$ of $p$-power index, then $G$ is of characteristic $p$ if and only if $N$ is of characteristic $p$.
\end{itemize}
\end{lemma}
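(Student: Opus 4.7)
I would prove the three parts in the order (c), (b), (a), so that later parts may invoke earlier ones.

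For (c), the forward direction is immediate from (b). For the converse, I would first check that $O_p(N)=O_p(G)\cap N$: one inclusion holds because $O_p(N)$ is characteristic in $N\unlhd G$ and hence a normal $p$-subgroup of $G$, while the other holds because $O_p(G)\cap N$ is a normal $p$-subgroup of $N$. Assuming $N$ has characteristic $p$, the intersection $C_G(O_p(G))\cap N$ lies in $C_N(O_p(N))\leq O_p(N)\leq O_p(G)$, while the quotient $C_G(O_p(G))/(C_G(O_p(G))\cap N)$ embeds into the $p$-group $G/N$. Hence $C_G(O_p(G))$ is a $p$-group, and being normal in $G$ it is contained in $O_p(G)$.

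For (b), by induction on the length of a subnormal chain, it suffices to treat the case $H\unlhd G$. I would then show $F^*(H)=O_p(H)$, which implies $C_H(O_p(H))=C_H(F^*(H))\leq F^*(H)=O_p(H)$ by the standard self-centralizing property of the generalized Fitting subgroup. For each prime $q\neq p$, the subgroup $O_q(H)$ is characteristic in $H$ and hence normal in $G$, so $[O_q(H),O_p(G)]\leq O_q(H)\cap O_p(G)=1$, giving $O_q(H)\leq C_G(O_p(G))\leq O_p(G)$ and forcing $O_q(H)=1$. For the layer $E(H)$, which is likewise normal in $G$, the subgroup $E(H)\cap O_p(G)$ is a solvable normal subgroup of the central product of quasisimple groups $E(H)$, hence contained in $Z(E(H))$. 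Then $[E(H),O_p(G),E(H)]\leq [Z(E(H)),E(H)]=1$, and a three subgroups lemma argument exploiting $E(H)'=E(H)$ yields $[E(H),O_p(G)]=1$, so $E(H)\leq C_G(O_p(G))\leq O_p(G)$; since $E(H)$ is perfect and $O_p(G)$ is a $p$-group, $E(H)=1$.

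For (a), the case of $C_G(P)$ reduces to that of $N_G(P)$ via (b) applied to the normal subgroup $C_G(P)\unlhd N_G(P)$, so it suffices to show $H:=N_G(P)$ is of characteristic $p$. Set $Q=O_p(H)$ and take $x\in C_H(Q)$; write $x=yz$ with $y$ its $p$-part and $z$ its $p'$-part. The crucial step, which I expect to be the main obstacle, is to show $z=1$. The plan is to apply Thompson's $P\times\langle z\rangle$-lemma to the coprime action on the $p$-group $O_p(G)$: both $P$ and $\langle z\rangle$ normalize $O_p(G)$ since $O_p(G)\unlhd G$, and they commute because $z\in C_H(Q)\leq C_H(P)=C_G(P)$. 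Moreover, $C_{O_p(G)}(P)\leq O_p(G)\cap H\leq Q$, so $\langle z\rangle$ centralizes $C_{O_p(G)}(P)$; the lemma then forces $\langle z\rangle$ to centralize $O_p(G)$, whence $z\in C_G(O_p(G))\leq O_p(G)$, and since $z$ is a $p'$-element, $z=1$. Therefore $C_H(Q)$ is a $p$-group, and being normal in $H$ as the centralizer of a normal subgroup, it lies in $O_p(H)=Q$.
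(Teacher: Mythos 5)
Your proof is correct, and it takes a genuinely different route from the paper. The paper simply cites Meierfrankenfeld--Stellmacher \cite[Lemmas~1.2, 1.3]{MS} for parts (a) and (b) and for the reduction of (c) to the equivalence ``$G$ is of characteristic $p$ if and only if $O^p(G)$ is,'' combined with the observation $O^p(N)=O^p(G)$. You instead give a self-contained argument: for (b) you invoke the generalized Fitting subgroup and a three-subgroups-lemma computation to force $F^*(H)=O_p(H)$; for (a) you apply Thompson's $P\times Q$ lemma to the action on $O_p(G)$, using that $C_{O_p(G)}(P)\leq O_p(N_G(P))$; and for the nontrivial direction of (c) you argue directly that $C_G(O_p(G))$ is a $p$-group by splitting along $N$. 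All the steps check out, including the two facts you use implicitly but correctly: that a finite group in which every element has $p$-power order is a $p$-group (Cauchy), and that $P\leq O_p(N_G(P))$ so that $C_H(Q)\leq C_G(P)$. The only presentational quirk is that the forward direction of (c) invokes (b), which you prove afterwards; since (b) does not use (c), there is no circularity, but the ordering could be stated more carefully. What your approach buys is independence from the MS reference; what the paper's citation buys is brevity, and its trick $O^p(N)=O^p(G)$ is a shorter path to (c) than your index-counting argument.
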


\begin{proof}
Let $P$ be a $p$-subgroup of $G$. By \cite[Lemma 1.2(a),(c)]{MS}, every subnormal subgroup of $G$ is of characteristic $p$ and  $N_G(P)$ is of characteristic $p$. In particular, $ C_G(P) \norm N_G(P)$ is of characteristic $p$. 

\smallskip

Let now $N$ be as in (c). By \cite[Lemma~1.3]{MS}, $G$ is of characteristic $p$ if and only if $O^p(G)$ is of characteristic $p$. Similarly, $N$ is of characteristic $p$ if and only if $O^p(N)$ is of characteristic $p$. As $O^p(N)=O^p(G)$, this implies (c).
\end{proof}

\begin{lemma}\label{L:CharpCharacterize}
Let $G$ be a finite group, and let $N$ be a normal subgroup of $G$ with Sylow $p$-subgroup $T$. Then the following are equivalent:
\begin{itemize}
 \item [(i)] $G$ is of characteristic $p$.
 \item [(ii)] $N$ and $N_G(T)$ are of characteristic $p$. 
 \item [(iii)] $N$ and $C_G(N)$ are of characteristic $p$.
\end{itemize}
\end{lemma}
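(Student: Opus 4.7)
The plan is to close the cycle $(i) \Rightarrow (ii) \Rightarrow (iii) \Rightarrow (i)$, with the first two implications being immediate from Lemma~\ref{prop.char.p.group} and the real content sitting in $(iii) \Rightarrow (i)$.

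For $(i) \Rightarrow (ii)$: $N \unlhd G$ is subnormal, hence of characteristic $p$ by Lemma~\ref{prop.char.p.group}(b); if $T \neq 1$ then $N_G(T)$ is of characteristic $p$ by Lemma~\ref{prop.char.p.group}(a), while if $T = 1$ then $N_G(T) = G$ directly. For $(ii) \Rightarrow (iii)$: from $T \leq N$ one gets $C_G(N) \leq C_G(T) \leq N_G(T)$, and $C_G(N) \unlhd G$ (centralizer of a normal subgroup) forces $C_G(N) \unlhd N_G(T)$, so $C_G(N)$ is subnormal in the characteristic-$p$ group $N_G(T)$ and Lemma~\ref{prop.char.p.group}(b) applies.

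For $(iii) \Rightarrow (i)$ the plan is to invoke the standard characterization that a finite group $H$ is of characteristic $p$ precisely when $F^*(H) = O_p(H)$, i.e.\ when $E(H) = 1$ and $O_q(H) = 1$ for every prime $q \neq p$. Given $N$ and $C_G(N)$ of characteristic $p$, I would first show $E(G) = 1$. Any component $L$ of $G$ must satisfy $L \not\leq N$, since otherwise $L$ would be a subnormal quasisimple subgroup of $N$, contradicting $E(N) = 1$; hence $L \cap N$ is a proper normal subgroup of the quasisimple group $L$ and lies in $Z(L)$. Writing $E(G)$ as a central product of its components and using that an element of $E(G)$ normalizes each component, an element $m \in E(G) \cap N$ satisfies $[m, L] \leq L \cap N \leq Z(L)$ for every component $L$; since $L = [L, L]$ is perfect, the three-subgroups lemma forces $[m, L] = 1$. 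Thus $E(G) \cap N \leq Z(E(G))$, so $[E(G), N] \leq Z(E(G))$, and because $E(G)$ is perfect the map $l \mapsto [l, n]$ is a homomorphism from $E(G)$ into the abelian group $Z(E(G))$ and is therefore trivial for each $n \in N$. Hence $E(G) \leq C_G(N)$; components of $G$ sitting inside $C_G(N)$ are components of $C_G(N)$ (subnormality being transitive), while $E(C_G(N)) = 1$ by hypothesis, so $E(G) = 1$.

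For each prime $q \neq p$, the intersection $O_q(G) \cap N$ is a normal $q$-subgroup of $N$, hence contained in $O_q(N) \leq F(N) = O_p(N)$ and therefore trivial; so $[O_q(G), N] = 1$, giving $O_q(G) \leq C_G(N)$ and then $O_q(G) \leq O_q(C_G(N)) = 1$ by the same reasoning applied to $C_G(N)$. The main obstacle will be establishing $E(G) \leq C_G(N)$: one cannot argue component-by-component because $N$ need not normalize individual components of $G$, so the argument has to be carried out globally using $[E(G), N] \leq E(G) \cap N$ together with the three-subgroups lemma and the perfectness of $E(G)$.
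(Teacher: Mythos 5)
Your argument is correct and closes the same cycle $(i)\Rightarrow(ii)\Rightarrow(iii)\Rightarrow(i)$ as the paper, with the first two implications handled identically via Lemma~\ref{prop.char.p.group}; you are even slightly more careful than the paper in treating the degenerate case $T=1$ in $(i)\Rightarrow(ii)$. The genuine difference is in $(iii)\Rightarrow(i)$, where the paper simply cites \cite[6.5.2]{KS} (which states that for $N\unlhd G$ every component of $G$ is a component of $N$ or of $C_G(N)$) to conclude $E(G)=1$ at once. You instead rederive the relevant piece of that fact from scratch: from $E(N)=1$ you deduce $L\cap N\leq Z(L)$ for each component $L$ of $G$, apply the three-subgroups lemma together with perfectness of $L$ to show $E(G)\cap N$ centralizes every component, and then use perfectness of $E(G)$ once more to upgrade $[E(G),N]\leq Z(E(G))$ to $[E(G),N]=1$, yielding $E(G)\leq C_G(N)$ and hence $E(G)=1$. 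This buys a self-contained proof at the cost of length; the paper's citation is shorter but opaque to a reader without Kurzweil--Stellmacher at hand. Your handling of the $p'$-core (prime-by-prime via $O_q$) is essentially the same as the paper's single computation with $O_{p'}(G)$. One small inaccuracy in wording: the step ``components of $G$ sitting inside $C_G(N)$ are components of $C_G(N)$ (subnormality being transitive)'' is not an appeal to transitivity of subnormality but rather to the fact that a subnormal subgroup of $G$ contained in a subgroup $H\leq G$ is subnormal in $H$; here this is immediate by intersecting the subnormal chain with the normal subgroup $C_G(N)$. The conclusion stands, only the label is off.
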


\begin{proof}
It follows from Lemma~\ref{prop.char.p.group} that (i) implies (ii). As $N$ is normal in $G$, $C_G(N)$ is also normal in $G$. Notice that $C_G(N)\leq C_G(T)\leq N_G(T)$. Thus $C_G(N)$ is a normal subgroup of $N_G(T)$. So by Lemma~\ref{prop.char.p.group}(b), $C_G(N)$ has characteristic $p$ if $N_G(T)$ has characteristic $p$. Hence, (ii) implies (iii). Assume now that (iii) holds. It follows from \cite[6.5.2]{KS} that every component of $G$ is either a component of $N$ or of $C_G(N)$. As $N$ and $C_G(N)$ do not have any components, it follows that $E(G)=1$. Moreover, $[N,O_{p^\prime}(G)]\leq N\cap O_{p^\prime}(G)=O_{p^\prime}(N)=1$. Thus $O_{p^\prime}(G)\leq C_G(N)$ and so $O_{p^\prime}(G)=O_{p^\prime}(C_G(N))=1$. Hence, (i) holds and the proof is complete. 
\end{proof}

\subsection{Some properties of fusion systems}

It will be convenient to use the following definition. 

\begin{definition}
Suppose $\F$ and $\tF$ are fusion systems over $S$ and $\tS$ respectively.
\begin{itemize}
\item If $\alpha\colon S\rightarrow \tS$ is a group isomorphism, then $\alpha$ is said to \emph{induce an isomorphism from $\F$ to $\tF$}, if $\{\alpha^{-1} \phi\alpha\colon \phi\in\Mor_\F(P,Q)\}=\Mor_{\tF}(P\alpha,Q\alpha)$ for all $P,Q\leq S$. 
\item We say that $\alpha\in\Aut(S)$ induces an automorphism of $\F$ if $\alpha$ induces an isomorphism from $\F$ to $\F$. Write $\Aut(\F)$ for the set of all $\alpha\in\Aut(S)$ which induce an automorphism of $\F$.  
\end{itemize}
\end{definition}

\begin{lemma}\label{L:crsqAutClosed}
Let $\F$ be a fusion system over $S$ and let $\E$ be an $\F$-invariant subsystem of $\F$. Then $\E^c$, $\E^r$ and $\E^{cr}$ are  closed under $\F$-conjugacy. Similarly, the set of $\E$-critical subgroups is closed under $\F$-conjugacy.
\end{lemma}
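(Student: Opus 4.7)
The plan is to reduce each of the four closure statements to a transport of the defining property along an $\F$-automorphism of $T$, using the $\F$-invariance of $\E$. Since $T$ is strongly closed in $\F$, every $\F$-conjugate of a subgroup of $T$ again lies in $T$, so fixing $P \leq T$ with the relevant property and $Q \in P^\F$, we may assume $Q \leq T$ and aim to show $Q$ inherits the property.

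From the $\F$-invariance of $\E$ one first extracts that conjugation by any $\alpha \in \Aut_\F(T)$ preserves $\E$-morphisms: for $\psi \in \Hom_\E(A,B)$ with $A,B \leq T$, $\alpha^{-1}\psi\alpha \in \Hom_\E(A\alpha, B\alpha)$. This immediately yields $(P\alpha)^\E = P^\E \alpha$ for every $P \leq T$ and $\alpha \in \Aut_\F(T)$. Secondly, $\F$-invariance forces $\F$ restricted to subgroups of $T$ to be generated by $\E$-morphisms and restrictions of $\Aut_\F(T)$-automorphisms; equivalently, for $\F$-conjugate $P, Q \leq T$, there exists $\alpha \in \Aut_\F(T)$ with $Q \in (P\alpha)^\E$, and consequently $Q^\E = P^\E \alpha$.

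Granted this bijective transport, the four closure statements follow by routine bookkeeping. For $\E^c$: every $Q' \in Q^\E$ has the form $P'\alpha$ with $P' \in P^\E$, and $C_T(Q') = C_T(P')\alpha \leq P'\alpha = Q'$, using that $\alpha$ is an automorphism of $T$. For $\E^r$: pick a fully $\E$-normalized $P' \in P^\E$ with $O_p(N_\E(P')) = P'$ and set $Q' := P'\alpha$; then $|N_T(Q')| = |N_T(P')|$ shows $Q'$ is fully $\E$-normalized, and conjugation by $\alpha$ induces an isomorphism of fusion systems $N_\E(P') \cong N_\E(Q')$ (since by $\F$-invariance the morphisms defining $N_\E(P')$ are carried to those defining $N_\E(Q')$), which preserves the $O_p$, giving $O_p(N_\E(Q')) = Q'$. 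Then $\E^{cr} = \E^c \cap \E^r$ is also closed under $\F$-conjugacy. Finally, for the set of $\E$-critical subgroups: the same $\alpha$ induces matching bijections $\Aut_T(P') \to \Aut_T(Q')$ and $\Aut_\E(P') \to \Aut_\E(Q')$ respecting inner automorphism subgroups, so the $O_p$'s of the outer automorphism groups correspond and the trivial-intersection condition transfers.

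The main obstacle is the generation fact in the second step — that $\F$-conjugation within $T$ is generated by $\E$-conjugation together with the action of $\Aut_\F(T)$ — which must be extracted from the $\F$-invariance axioms directly, since saturation of $\F$ or $\E$ is not being assumed here. Once this is in hand, all the remaining verifications are routine.
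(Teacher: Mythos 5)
Your proposal is correct and follows essentially the same route as the paper: reduce to $\Aut_\F(T)$-invariance via the Frattini property of $\F$-invariant subsystems, then transport each defining property along $\alpha\in\Aut_\F(T)$ using that $\alpha$ induces an automorphism of $\E$. The ``main obstacle'' you flag in the final paragraph is in fact not an obstacle at all: the generation fact that every $\F$-conjugation between subgroups of $T$ factors as an $\E$-morphism followed by a restriction of some $\alpha\in\Aut_\F(T)$ is precisely the Frattini condition, which is one of the defining axioms of an $\F$-invariant (weakly normal) subsystem in \cite[Definition~I.6.1]{AKO}, so no extraction is needed.
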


\begin{proof}
Let $T\leq S$ be such that $\E$ is a subsystem over $T$. Let $\Gamma$ be one of the sets $\E^c$, $\E^r$, $\E^{cr}$ or the set of $\E$-critical subgroups. It follows directly from the definition of these sets that $\Gamma$ is closed under $\E$-conjugacy. By the Frattini condition for $\F$-invariant subsystems \cite[Definition~I.6.1]{AKO}, it is thus sufficient to argue that $\Gamma$ is $\Aut_\F(T)$-invariant. The definition of $\F$-invariant subsystems implies moreover that every element of $\Aut_\F(T)$ induces an automorphism of $\E$. Therefore, $\alpha\in\Aut_\F(T)$ maps $\E$-conjugacy classes to $\E$-conjugacy classes and fully $\E$-normalized subgroups to fully $\E$-normalized subgroups. Moreover, if $P\leq T$ with $C_T(P)\leq P$, then $C_T(P\alpha)\leq P\alpha$. So $\E^c$ is $\Aut_\F(T)$-invariant. Notice also that $\alpha|_{N_T(Q)}$ induces an isomorphism from $N_\E(Q)$ to $N_\E(Q\alpha)$ for every $Q\leq T$. Hence, $\alpha$ maps $\E$-radical subgroups to $\E$-radical subgroups. So $\E^r$ and $\E^{cr}=\E^c\cap\E^r$ are $\Aut_\F(T)$-invariant. For $Q\leq T$, the map $\Aut_\E(Q)\rightarrow \Aut_\E(Q\alpha),\phi\mapsto \alpha^{-1}\phi\alpha$ is an isomorphism which takes $\Inn(Q)$ to $\Inn(Q\alpha)$ and $\Aut_T(Q)$ to $\Aut_T(Q\alpha)$. Hence, the set of $\E$-critical subgroups is $\Aut_\F(T)$ invariant. This proves the assertion. 
\end{proof}

\textbf{For the remainder of this subsection let $\F$ be a saturated fusion system over a $p$-group $S$.}

\smallskip

In the proofs of the following two lemmas we cite \cite{AOV1}. It should be pointed out that normal subsystems in the sense of \cite[Definition~1.18]{AOV1} correspond to weakly normal subsystems in the language of \cite{AKO} (i.e. in the language that we are using in this paper).

\begin{lemma}\label{L:RadicalIntersect}
Let $\E$ be a weakly normal subsystem of $\F$ over $T$. If $R\in\F^{cr}$, then $R\cap T\in\E^{cr}$.
\end{lemma}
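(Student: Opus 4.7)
The plan is to reduce to Lemma~\ref{L:FinvariantCritical}(b). Since $\F$ is saturated and $R\in\F^{cr}$, Lemma~\ref{L:radical}(b) will give that $R$ is $\F$-critical, and since $\E$ is $\F$-invariant, Lemma~\ref{L:crsqAutClosed} applied to $\E$ will give that $\E^{cr}$ is closed under $\F$-conjugacy. It therefore suffices to exhibit a single $\F$-conjugate of $R\cap T$ that lies in $\E^{cr}$.

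To do so, I will fix a fully $\F$-normalized $\F$-conjugate $\hat{P}$ of $R\cap T$ (which lies in $T$ by strong closure) and construct an $\F$-conjugate $R^{\ast}$ of $R$ with $R^{\ast}\cap T=\hat{P}$. Since $\F$-criticality is preserved under $\F$-conjugacy, $R^{\ast}$ will be $\F$-critical, and then applying Lemma~\ref{L:FinvariantCritical}(b) with $R^{\ast}$ and $\hat{P}$ in the roles of $P$ and $P_0$ will yield $\hat{P}\in\E^{cr}$, provided $\hat{P}$ is also fully $\E$-normalized. The argument therefore splits into two technical pieces: (i) showing that every fully $\F$-normalized subgroup of $T$ is automatically fully $\E$-normalized, and (ii) constructing the required $R^{\ast}$.

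Both (i) and (ii) will follow the same pattern, in the spirit of Oliver's extension arguments used in Theorem~\ref{T:FInvariantOliverExtensions}: a Sylow-theoretic adjustment inside $\Aut_\F(\hat{P})$, followed by receptivity of $\hat{P}$ in $\F$. For (i), I will pick a fully $\E$-normalized $\E$-conjugate $Q$ of $\hat{P}$ and an $\E$-isomorphism $\phi\colon\hat{P}\to Q$ extending to $N_T(\hat{P})\to N_T(Q)$ by Lemma~\ref{L:SylowExtensionAxiom}; since $\hat{P}$ is fully $\F$-automized, Sylow's theorem inside $\Aut_\F(\hat{P})$ will furnish $\gamma\in\Aut_\F(\hat{P})$ with $\gamma^{-1}\phi^{-1}\Aut_T(Q)\phi\gamma\leq\Aut_S(\hat{P})$, placing $N_T(Q)$ inside the receptivity domain of $(\phi\gamma)^{-1}\colon Q\to\hat{P}$. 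Receptivity of $\hat{P}$ in $\F$ will then extend $(\phi\gamma)^{-1}$ to a morphism sending $N_T(Q)$ injectively into $N_S(\hat{P})\cap T=N_T(\hat{P})$ (by strong closure of $T$), forcing $|N_T(Q)|\leq|N_T(\hat{P})|$ and hence equality. For (ii), starting from any $\F$-isomorphism $\phi_0\colon R\cap T\to\hat{P}$, an analogous Sylow adjustment inside $\Aut_\F(\hat{P})$ will yield $\phi_0'$ with $\phi_0'\,\Aut_R(R\cap T)\,(\phi_0')^{-1}\leq\Aut_S(\hat{P})$; receptivity of $\hat{P}$ will then extend $\phi_0'$ to a morphism whose domain contains $R$, and the image of $R$ will be the desired $R^{\ast}$, satisfying $R^{\ast}\cap T=\hat{P}$ once again by strong closure of $T$. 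The hard part will be the Sylow-conjugation bookkeeping and verifying that these adjustments remain compatible with the strong-closure and invariance axioms throughout.
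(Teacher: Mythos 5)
Your proposal is correct and follows the same core reduction as the paper: pass to a fully $\F$-normalized conjugate of $R\cap T$, check it is fully $\E$-normalized, and apply Lemma~\ref{L:radical}(b), Lemma~\ref{L:crsqAutClosed} and Lemma~\ref{L:FinvariantCritical}(b). However, both of your technical pieces (i) and (ii) re-derive by hand, via Sylow adjustment in $\Aut_\F(\hat{P})$ and receptivity, precisely what Corollary~\ref{C:FullyNormalizedConjugate} already packages: applying it with $Q=R\cap T$ gives $\alpha\in\Hom_\F(N_S(R\cap T),N_S(\hat{P}))$ with $(R\cap T)\alpha=\hat{P}$, so $R^*:=R\alpha$ exists at once (as $R\leq N_S(R\cap T)$) with $R^*\cap T=\hat{P}$ by strong closure, and applying it again inside $T$ to $\E$-conjugates of $\hat{P}$ gives full $\E$-normalization directly.
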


\begin{proof}
This is \cite[Lemma~1.20(d)]{AOV1}, but follows also easily from the results stated before. Namely, by Corollary~\ref{C:FullyNormalizedConjugate}, there exists $\alpha\in\Hom_\F(N_S(R\cap T),S)$ such that $(R\cap T)^\alpha$ is fully normalized. Since $R\leq N_S(R\cap T)$, the subgroup $R^\alpha$ is well-defined. As $T$ is strongly closed, $R^\alpha\cap T=(R\cap T)^\alpha$. By Lemma~\ref{L:crsqAutClosed}, $\E^{cr}$ and $\F^{cr}$ are closed under $\F$-conjugacy. So replacing $R$ by $R^\alpha$, we may assume that $R\cap T$ is fully $\F$-normalized. If $Q$ is an $\E$-conjugate of $R\cap T$, then by Corollary~\ref{C:FullyNormalizedConjugate}, there is an $\F$-morphism mapping $N_S(Q)$ into $N_S(R\cap T)$ and thus $N_T(Q)$ into $N_T(R\cap T)$. Hence, $R\cap T$ is fully $\E$-normalized.  Now the claim follows from Lemma~\ref{L:radical}(b) and Lemma~\ref{L:FinvariantCritical}.
\end{proof}

\begin{lemma}\label{L:AOV}
Let $\E$ be an $\F$-invariant (not necessarily saturated) subsystem of $\F$ over $T\leq S$. Let $P$ be an $\E$-critical subgroup of $T$ such that  $P$ is fully $\F$-normalized. Then there exists $R\in \F^{cr}$ with $R\cap T=P$. 
\end{lemma}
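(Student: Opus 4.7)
The plan is to take $R\le N_S(P)$ to be the preimage of $O_p(\Aut_\F(P))$ under the surjection $N_S(P)\twoheadrightarrow\Aut_S(P)=N_S(P)/C_S(P)$, and to show $R\in\F^{cr}$ and $R\cap T=P$. This construction makes sense because $P$ being fully $\F$-normalized forces $\Aut_S(P)\in\Syl_p(\Aut_\F(P))$, so $\Aut_S(P)$ contains the normal $p$-subgroup $O_p(\Aut_\F(P))$; note that $PC_S(P)\le R$. Applying Corollary~\ref{C:FullyNormalizedConjugate} to each $\E$-conjugate $Q$ of $P$ and using strong closure of $T$, one also sees that $P$ is fully $\E$-normalized.

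For the equality $R\cap T=P$, I use that $\F$-invariance of $\E$ forces $\Aut_\E(P)\unlhd\Aut_\F(P)$, so $O_p(\Aut_\F(P))\cap\Aut_\E(P)$ is a normal $p$-subgroup of $\Aut_\E(P)$ and hence contained in $O_p(\Aut_\E(P))$. Combined with $\Aut_T(P)\le\Aut_\E(P)$ (as $\F_T(T)\subseteq\E$) and the $\E$-critical hypothesis $\Aut_T(P)\cap O_p(\Aut_\E(P))=\Inn(P)$, this gives $\Aut_T(P)\cap O_p(\Aut_\F(P))=\Inn(P)$. So for $g\in R\cap T$ the element $c_g$ lies in $\Inn(P)$, hence $g\in PC_S(P)\cap T=PC_T(P)=P$, where the last equality uses $C_T(P)\le P$ from the $\E$-centricity of $P$.

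To show $R\in\F^c$, I argue first that $PC_S(P)\in\F^c$: receptivity of $P$ (since $P$ is fully $\F$-centralized in the saturated $\F$) shows that every $\F$-conjugate of $PC_S(P)$ has the form $P_1C_S(P_1)$ with $P_1$ a fully $\F$-centralized $\F$-conjugate of $P$, and then $C_S(P_1C_S(P_1))\le Z(C_S(P_1))\le P_1C_S(P_1)$. Overgroup-closure of $\F^c$ in the saturated $\F$ then yields $R\in\F^c$.

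The heart of the proof is showing that $R$ is $\F$-radical, which I would accomplish by identifying $R$ with $O_p(N_\F(P))$ in the saturated subsystem $N_\F(P)$ (saturated because $P$ is fully $\F$-normalized). For $R\unlhd N_\F(P)$: every $\phi\in\Hom_{N_\F(P)}(A,B)$ admits an extension $\bar\phi\in\Hom_\F(AP,BP)$ normalizing $P$, and $\Aut_\F(P)$-invariance of $O_p(\Aut_\F(P))$ shows $R\le N_{\bar\phi|_P}$; the receptive extension of $\bar\phi|_P$ then yields an extension of $\phi$ to $AR$ normalizing $R$. Conversely, for any $p$-subgroup $Q\unlhd N_\F(P)$, the extension property of normal subgroups forces $\Aut_Q(P)\unlhd\Aut_\F(P)$, hence $\Aut_Q(P)\le O_p(\Aut_\F(P))$ and $Q\le R$. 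So $R=O_p(N_\F(P))$, and combined with $R\in\F^c$, this gives $R\in N_\F(P)^{cr}$. Lemma~\ref{L:radical}(b) applied in $N_\F(P)$ then yields $O_p(\Aut_{N_\F(P)}(R))=\Inn(R)$. Finally, strong closure of $T$ gives $\phi(P)=\phi(R\cap T)\le R\cap T=P$ for all $\phi\in\Aut_\F(R)$, so $\Aut_\F(R)=\Aut_{N_\F(P)}(R)$, and Lemma~\ref{L:radical}(b) applied in $\F$ yields $R\in\F^{cr}$. The main obstacle will be the extension/gluing step in the identification $R\unlhd N_\F(P)$: combining the receptive extension of $\bar\phi|_P$ with $\bar\phi$ itself on the overlapping domain, so that the glued morphism is a well-defined $\F$-morphism on $AR$, requires careful bookkeeping with the saturation axioms.
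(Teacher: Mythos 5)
The paper's own ``proof'' is a one-line citation to \cite[Lemma~1.19]{AOV1}, so your argument is a genuine attempt at a self-contained proof. The skeleton is sound and standard: take $R$ to be the preimage in $N_S(P)$ of $O_p(\Aut_\F(P))$, then check $R\cap T=P$, $R\in\F^c$ and $R\in\F^r$. The computation $R\cap T=P$ is correct (this is the only place $\E$-criticality and $\F$-invariance are used, via $\Aut_\E(P)\unlhd\Aut_\F(P)$ and $\Aut_T(P)\cap O_p(\Aut_\E(P))=\Inn(P)$), and so is the proof that $PC_S(P)\in\F^c$ (using only that $P$ is fully $\F$-centralized), hence $R\in\F^c$. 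The ``converse'' half of your normality argument --- that every $p$-subgroup $Q\unlhd N_\F(P)$ satisfies $\Aut_Q(P)\unlhd\Aut_\F(P)$ and hence $Q\leq R$, so $O_p(N_\F(P))\leq R$ --- is also correct. Likewise the very last step, deducing $\Aut_\F(R)=\Aut_{N_\F(P)}(R)$ from strong closure of $T$ together with $R\cap T=P$, is fine and is where the $\E$-criticality hypothesis reenters (without it the identity fails: e.g.\ $\F=\F_{D_8}(S_4)$, $P=\langle(13)\rangle$ gives $R=\langle(13),(24)\rangle$ with $\Aut_\F(R)\neq\Aut_{N_\F(P)}(R)$, and indeed then $R\notin\F^{cr}$).

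The genuine gap is the forward inclusion $R\unlhd N_\F(P)$, and what you describe will not work as stated: given $\phi\in\Hom_{N_\F(P)}(A,B)$, the receptive extension $\tilde\psi$ of $\psi:=\bar\phi|_P$ to $N_\psi\supseteq AR$ is \emph{some} $\F$-morphism, but there is no reason for $\tilde\psi$ to agree with $\phi$ on $A$, so it does not extend $\phi$. The two extensions differ on $A$ by a morphism fixing $P$ pointwise, and reconciling them amounts to a nontrivial statement about $C_\F(P)$ rather than ``bookkeeping.'' One can show $R$ is strongly closed in $N_\F(P)$ by the computation $(c_a|_P)^{\psi}=c_{a\phi}|_P$, and one can make partial progress (e.g.\ the subgroup $\{\alpha\in\Aut_\F(R): \alpha|_{PC_S(P)}=\id\}$ is a $p$-group by Thompson's $A\times B$ lemma, using $PC_S(P)\in\F^c$), but I do not see that the remaining step --- in effect controlling $\Aut_{C_\F(P)}(C_S(P))$ or showing $R$ lies in every member of $N_\F(P)^{cr}$ --- reduces to routine saturation-axiom manipulations. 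Note also that the naive finite-group analogue fails: if $G$ has $P\leq S\in\Syl_p(G)$ fully normalized, the preimage in $N_S(P)$ of $O_p(N_G(P)/C_G(P))$ is a Sylow $p$-subgroup of the corresponding normal subgroup $\bar R\unlhd N_G(P)$, but need not be $O_p(N_G(P))$ unless $\bar R$ is $p$-closed. So the identity $R=O_p(N_\F(P))$, while I believe it is true, is a real lemma in its own right (it is essentially what \cite[Lemma~1.19]{AOV1} establishes after its own preparations), and your proof needs to supply an actual argument for it rather than defer it.
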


\begin{proof}
This is a special case of \cite[Lemma~1.19]{AOV1}.
\end{proof}

We will now look at some properties of the set $\delta(\F)$ introduced in Subsection~\ref{SS:Regular}. As shown in \cite[Lemma~7.21]{normal}, $\delta(\F)$ can be characterized as the set of subgroups of $S$ containing an element of $F^*(\F)^s$.  The reader is referred to \cite{AschbacherGeneralized} or \cite[Definition~7.2]{normal} for the definition of the \emph{generalized Fitting subsystem} $F^*(\F)$ and the \emph{layer} $E(\F)$ of $\F$. Recall Notation~\ref{N:PcapE}.

\begin{lemma}\label{L:Regular}
Let $P\leq S$ with $O_p(\F)\leq P$. Then $P\in \delta(\F)$ if and only if $E(\F)\cap P\in E(\F)^s$.
\end{lemma}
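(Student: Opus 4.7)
The plan is to reduce the claim to a statement about subcentric subgroups of $F^*(\F)$ versus those of $E(\F)$, using the characterization of $\delta(\F)$ recalled immediately before the lemma.

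Set $T^*:=F^*(\F)\cap S$ and $T_0:=E(\F)\cap S$. By \cite[Lemma~7.21]{normal}, $P\in\delta(\F)$ iff $P$ contains an element of $F^*(\F)^s$. Since $F^*(\F)^s$ is $F^*(\F)$-closed, hence overgroup-closed in $T^*$, this is equivalent to $P\cap T^*\in F^*(\F)^s$. Next, I would use that $F^*(\F)$ is the internal central product of $E(\F)$ with $\langle O_p(\F)\rangle_\F$, so that $T^*=T_0\cdot O_p(\F)$ and $[T_0,O_p(\F)]=1$ inside $F^*(\F)$. Under the hypothesis $O_p(\F)\leq P$, an elementary check gives $P\cap T^*=(P\cap T_0)\cdot O_p(\F)$: the inclusion $\supseteq$ is clear, and conversely, writing any $x\in P\cap T^*$ as $x=yz$ with $y\in T_0$ and $z\in O_p(\F)$, the element $y=xz^{-1}$ lies in $P$ because $z\in O_p(\F)\leq P$.

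With this reduction in hand, the lemma becomes the claim that for $R\leq T_0$,
\[
R\cdot O_p(\F)\in F^*(\F)^s\quad\Longleftrightarrow\quad R\in E(\F)^s.
\]
To prove this, I would exploit the central product structure: the $F^*(\F)$-conjugacy class of $R\cdot O_p(\F)$ consists exactly of the subgroups $R'\cdot O_p(\F)$ with $R'\in R^{E(\F)}$, and $N_{T^*}(R'\cdot O_p(\F))=N_{T_0}(R')\cdot O_p(\F)$. Consequently $R\cdot O_p(\F)$ is fully $F^*(\F)$-normalized iff $R$ is fully $E(\F)$-normalized, and $N_{F^*(\F)}(R\cdot O_p(\F))$ is itself a central product of $N_{E(\F)}(R)$ with the $p$-group fusion system $\F_{O_p(\F)}(O_p(\F))$. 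Since a central product of a saturated fusion system with a fusion system of a $p$-group is constrained iff the first factor is constrained (the $p$-part only enlarges $O_p$), the equivalence follows.

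The main obstacle is verifying the precise bookkeeping for the central product: that $F^*(\F)$-conjugates of $R\cdot O_p(\F)$ come only from the $E(\F)$-factor, that normalizers decompose as asserted, and that constrainedness is preserved under central product with a $p$-group fusion system. These facts are essentially consequences of the construction of $F^*(\F)$ from $E(\F)$ and $O_p(\F)$ together with the standard theory of central products of saturated fusion systems, as developed in \cite{AschbacherGeneralized,normal,Henke:Regular}, but one must be careful to cite or verify each ingredient rather than take it for granted.
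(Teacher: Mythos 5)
Your proposal follows the same route as the paper: both reduce to $P\cap T^*\in F^*(\F)^s$ via \cite[Lemma~7.21]{normal}, both invoke the central product $F^*(\F)=E(\F)*\F_{O_p(\F)}(O_p(\F))$, and both check the elementary identity $P\cap T^*=(P\cap T_0)O_p(\F)$ under the hypothesis $O_p(\F)\leq P$. The only difference is the final step: the paper discharges the ``central product with a $p$-group factor preserves and reflects subcentricity'' claim by a single citation to \cite[Lemma~2.14(g)]{Henke:Regular}, whereas you re-derive it by analysing conjugacy classes, normalizers, and constrainedness in the central product by hand. Your bookkeeping is the content of that cited lemma, so the argument is essentially identical in substance; citing the existing lemma is shorter but your explicit sketch shows you have the right picture.
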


\begin{proof}
We use that $F^*(\F)=E(\F)*\F_{O_p(\F)}(O_p(\F))$ by \cite[Theorem~7.10(e)]{normal}. In particular, $F^*(\F)\cap S=(E(\F)\cap S)O_p(\F)$. As $O_p(\F)\leq P$, a Dedekind Argument yields thus $F^*(\F)\cap P=( E(\F)\cap P)O_p(\F)$.  It is shown in \cite[Lemma~7.21]{normal} that $P\in\delta(\F)$ if and only if $F^*(\F)\cap P\in F^*(\F)^s$. The assertion follows now from \cite[Lemma~2.14(g)]{Henke:Regular}.
\end{proof}



\begin{lemma}\label{L:deltaFR}
Let $\E$ be a normal subsystem of $\F$ and $O_p(\F)\leq P\leq S$. Then the following hold:
\begin{itemize}
 \item [(a)] If $E(\F)=E(\E)$, then $P\in\delta(\F)$ if and only if $ \E\cap P\in\delta(\E)$.
 \item [(b)] Suppose $E(C_\F(\E))\cap S\leq P$. Then
\[P\in\delta(\F)\Longleftrightarrow E(\E)\cap P\in E(\E)^s\Longleftrightarrow \E\cap P\in\delta(\E).\]
\end{itemize}
\end{lemma}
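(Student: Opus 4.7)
My plan is to deduce part (a) from part (b) and to prove part (b) by applying Lemma~\ref{L:Regular} to both $\F$ and $\E$, using the central product decomposition $E(\F) = E(\E) * E(C_\F(\E))$ as the bridge. For the reduction, assume $E(\F) = E(\E)$. Every component of $C_\F(\E)$ is also a component of $\F$ (as $C_\F(\E) \unlhd \F$), so it lies in $E(\F) = E(\E)$, but it also centralizes $\E \supseteq E(\E)$, so it would be central in $E(\E)$, contradicting quasisimplicity unless it is trivial. Thus $E(C_\F(\E)) = 1$, so $R := E(C_\F(\E)) \cap S = 1 \leq P$ automatically and part (b) specializes to part (a).

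For the first equivalence in (b), set $T_0 := E(\E) \cap S$ and $R := E(C_\F(\E)) \cap S$. Since $O_p(\F) \leq P$, Lemma~\ref{L:Regular} applied to $\F$ yields $P \in \delta(\F) \Longleftrightarrow P \cap E(\F) \in E(\F)^s$. The layer $E(\F) = E(\E) * E(C_\F(\E))$ is over the support $T_0 R$ with $[T_0, R] = 1$, and because $R \leq P$ a standard modular-law argument gives $P \cap (T_0 R) = (P \cap T_0) R$. Applying the central-product characterization of subcentricity \cite[Lemma~2.14(g)]{Henke:Regular}, the subgroup $(P \cap T_0) R$ lies in $(E(\E) * E(C_\F(\E)))^s$ if and only if $P \cap T_0 \in E(\E)^s$ and $R \in E(C_\F(\E))^s$; the latter holds automatically since $R$ is the ambient Sylow $p$-subgroup. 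Rewriting $P \cap T_0 = P \cap E(\E)$ completes the first equivalence.

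For the second equivalence, I apply Lemma~\ref{L:Regular} to the saturated subsystem $\E$ over $T$: provided $O_p(\E) \leq P \cap \E$, one obtains $P \cap \E \in \delta(\E) \Longleftrightarrow (P \cap \E) \cap E(\E) \in E(\E)^s$, and the right-hand side collapses to $P \cap T_0 = P \cap E(\E)$. The one nontrivial point, and the main obstacle I anticipate, is verifying $O_p(\E) \leq P$. I expect this by passing to a regular locality $(\L, \delta(\F), S)$ over $\F$ with corresponding partial normal subgroup $\N \unlhd \L$ satisfying $\F_T(\N) = \E$: since $O_p(\N)$ is characteristic in $\N$ and $\N \unlhd \L$, $\L$-conjugation should preserve $O_p(\N)$, making it a partial normal $p$-subgroup of $\L$; by Lemma~\ref{L:OpL} this gives $O_p(\N) \leq O_p(\L)$, and translating back yields $O_p(\E) = O_p(\N) \leq O_p(\L) = O_p(\F) \leq P$. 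Making the $\L$-invariance of $O_p(\N)$ rigorous within the partial-group framework is the technical crux.
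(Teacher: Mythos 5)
Your treatment of part (b) is essentially the paper's argument: apply Lemma~\ref{L:Regular} to $\F$, use the central product decomposition $E(\F)=E(\E)*E(C_\F(\E))$ from \cite[Lemma~7.13(c)]{normal} together with the modular law (valid since $R\leq P$) to rewrite $P\cap E(\F)=(P\cap E(\E))R$, invoke \cite[Lemma~2.14(g)]{Henke:Regular} with the observation that $R\in E(C_\F(\E))^s$ trivially, and apply Lemma~\ref{L:Regular} to $\E$ for the second equivalence. The two places you diverge are in part (a) and in the attention paid to $O_p(\E)\leq O_p(\F)$. For (a), the paper does not reduce to (b) at all: once Lemma~\ref{L:Regular} is applied to both $\F$ and $\E$, the hypothesis $E(\F)=E(\E)$ makes the two right-hand conditions literally identical, and (a) is immediate. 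Your detour through $E(C_\F(\E))=1$ is sound in spirit but heavier than necessary, and it leans on the fusion-system analog of ``components of a normal subsystem are components'' which you use without a citation — that is a real (if standard) ingredient you would need to source before the reduction is rigorous. Finally, $O_p(\E)\leq O_p(\F)$ is stated by the paper as a known fact without proof; the locality argument you sketch — $O_p(\N)$ is $N_\L(T)$-invariant since elements of $N_\L(T)$ induce automorphisms of $\N$, so $O_p(\N)\unlhd\L$ by the Frattini argument \cite[Corollary~3.13]{loc1}, hence $O_p(\N)\leq O_p(\L)$, then translate via Lemma~\ref{L:OpL} and \cite[Proposition~5]{subcentric} — is exactly what Lemma~\ref{L:KernelBasic}(c) records (for kernels, but the identical argument works for any $\N\unlhd\L$ with $T=S\cap\N\in\Delta$, which holds here since $(\L,\delta(\F),S)$ is regular), so it is not really a gap to be anxious about, just a fact the paper already has in hand.
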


\begin{proof}
By Lemma~\ref{L:Regular}, we have $P\in\delta(\F)$ if and only if $E(\F)\cap P\in E(\F)^s$. As $O_p(\E)\leq O_p(\F)\leq P$ by \cite[Lemma~2.12(b)]{subcentric}, it follows similarly that $\E\cap P\in \delta(\E)$ if and only if $E(\E)\cap P\in E(\E)^s$. This implies (a). By \cite[Lemma~7.13(c)]{normal}, $E(\F)=E(\E)*E(C_\F(\E))$. As $R:=E(C_\F(\E))\cap S\leq P$, we have $E(\F)\cap P=(E(\E)\cap P)R$. Notice that $R\in E(C_\F(\E))^s$. Hence, by \cite[Lemma~2.14(g)]{Henke:Regular}, $E(\F)\cap P\in E(\F)^s$ if and only if $E(\E)\cap P\in E(\E)^s$. This implies (b).
\end{proof}

\begin{remark}\label{R:ER}
If $\E$ is a normal subsystem of $\F$ defined over $T\leq S$ and $R$ is a subgroup of $S$, then there is a unique saturated subsystem $\mD$ of $\F$ over $TR$ with $O^p(\mD) = O^p(\E)$. It is denoted by $(\E R)_\F$ (sometimes denoted simply $\E R$). This was first shown by Aschbacher \cite[Chapter 8]{AschbacherGeneralized}. The result was revisited in \cite{Henke.products} where a concrete description is given.
\end{remark}

We conclude this section with the following lemma, which was first proved by Aschbacher \cite[1.3.2]{AschbacherFSCT}. We give a new proof using localities.

\begin{lemma}\label{L:FusionFrattini}
Let $\E$ be a normal subsystem of $\F$ over $T\leq S$. Then $\F=\< \E S, N_\F(T)\>$.
\end{lemma}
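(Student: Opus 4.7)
The plan is to translate the statement into a factorization problem inside a regular locality and apply the Frattini Splitting Lemma. Let $(\L,\Delta,S)$ be a regular locality over $\F$. By \cite[Theorem~A]{normal}, the normal subsystem $\E$ corresponds to a partial normal subgroup $\N\unlhd \L$ with $\F_T(\N)=\E$, where $T=\N\cap S$. Because $T$ is strongly closed in $\F$ and therefore normal in $S$, we have $N_S(T)=S$ and $S\leq N_\L(T)$. By Proposition~\ref{P:crCompleteSaturated}(a), $\F$ is $\Delta$-generated, so it suffices to show that each conjugation map $c_g|_P\colon P\to S$, for $g\in\L$ and $P\leq S_g$ with $P\in\Delta$, is a composite of restrictions of morphisms from $\E S$ and $N_\F(T)$.

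Apply Lemma~\ref{L:FrattiniSplitting} to write $g=nf$ with $n\in\N$, $f\in N_\L(T)$, $(n,f)\in\D$, and $S_g=S_{(n,f)}$. Setting $P:=S_g$, this gives $P\leq S_n$, $P^n\leq S_f$, and by Lemma~\ref{L:LocalitiesProp}(c),
\[c_g|_P \;=\; (c_n|_P)\circ(c_f|_{P^n})\]
as group homomorphisms. Since $P^n\leq S\leq N_\L(T)$, the locality conjugation $c_f|_{P^n}$ coincides with the restriction of the group conjugation by $f$ in the finite group $N_\L(T)$. This group conjugation extends $c_f|_{P^n}$ to $P^nT$ and sends $T$ to $T$, because $f\in N_\L(T)$. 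Thus $c_f|_{P^n}$ is a morphism of $N_\F(T)$.

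It remains to identify $c_n|_P$ as a morphism of the product subsystem $\E S$. Here we appeal to the realization of the product subsystem $\E S$ via the partial subgroup $\N S\subseteq\L$ of a regular locality, as developed in \cite{Henke.products}: $\N S$ is a partial subgroup of $\L$ that carries a cr-complete locality structure $(\N S,\Delta_0,S)$ whose associated fusion system is precisely $\E S$. Since $n\in\N\subseteq\N S$, the map $c_n|_P$ is then a morphism of $\F_S(\N S)=\E S$, completing the argument.

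The main obstacle is the last step: identifying $\F_S(\N S)$ with $\E S$ must be done without invoking Theorem~\ref{T:ProductMain}, whose proof relies on the present lemma. The external reference \cite{Henke.products}, which constructs $\E S$ directly from the partial subgroup $\N S$ inside a regular locality, provides exactly what is needed and sidesteps any circular dependency with the later results in this paper.
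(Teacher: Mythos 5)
Your proof is correct and follows essentially the same strategy as the paper's: realize $\E$ as $\F_T(\N)$ for a partial normal subgroup $\N$ of a locality over $\F$, decompose an arbitrary $g\in\L$ as $g=nf$ via the Frattini/Splitting Lemma (Lemma~\ref{L:FrattiniSplitting}), and recognize $c_n|_P$ as a morphism of $\E S$ and $c_f|_{P^n}$ as a morphism of $N_\F(T)$. The two minor differences are worth noting. First, the paper works in the subcentric linking locality ($\Delta=\F^s$) rather than the regular locality, though either works since both satisfy $\delta(\F)\subseteq\Delta$ so that \cite[Theorem~A]{normal} applies. Second, and more substantively: for the identification $\E S=\F_S(\N S)$ the paper cites \cite[Corollary~4.10]{normal} directly, which gives precisely this equality; you instead appeal to \cite{Henke.products} and assert a stronger claim (that $\N S$ carries a cr-complete locality structure realizing $\E S$). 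Your instinct that one cannot invoke Theorem~\ref{T:ProductMain} here without circularity is exactly right — the paper sidesteps the issue the same way, just with the leaner citation \cite[Corollary~4.10]{normal}, which is the more natural reference and asks for less than what you attribute to \cite{Henke.products}.
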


\begin{proof}
By \cite[Theorem~A]{subcentric} there exists a linking locality $(\L,\Delta,S)$ over $\F$ with $\Delta=\F^s$. Moreover, by \cite[Theorem~A]{normal}, there exists a partial normal subgroup $\N$ of $\L$ with $T=S\cap \N$ and $\E=\F_T(\N)$. Now, by \cite[Corollary~4.10]{normal}, we have $\E S=\F_S(\N S)$. As $\F=\F_S(\L)$ is generated by maps of the form $c_g\colon S_g\rightarrow S$ with $g\in\L$, it is sufficient to prove that such maps are in $\<\E S,N_\F(T)\>$. Let $g\in\L$. By Lemma~\ref{L:FrattiniSplitting}, there exist $n\in\N$ and $f\in N_\L(T)$ such that $(n,f)\in\D$, $g=nf$ and $P:=S_g=S_{(n,f)}$. Hence, $c_g=(c_n|_P)(c_f|_{P^n})$. Notice that $c_n|_P\in \F_S(\N S)=\E S$ and that $c_f|_{P^n}$ is a morphism in $N_\F(T)$. This shows the assertion. 
\end{proof}

\section{Kernels of localities}\label{S:Kernels}

In this section we investigate properties of kernels of localities. In particular, we prove Theorem~\ref{T:crComplete}, \ref{T:KernelLinkingLocality} and \ref{T:Regular}. Recall the following definition.

\begin{definition}
Let $(\L,\Delta,S)$ be a locality. A \emph{kernel} of $\L$ is a partial normal subgroup $\N$ of $\L$ such that $P\cap \N\in\Delta$ for every $P\in\Delta$. 
\end{definition}

\begin{lemma}\label{L:KernelBasic}
Let $\N$ be a kernel of a locality $(\L,\Delta,S)$. Set $\F=\F_S(\L)$, $T:=S\cap\N$, $\E=\F_T(\N)$ and $\Gamma:=\{P\in\Delta\colon P\leq T\}=\{P\cap T\colon P\in\Delta\}$. Then the following hold:
\begin{itemize}
 \item [(a)] $(\N,\Gamma,T)$ is a locality over $\E$ and $\Gamma$ is closed under $\F$-conjugacy.
 \item [(b)] For every $f\in N_\L(T)$, $\L=\D(f)$, $c_f\in\Aut(\L)$ and $c_f|_\N\in\Aut(\N)$.
 \item [(c)] $O_p(\N)=O_p(\L)\cap \N\unlhd\L$.
 \item [(d)] The subsystem $\E$ is $\F$-invariant. 
\end{itemize}
\end{lemma}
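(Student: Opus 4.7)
The plan is to address the four parts in sequence, relying on the Frattini splitting of Lemma~\ref{L:FrattiniSplitting} to transfer structure between $\N$ and $\L$.

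For part (a), I first check that the two descriptions of $\Gamma$ coincide: if $P\in\Delta$ with $P\leq T$ then $P=P\cap T$, while conversely for any $P\in\Delta$ the set $P\cap T=P\cap\N$ lies in $\Delta$ by the kernel property and is contained in $T$. To verify that $(\N,\Gamma,T)$ is a locality, I check that $T=S\cap\N$ is a Sylow $p$-subgroup of $\N$ (since any $p$-subgroup of $\N$ embeds into a Sylow $p$-subgroup of $\L$, which is an $\L$-conjugate of $S$, and lands in $T$ after conjugating back); that $\D\cap\W(\N)=\D_\Gamma$, which is Lemma~\ref{L:DDeltaDGamma}(b) once one notes (via overgroup closure of $\Delta$ in $S$) that $\Delta=\{P\leq S:P\cap\N\in\Gamma\}$; and that $\Gamma$ is closed under $\N$-conjugacy and $T$-overgroups, inherited from the corresponding properties of $\Delta$. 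For $\F$-closure of $\Gamma$, note that $T$ is strongly $\F$-closed because every generator $c_g$ of $\F$ sends elements of $\N$ to $\N$ by $\N\unlhd\L$; combined with $\F$-closure of $\Delta$, this forces $\Gamma$ to be $\F$-closed as well.

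For part (b), fix $f\in N_\L(T)$. To show $\L=\D(f)$, let $g\in\L$ and apply Lemma~\ref{L:FrattiniSplitting} to write $g=nh$ with $n\in\N$, $h\in N_\L(T)$ and $(n,h)\in\D$ via some $\Delta$-chain $P_0,P_1,P_2$. Setting $R_i:=P_i\cap T\in\Gamma$ (using Lemma~\ref{L:DDeltaDGamma}(a)) and observing that $R_0,R_2\leq T\subseteq S_f$ with $R_0^f,R_2^f\leq T^f=T$ again in $\Gamma$ by Lemma~\ref{L:LocalitiesProp}(b), the chain $(R_0^f,R_0,R_1,R_2,R_2^f)$ witnesses $(f^{-1},n,h,f)\in\D$, and the associativity axiom yields $(f^{-1},g,f)\in\D$. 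To promote $c_f$ to an automorphism of $\L$, I take an arbitrary $w=(g_1,\dots,g_k)\in\D$ via $P_0,\dots,P_k$, pass to $R_i:=P_i\cap T\in\Gamma$, and interleave to construct a $\Delta$-chain witnessing
\[w':=(f^{-1},g_1,f,f^{-1},g_2,f,\dots,f^{-1},g_k,f)\in\D,\]
where each triple $(f^{-1},g_i,f)$ is read off from $(R_{i-1}^f,R_{i-1},R_i,R_i^f)$ and adjacent triples share the $R_i^f$ entry. Contracting each triple to $g_i^f$ via the associativity axiom gives $(g_1^f,\dots,g_k^f)\in\D$ with product $\Pi(w)^f$, so $c_f$ is a partial group homomorphism. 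Bijectivity comes from \cite[Lemma~1.6]{loc1}, and the restriction $c_f|_\N\in\Aut(\N)$ follows since $\N\unlhd\L$ forces $\N^f=\N$.

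For part (c), the inclusion $O_p(\L)\cap\N\leq O_p(\N)$ holds because $O_p(\L)\cap\N$ is a $p$-subgroup of $\N$ that is partial normal in $\L$ (as the intersection of the normal subgroup $O_p(\L)$ with the partial normal subgroup $\N$), hence partial normal in $\N$. For the reverse, Lemma~\ref{L:OpL} reduces the task to showing $N_\L(O_p(\N))=\L$. Given $f\in\L$, Frattini produces $f=nh$ with $n\in\N$, $h\in N_\L(T)$ and $S_f=S_{(n,h)}$. For any $x\in O_p(\N)$, Lemma~\ref{L:OpL} applied inside $(\N,\Gamma,T)$ gives $x\in\D(n)$ and $x^n\in O_p(\N)$; then $x^n\in O_p(\N)\leq T\subseteq S_h$ yields $x^n\in\D(h)$ and $(x^n)^h\in O_p(\N)$, using that by part (b) $c_h|_\N\in\Aut(\N)$ preserves the characteristic subgroup $O_p(\N)$. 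Hence $x\in S_{(n,h)}=S_f$ and, by Lemma~\ref{L:LocalitiesProp}(c), $x^f=(x^n)^h\in O_p(\N)$. Applying the same argument with $f^{-1}$ yields $O_p(\N)^f=O_p(\N)$, so $f\in N_\L(O_p(\N))$. Then Lemma~\ref{L:OpL} gives $O_p(\N)\leq O_p(\L)$, hence $O_p(\N)\leq O_p(\L)\cap\N$ and the two sides coincide.

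For part (d), strong $\F$-closure of $T$ was already observed in (a). For the invariance condition, Lemma~\ref{L:NDPNHP}(a) applied to $\H=\L$ and the object $T\in\Delta$ shows every $\alpha\in\Aut_\F(T)$ has the form $c_f$ for some $f\in N_\L(T)$; similarly for $P\in\Gamma$ every $\phi\in\Hom_\E(P,T)$ equals $c_n$ for some $n\in\N$. Since $\E$ is $\Gamma$-generated, invariance need only be checked on such generators, where the partial group identity $c_f^{-1}\circ c_n\circ c_f=c_{n^f}$ (valid because $\L=\D(f)$ by (b)) reduces the task to observing that $n^f\in\N$ by $\N\unlhd\L$. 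Hence $\E$ is $\F$-invariant. The main technical obstacle throughout is the careful construction of $\Delta$-chains in (b); once $c_f$ is established as a partial group automorphism preserving $\N$, the characteristic-subgroup argument in (c) and the generator-level computation in (d) both follow with little extra work.
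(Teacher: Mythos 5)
Your proof re-derives from first principles the lemmas that the paper outsources: the paper establishes parts (a) and (b) by citing Lemma~3.29(b),(c) of the reference [Henke:Regular], part (c) by citing Corollary~3.13 of [loc1], and part (d) by citing Lemma~3.28 of [Henke:Regular]. Your hands-on route via the Frattini splitting (Lemma~\ref{L:FrattiniSplitting}), the $\Delta$-chain interleaving trick, and Lemma~\ref{L:DDeltaDGamma} is essentially a reconstruction of what those cited lemmas say, and for parts (a)--(c) it is sound. Two small imprecisions in (a): the claim that $T$ is maximal among $p$-subgroups of $\N$ is supported only by a Sylow-style heuristic (``embeds into a Sylow $p$-subgroup of $\L$, which is an $\L$-conjugate of $S$''), which is not how one argues in partial groups --- the precise statement is [loc1, Lemma~3.1(c)]; and showing $c_f\in\Aut(\L)$ in (b) requires $\D\alpha^*=\D$, not just $\D\alpha^*\subseteq\D$, which you get by running the same chain argument for $c_{f^{-1}}$, but you should say so.

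The genuine gap is in part (d). The definition of $\F$-invariant in [AKO, Definition~I.6.1] has three clauses: $T$ is strongly closed, each $\alpha\in\Aut_\F(T)$ lies in $\Aut(\E)$, and the $\F$-Frattini condition: every $\phi\in\Hom_\F(P,T)$ with $P\leq T$ factors as $\phi=\phi_0\circ\alpha$ with $\phi_0\in\Hom_\E(P,T)$ and $\alpha\in\Aut_\F(T)$. You verify the first two (strong closure in (a); the $\Aut_\F(T)$-invariance by the generator computation $c_f^{-1}\circ c_n\circ c_f = c_{n^f}$), but you never address the Frattini condition, and it does not follow from the other two. It is what the paper's cited Lemma~3.28 of [Henke:Regular] supplies. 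It can be proved here directly: given $\phi\in\Hom_\F(P,T)$, write $\phi$ as a composite of restrictions of maps $c_g$ with $g\in\L$, decompose each $g=nh$ by Lemma~\ref{L:FrattiniSplitting} with $n\in\N$, $h\in N_\L(T)$ and $S_g=S_{(n,h)}$, and then use $\N\unlhd\L$ and $c_h|_\N\in\Aut(\N)$ from (b) to commute all the $c_h$ factors to the right, collecting an element of $\Aut_\F(T)$ at the end and an $\E$-morphism at the start. You already have all the tools to do this, but as written the step is missing.
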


\begin{proof}
\textbf{(a,b)} It follows from \cite[Lemma~3.29(b),(c)]{Henke:Regular} and the definition of $\E$ that $(\N,\Gamma,T)$ is a locality over $\E$ and that (b) holds. Since $\Delta$ is closed under passing to $\L$-conjugates in $S$, $\N\unlhd\L$ and $\Gamma=\{P\in\Delta\colon P\subseteq\N\}$, it follows that $\Gamma$ is closed under passing to $\L$-conjugates in $S$. So $\Gamma$ is closed under $\F$-conjugacy and (a) holds.

\smallskip

\textbf{(c)} Notice that $O_p(\N)$ is invariant under conjugation by elements of $N_\L(T)$ as the elements of $N_\L(T)$ induce automorphisms of $\N$. Since $O_p(\N)\unlhd\N$, it follows from \cite[Corollary~3.13]{loc1} that $O_p(\N)\unlhd\L$ and thus $O_p(\N)\leq O_p(\L)$. Clearly $O_p(\L)\cap \N\leq O_p(\N)$ and so (c) holds.

\smallskip

\textbf{(d)} As $\Aut_\F(T)$ is generated by maps of the form $c_f|_T$ with $f\in N_\L(T)$ and these maps induce automorphisms of $\E$  by \cite[Lemma~3.29(d)]{Henke:Regular} , it follows that the elements of $\Aut_\F(T)$ induce automorphisms of $\E$. Now (d) is a consequence of  \cite[Lemma~3.28]{Henke:Regular}.
\end{proof}

\begin{definition}
We say that $(\N,\Gamma,T)$ is a kernel of $(\L,\Delta,S)$ to indicate that $\N$ is a kernel of $(\L,\Delta,S)$, $T=S\cap\N$ and $\Gamma=\{P\in\Delta\colon P\leq T\}=\{P\cap T\colon P\in\Delta\}$. 
\end{definition}

The following lemma can be seen as a converse to Lemma~\ref{L:KernelBasic}(a).

\begin{lemma}\label{L:ConstructLocality}
Suppose we are given a finite partial group $\L$ with product $\Pi\colon\D\rightarrow\L$. Let $S$ be a maximal $p$-subgroup of $\L$ and  $\N\unlhd\L$. Set $T:=\N\cap S$, let $\Gamma$ be a set of subgroups of $T$ and set $\Delta:=\{P\leq S\colon P\cap T\in\Gamma\}$. Suppose the following hold:
\begin{itemize}
 \item $\Gamma$ is closed under passing to overgroups and $\L$-conjugates which lie in $T$.
 \item $\D=\D_\Gamma$.
\end{itemize}
Then $(\L,\Delta,S)$ is a locality with kernel $(\N,\Gamma,T)$. 
\end{lemma}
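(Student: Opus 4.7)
The plan is to verify the three defining axioms of Definition~\ref{locality} for the triple $(\L,\Delta,S)$, and then to read off that $(\N,\Gamma,T)$ is the kernel. The axiom that $S$ is a maximal $p$-subgroup of $\L$ is given by hypothesis.

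For the axiom $\D = \D_\Delta$, the key observation is that $P\cap\N = P\cap(S\cap\N) = P\cap T$ for every $P\leq S$. Hence the set $\Delta$ here coincides with the set $\{P\leq S\colon P\cap\N\in\Gamma\}$ appearing in Lemma~\ref{L:DDeltaDGamma}(b), so that lemma yields $\D_\Delta = \D_\Gamma$, which in turn equals $\D$ by hypothesis.

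For the closure axiom I will treat overgroup-closure and $\L$-conjugacy-closure separately. Overgroup-closure is immediate from the corresponding property of $\Gamma$: if $P\leq Q\leq S$ with $P\cap T\in\Gamma$, then $Q\cap T$ is an overgroup of $P\cap T$ inside $T$, hence lies in $\Gamma$, and so $Q\in\Delta$. For conjugacy closure, take $P\in\Delta$ and $f\in\L$ with $P\subseteq S_f$. Then $P^f\leq S$, and $P\cap T\subseteq P\subseteq S_f$, so $(P\cap T)^f$ is defined. I will apply Lemma~\ref{L:DDeltaDGamma}(a) to the partial normal subgroup $\N$ to obtain
\[(P\cap T)^f = (P\cap\N)^f = P^f\cap\N = P^f\cap T.\]
Since $P\cap T\in\Gamma$ is conjugated by $f$ to a subgroup of $T$, the hypothesis that $\Gamma$ is closed under $\L$-conjugates lying in $T$ gives $P^f\cap T\in\Gamma$, i.e.\ $P^f\in\Delta$.

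With the locality axioms verified, $\N\unlhd\L$ is given, and $P\cap\N = P\cap T\in\Gamma\subseteq\Delta$ for every $P\in\Delta$, so $\N$ is a kernel of $(\L,\Delta,S)$. A quick set-theoretic check confirms that $T = S\cap\N$ and that both $\{P\in\Delta\colon P\leq T\}$ and $\{P\cap T\colon P\in\Delta\}$ coincide with $\Gamma$ (using that $\Gamma$ is overgroup-closed in $T$, so in particular $T\in\Gamma$ provided $\Gamma$ is non-empty, which it must be since $\D = \D_\Gamma$ contains all length-one words). Thus $(\N,\Gamma,T)$ is precisely the kernel triple. The proof is essentially an unwinding of definitions; the only non-routine ingredient is the identity $(P\cap\N)^f = P^f\cap\N$ provided by Lemma~\ref{L:DDeltaDGamma}(a), and I do not anticipate a real obstacle.
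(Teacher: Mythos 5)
Your proposal follows the same overall route as the paper, but there is a genuine gap in the conjugacy-closure step. You take $P\in\Delta$ and $f\in\L$ with $P\subseteq S_f$ and immediately write ``$P^f\leq S$''. From $P\subseteq S_f$ one only gets that $P^f$ is a \emph{subset} of $S$. In a general partial group the restriction of the conjugation map $c_f$ to a subgroup $P$ of its domain need not be a group homomorphism, so $P^f$ need not be a subgroup; the statement that $c_g\colon S_g\rightarrow S$ is a group homomorphism (Lemma~\ref{L:LocalitiesProp}(d)) is a property \emph{of localities} and hence cannot be assumed while you are in the middle of proving that $(\L,\Delta,S)$ is one. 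Since $\Delta$ consists of \emph{subgroups} $P\leq S$ with $P\cap T\in\Gamma$, your concluding step ``$P^f\cap T\in\Gamma$, hence $P^f\in\Delta$'' only works once $P^f$ is known to be a subgroup, which you have not established.

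The paper closes this gap by a short but essential argument: set $Q:=P\cap\N\in\Gamma$; then $Q\unlhd P$ and by hypothesis $Q^f\in\Gamma$, so for $x,y\in P$ the word $u=(f^{-1},x,f,f^{-1},y,f)$ lies in $\D=\D_\Gamma$ via $Q^f$, and the partial-group axioms give $x^fy^f=\Pi(u)=(xy)^f$. Thus $c_f|_P$ is a group homomorphism, so $P^f$ is a subgroup of $S$, and only then does your final computation via Lemma~\ref{L:DDeltaDGamma}(a) deliver $P^f\in\Delta$. Aside from this omission, the rest of your proposal (the use of Lemma~\ref{L:DDeltaDGamma}(b) for $\D=\D_\Delta$, the overgroup-closure, and the identification of the kernel triple) is correct and matches the paper's argument.
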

 
\begin{proof}
If $(\L,\Delta,S)$ is a locality, then clearly $(\N,\Gamma,T)$ is a kernel of $(\L,\Delta,S)$. By Lemma~\ref{L:DDeltaDGamma}(b), we have $\D_\Delta=\D_\Gamma=\D$. Recall that $S$ is a maximal $p$-subgroup of $\L$ by assumption. As $\Gamma$ is overgroup-closed in $T$, it follows that $\Delta$ is overgroup-closed in $S$. Hence, it remains to show that $\Delta$ is closed under passing to $\L$-conjugates in $S$. 

\smallskip

Let $P\in\Delta$ and $f\in\L$ with $P\subseteq\D(f)$ and $P^f\subseteq S$. Observe that $Q:=P\cap \N\in\Gamma$ is normal in $P$ and $Q^f\in\Gamma$ by assumption. Hence, if $x,y\in P$, then $u:=(f^{-1},x,f,f^{-1},y,f)\in\D=\D_\Gamma$ via $Q^f$. Thus, by the axioms of a partial group, $x^fy^f=\Pi(u)=\Pi(f^{-1},x,y,f)=(xy)^f$ and so $c_f\colon P\rightarrow S,x\mapsto x^f$ is a group homomorphism. Therefore, $P^f$ is a subgroup of $S$. By Lemma~\ref{L:DDeltaDGamma}(a), $P^f\cap T=P^f\cap \N=Q^f\in\Gamma$ and so $P^f\in\Delta$.
\end{proof}

We will use from now on without further reference that, by Lemma~\ref{L:KernelBasic}(a), $(\N,\Gamma,T)$ is a locality if $(\N,\Gamma,T)$ is a kernel of a locality $(\L,\Delta,S)$. In particular, it makes sense to say that a kernel $(\N,\Gamma,T)$ is cr-complete.

\begin{prop}\label{Prop:kernel.cr.complete}
Let $(\L,\Delta,S)$ be a locality with kernel $(\N,\Gamma,T)$. Then $(\L,\Delta,S)$ is cr-complete if and only if $(\N,\Gamma,T)$ is cr-complete.
\end{prop}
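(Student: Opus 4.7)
Throughout, set $\F:=\F_S(\L)$ and $\E:=\F_T(\N)$ with $T=S\cap\N$. I first record some preparatory observations: $\E$ is $\F$-invariant by Lemma~\ref{L:KernelBasic}(d); $T$ is strongly $\F$-closed, since $\N\unlhd\L$ forces $t^f\in T$ whenever $t\in T\cap S_f$; and the kernel axiom $P\cap\N\in\Delta$ for all $P\in\Delta$ together with the overgroup-closedness of $\Delta$ in $S$ implies $\Delta=\{P\leq S:P\cap T\in\Gamma\}$, so $P\in\Delta$ if and only if $P\cap T\in\Gamma$. This reformulation is the bridge between the two cr-completeness statements.

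For the direction $(\N,\Gamma,T)$ cr-complete $\Longrightarrow$ $(\L,\Delta,S)$ cr-complete, I will assume $\E^{cr}\subseteq\Gamma$. Proposition~\ref{P:crCompleteSaturated}(c) applied to the kernel locality then gives $\E$ saturated, so $\E$ is weakly normal in $\F$. By Proposition~\ref{P:crCompleteSaturated}(a), $\F$ is $\Delta$-generated and $\Delta$-saturated, and $\E^{cr}\subseteq\Gamma\subseteq\Delta$, so Theorem~\ref{T:FInvariantOliverExtensions2} forces $\F$ itself to be saturated. Lemma~\ref{L:RadicalIntersect} then applies: for any $R\in\F^{cr}$, $R\cap T\in\E^{cr}\subseteq\Gamma$, so $R\in\Delta$, as required.

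For the converse, I will assume $\F^{cr}\subseteq\Delta$, so $\F$ is saturated by Proposition~\ref{P:crCompleteSaturated}(c). My plan is first to show every $\E$-critical subgroup lies in $\Gamma$, then deduce $\E$ is saturated, and finally invoke Lemma~\ref{L:radical}(b) to identify $\E^{cr}$ with the set of $\E$-critical subgroups. Let $P\leq T$ be $\E$-critical. By Lemma~\ref{L:crsqAutClosed}, the set of $\E$-critical subgroups is $\F$-closed, so I may replace $P$ by an $\F$-conjugate and assume $P$ is fully $\F$-normalized. Lemma~\ref{L:AOV} then supplies $R\in\F^{cr}$ with $R\cap T=P$; as $\F^{cr}\subseteq\Delta$, this gives $R\in\Delta$, so $P=R\cap T\in\Gamma$. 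Using the $\F$-closedness of $\Gamma$ recorded in Lemma~\ref{L:KernelBasic}(a), it follows that every $\E$-critical subgroup lies in $\Gamma$.

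To finish, $\E$ is $\Gamma$-generated and $\Gamma$-saturated by Proposition~\ref{P:crCompleteSaturated}(a), and $\Gamma$ now contains every $\E$-critical subgroup, so Theorem~\ref{T:BCGLOControlling} applied to $\E$ forces $\E$ to be saturated. Lemma~\ref{L:radical}(b) then identifies $\E^{cr}$ with the set of $\E$-critical subgroups, which lies in $\Gamma$. The conceptually pleasing feature of this plan is that the forward direction sidesteps any direct comparison between $\E^{cr}$ and the set of $\E$-critical subgroups: one dispatches the $\E$-critical subgroups via Lemma~\ref{L:AOV}, obtains saturation of $\E$ as a byproduct, and only afterwards invokes the identification $\E^{cr}=\{P\le T:P\text{ is $\E$-critical}\}$ that holds once $\E$ is known to be saturated. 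The main obstacle I foresee is a bookkeeping one, namely the verification that the hypotheses of Lemma~\ref{L:AOV} and Theorem~\ref{T:BCGLOControlling} are correctly matched to $\E$ here (for instance, that $\Gamma$ being $\F$-closed is a fortiori $\E$-closed); beyond this the argument is a short chain of invocations of the lemmas already in the paper.
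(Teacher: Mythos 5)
Your proof is correct and follows essentially the same route as the paper's: for the direction ``kernel cr-complete $\Rightarrow$ ambient cr-complete'' both use Proposition~\ref{P:crCompleteSaturated}(a)(c), Theorem~\ref{T:FInvariantOliverExtensions2} and Lemma~\ref{L:RadicalIntersect}; for the other direction both pass through $\E$-critical subgroups via Lemma~\ref{L:crsqAutClosed} and Lemma~\ref{L:AOV}. The only cosmetic difference is that the paper packages your final steps (Theorem~\ref{T:BCGLOControlling} plus Lemma~\ref{L:radical}(b) applied to $\E$) into the single citation Proposition~\ref{P:crCompleteSaturated}(b).
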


\begin{proof}
Set $\E:=\F_T(\N)$ and $\F:=\F_S(\L)$. By Lemma~\ref{L:KernelBasic}(a),(d), $(\N,\Gamma,T)$ is a locality and $\E$ is $\F$-invariant. 

\smallskip

Assume first that $(\L,\Delta,S)$ is cr-complete. Then in particular, $\F$ is saturated by Proposition~\ref{P:crCompleteSaturated}(c). We show:
\begin{equation}\label{E:ECriticalinGamma}
 \mbox{If $Q\leq T$ is $\E$-critical, then $Q\in\Gamma$.}
\end{equation}
For the proof, let $Q$ be an $\E$-critical subgroup of $T$. Then there exists a fully $\F$-normalized $\F$-conjugate $P$ of $Q$, which by Lemma~\ref{L:crsqAutClosed} is also $\E$-critical. Hence, Lemma~\ref{L:AOV} implies the existence of an element $R\in\F^{cr}$ with $R\cap T=P$. As $\F^{cr}\subseteq\Delta$ and $(\N,\Gamma,T)$ is a kernel of $(\L,\Delta,S)$, it follows that $P\in\Gamma$. Hence, Lemma~\ref{L:KernelBasic}(a) yields $Q\in\Gamma$ and thus \eqref{E:ECriticalinGamma} holds. Proposition~\ref{P:crCompleteSaturated}(b) implies now that $\E$ is saturated and $(\N,\Gamma,T)$ is cr-complete.

\smallskip

To prove the other implication, assume that $(\N,\Gamma,T)$ is cr-complete. Then by Proposition~\ref{P:crCompleteSaturated}(c), $\E$ is saturated. Observe also that $\E^{cr}\subseteq\Gamma\subseteq\Delta$. As $\F$ is $\Delta$-saturated and $\Delta$-generated by Proposition~\ref{P:crCompleteSaturated}(a), it follows now from Theorem~\ref{T:FInvariantOliverExtensions2} that $\F$ is saturated. In particular, $\E$ is weakly normal in $\F$. So Lemma~\ref{L:RadicalIntersect} gives $R\cap T\in\E^{cr}\subseteq\Gamma\subseteq\Delta$ for every $R\in\F^{cr}$. As $\Delta$ is overgroup-closed, this implies that $(\L,\Delta,S)$ is cr-complete.
\end{proof}


\begin{lemma}\label{L:KernelBasiccd}
Let $(\N,\Gamma,T)$ be a kernel of a locality $(\L,\Delta,S)$. Set $\F:=\F_S(\L)$ and $\E:=\F_T(\N)$. Suppose $C_\N(T)\leq T$. Then the following hold:
\begin{itemize}
\item [(a)] $N_\N(T)\subseteq N_\N(TC_S(T))$.  
\item [(b)] If $\E$ and $\F$ are saturated, then $\E$ is normal in $\F$.
\end{itemize}
\end{lemma}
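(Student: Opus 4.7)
The plan is to recast all computations inside the finite group $G:=N_\L(T)$, which is a genuine subgroup of $\L$ by Lemma~\ref{L:LocalitiesProp}(a) since $T\in\Gamma\subseteq\Delta$. Inside $G$ one has $T\unlhd G$, and $N:=N_\N(T)=\N\cap G$ is a normal subgroup of $G$ because $\N\unlhd\L$; moreover $C_S(T)\leq N_S(T)\leq G$. The commutator estimate I will repeatedly exploit reads as follows: for $n\in N$ and $c\in C_S(T)$, the element $[n,c]$ lies in $N$ (by normality of $N$ in $G$) and acts trivially on $T$ by conjugation (since $n$ normalizes $T$ while $c$ centralizes it). Hence $[n,c]\in N\cap C_G(T)=C_\N(T)\leq T$ by the standing hypothesis, and in fact $[n,c]\in T\cap C_G(T)=Z(T)$.

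For part (a), I will take $n\in N_\N(T)$ and verify the two conditions defining $n\in N_\N(TC_S(T))$. The inclusion $TC_S(T)\subseteq\D(n)$ is immediate from Lemma~\ref{L:LocalitiesProp}(b): since $T\in\Delta$ and $T\subseteq S_n$, one has $N_\L(T)\subseteq\D(n)$, and $TC_S(T)\leq N_\L(T)$. The identity $(TC_S(T))^n=TC_S(T)$ follows from the commutator estimate above, which rearranges to $c^n=c\cdot[n,c]^{-1}\in cT\subseteq TC_S(T)$ for each $c\in C_S(T)$; together with $T^n=T$ this gives $(TC_S(T))^n\subseteq TC_S(T)$, and the reverse inclusion is obtained by running the same argument with $n^{-1}\in N_\N(T)$ in place of $n$.

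For part (b), I will verify Aschbacher's definition of a normal subsystem from \cite[Definition~I.6.1]{AKO}. Saturation of $\E$ is hypothesized and $\F$-invariance is supplied by Lemma~\ref{L:KernelBasic}(d), so only the extension axiom remains. Given $\alpha\in\Aut_\E(T)$, Lemma~\ref{L:NDPNHP}(a) applied to the locality $(\N,\Gamma,T)$ produces $n\in\N$ with $T\leq S_n$ and $\alpha=c_n|_T$; then $n\in N_\N(T)$, which part (a) promotes to $n\in N_\N(TC_S(T))$. Hence $\tilde\alpha:=c_n|_{TC_S(T)}\in\Aut_\F(TC_S(T))$ is a well-defined extension of $\alpha$, and the commutator condition $[\tilde\alpha,C_S(T)]\leq Z(T)$ is an immediate translation of the refined estimate $[n,c]\in Z(T)$ from the first paragraph.

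The point requiring some care, rather than a genuine obstacle, is to invoke Lemma~\ref{L:LocalitiesProp}(b) at the right moments so that the passage between the ambient partial group $\L$ and the genuine group $N_\L(T)$ is legitimate. Once the relevant elements are recognized as living inside the honest group $N_\L(T)$, the commutator manipulations are entirely elementary.
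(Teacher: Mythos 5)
Your proof is correct, and for part (a) it is genuinely more direct than the paper's. You establish the key commutator estimate $[n,c]\in N_\N(T)\cap C_\L(T)=C_\N(T)\leq T$ (in fact $Z(T)$) for $n\in N_\N(T)$, $c\in C_S(T)$, working throughout inside the honest group $N_\L(T)$, and then read off $c^n=c[n,c]^{-1}\in cT\subseteq TC_S(T)$ directly. The paper makes exactly the same commutator observation but applies it only to elements of $N_{C_S(T)}(P)$ where $P:=S_g$, concluding $N_{C_S(T)}(P)\leq P$, and then invokes the normalizer-grows property of $p$-groups on $PC_S(T)$ to deduce $C_S(T)\leq P$; your version bypasses that detour since the commutator estimate already shows $c^n\in S$ for \emph{every} $c\in C_S(T)$, giving $C_S(T)\leq S_n$ immediately. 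For part (b) the two arguments are essentially the same: you invoke Lemma~\ref{L:NDPNHP}(a) applied to $(\N,\Gamma,T)$ to realize an arbitrary $\alpha\in\Aut_\E(T)$ as $c_n|_T$ for a single $n\in N_\N(T)$ and then extend via part (a), where the paper appeals to the generation of $\Aut_\E(T)$ by such maps; both routes reduce the extension condition of \cite[Definition~I.6.1]{AKO} to the refined estimate $[n,c]\in Z(T)$, and both cite Lemma~\ref{L:KernelBasic} for $\F$-invariance (the paper's reference to part (b) of that lemma is apparently a typo for part (d), which you cite correctly).
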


\begin{proof}
\textbf{(a)} Notice that $T\in\Gamma\subseteq\Delta$ and thus $N_\L(T)$ is a group. The assumption $C_\N(T)\leq T$ yields that  $N_\N(T)$ is of characteristic $p$.

\smallskip

Let $g\in N_\N(T)$ and set $P:=S_g$. Then $T\leq P$ and so $P^g=P$ by \cite[Lemma~3.1(b)]{loc1}. Hence, $g\in K:=N_\N(P)$. Note that $N_\L(P)\leq N_\L(T)$ and so $K=N_{N_\N(T)}(P)\leq N_\L(T)$. Calculating inside the group $N_\L(T)$, we have 
\[[K,N_{C_S(T)}(P)]\leq \N\cap C_\L(T)=C_\N(T)\leq T,\]
where the last inclusion holds by assumption. Thus $[g,N_{C_S(T)}(P)]\leq T$ and so $N_{C_S(T)}(P)\leq S_g=P$. Therefore, we have $N_{PC_S(T)}(P)=PN_{C_S(T)}(P)=P$. As $PC_S(T)$ is a $p$-group, this implies $C_S(T)\leq P$ and thus $g$ acts on $T$ and $C_S(T)$. Hence (a) holds.

\smallskip

\textbf{(b)} Notice that part (a) implies $[TC_S(T),N_\N(T)]\leq (TC_S(T))\cap\N=T$. Since $\Aut_\F(T)$ is generated by maps of the form $c_f|_T$ with $f\in N_\L(T)$, the extension condition as stated in \cite[Definition~I.6.1]{AKO} follows. Now (b) follows from Lemma~\ref{L:KernelBasic}(b).
\end{proof}

\begin{lemma}\label{L:ProjectionKernel}
A projection from a locality $(\L^*,\Delta^*,S^*)$ to a locality $(\L,\Delta,S)$ sends every kernel of $(\L^*,\Delta^*,S^*)$ to a kernel of $(\L,\Delta,S)$.   
\end{lemma}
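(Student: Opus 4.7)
The plan is as follows. Write $\alpha\colon \L^*\to \L$ for the projection, and let $(\N^*,\Gamma^*,T^*)$ be the given kernel of $(\L^*,\Delta^*,S^*)$. Set $\N:=\N^*\alpha$. I aim to show that $\N$ is a partial normal subgroup of $\L$ with $P\cap\N\in\Delta$ for every $P\in\Delta$, which is exactly what is required for $\N$ to be a kernel of $(\L,\Delta,S)$ in the sense of Definition~\ref{D:Kernel}.

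The object condition is the straightforward part. Given $P\in\Delta$, the fact that $\alpha$ is a projection of localities means $\Delta=\Delta^*\alpha$, so I pick $P^*\in\Delta^*$ with $P^*\alpha=P$. The kernel hypothesis on $\N^*$ gives $P^*\cap\N^*\in\Delta^*$, hence $(P^*\cap\N^*)\alpha\in\Delta$. Since
$(P^*\cap\N^*)\alpha\subseteq P^*\alpha\cap\N^*\alpha=P\cap\N\leq S$,
overgroup-closure of $\Delta$ in $S$ forces $P\cap\N\in\Delta$, as desired.

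The main step is to show $\N\unlhd\L$. My approach is to invoke the correspondence between partial normal subgroups of $\L^*$ containing $K:=\ker(\alpha)$ and partial normal subgroups of $\L$ built into Chermak's quotient construction. Set $\M^*:=\N^*K$. Since both $K$ and $\N^*$ are partial normal in the locality $\L^*$, so is $\M^*$ (the product of two partial normal subgroups of a locality is partial normal, as noted in the introduction). Clearly $\M^*\alpha=\N^*\alpha\cdot K\alpha=\N$ and $K\subseteq \M^*$; the correspondence then yields that $\N$ is a partial normal subgroup of $\L$. More concretely, one may factor the natural projection $\L^*\to\L^*/\M^*$ through $\alpha$ to obtain a projection $\L\to\L^*/\M^*$ whose kernel is $\M^*\alpha=\N$, and kernels of projections of partial groups are partial normal subgroups by \cite[Lemma~1.14]{loc1}.

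I expect the main obstacle to be precisely this last invocation. A direct attempt to verify partial normality of $\N$ would lift a word in $\D\cap\W(\N)$ to a word in $\D_{\L^*}$ via the projection property, but the entries of such a lift are only forced to lie in the preimage $\alpha^{-1}(\N)$, which equals $\M^*$ and in general strictly contains $\N^*$. Passing to $\M^*=\N^*K$ and exploiting that $\M^*$ is itself partial normal in $\L^*$ is the step that renders the argument clean; without the general fact on products of partial normal subgroups of a locality, one would be forced into a more delicate analysis of maximal cosets in the sense of \cite{loc1}.
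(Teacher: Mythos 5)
Your treatment of the object condition is identical to the paper's: pick $P^*\in\Delta^*$ with $P^*\alpha=P$, note $P^*\cap\N^*\in\Delta^*$, observe $(P^*\cap\N^*)\alpha\subseteq P\cap\N$, and conclude by overgroup-closure of $\Delta$. This part is fine.

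The difference lies entirely in how you justify $\N:=\N^*\alpha\unlhd\L$. The paper disposes of this in one line by citing \cite[Lemma~2.5]{Henke:NK}, which states directly that the image of a partial normal subgroup under a projection of localities is a partial normal subgroup. You instead re-derive this from two other pieces of machinery: (i) the fact (recorded in the paper's introduction) that the product $\M^*:=\N^*K$ of two partial normal subgroups of a locality is partial normal, where $K=\ker(\alpha)$; and (ii) the correspondence/factorization theorem for quotients of localities, which lets you factor the natural projection $\L^*\to\L^*/\M^*$ through $\alpha$ and recognize $\N=\M^*\alpha$ as the kernel of the induced projection $\L\to\L^*/\M^*$, hence partial normal by \cite[Lemma~1.14]{loc1}. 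This is a genuinely different route: your argument amounts to a proof of the cited lemma rather than an invocation of it. It buys self-containedness within \cite{loc1}, at the cost of leaning on the (true but not entirely formal) assertion that such a factorization exists and is a projection; that step implicitly uses that the fibers of $\alpha$ are the maximal $K$-cosets and that these refine the maximal $\M^*$-cosets, which is the substance of Chermak's isomorphism/correspondence theorems for localities. Your diagnosis of why a naive lifting argument stalls --- the lift of a word in $\W(\N)$ has entries only in $\alpha^{-1}(\N)=\M^*$, not in $\N^*$ --- is accurate and is precisely what makes passing to $\M^*$ the right move. One small notational quibble: the intermediate equality ``$\M^*\alpha=\N^*\alpha\cdot K\alpha$'' is not quite a well-formed identity of partial products, though the end result $\M^*\alpha=\N$ that you actually use is correct and easy to verify directly.
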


\begin{proof}
Let $\varphi \colon \L^* \rightarrow \L$ be a projection of localities and let $\N^*$ be a kernel of $\L^*$.
Then $\N:=(\N^*)\varphi$ is a partial normal subgroup of $\L$ by \cite[Lemma~2.5]{Henke:NK}. Let $P \in \Delta$. Then $P=(P^*)\varphi$ for some $P^* \in\Delta^*$ since $\Delta^*\varphi=\Delta$. Since $\N^*$ is a kernel of $\L^*$ we have $\N^* \cap P^*\in \Delta^*$. Hence
\[ \N\cap P = (\N^*)\varphi \cap P^*\varphi\geq (\N^* \cap P^*)\varphi \in \Delta^*\varphi = \Delta.\]
As $\Delta$ is overgroup-closed, it follows that $\N\cap P\in\Delta$ and thus $\N$ is a kernel of $\L$.
\end{proof}

For the next lemma, the reader might want to recall Definition~\ref{D:RestrictionLocality}. By $\E^q$ we denote the quasicentric subgroups of a saturated fusion system $\E$; for the definition see \cite[Definition~III.4.5]{AKO}.

\begin{lemma}\label{L:ProjectionPartialNormal}
Let $(\L,\Delta,S)$ be a locality over $\F$ with cr-complete kernel $(\N,\Gamma,T)$. Set $\E:=\F_T(\N)$. Let $\E^{cr}\subseteq\Gamma_0\subseteq\Gamma\cap\E^q$ such that $\Gamma_0$ is $\E$-closed and $\Aut_\F(T)$-invariant, and let $\Delta_0$ be the set of overgroups in $S$ of the elements of $\Gamma_0$. Set \[\L_0:=\L|_{\Delta_0},\;\N_0:=\N\cap\L_0\mbox{ and }\Theta=\bigcup_{P\in\Gamma_0}O_{p^\prime}(N_\N(P)).\] 
Then the following hold:
\begin{itemize}
 \item[(a)] $(\L_0,\Delta_0,S)$ is a cr-complete locality over $\F$ with cr-complete kernel $(\N_0,\Gamma_0,T)$. Moreover, $\N_0=\N|_{\Gamma_0}$ and  $\F_T(\N_0)=\E$.
 \item[(b)] $\Theta$ is a partial normal subgroup of $\L_0$ with $\Theta\cap S=1$. In particular, setting $\ov{\L}_0=\L_0/\Theta$ and using the ``bar notation'' as usual, $S$ and $T$ are naturally isomorphic to $\ov{S}$ and $\ov{T}$ via the restriction of the natural projection map $\L_0\rightarrow\ov{\L}_0$. 
 \item[(c)] Identify $\ov{S}$ and $\ov{T}$ with $S$ and $T$. Then the following conditions hold:
\begin{itemize}
\item $(\ov{\L}_0,\Delta_0,S)$ is a locality over $\F$.
\item $(\ov{\N}_0,\Gamma_0,T)$ is a kernel of $(\ov{\L}_0,\Delta_0,S)$, which is a linking locality over $\E$.
\end{itemize}
\end{itemize}
\end{lemma}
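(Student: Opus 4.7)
The plan is to handle the three parts in sequence, treating part (b) as the technical crux. Throughout I write $\alpha\colon\L_0\to\ov{\L}_0$ for the natural projection.

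For part (a), I first verify $\Delta_0$ is $\F$-closed. Overgroup-closure in $S$ is built into the definition. For $\F$-conjugacy closure, fix $P\in\Delta_0$ and $\phi\in\Hom_\F(P,S)$. Realize $\phi=c_g|_P$ for some $g\in\L$ with $P\leq S_g$, and apply Lemma~\ref{L:FrattiniSplitting} to write $g=nf$ with $n\in\N$, $f\in N_\L(T)$, $S_g=S_{(n,f)}$. Then $\phi$ decomposes on $P\cap T$ as an $\N$-conjugation (preserving $\Gamma_0$ by $\E$-closure) followed by conjugation by $f$ (preserving $\Gamma_0$ by $\Aut_\F(T)$-invariance), and overgroup-closure of $\Gamma_0$ in $T$ yields $P\phi\in\Delta_0$. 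Once $\Delta_0$ is $\F$-closed, Definition~\ref{D:RestrictionLocality} yields the locality $(\L_0,\Delta_0,S)$. A direct check, using that elements of $\N$ send $S_n\cap T$ into $T$, gives $\N_0=\N|_{\Gamma_0}$; the kernel axioms for $(\N_0,\Gamma_0,T)$ inside $(\L_0,\Delta_0,S)$ follow from overgroup-closure of $\Gamma_0$ in $T$ together with $\N_0\unlhd\L_0$. That $\F_T(\N_0)=\E$ follows from $\E^{cr}\subseteq\Gamma_0$ via Alperin's fusion theorem in the saturated system $\E$: every generating morphism of $\E$ decomposes through objects of $\Gamma_0$ and is realized by elements of $\N_0$ (since $N_\N(P)\subseteq\N_0$ whenever $P\in\Gamma_0$). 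Cr-completeness of both triples then follows from Proposition~\ref{Prop:kernel.cr.complete}.

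For part (b), the main obstacle is showing $\Theta$ is a partial normal subgroup of $\L_0$. Inversion-closure is immediate since each $O_{p'}(N_\N(P))$ is a subgroup. For conjugation-closure, given $g\in\L_0$ and $x\in\Theta\cap\D(g)$ with $x\in O_{p'}(N_\N(P))$, I plan to locate $R\in\Gamma_0$ with $R\leq S_g$ and $x\in O_{p'}(N_\N(R))$: then the group isomorphism $c_g\colon N_\L(R)\to N_\L(R^g)$ of Lemma~\ref{L:LocalitiesProp}(b) restricts to an isomorphism $N_\N(R)\to N_\N(R^g)$ (since $\N\unlhd\L$) and carries the characteristic subgroup $O_{p'}(N_\N(R))$ to $O_{p'}(N_\N(R^g))$, while $R^g\in\Gamma_0$ by the closure proved in (a). The delicate step is producing such $R$: one uses an object chain in $\D_{\Delta_0}$ witnessing $(g^{-1},x,g)\in\D$, together with the hypothesis $\Gamma_0\subseteq\E^q$, which provides the compatibility of $O_{p'}$-cores across different containing $\Gamma_0$-objects. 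Product-closure along defining $\D$-chains is handled analogously. Granting that $\Theta$ is a partial normal subgroup, $\Theta\cap S=1$ is immediate because elements of $\Theta$ are $p'$-elements while $S$ is a $p$-group. Consequently $\alpha|_S$ is injective; since $\ov{S}$ is a maximal $p$-subgroup of $\ov{\L}_0$ containing $S\alpha$, maximality forces $S\alpha=\ov{S}$, and likewise $T\alpha=\ov{T}$.

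For part (c), identify $\ov{S}$ with $S$ and $\ov{T}$ with $T$ via $\alpha$; then $\alpha$ is a projection of localities from $(\L_0,\Delta_0,S)$ onto $(\ov{\L}_0,\Delta_0,S)$, as $\ker(\alpha)\cap S=1$ makes $\alpha|_S$ an identification of $\Delta_0$ with $\Delta_0$. Then $(\ov{\N}_0,\Gamma_0,T)$ is a kernel of $(\ov{\L}_0,\Delta_0,S)$ by Lemma~\ref{L:ProjectionKernel}, and cr-completeness is preserved under $\alpha$. It remains to check objective characteristic $p$: for $P\in\Gamma_0$, the group $N_{\ov{\N}_0}(P)$ is the quotient of $N_{\N_0}(P)=N_\N(P)$ by the normal $p'$-subgroup $O_{p'}(N_\N(P))\leq\Theta$, and the $\E$-quasicentric hypothesis $\Gamma_0\subseteq\E^q$ ensures this quotient has characteristic $p$. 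Together with the cr-completeness already established, this gives the linking locality conclusion.
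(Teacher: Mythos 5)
Your overall plan is in the right direction, but there is a genuine gap at the step you yourself flag as ``delicate,'' and this gap propagates into part~(c).

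In part~(b), showing that $\Theta=\bigcup_{P\in\Gamma_0}O_{p'}(N_\N(P))$ is a partial normal subgroup (and in particular a partial subgroup, which is already nontrivial: you must check closure under products of words in $\D\cap\W(\Theta)$, not merely inversion and conjugation) requires exactly the ``compatibility of $O_{p'}$-cores'' that you assert but do not prove. The hypothesis $\Gamma_0\subseteq\E^q$ does \emph{not} by itself tell you that, given $x\in O_{p'}(N_\N(P))$ and $P'\leq P$ with $P'\in\Gamma_0$ and $x\in N_\N(P')$, one has $x\in O_{p'}(N_\N(P'))$; this coherence is the heart of the matter and takes real work. The paper sidesteps it by invoking \cite[Proposition~6.4]{subcentric} applied to the locality $(\N_0,\Gamma_0,T)$, which in one stroke yields that $\Theta\unlhd\N_0$, that $T\cap\Theta=1$, and that $(\N_0/\Theta,\Gamma_0,T)$ is a linking locality over $\E$. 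It then upgrades $\Theta\unlhd\N_0$ to $\Theta\unlhd\L_0$ by a much softer argument: elements of $N_{\L_0}(T)$ act on $\Gamma_0$ (since $\Gamma_0$ is $\Aut_\F(T)$-invariant) and hence permute the groups $O_{p'}(N_\N(P))$, so $\Theta$ is $N_{\L_0}(T)$-invariant, and \cite[Corollary~3.13]{loc1} finishes. Your plan of directly chasing object chains inside $\D_{\Delta_0}$ would in effect reprove \cite[Proposition~6.4]{subcentric}; without actually doing so, the argument is incomplete.

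The same missing coherence resurfaces in part~(c): your claim that $N_{\ov{\N}_0}(P)$ is the quotient of $N_\N(P)$ by $O_{p'}(N_\N(P))$ requires $\Theta\cap N_\N(P)=O_{p'}(N_\N(P))$, which is not immediate from the definition of $\Theta$ as a union. Again, this is precisely one of the outputs of \cite[Proposition~6.4]{subcentric}. In the paper's route, objective characteristic $p$ of $(\ov{\N}_0,\Gamma_0,T)$ comes for free from that proposition, and then cr-completeness plus Lemma~\ref{L:ProjectionKernel} give the rest.

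Two smaller points in part~(a): you establish $\F_T(\N_0)=\E$ via Alperin in $\E$, but the statement also asserts that $(\L_0,\Delta_0,S)$ is a locality \emph{over $\F$}, i.e.\ $\F_S(\L_0)=\F$; this needs an argument (the paper gets $\F^{cr}\subseteq\Delta_0$ from Lemma~\ref{L:RadicalIntersect} and then applies Alperin in $\F$). Also, the paper's way of proving $\F$-conjugacy closure of $\Gamma_0$ is cleaner than your Frattini-splitting decomposition: since $\E$ is $\F$-invariant by Lemma~\ref{L:KernelBasic}(d), the Frattini condition for $\F$-invariant subsystems immediately turns ``$\E$-closed and $\Aut_\F(T)$-invariant'' into ``closed under $\F$-conjugacy,'' without having to realize conjugations by elements of $\L$.
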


\begin{proof}
\textbf{(a)} Recall from Lemma~\ref{L:KernelBasic}(d) that $\E$ is $\F$-invariant. As $\Gamma_0$ is $\Aut_\F(T)$-invariant and closed under $\E$-conjugacy, it follows from the Frattini condition for $\F$-invariant subsystems (cf. \cite[Definition~I.6.1]{AKO}) that $\Gamma_0$ is closed under $\F$-conjugacy. Thus, $\Delta_0$ is $\F$-closed, $\L_0$ is well-defined and $(\L_0,\Delta_0,S)$ is a locality. It is easy to observe that $\N_0$ is a partial normal subgroup of $\L_0$. It follows from the definition of $\Delta_0$ that $(\N_0,\Gamma_0,T)$ is a kernel of $(\L_0,\Delta_0,S)$ and $\N_0=\N|_{\Gamma_0}$ (see also Lemma~\ref{L:DDeltaDGamma}). As $\E^{cr}\subseteq \Gamma_0$,  we can conclude from Lemma~\ref{L:RadicalIntersect} that $\F^{cr}\subseteq \Delta_0\subseteq\Delta$. In particular, $\F$ and $\E$ are saturated by Proposition~\ref{P:crCompleteSaturated}(c). Note furthermore that, for all $P\in\Delta$,  $N_\L(P)=N_{\L_0}(P)$ and thus, by Lemma~\ref{L:NDPNHP}(a), $\Aut_\F(P)=\Aut_{\F_S(\L_0)}(P)$. Similarly, $\Aut_\E(Q)=\Aut_{\F_T(\N_0)}(Q)$ for all $Q\in\Gamma$. Hence, it follows from Alperin's Fusion Theorem \cite[Theorem~I.3.6]{AKO} (combined with Lemma~\ref{L:radical}(b)) that $\F=\F_S(\L_0)$ and $\E=\F_T(\N_0)$. This proves (a).

\smallskip

\textbf{(b,c)} We argue first that 
\begin{equation}\label{E:AlmostCharp}
\mbox{$N_{\N}(P)/O_{p^\prime}(N_{\N}(P))$ is of characteristic $p$ for all $P\in \Gamma_0$.} 
\end{equation}
By Lemma~\ref{L:LocalitiesProp}(b) we may reduce to the case that $P$ is fully $\E$-normalized. So let $P\in\Gamma_0$ be fully $\E$-normalized. As $\Gamma_0\subseteq \E^q$ \cite[Proposition~1(c)]{subcentric} gives that $C_\N(P)/O_{p^\prime}(C_\N(P))$ is a $p$-group. In particular, $C_\N(P)/O_{p^\prime}(C_\N(P))$ is of characteristic $p$ and thus  $C_\N(P)=C_{N_\N(P)}(P)$ is ``almost of characteristic $p$'' in the sense of \cite[Definition~2.6]{subcentric}. It follows therefore from \cite[Lemma~2.9]{subcentric} (applied with $N_\N(P)$ in place of $G$) that $N_\N(P)$ is ``almost of characteristic $p$'', which means that $N_\N(P)/O_{p^\prime}(N_\N(P))$ is of characteristic $p$. This proves \eqref{E:AlmostCharp}.

\smallskip

As $N_{\N_0}(P)=N_\N(P)$ for all $P\in\Gamma_0$, it follows from \eqref{E:AlmostCharp} and from \cite[Proposition~6.4]{subcentric} applied with $(\N_0,\Gamma_0,T)$ in place of $(\L,\Delta,S)$ that $\Theta$ is a partial normal subgroup of $\N_0$, that $T\cap\Theta=1$, that the canonical projection $\N_0\rightarrow \N_0/\Theta$ is injective on $T$, and that $(\N_0/\Theta,\Gamma_0,T)$ is a linking locality over $\E$ if we identify $T$ with its image in $\N_0/\Theta$. The elements of $N_\L(T)=N_{\L_0}(T)$ induce $\F$-automorphisms of $T$ and act thus by assumption on $\Gamma_0$ via conjugation. Hence, it follows from Lemma~\ref{L:LocalitiesProp}(b) that $\Theta$ is invariant under conjugation by elements of $N_{\L_0}(T)$.  Now \cite[Corollary~3.13]{loc1} gives that $\Theta$ is a partial normal subgroups of $\L_0$. Moreover, $S\cap\Theta=S\cap\N\cap\Theta=T\cap\Theta=1$. This means that the kernel of the natural projection $\rho\colon \L_0\rightarrow \ov{\L_0}:=\L_0/\Theta$ intersects trivially with $S$ and restricts thus to isomorphisms $S\rightarrow \ov{S}$ and $T\rightarrow \ov{T}$. Hence (b) holds. 

\smallskip

We use now the notation introduced in (b) and (c). By \cite[Theorem~4.3]{loc1}, $(\ov{\L}_0,\Delta_0,S_0)$ is a locality. It is a special case of \cite[Lemma~2.21(b)]{Henke:2020} that $\F_S(\ov{\L}_0)=\F_S(\L_0)=\F$. Using \cite[Lemma~3.15]{loc1} one can observe that $\rho|_{\N_0}$ coincides with the canonical projection $\N_0\rightarrow \N_0/\Theta$. Thus, $(\ov{\N}_0,\Gamma_0,T)=(\N_0/\Theta,\Gamma_0,T)$ is a linking locality over $\E$. Now (c) follows from Lemma~\ref{L:ProjectionKernel}.
\end{proof}

\begin{prop}\label{P:KernelcrLinking}
 Let $\F$ be a fusion system over $S$ with a subsystem $\E$ over $T$. Then the following conditions are equivalent:
\begin{itemize}
 \item [(i)] There exists a locality $(\L,\Delta,S)$ over $\F$ with a cr-complete kernel $(\N,\Gamma,T)$ over $\E$.
 \item [(ii)] There exists a locality $(\L,\Delta,S)$ over $\F$ with a kernel $(\N,\Gamma,T)$, which is a linking locality over $\E$.
\end{itemize}
If either of these two conditions holds, then $\E$ is a normal subsystem of $\F$.
\end{prop}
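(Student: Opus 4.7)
The proof has two main tasks: establishing the equivalence of (i) and (ii), and showing $\E \unlhd \F$ under either condition. The implication (ii) $\Rightarrow$ (i) will be immediate because a linking locality is cr-complete by definition, so the kernel provided in (ii) is already a cr-complete kernel. The normality statement will follow directly from Theorem~\ref{T:crComplete}: as soon as we have a cr-complete kernel (which is available under either hypothesis) we obtain $\E = \F_T(\N) \unlhd \F_S(\L) = \F$.

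For the substantive implication (i) $\Rightarrow$ (ii), I would start from a locality $(\L,\Delta,S)$ over $\F$ with a cr-complete kernel $(\N,\Gamma,T)$ over $\E$. Theorem~\ref{T:crComplete} together with Proposition~\ref{P:crCompleteSaturated}(c) guarantee that $\F$ and $\E$ are saturated, and Lemma~\ref{L:KernelBasic}(d) gives that $\E$ is $\F$-invariant. My plan is to shrink the object sets to those above $\E^{cr}$ and then kill off the $p'$-residues of the local groups via Lemma~\ref{L:ProjectionPartialNormal}, which is precisely tailored to this situation.

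Concretely, I would set $\Gamma_0$ to be the set of subgroups of $T$ containing some element of $\E^{cr}$, and verify the hypotheses of Lemma~\ref{L:ProjectionPartialNormal}. The inclusion $\E^{cr} \subseteq \Gamma_0$ is immediate. Since $\E^q$ is overgroup-closed in $T$ and contains $\E^{cr}$, we get $\Gamma_0 \subseteq \E^q$; similarly, $\Gamma$ is overgroup-closed in $T$ and contains $\E^{cr}$ by cr-completeness, so $\Gamma_0 \subseteq \Gamma$. The set $\Gamma_0$ is overgroup-closed in $T$ by construction, and it is closed under $\E$-conjugacy because $\E^{cr}$ is (any $\E$-isomorphism carries an $\E$-centric radical subgroup inside to another such subgroup inside the image). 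Finally, Lemma~\ref{L:crsqAutClosed} applied to the $\F$-invariant subsystem $\E$ shows that $\E^{cr}$ is closed under $\F$-conjugacy, and passing to overgroups makes $\Gamma_0$ invariant under $\Aut_\F(T)$. Letting $\Delta_0$ be the set of overgroups in $S$ of elements of $\Gamma_0$, Lemma~\ref{L:ProjectionPartialNormal}(c) then yields a locality $(\ov{\L}_0,\Delta_0,S)$ over $\F$ with kernel $(\ov{\N}_0,\Gamma_0,T)$ that is a linking locality over $\E$, establishing (ii).

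The only real obstacle is bookkeeping: checking that $\Gamma_0$ sits in the window $\E^{cr} \subseteq \Gamma_0 \subseteq \Gamma \cap \E^q$ and has all the required closure properties so that Lemma~\ref{L:ProjectionPartialNormal} applies. All the substantive work, namely the construction of the quotient by $\Theta = \bigcup_{P \in \Gamma_0} O_{p'}(N_{\N}(P))$ and the verification that the resulting kernel is of objective characteristic $p$, is already encapsulated in that lemma.
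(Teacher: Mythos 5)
Your handling of the two implications is essentially sound. For (ii) $\Rightarrow$ (i) you correctly observe it is trivial, and for (i) $\Rightarrow$ (ii) your choice of $\Gamma_0$ (the overgroup closure of $\E^{cr}$ in $T$) differs slightly from the paper's choice $\Gamma_0 := \Gamma \cap \E^c$, but both satisfy the hypotheses of Lemma~\ref{L:ProjectionPartialNormal}: since $\E^c$ is overgroup-closed and contains $\E^{cr}$, your $\Gamma_0$ sits inside $\E^c \subseteq \E^q$, and the closure under $\E$-conjugacy and $\Aut_\F(T)$-action follows from Lemma~\ref{L:crsqAutClosed} as you say. So that part is fine and genuinely interchangeable with the paper's route.

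The normality part has a real problem: it is circular. You propose to cite Theorem~\ref{T:crComplete} to conclude $\E \unlhd \F$, but that theorem is \emph{proved} in the paper by invoking Proposition~\ref{P:KernelcrLinking} (the very result you are proving) together with Proposition~\ref{Prop:kernel.cr.complete}. Proposition~\ref{Prop:kernel.cr.complete} only gives the ``cr-complete $\Leftrightarrow$ cr-complete'' equivalence; the normality clause of Theorem~\ref{T:crComplete} has no source other than the proposition at hand. The substantive step you are missing is this: once (ii) is established you obtain a kernel that is a linking locality, so $N_\N(T)$ has characteristic $p$ and hence $C_\N(T) \leq T$; that extra hypothesis is exactly what Lemma~\ref{L:KernelBasiccd}(b) needs (together with saturation of $\E$ and $\F$, supplied by Proposition~\ref{Prop:kernel.cr.complete} and Proposition~\ref{P:crCompleteSaturated}(c)) to conclude $\E \unlhd \F$. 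Your proposal never secures $C_\N(T) \leq T$ and never touches Lemma~\ref{L:KernelBasiccd}, so the normality claim as written does not close.
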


\begin{proof}
 Clearly (ii) implies (i) as every linking locality is cr-complete. Assume now that (i) holds. Notice then that $\E^{cr}\subseteq  \Gamma_0:=\Gamma\cap\E^c\subseteq \Gamma\cap\E^q$. Moreover, $\Gamma_0$ is $\E$-closed and $\Aut_\F(T)$ invariant as the same holds for $\Gamma$ and $\E^c$ (cf. Lemma~\ref{L:crsqAutClosed}). Hence, (ii) follows from Lemma~\ref{L:ProjectionPartialNormal}(c). 

\smallskip

Assume now (ii) holds. If $(\N,\Gamma,T)$ is a kernel of a locality $(\L,\Delta,S)$ over $\F$ such that $(\N,\Gamma,T)$ is a linking locality over $\E$, then $(\N,\Gamma,T)$ is in particular cr-complete. Thus, $(\L,\Delta,S)$ is cr-complete by Proposition~\ref{Prop:kernel.cr.complete}. Hence $\E$ and $\F$ are saturated by Proposition~\ref{P:crCompleteSaturated}(c). So $\E\unlhd\F$ by Lemma~\ref{L:KernelBasiccd}(b). This proves the assertion.
\end{proof}

\begin{proof}[Proof of Theorem~\ref{T:crComplete}]
The statement follows from Proposition~\ref{Prop:kernel.cr.complete} and Proposition~\ref{P:KernelcrLinking}.
\end{proof}

\begin{prop}\label{P:ObjectiveCharpCharacterize}
 Let $(\L,\Delta,S)$ be a locality with a kernel $(\N,\Gamma,T)$. Then the following conditions are equivalent:
\begin{itemize}
\item [(i)] $(\L,\Delta,S)$ is of objective characteristic $p$.
\item [(ii)] $N_\L(T)$ is of characteristic $p$ and $(\N,\Gamma,T)$ is of objective characteristic $p$.
\item [(iii)] $ C_\L(T)$ is of characteristic $p$ and $(\N,\Gamma,T)$ is of objective characteristic $p$.
\item [(iv)] $N_\L(P)$ is of characteristic $p$ for every $P\in\Gamma$.
\end{itemize}
\end{prop}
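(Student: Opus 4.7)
The plan is to establish the cycle of implications (i)$\Leftrightarrow$(iv) and (iv)$\Leftrightarrow$(ii)$\Leftrightarrow$(iii). Most of these fall out as short applications of Lemma~\ref{prop.char.p.group} and Lemma~\ref{L:CharpCharacterize}, while the core work is an induction needed for (ii)$\Rightarrow$(iv).

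The easy implications go as follows. Since $\N\unlhd\L$, for every $P\in\Delta$ the subgroup $N_\N(P)=\N\cap N_\L(P)$ is normal in $N_\L(P)$. Combining this with $\Gamma\subseteq\Delta$, the direction (i)$\Rightarrow$(iv) is immediate. For (iv)$\Rightarrow$(i), given $P\in\Delta$ I set $Q:=P\cap\N\in\Gamma$ and note $P\leq N_\L(Q)$ and $N_\L(P)=N_{N_\L(Q)}(P)$, so Lemma~\ref{prop.char.p.group}(a) applied to the nontrivial $p$-subgroup $P$ of the characteristic-$p$ group $N_\L(Q)$ yields the claim. Both (iv)$\Rightarrow$(ii) and (ii)$\Rightarrow$(iii) follow from Lemma~\ref{prop.char.p.group}(b): (iv)$\Rightarrow$(ii) since $T\in\Gamma$ and $N_\N(P)\unlhd N_\L(P)$ for every $P\in\Gamma$; (ii)$\Rightarrow$(iii) since $C_\L(T)\unlhd N_\L(T)$. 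For (iii)$\Rightarrow$(ii), I apply Lemma~\ref{L:CharpCharacterize} to the group $N_\L(T)$ with normal subgroup $N_\N(T)$, whose Sylow $p$-subgroup is $T$ (any $p$-subgroup $P\leq N_\N(T)$ containing $T$ satisfies $TP\leq \N$ and, $T$ being a maximal $p$-subgroup of $\N$, forces $P=T$). The centralizer $C_{N_\L(T)}(N_\N(T))$ sits inside $C_\L(T)$ and is normal in $C_\L(T)$ (being the centralizer in $N_\L(T)$ of a normal subgroup), hence is of characteristic $p$ by Lemma~\ref{prop.char.p.group}(b), and the cited lemma then concludes that $N_\L(T)$ is of characteristic~$p$.

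The key step is (ii)$\Rightarrow$(iv), proved by induction on $|T:Q|$ for $Q\in\Gamma$. The base case $Q=T$ is exactly hypothesis (ii). For the inductive step with $Q<T$, I first reduce to $Q$ fully $\F_S(\L)$-normalized: by Lemma~\ref{L:KernelBasic}(a), $\Gamma$ is closed under $\F_S(\L)$-conjugacy, and by Lemma~\ref{L:LocalitiesProp}(b), any $f\in\L$ with $Q^f\in\Gamma$ induces a group isomorphism $c_f\colon N_\L(Q)\to N_\L(Q^f)$ which preserves the characteristic-$p$ property and $|T:Q|$. For such fully normalized $Q$, $N_S(Q)\in\Syl_p(N_\L(Q))$, and intersecting with the normal subgroup $\N\cap N_\L(Q)=N_\N(Q)$ gives $N_T(Q)\in\Syl_p(N_\N(Q))$. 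Since $Q<T$ forces $N_T(Q)>Q$, we have $|T:N_T(Q)|<|T:Q|$, so the inductive hypothesis yields that $N_\L(N_T(Q))$ is of characteristic $p$. Then
\[
N_{N_\L(Q)}(N_T(Q))\;=\;N_\L(Q)\cap N_\L(N_T(Q))\;=\;N_{N_\L(N_T(Q))}(Q)
\]
is the normalizer of the nontrivial $p$-subgroup $Q$ in a characteristic-$p$ group, hence itself of characteristic $p$ by Lemma~\ref{prop.char.p.group}(a). Together with $N_\N(Q)$ of characteristic $p$ (from (ii) applied in $(\N,\Gamma,T)$), Lemma~\ref{L:CharpCharacterize} applied to $N_\L(Q)$ with normal subgroup $N_\N(Q)$ and its Sylow $N_T(Q)$ forces $N_\L(Q)$ to be of characteristic $p$, closing the induction.

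The main obstacle is the inductive step of (ii)$\Rightarrow$(iv): one must simultaneously reduce to a fully normalized $Q$ in order to identify the Sylow $p$-subgroup of $N_\N(Q)$, and find a strictly smaller object (namely $N_T(Q)$) on which to invoke the inductive hypothesis; strict descent is guaranteed only because normalizers grow in the $p$-group $T$. Lemma~\ref{L:CharpCharacterize} is the natural device that packages the characteristic-$p$ information on $N_\N(Q)$ and on the normalizer of its Sylow into the conclusion about $N_\L(Q)$.
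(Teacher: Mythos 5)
Your proposal is correct and follows essentially the same strategy as the paper: the easy implications are dispatched with Lemma~\ref{prop.char.p.group}, and the substantive direction (ii)$\Rightarrow$(iv) is handled by applying Lemma~\ref{L:CharpCharacterize} inside $N_\L(Q)$ after descending from $N_T(Q)$ — your induction on $|T:Q|$ is the same argument as the paper's choice of a counterexample $P$ of maximal order, just phrased in the opposite direction. The one place you genuinely diverge is the equivalence of (ii) and (iii): the paper simply invokes an external result (\cite[Lemma~2.9]{subcentric}) applied to the group $N_\L(T)$, whereas you give a self-contained argument by applying Lemma~\ref{L:CharpCharacterize} to $N_\L(T)$ with normal subgroup $N_\N(T)$ and checking that $C_{N_\L(T)}(N_\N(T))$ inherits characteristic $p$ from $C_\L(T)$; this is a nice alternative that keeps everything internal to the paper's toolkit, at the cost of a slightly longer verification (identifying $T$ as a Sylow of $N_\N(T)$ and the normality of $C_{N_\L(T)}(N_\N(T))$ in $C_\L(T)$), and it only proves (iii)$\Rightarrow$(ii) rather than the equivalence directly, which is fine since (ii)$\Rightarrow$(iii) you already have.
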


\begin{proof}
By Lemma~\ref{L:KernelBasic}(a), $(\N,\Gamma,T)$ is a locality. Notice that $T=S\cap\N\in\Gamma\subseteq\Delta$ and thus $N_\L(T)$ is a finite group with $C_{N_\L(T)}(T)=C_\L(T)$. So properties (ii) and (iii) are equivalent by \cite[Lemma~2.9]{subcentric}. Hence, it is sufficient to show that properties (i),(ii) and (iv) are equivalent. 

\smallskip

If (i) holds then $N_\L(P)$ is of characteristic $p$ for every $P\in\Gamma$. In particular, $N_\L(T)$ is of characteristic $p$. Moreover, by Lemma~\ref{prop.char.p.group}(b), $N_\N(P)\unlhd N_\L(P)$ is of characteristic $p$ for every $P\in\Gamma$ showing that $(\N,\Gamma,T)$ is of objective characteristic $p$. So (i) implies (ii).

\smallskip

To show that (ii) implies (iv), assume now that (ii) holds. Suppose furthermore that (iv) is false, i.e. there exists $P\in\Gamma$ such that $G:=N_\L(P)$ is not of characteristic $p$. Choose such $P$ of maximal order. By Lemma~\ref{L:LocalitiesProp}(b) and \cite[Lemma~2.9]{loc1}, we may replace $P$ by a suitable $\L$-conjugate of $P$ and assume that $N_S(P)$ is a Sylow $p$-subgroup of $G$. As (ii) holds, we have $P\neq T$ and thus $P<N_T(P)$. Hence, by the maximality of $|P|$, the group $N_\L(N_T(P))$ has characteristic $p$. Hence, by Lemma~\ref{prop.char.p.group}(a), $N_G(N_T(P))=N_{N_\L(N_T(P))}(P)$ is of characteristic $p$. As $(\N,\Gamma,T)$ is of objective characteristic $p$, we know also that $N:=N_\N(P)\unlhd G$ is of characteristic $p$. Notice that $N_T(P)=N_S(P)\cap N$ is a Sylow $p$-subgroup of $N$, as $N_S(P)$ is a Sylow $p$-subgroup of $G$. So it follows from Lemma~\ref{L:CharpCharacterize} that $G$ is of characteristic $p$ contradicting our assumption. Hence, (ii) implies (iv).

\smallskip

Assume now that (iv) holds. If $P\in\Delta$ is arbitrary, then $P\cap\N\in\Gamma$ and so $N_\L(P\cap\N)$ is of characteristic $p$. Observe that $N_\L(P)\subseteq N_\L(P\cap\N)$ and thus $N_\L(P)=N_{N_\L(P\cap\N)}(P)$ is of characteristic $p$ by Lemma~\ref{prop.char.p.group}(a). Hence, (iv) implies (i).  
\end{proof}

\begin{proof}[Proof of Theorem~\ref{T:KernelLinkingLocality}]
 The statement follows from Proposition~\ref{Prop:kernel.cr.complete} and Proposition~\ref{P:ObjectiveCharpCharacterize}. 
\end{proof}

\begin{proof}[Proof of Theorem~\ref{T:Regular}]
Set $\F:=\F_S(\L)$ and $\E:=\F_T(\N)$. By \cite[Lemma~3.28(c)]{normal}, $(\L,\Delta,S)$ is a linking locality if and only if $(\L,\tDelta,S)$ is a linking locality and similarly, $(\N,\Gamma,T)$ is a linking locality if and only if $(\N,\tGamma,T)$ is a linking locality.\footnote{It should be noted here that the term ``proper locality'' used in \cite{normal} means exactly the same as the term ``linking locality''.} Thus, supposing from now on that $(\L,\Delta,S)$ and $(\N,\Gamma,T)$ are linking localities, it is by Theorem~\ref{T:KernelLinkingLocality} enough to show that $\tDelta=\delta(\F)$ if and only if $\tGamma=\delta(\E)$ and that $E(\L)=E(\N)$.

\smallskip

By Proposition~\ref{P:KernelcrLinking}, $\E\unlhd\F$. Our assumption yields moreover that $N_\L(T)$ is of characteristic $p$. Hence, by Lemma~\ref{L:NDPNHP}(b), $N_\F(T)=\F_S(N_\L(T))$ is constrained. It follows then from \cite[Corollary~7.18, Lemma~7.19]{normal} that $E(C_\F(\E))=E(N_\F(T))$ is the trivial fusion system. By \cite[Lemma~7.13(c)]{normal} this implies $E(\F)=E(\E)$. 

\smallskip

By \cite[Lemma~11.13]{Henke:Regular}, $E(\N)\unlhd\L$. Moreover, by \cite[Theorem~E(d)]{normal}, $\F_{S\cap E(\N)}(E(\N))=E(\E)=E(\F)$ and $E(\L)$ is the unique partial normal subgroup of $\L$ with $\F_{S\cap E(\L)}(E(\L))=E(\F)$. Hence, $E(\N)=E(\L)$.

\smallskip

By Lemma~\ref{L:OpL} and \cite[Proposition~5]{subcentric}, $O_p(\F)=O_p(\L)$ and $O_p(\E)=O_p(\N)$. Moreover, $O_p(\L)\cap T=O_p(\L)\cap\N=O_p(\N)$ by Lemma~\ref{L:KernelBasic}(c).  

\smallskip

Assume first that $\tGamma=\delta(\E)$. Let $P\leq S$ with $O_p(\L)=O_p(\F)\leq P$. Then $O_p(\N)\leq P\cap T$ and thus $P\cap T\in\Gamma$ if and only if $P\cap T\in\tGamma$. Hence,
\begin{eqnarray*}
P\in\delta(\F) &\Longleftrightarrow& P\cap T\in\delta(\E) \mbox{ (by Lemma~\ref{L:deltaFR}(a))}\\
&\Longleftrightarrow & P\cap T\in\tGamma \mbox{ (as $\tGamma=\delta(\E)$)}\\
&\Longleftrightarrow & P\cap T\in\Gamma \mbox{ (as noted above)}\\
&\Longleftrightarrow & P\in\Delta\mbox{ (since $(\N,\Gamma,T)$ is a kernel).}
\end{eqnarray*}
Since the above equivalences hold for every $P\leq S$ with $O_p(\L)\leq P$, it follows now from \cite[Lemma~10.6]{Henke:Regular} that $\delta(\F)=\{P\leq S\colon PO_p(\L)\in\delta(\F)\}=\{P\leq S\colon PO_p(\L)\in\Delta\}=\tDelta$. This proves one direction.

\smallskip

Assume now the other way around that $\delta(\F)=\tDelta$. Let $Q\leq T$ with $O_p(\N)\leq Q$. Then $QO_p(\L)\cap T=Q(O_p(\L)\cap T)=QO_p(\N)=Q$. Hence 
\begin{eqnarray*}
 Q\in\delta(\E) &\Longleftrightarrow& QO_p(\L)\cap T\in\delta(\E)\mbox{ (as $Q=QO_p(\L)\cap T$)}\\
&\Longleftrightarrow & QO_p(\L)\in\delta(\F)\mbox{ (by Lemma~\ref{L:deltaFR}(a) and since $O_p(\L)=O_p(\F)$)}\\
&\Longleftrightarrow & QO_p(\L)\in\tDelta\mbox{ (as $\delta(\F)=\tDelta$)}\\
&\Longleftrightarrow & QO_p(\L)\in\Delta\mbox{ (by definition of $\tDelta$)}\\
&\Longleftrightarrow & QO_p(\L)\cap T\in\Gamma\mbox{ (as $(\N,\Gamma,T)$ is a kernel)}\\
&\Longleftrightarrow & Q\in \Gamma \mbox{ (as $Q=QO_p(\L)\cap T$)}.
\end{eqnarray*}
The above equivalences hold for all $Q\leq T$ with $O_p(\N)\leq Q$. Hence, it follows from \cite[Lemma~10.6]{Henke:Regular} that $\delta(\E)=\{Q\leq T\colon QO_p(\N)\in\delta(\E)\}=\{Q\leq T\colon  QO_p(\N)\in\Gamma\}=\tGamma$. This proves the assertion.
\end{proof}

\begin{lemma}\label{L:KernelRegularNLT0}
 Let $(\L,\Delta,S)$ be a locality with kernel $(\N,\Gamma,T)$ such that $(\N,\Gamma,T)$ is a regular locality. Set $\E:=\F_T(\N)$, $T_0:=E(\N)\cap S=E(\E)\cap S$ and $\tDelta:=\{P\leq S\colon PO_p(\L)\in\Delta\}$. Then the following are equivalent:
\begin{itemize}
 \item [(i)] $(\L,\Delta,S)$ is a linking locality.
 \item [(ii)] $(\L,\tDelta,S)$ is a regular locality.
 \item [(iii)] $N_\L(T)$ is a group of characteristic $p$.
 \item [(iv)] $N_\L(T_0)$ is a group of characteristic $p$.
\end{itemize}
\end{lemma}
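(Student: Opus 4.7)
My plan is to establish the four equivalences by treating (i)$\Leftrightarrow$(iii), (ii)$\Leftrightarrow$(iii) and (iii)$\Leftrightarrow$(iv) separately, leaning on the theorems proved earlier in the section. The first two equivalences are essentially immediate from the general results on kernels, while (iii)$\Leftrightarrow$(iv) carries the actual content of this lemma.

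For (i)$\Leftrightarrow$(iii) I would simply invoke Theorem~\ref{T:KernelLinkingLocality}: by hypothesis $(\N,\Gamma,T)$ is regular and therefore a linking locality, so condition (ii) of that theorem collapses to the single requirement that $N_\L(T)$ be of characteristic $p$. For (ii)$\Leftrightarrow$(iii) I would apply Theorem~\ref{T:Regular}; the one thing to check is that under the regularity hypothesis one has $\tGamma=\Gamma$. Since $\Gamma=\delta(\E)$, this follows from Lemma~10.6 of \cite{Henke:Regular}, which characterises $\delta(\E)$ as $\{Q\leq T:QO_p(\N)\in\delta(\E)\}$. With $\tGamma=\Gamma$, the statement ``$(\N,\tGamma,T)$ is regular'' is our standing hypothesis, so Theorem~\ref{T:Regular} reduces (ii) to ``$N_\L(T)$ has characteristic $p$'', i.e.\ to (iii).

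The substantive step is (iii)$\Leftrightarrow$(iv). First I want to note that $T_0\in\delta(\E)=\Gamma\subseteq\Delta$ by Corollary~11.10 of \cite{Henke:Regular}, so $N_\L(T_0)$ is a subgroup of $\L$ by Lemma~\ref{L:LocalitiesProp}(a). The key inclusion I need is
\[ N_\L(T)\leq N_\L(T_0). \]
To obtain it, I would argue: by Lemma~\ref{L:KernelBasic}(b), each $f\in N_\L(T)$ gives $c_f|_\N\in\Aut(\N)$; since $\Gamma$ is $\F$-closed (Lemma~\ref{L:KernelBasic}(a)) and $c_f$ induces an $\F$-automorphism of $T$, conjugation by $f$ permutes $\Gamma$, so $c_f|_\N$ is an automorphism of the regular locality $(\N,\Gamma,T)$. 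Such an automorphism must permute the components of $\N$ and hence fixes $E(\N)$ setwise; in particular it fixes $E(\N)\cap T=T_0$, proving the inclusion. I also note that $T$ itself normalises $T_0$ (since $E(\N)\unlhd\N$ and $T_0=E(\N)\cap T$), so $T\leq N_\N(T_0)$; as $T\in\Syl_p(\N)$, we have $T\in\Syl_p(N_\N(T_0))$.

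With these preliminaries in place, the proof of (iii)$\Leftrightarrow$(iv) is a single application of Lemma~\ref{L:CharpCharacterize} to the normal subgroup $N_\N(T_0)\unlhd N_\L(T_0)$. Indeed $N_{N_\L(T_0)}(T)=N_\L(T_0)\cap N_\L(T)=N_\L(T)$ by the inclusion just proved, and $N_\N(T_0)$ is of characteristic $p$ because $(\N,\Gamma,T)$ is a linking locality and $T_0\in\Gamma$. Lemma~\ref{L:CharpCharacterize} then says $N_\L(T_0)$ is of characteristic $p$ if and only if $N_\L(T)$ is, completing the equivalence. The main obstacle, as is typical in this subject, is the inclusion $N_\L(T)\leq N_\L(T_0)$; everything else is a clean bookkeeping step using the existing lemmas.
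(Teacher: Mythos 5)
Your proposal matches the paper's proof in all essentials: (i)$\Leftrightarrow$(iii) via Theorem~\ref{T:KernelLinkingLocality}, (ii)$\Leftrightarrow$(iii) via Theorem~\ref{T:Regular} together with the observation that $\tGamma=\Gamma=\delta(\E)$, and (iii)$\Leftrightarrow$(iv) via the inclusion $N_\L(T)\leq N_\L(T_0)$ (which the paper obtains from Lemma~\ref{L:KernelBasic}(b) and \cite[Lemma~11.12]{Henke:Regular}, the result you invoke in words about automorphisms of $\N$ permuting components and preserving $E(\N)$) followed by an application of Lemma~\ref{L:CharpCharacterize} to $N_\N(T_0)\unlhd N_\L(T_0)$. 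The one small refinement in your write-up — explicitly noting $T\in\Syl_p(N_\N(T_0))$ before applying Lemma~\ref{L:CharpCharacterize} — is a detail the paper leaves implicit, and is worth keeping.
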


\begin{proof}
Set $\F:=\F_S(\L)$. By \cite[Theorem~E(d)]{normal}, $E(\E)=\F_{E(\N)\cap S}(E(\N))$ and in particular $T_0:=E(\N)\cap S=E(\E)\cap S$. As $(\N,\Gamma,T)$ is regular (and in particular a linking locality), it follows from Theorem~\ref{T:KernelLinkingLocality} that (i) and (iii) are equivalent. Moreover, by \cite[Lemma~10.6]{Henke:Regular}, $\Gamma=\delta(\E)=\{Q\leq T\colon QO_p(\N)\in\Gamma\}$. Therefore, properties (ii) and (iii) are equivalent by Theorem~\ref{T:Regular}. Hence, it remains to prove that (iii) and (iv) are equivalent.

\smallskip

As $T_0\in E(\E)^s\subseteq\delta(\E)=\Gamma\subseteq \Delta$ by \cite[Lemma~7.22]{normal}, it follows that $G:=N_\L(T_0)$ is a group with $N:=N_\N(T_0)\unlhd G$. Indeed, since $(\N,\Gamma,T)$ is a regular locality, $N$ is of characteristic $p$. Hence, by Lemma~\ref{L:CharpCharacterize}, $G$ is of characteristic $p$ if and only if $N_G(T)$ is of characteristic $p$. By \cite[Lemma~11.12]{Henke:Regular}, every automorphism of $\N$ leaves $E(\N)$ invariant, so $N_\L(T)$ acts by Lemma~\ref{L:KernelBasic}(b) on $E(\N)$ via conjugation. In particular, $N_\L(T)\subseteq G$ and thus $N_\L(T)=N_G(T)$. This shows the equivalence of (iii) and (iv) as required. 
\end{proof}

\section{Products}\label{S:Products}

In this section we prove Theorem~\ref{T:ProductMain} and Corollary~\ref{C:ProductsFusionSystems}. Indeed we state and prove here some more detailed results. The following theorem implies Theorem~\ref{T:ProductMain}. The reader might want to recall the notation introduced in Remark~\ref{R:ER}.

\begin{theorem}\label{T:ProductDetails}
Let $(\L, \Delta, S)$ be a regular locality and let $\N\norm \L$ be a partial normal subgroup of $\L$. Set 
\[T^* := F^*(\L) \cap S, \; T_0:=E(\N)\cap S,\; T:=S\cap\N\mbox{ and }\E:=\F_T(\N).\]
Let $H\leq N_\L(T^*)$ and set $\tH:=N_\N(T^*)H$. Fix $S_0\in \Syl_p(\tH)$ with $T\leq S_0$. Set 
\[\Delta_0 := \{ P \leq S_0 \mid P \cap \N \in \delta(\E)\}.\]
Then the following hold:
\begin{itemize}
\item[(a)] $\N H=H\N$ is a partial subgroup of $\L$. 
\item[(b)] $N_\N(T_0)=N_\N(T^*)$ and $\tH=N_{\N H}(T^*)=N_{\N H}(T_0)=N_\N(T_0)H$.
\item[(c)] $(\N H, \Delta_0, S_0)$ is a cr-complete locality with kernel $(\N,\delta(\E),T)$.
\item[(d)] Set $\tDelta_0:=\{P\leq S_0\colon PO_p(\N H)\in \Delta_0\}$. Then the following are equivalent: 
\begin{itemize}
\item $\tH$ is of characteristic $p$;
\item $(\N H,\Delta_0,S_0)$ is a linking locality;
\item $(\N H,\tDelta_0,S_0)$ is a regular locality.
\end{itemize}
\item[(e)] $\F_0:=\F_{S_0}(\N H)$ is saturated. Moreover, $\E$ and $E(\E)$ are normal subsystems of $\F_0$, $N_{\F_0}(T_0)=\F_{S_0}(\tH)$ and  
\[\F_0=\<(E(\E)S_0)_{\F_0},\F_{S_0}(\tH)\>.\]  
\end{itemize}
\end{theorem}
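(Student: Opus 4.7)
I would prove the parts in the order (b), (a), (c), (d), (e). Part (b) is immediate but is needed to make sense of the identifications in (c)--(e), while (a) is the main partial-group technicality on which the rest rests.

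The equality $N_\N(T_0)=N_\N(T^*)$ in (b) is exactly Lemma~\ref{L:T0R}(a), which gives $\tH=N_\N(T^*)H=N_\N(T_0)H$; since $H\leq N_\L(T^*)\leq N_\L(T_0)$ by the same lemma, we have $\tH\subseteq N_{\N H}(T^*)\cap N_{\N H}(T_0)$, and the reverse inclusions follow once (a) provides a Frattini-type decomposition $g=nh\in\N H$ with $n\in N_\N(T^*)$, $h\in H$. For (a), the heart is the identity $\N H=H\N$: given $g=\Pi(n,h)$ with $n\in\N$, $h\in H$, normality of $\N$ together with the partial-group axioms allows one to pass to a word of the form $(h,h^{-1},n,h)$ in $\D$ and obtain $g=h\cdot n^h$ with $n^h\in\N$. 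Closure under products then reduces to rewriting $(n_1h_1)(n_2h_2)=(n_1n_2')(h_1h_2)$ with $n_2':=h_1 n_2 h_1^{-1}\in\N$, and closure under inversion is the same argument.

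For (c), I would apply Lemma~\ref{L:ConstructLocality} to the partial group $\N H$ with maximal $p$-subgroup $S_0$, partial normal subgroup $\N$, and $\Gamma:=\delta(\E)$. Maximality of $S_0$ and the identity $\N\cap S_0=T$ follow from the fact that any $p$-subgroup of $\N H$ is, after a Frattini decomposition through the kernel, conjugate into $\tH$, of which $S_0$ is a Sylow. The set $\delta(\E)$ is overgroup-closed in $T$ by \cite[Lemma~10.4]{Henke:Regular} and closed under $\L$-conjugation on $T$ by Lemma~\ref{L:T0R}(d), hence also under $\N H$-conjugation. The domain condition $\D|_{\N H}=\D_{\delta(\E)}$ (equivalently $\D_{\Delta_0}$ by Lemma~\ref{L:DDeltaDGamma}(b)) follows from the fact that, for $w\in\W(\N H)\cap\D$, any chain in $\delta(\F)$ witnessing $w\in\D$ restricts via Lemma~\ref{L:DDeltaDGamma}(a) to a chain $P_j\cap\N$, which lies in $\delta(\E)$ by Lemma~\ref{L:deltaFR}(b). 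Since the kernel $(\N,\delta(\E),T)$ is regular and hence cr-complete, Proposition~\ref{Prop:kernel.cr.complete} yields cr-completeness of $(\N H,\Delta_0,S_0)$.

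Part (d) then follows from Theorem~\ref{T:KernelLinkingLocality} (the locality is linking iff the kernel is linking, which it is, and $N_{\N H}(T)$ has characteristic $p$) combined with Lemma~\ref{L:KernelRegularNLT0} applied to $(\N H,\Delta_0,S_0)$, which rewrites the characteristic-$p$ condition as $N_{\N H}(T_0)=\tH$ being of characteristic $p$, and equates this with $(\N H,\tDelta_0,S_0)$ being regular. For (e), $\F_0=\F_{S_0}(\N H)$ is saturated by (c) and Proposition~\ref{P:crCompleteSaturated}(c); then $\E\unlhd\F_0$ by Theorem~\ref{T:crComplete}, and $E(\E)\unlhd\F_0$ by \cite[Lemma~7.13(a)]{normal}. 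Lemma~\ref{L:NDPNHP}(b) applied at $T_0\in\delta(\E)\subseteq\Delta_0$, together with $S_0\leq\tH=N_{\N H}(T_0)$ forcing $N_{S_0}(T_0)=S_0$, gives $N_{\F_0}(T_0)=\F_{S_0}(\tH)$. Applying Lemma~\ref{L:FusionFrattini} to the normal subsystem $E(\E)\unlhd\F_0$ over $T_0$ then yields $\F_0=\langle(E(\E)S_0)_{\F_0},N_{\F_0}(T_0)\rangle=\langle(E(\E)S_0)_{\F_0},\F_{S_0}(\tH)\rangle$.

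The main obstacle is part (a) and the associated domain equality in (c): conceptually the product $\N H$ is evidently closed because $H$ normalizes $\N$, but making this rigorous in the partial-group language requires repeated use of the substitution and inversion axioms and careful bookkeeping of the chains of objects witnessing membership in $\D$. Once (a) is in place, the remaining parts are largely bookkeeping against the theory developed earlier in the paper.
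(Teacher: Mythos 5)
Your plan follows essentially the same route as the paper, and you have correctly located the key ingredients: Lemma~\ref{L:T0R} for~(b), Lemma~\ref{L:ConstructLocality} for~(c), Lemma~\ref{L:KernelRegularNLT0} for~(d), and Lemma~\ref{L:FusionFrattini} together with Lemma~\ref{L:NDPNHP}(b) for~(e). There are, however, two places where your sketch elides the actual mechanism, and both elisions are exactly where a straightforward attempt would stall.

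First, in~(a) you attribute the definedness of words such as $(h,h^{-1},n,h)$ to ``normality of $\N$ together with the partial-group axioms.'' That is not enough: in an arbitrary locality, having $(n,h)\in\D$ gives no right to insert $h,h^{-1}$, because one must exhibit an object chain in $\Delta$ witnessing membership in $\D=\D_\Delta$. What makes the rewriting legal here is the regular-locality hypothesis combined with $H\leq N_\L(T^*)$: since $\Delta=\delta(\F)$, a word $u$ lies in $\D$ if and only if $S_u\cap T^*\in\Delta$, and since every $g\in H\cup\N$ normalizes or centralizes $T^*$ (for $H$ directly, for $\N$ via $R=E(C_\L(\N))\cap S$), inserting and deleting letters from $H$ does not change $S_u\cap T^*$. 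The paper isolates this as its displays (5.1) and (5.2), and (a) really does hinge on them; the same mechanism underlies the Dedekind Lemma computation that the paper uses to prove~(b) cleanly, where your ``Frattini-type decomposition with $n\in N_\N(T^*)$'' is slightly circular as phrased.

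Second, in~(c) you propose to verify $\D\cap\W(\N H)=\D_{\delta(\E)}\cap\W(\N H)$ by intersecting an arbitrary witnessing chain $(P_j)$ with $\N$ and invoking Lemma~\ref{L:deltaFR}(b). But Lemma~\ref{L:deltaFR}(b) only gives the equivalence $P\in\delta(\F)\Leftrightarrow P\cap\E\in\delta(\E)$ when $E(C_\F(\E))\cap S\leq P$, which an arbitrary member $P_j$ of a chain need not satisfy. The paper avoids this by working with $S_w$ alone: since $\N H\subseteq N_\L(R)$ and $O_p(\L)\leq S_w$ always, one has $RO_p(\L)\leq S_w$ for $w\in\W(\N H)$, so Lemma~\ref{L:deltaFR}(b) applies to $S_w$, and then the criterion ``$w\in\D\Leftrightarrow S_w\in\Delta$'' from Lemma~\ref{L:LocalitiesProp}(e) does the rest (together with Lemma~\ref{L:T0R}(d) for the converse direction, which your sketch does not address at all). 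With these two points repaired your argument matches the paper's; the remaining parts~(d) and~(e) are, as you say, bookkeeping against the earlier theory.
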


\begin{proof}
Set $\F:= \F_S(\L)$. As usual, we write $\Pi\colon\D\rightarrow\L$ for the product on $\L$.

\smallskip

\textbf{(a)} By definition of a regular locality, for every $P\leq S$, we have $P\in\Delta$ if and only if $P\cap T^*\in\Delta$. Thus,  Lemma~\ref{L:LocalitiesProp}(e) yields that
\begin{equation}\label{eq0}
 \mbox{ for all $u\in\W(\L)$, we have $u\in\D$ if and only if $S_u\cap T^*\in\Delta$.}
\end{equation}
In particular, $S_f \cap T^* \in \Delta$ for all $f\in\L$. If $n \in \N$ and $g\in \N_\L(T^*)$, then we can conclude that $(g^{-1},g,n)\in\D$ via $S_n\cap T^*$. Hence, by the axioms of a partial group, $gn$ is defined and $g^{-1}(gn)=\Pi(g^{-1},g,n)=n$. Note also that $S_{gn}\cap T^*\leq S_{(g,g^{-1},gn)}$. Using Lemma~\ref{L:LocalitiesProp}(f) we conclude that    
\[ S_{(g,n)} \cap T^* \leq S_{gn} \cap T^* \leq S_{(g,g^{-1},gn)} \cap T^* \leq S_{(g,g^{-1}(gn))} \cap T^* \leq S_{(g,n)} \cap T^*\]
and so 
\begin{equation}\label{eq1} 
S_{(g,n)} \cap T^* = S_{gn} \cap T^*\mbox{ for all $g\in N_\L(T^*)$ and all $n\in\N$.}
\end{equation}
Let $n\in \N$ and $g \in H$. We argue first that $\N H=H\N$. Observe that $u_1:=(g,g^{-1},n,g)\in\D$ via $S_n\cap T^*$ and $u_2:=(g,n,g^{-1},g)\in\D$ via $(S_n\cap T^*)^{g^{-1}}$. Hence, $ng=\Pi(u_1)=g(n^g)\in H\N$ and $gn=\Pi(u_2)=(n^{g^{-1}})g\in\N H$. This shows $\N H=H\N$. In particular, if $f=ng\in \N H$, then \cite[Lemma~1.4(f)]{loc1} yields $(g^{-1}, n^{-1}) \in \D$ and $f^{-1} = g^{-1}n^{-1}\in H\N=\N H$.

\smallskip

It remains thus to show that $\N H=H\N$ is closed under the partial product $\Pi\colon \D\rightarrow \L$ on $\L$. 
Let $w = (f_1,\dots,f_k) \in \D \cap \W(H\N)$. Then for every $1 \leq i \leq k $ we have $f_i = g_in_i$ for some $g_i \in H$ and $n_i \in \N$. Set $u = (g_1,n_1, \dots, g_k,n_k)$. Then by (\ref{eq1}) we get  $S_u\cap T^* = S_w\cap T^*$. Now \eqref{eq0} yields first $S_u\cap T^* = S_w\cap T^*\in\Delta$ and then $u \in \D$. By \cite[Lemma~3.4]{loc1}, $\Pi(u)=\Pi(g_1,\dots,g_k,n)$ for some $n\in \N$. Hence, by the axioms of a partial group, 
\[ \Pi(w) = \Pi(u) =\Pi(g_1,\dots,g_k,n)= \Pi(g_1,\dots,g_k)n \in H\N=\N H.\]
Thus $\N H$ is closed under the partial product and thus a partial subgroup. Hence, (a) holds.

\smallskip

\textbf{(b)} Lemma~\ref{L:T0R}(a) gives $N_\L(T_0)\leq N_\L(T^*)$ and $N_\N(T_0)=N_\N(T^*)$. It follows from the Dedekind Lemma \cite[Lemma~1.10]{loc1} that $N_{\N H}(T^*)=\N H\cap N_{\N H}(T^*)=N_\N(T^*)H=\tH$. As $H\leq N_\L(T^*)\subseteq N_\L(T_0)$ and $N_\L(T_0)$ is by \cite[Lemma~2.12(a)]{loc1} a partial subgroup of $\L$, the Dedekind Lemma yields similarly $N_{\N H}(T_0)=\N H\cap N_{\N H}(T_0)=N_\N(T_0)H$. This implies (b).

\smallskip

\textbf{(c,d)} Set $\Gamma:=\delta(\E)$ and $R:=E(C_\L(\N))\cap S$. We show first that 
\begin{equation}\label{eq2}
\D \cap \W(\N H) =\D_\Gamma\cap \W(\N H) 
\end{equation}
As $R\subseteq C_\L(\N)$, \cite[Lemma~3.5]{Henke:Regular} yields $\N\subseteq C_\L(R)\subseteq N_\L(R)$. Moreover, by Lemma~\ref{L:T0R}(a), $H\subseteq N_\L(T^*)\subseteq N_\L(R)$. As $N_\L(R)$ is by \cite[Lemma~2.12(a)]{loc1} a partial subgroup of $\L$, it follows that $\N H\subseteq N_\L(R)$. We use moreover that $O_p(\L)=O_p(\F)$ by \cite[Proposition~5]{subcentric} and Lemma~\ref{L:OpL}. So fixing $w=(f_1,\dots,f_n)\in\W(\N H)$, we have $RO_p(\F)=RO_p(\L)\leq S_w$. Hence, Lemma~\ref{L:deltaFR}(b) gives
\[S_w\in \delta(\F)=\Delta\Longleftrightarrow S_w\cap T\in\Gamma.\]
Notice that $S_w\in\D=\D_\Delta$ if and only if $S_w\in\Delta$. By \cite[Lemma~10.4]{Henke:Regular}, $\Gamma$ is overgroup-closed in $T$. Hence, $w\in\D_\Gamma$ implies $S_w\cap T\in\Gamma$. Moreover, by Lemma~\ref{L:T0R}(d), $\Gamma$ is closed under passing to $\L$-conjugates in $S$.  
Therefore, $S_w\cap T\in\Gamma$ implies $w\in\D_\Gamma$. So $S_w\cap T\in\Gamma$ if and only if $w\in\D_\Gamma$. This proves \eqref{eq2}. We show next:
\begin{equation}\label{eq4}
 S_0\mbox{ is maximal with respect to inclusion among the $p$-subgroups of }\N H.
\end{equation}
Let $S_1$ be a $p$-subgroup of $\N H$ such that $S_0\leq S_1$. Notice that $T\leq S_0\leq S_1$ and so $T_0=E(\N)\cap S\leq E(\N)\cap S_1$. By \cite[Lemma~11.13]{Henke:Regular}, $E(\N)\unlhd\L$. In particular, \cite[Lemma~3.1(c)]{loc1} yields that $T_0$ is a maximal $p$-subgroup of $E(\N)$. Hence, $T_0=E(\N)\cap S_1\unlhd S_1$. Now (b) yields $S_1\leq N_{\N H}(T_0)=\tH$. As $S_0\in \Syl_p(\tH)$, it follows that $S_1=S_0$. This proves \eqref{eq4}. 

\smallskip

Recall that by Lemma~\ref{L:T0R}(d), $\Gamma$ is closed under passing to $\L$-conjugates in $S$. As $T$ is by \cite[Lemma~3.1(c)]{loc1} a maximal $p$-subgroup of $\N$, we have $T=S_0\cap \N$. Since the elements of $\Gamma$ are contained in $T$ and $\N\unlhd \N H$, it follows that $\Gamma$ is closed under passing to $\N H$-conjugates in $S_0$. So by \eqref{eq2} and \eqref{eq4}, the hypothesis of Lemma~\ref{L:ConstructLocality} is fulfilled with $(\N H,S_0)$ in place of $(\L,S)$. Hence, it follows from this lemma that $(\N H,\Delta_0,S_0)$ is a locality with kernel $(\N,\Gamma,T)$. By \cite[Theorem~10.16(a)]{Henke:Regular}, $(\N,\Gamma,T)$ is a regular locality and in particular it is cr-complete. Hence, $(\N H,\Delta_0,S_0)$ is cr-complete by Theorem~\ref{T:crComplete}, i.e. (c) holds. Part (d) follows from (b) and Lemma~\ref{L:KernelRegularNLT0}.

\smallskip

\textbf{(e)} By \cite[Corollary~11.10]{Henke:Regular}, we have $T_0\in\delta(\E)=\Gamma\subseteq\Delta_0$. Hence, parts (b) and (c) together with Lemma~\ref{L:NDPNHP}(b) yield that $N_{\F_0}(T_0):=\F_{S_0}(N_{\N H}(T_0))=\F_{S_0}(\tH)$. Moreover, by Theorem~\ref{T:crComplete} and Proposition~\ref{P:crCompleteSaturated}(c), part (c) implies that $\F_0$ is saturated and $\E\unlhd\F_0$. Hence, $E(\E)\unlhd\F_0$ by \cite[Lemma~7.13(a)]{normal}. By Lemma~\ref{L:T0R}(b), we have $T_0=E(\E)\cap S$. Hence, (e) follows from Lemma~\ref{L:FusionFrattini}.
\end{proof}

If the subgroup $H$ in Theorem~\ref{T:ProductDetails} has the property that $S\cap H\in\Syl_p(H)$, then the following lemma says that we can choose the $p$-subgroup $S_0$ as a subgroup of $S$.

\begin{lemma}\label{P:ChooseS0}
Let $(\L,\Delta,S)$ be a regular locality and set $T^*:=F^*(\L)\cap S$. Let $\N\unlhd\L$, $T:=S\cap\N$ and $H\leq N_\L(T^*)$ such that $S\cap H\in\Syl_p(H)$. Then
\[S_0:=T(S\cap H)=S\cap (\N H)\in\Syl_p(N_\N(T^*)H).\]
\end{lemma}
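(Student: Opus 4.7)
The plan is to pass everything into the finite group $G:=N_\L(T^*)$ and deduce the Sylow statement from Lemma~\ref{lem:sylow.prod}, then identify $T(S\cap H)$ with $S\cap(\N H)$ by invoking the locality structure supplied by Theorem~\ref{T:ProductDetails}.

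First I would establish three basic facts about $G=N_\L(T^*)$. Since $(\L,\Delta,S)$ is regular, $T^*\in F^*(\L)^s\subseteq\delta(\F)=\Delta$, so $G$ is a finite group; moreover the remarks preceding Theorem~\ref{T:ProductMain} give that $G$ is of characteristic $p$. Next, for any $s\in S$ one has $S\subseteq S_s$, and because $F^*(\L)\unlhd\L$, Lemma~\ref{L:DDeltaDGamma}(a) yields $(T^*)^s=(F^*(\L)\cap S)^s=F^*(\L)\cap S=T^*$, so $S\leq G$; by maximality of $S$ among the $p$-subgroups of $\L$ we obtain $S\in\Syl_p(G)$. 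Finally $N:=N_\N(T^*)=\N\cap G$ is a normal subgroup of $G$, and $S\cap N=(S\cap\N)\cap G=T\cap G=T$, since $T\leq S\leq G$.

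With these facts in hand I would apply Lemma~\ref{lem:sylow.prod} to $G$, its normal subgroup $N$, the Sylow $p$-subgroup $S$ of $G$, and the subgroup $H\leq G$ (using the hypothesis $S\cap H\in\Syl_p(H)$) to conclude
\[S_0:=T(S\cap H)=(S\cap N)(S\cap H)\in \Syl_p(NH)=\Syl_p(N_\N(T^*)H).\]
In particular $S_0\leq S$ and the Sylow half of the claim is proved.

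It remains to show $S_0=S\cap(\N H)$. The inclusion $S_0\subseteq S\cap(\N H)$ is immediate. For the reverse, since $T\leq S_0$ and $S_0\in\Syl_p(N_\N(T^*)H)$, the choice of $S_0$ is admissible in Theorem~\ref{T:ProductDetails}(c), and that theorem gives that $(\N H,\Delta_0,S_0)$ is a locality; in particular $S_0$ is maximal among the $p$-subgroups of $\N H$. On the other hand Theorem~\ref{T:ProductDetails}(a) tells us that $\N H$ is a partial subgroup of $\L$, so $S\cap(\N H)$ is closed under the product in $S$ and hence a subgroup of $S$ contained in $\N H$, i.e.\ a $p$-subgroup of the partial group $\N H$ containing $S_0$. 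Maximality of $S_0$ forces equality, completing the proof. The only real thing to check carefully is that $S$ genuinely normalizes $T^*$ in the partial-group sense, which is where the structure of $F^*(\L)$ as a partial normal subgroup together with Lemma~\ref{L:DDeltaDGamma}(a) does the work; once that is in place the argument is a direct assembly of Lemma~\ref{lem:sylow.prod} and Theorem~\ref{T:ProductDetails}.
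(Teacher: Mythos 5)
Your argument is correct and follows the same route as the paper: identify $N_\L(T^*)$ as a finite group with $N_\N(T^*)$ normal and $S$ Sylow, apply Lemma~\ref{lem:sylow.prod} to obtain the Sylow statement, then invoke Theorem~\ref{T:ProductDetails}(a),(c) to see that $S_0$ is a maximal $p$-subgroup of the partial group $\N H$ and hence equals $S\cap(\N H)$. Your write-up merely fills in the implicit background steps (that $T^*\in\Delta$, that $S\leq N_\L(T^*)$ via Lemma~\ref{L:DDeltaDGamma}(a), that $S\cap N_\N(T^*)=T$, and that $S\cap(\N H)$ is a $p$-subgroup of the partial group $\N H$) that the paper leaves unstated.
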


\begin{proof}
Note that $N_\L(T^*)$ is a group, $S$ is a Sylow $p$-subgroup of $N_\L(T^*)$, $N_\N(T^*) \norm N_\L(T^*)$ and $H \leq N_\L(T^*)$ by assumption. As $S\cap H\in \Syl_p(H)$, Lemma \ref{lem:sylow.prod} yields that 
\[S_0:=(S \cap N_\N(T^*))(S \cap H)=T(S\cap H)\]
is a Sylow $p$-subgroup of $N_\N(T^*)H$. In particular, it is a consequence of Theorem~\ref{T:ProductDetails}(a),(c) that $S_0$ is a maximal $p$-subgroup of the partial group $\N H$ and thus equals $S\cap (\N H)$. 
\end{proof}

Assuming the hypothesis of Corollary~\ref{C:Products0} below, we have $\E\unlhd\F$ by \cite[Theorem~A]{normal}. Hence, it is a consequence of \cite[Lemma~7.13(a)]{normal} that $E(\F)\unlhd\F$ and thus $(E(\E)S_0)_\F$ is well-defined. This is implicitly used in the statement of part (b).

\begin{corollary}\label{C:Products0}
 Let $(\L,\Delta,S)$ be a regular locality, $T^*:=F^*(\L)\cap S$, $\N\unlhd\L$, and $H\leq N_\L(T^*)$ with $S\cap H\in \Syl_p(H)$. Set $T:=S\cap T$, $\E:=\F_T(\N)$, $S_0:=T(S\cap H)$, $\tH:=N_\N(T^*)H$ and 
\[\E H:=\F_{S_0}(\N H).\]
Then the following hold:
\begin{itemize}
 \item [(a)] $\E H$ is a saturated subsystem of $\F$, $\E\unlhd\E H$ and $E(\E)\unlhd \E H$.
 \item [(b)] $S_0\in\Syl_p(\tH)$ and $\E H=\<E(\E)S_0,\F_{S_0}(\tH)\>$ where $E(\E)S_0:=(E(\E)S_0)_\F=(E(\E)S_0)_{\E H}$.
 \item [(c)] If $\mD$ is a saturated subsystem of $\F$ such that $E(\E)\unlhd\mD$ and $\F_{S_0}(\tH)\subseteq\mD$, then $\E H\subseteq \mD$.
\end{itemize}
\end{corollary}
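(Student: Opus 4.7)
The plan is to derive Corollary~\ref{C:Products0} almost immediately from Theorem~\ref{T:ProductDetails} together with Lemma~\ref{P:ChooseS0}, so the work is mostly bookkeeping. First I would observe that Lemma~\ref{P:ChooseS0} shows that the choice $S_0 := T(S\cap H)$ satisfies $S_0 = S\cap(\N H)$ and $S_0\in \Syl_p(\tH)$; this already gives the Sylow part of~(b). In particular $S_0$ is an admissible choice in the hypothesis of Theorem~\ref{T:ProductDetails}, so I may invoke that theorem throughout.

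For part~(a), Theorem~\ref{T:ProductDetails}(e) applied with this $S_0$ gives directly that $\E H = \F_{S_0}(\N H)$ is saturated and that both $\E$ and $E(\E)$ are normal in it. To see that $\E H$ is a subsystem of $\F$, I would note that $\N H\subseteq \L$ by Theorem~\ref{T:ProductDetails}(a), so every generating morphism of $\F_{S_0}(\N H)$, being of the form $c_h\colon S_h\cap \N H\to S_0$ for some $h\in \N H$, is automatically a morphism in $\F=\F_S(\L)$.

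For part~(b), the equality $\E H=\<(E(\E)S_0)_{\E H},\F_{S_0}(\tH)\>$ is Theorem~\ref{T:ProductDetails}(e), so the only remaining point is to identify $(E(\E)S_0)_{\E H}$ with $(E(\E)S_0)_\F$. Here I would use the standard characterization (see e.g. \cite[Chapter~8]{AschbacherGeneralized}) that, for any saturated fusion system $\mathcal{A}$ over $P$ containing $E(\E)$ as a normal subsystem and containing $S_0\leq P$, the product $(E(\E)S_0)_\mathcal{A}$ is the unique saturated subsystem of $\mathcal{A}$ over $T_0 S_0=S_0$ whose $O^p$ equals $O^p(E(\E))$ (where $T_0=E(\E)\cap S$, and the equality $T_0S_0=S_0$ uses $T_0\leq T\leq S_0$). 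Applying this both with $\mathcal{A}=\F$ and with $\mathcal{A}=\E H\subseteq\F$, the subsystem $(E(\E)S_0)_{\E H}$ is a saturated subsystem of $\F$ with the defining property of $(E(\E)S_0)_\F$, and uniqueness in $\F$ forces the two to coincide.

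Part~(c) follows by the same uniqueness trick. Given a saturated $\mD\subseteq\F$ with $E(\E)\unlhd\mD$ and $\F_{S_0}(\tH)\subseteq\mD$, the Sylow subgroup of $\mD$ contains $S_0$, so $(E(\E)S_0)_\mD$ exists, is a saturated subsystem of $\F$ with the defining property recalled above, and hence equals $(E(\E)S_0)_\F$. In particular $(E(\E)S_0)_\F\subseteq\mD$, and combined with the assumption $\F_{S_0}(\tH)\subseteq\mD$ this gives $\E H\subseteq\mD$ via the generation formula from~(b). I expect the only subtle point to be justifying this independence of the product subsystem from the ambient saturated fusion system; once that is in hand, the rest is a direct translation of Theorem~\ref{T:ProductDetails}.
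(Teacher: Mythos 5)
Your proposal is correct and follows essentially the same route as the paper's proof: invoke Lemma~\ref{P:ChooseS0} for the Sylow statement, apply Theorem~\ref{T:ProductDetails}(e) for parts (a) and (b), and then resolve the ambiguity between the product subsystems $(E(\E)S_0)_{\E H}$, $(E(\E)S_0)_\F$, and $(E(\E)S_0)_{\mD}$ by using that the product subsystem is independent of the ambient saturated fusion system. The only (minor) divergence is that you derive this independence from the characterization of the product subsystem via its $O^p$ recalled just before Lemma~\ref{L:FusionFrattini}, whereas the paper simply cites \cite[Remark~2.27]{normal} for this fact; both arguments amount to the same observation.
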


\begin{proof}
By Lemma~\ref{P:ChooseS0}, $S_0\in\Syl_p(\tH)$. Therefore, setting $\Delta_0:=\{P\leq S_0\colon P\cap T\in\delta(\E)\}$, the hypothesis and thus the conclusion of Theorem~\ref{T:ProductDetails} holds. Theorem~\ref{T:ProductDetails}(e) implies now that (a) holds and $\E H=\<(E(\E)S_0)_{\E H},\F_{S_0}(\tH)\>$.  By \cite[Remark~2.27]{normal}, $(E(\E)S_0)_{\E H}=(E(\E)S_0)_\F$. Hence (b) holds as well.

\smallskip

Let now $\mD$ be as in (c) and suppose $\mD$ is a subsystem over $R\leq S$. As $\F_{S_0}(\tH)\subseteq\mD$, we have in particular that $S_0\leq R$. Using \cite[Remark~2.27]{normal} again, we observe that $(E(\E)S_0)_\F=(E(\E)S_0)_\mD\subseteq\mD$. As $\F_{S_0}(\tH)\subseteq\mD$ by assumption, it follows now from (b) that $\E H\subseteq\mD$ and so (c) holds.
\end{proof}

\begin{proof}[Proof of Corollary~\ref{C:ProductsFusionSystems}]
By \cite[Lemma~10.4]{Henke:Regular}, there exists a regular locality $(\L,\Delta,S)$ over $\F$. Moreover, by \cite[Theorem~A]{normal}, there exists $\N\unlhd\L$ with $T=S\cap\N$ and $\E=\F_T(\N)$. By \cite[Theorem~E(d)]{normal}, we have $T^*:=F^*(\F)\cap S=F^*(\L)\cap S$. Moreover, Lemma~\ref{L:T0R}(b) gives  $T_0:=E(\E)\cap S=E(\N)\cap S$. In particular, by \cite[Lemma~10.4, Remark~10.12]{Henke:Regular}, we have $T^*\in\delta(\F)=\Delta\subseteq \F^s$. So $N_\F(T^*)$ is constrained and $N_\L(T^*)$ is by Lemma~\ref{L:NDPNHP}(b) a model for $N_\F(T^*)$. Using Lemma~\ref{L:T0R}(a), one observes that $N_\N(T_0)=N_\N(T^*)\unlhd N_\L(T^*)$. Moreover, Lemma~\ref{L:T0R}(c) gives $N_\E(T_0)=\F_{T}(N_\N(T^*))$. Hence, it follows from \cite[Proposition~I.6.2]{AKO} that $N_\E(T_0)\unlhd N_\F(T^*)$.

\smallskip

Let now $G$ be an arbitrary model for $N_\F(T^*)$. As $N_\E(T_0)\unlhd N_\F(T^*)$, it follows from \cite[Theorem~II.7.5]{AKO} that there exists a unique normal subgroup $N$ of $G$ with $S\cap N=T$ and $\F_T(N)=N_\E(T_0)$. Let now $H\leq G$ with $S\cap H\in\Syl_p(H)$. Set $S_0:=T(S\cap H)$. By \cite[Theorem~III.5.10]{AKO}, there exists an isomorphism $\alpha\colon G\rightarrow N_\L(T^*)$ with $\alpha|_S=\id_S$. Notice that $N\alpha\unlhd N_\L(T^*)$ and $H\alpha\leq N_\L(T^*)$ with 
\[T=S\cap (N\alpha),\;\F_T(N\alpha)=\F_{T}(N)=N_\E(T^*),\]
\[S\cap H=S\cap (H\alpha)\in\Syl_p(H\alpha)\mbox{ and }\F_{S_0}(N H)=\F_{S_0}((NH)\alpha).\] 
By \cite[Theorem~II.7.5]{AKO}, $N_\N(T^*)$ is the unique normal subgroup of $N_\L(T^*)$ realizing $N_\E(T^*)$. Hence, $N\alpha=N_\N(T^*)$ and $\F_{S_0}(NH)=\F_{S_0}(N_\N(T^*)(H\alpha))$. Therefore the assertion follows from Corollary~\ref{C:Products0} applied with $H\alpha$ in place of $H$. 
\end{proof}

\appendix

\section{Normal pairs of transporter systems}\label{A}

It is not within the scope of this paper to construct extensions of  localities, but Theorem~\ref{T:FInvariantOliverExtensions2} and our theorems on kernels provide some tools for examining the properties of existing extensions. The results in this appendix may help to compare our theorems on kernels to theorems on extensions of linking systems in the literature. Such extensions  have been studied in various places  starting with \cite{BCGLOExtensions}. A more transparent algebraic framework is used in  \cite{OliverExtensions, AOV1}, where the definition of a normal pair of linking systems is crucial (see also \cite[Definition~III.4.12]{AKO}). This definition naturally generalizes to a definition of a ``normal pair of transporter systems'', which we state below. Transporter systems were defined by Oliver and Ventura \cite{OliverVentura} generalizing the concept of a linking system. The goal of this appendix is to show that localities with kernels correspond to normal pairs of transporter systems (cf. Proposition~\ref{L:KernelsToNormalPairs} and Theorem~\ref{T:NormalPairsToKernels}). We use here a correspondence between localities and transporter systems which was observed by Chermak \cite[Appendix~A]{Ch}.

\smallskip

For any functor $\alpha\colon \C\rightarrow \m{D}$ between categories and any objects $P,Q\in\C$, we let
\[\alpha_{P,Q}\colon \Mor_\C(P,Q)\rightarrow \Mor_\m{D}(\alpha(P),\alpha(Q))\] denote the induced map between morphisms sets. Moreover, we set $\alpha_P:=\alpha_{P,P}$.

\smallskip

The literature on linking systems and transporter systems is mostly written in ``left-hand notation''. Therefore, in this appendix (unlike in the rest of the paper) we will write maps on the left hand side of the argument and conjugate from the left. In particular, if $G$ is a finite group and $\Delta$ is a set of subgroups of $G$, then $\T_\Delta(G)$ denotes the category whose object set is $\Delta$ and such that the morphism set $\Mor_{\T_\Delta(G)}(P,Q)$ between any two objects $P,Q\in\Delta$ is the set of all $g\in G$ with
\[^g\!P:=P^{g^{-1}}=gPg^{-1}\leq Q.\]
A \emph{transporter system} associated to a fusion system $\F$ over $S$ is a category $\T$ whose object set is an $\F$-closed collection of subgroups of $S$, together with functors
\[\T_{\Ob(\T)}(S)\xrightarrow{\;\;\;\delta\;\;\;}\T\xrightarrow{\;\;\;\pi\;\;\;}\F\]
subject to certain axioms. In particular, $\delta$ is the identity on objects and $\pi$ is the inclusion on objects, $\delta$ is injective on morphism sets, $\pi$ is surjective on morphism sets, and $\pi\circ\delta$ sends an element $g\in \Mor_{\T_{\Ob(\T)}(S)}(P,Q)$ to the corresponding conjugation map $c_{g^{-1}}\colon P\rightarrow Q,x\mapsto {}^g\!x$. The reader is referred to \cite[Definition~3.1]{OliverVentura} for the precise definition. If $P,Q\in\Ob(\T)$ and $\phi\in\Mor_\T(P,Q)$, we will usually write $\pi(\phi)$ for $\pi_{P,Q}(\phi)$.  The following definition is non-standard.

\begin{definition}
If $(\T,\delta,\pi)$ is a transporter system associated to a fusion system $\F$, then we will say that $(\T,\delta,\pi)$ is a transporter system \emph{over} $\F$ if $\F$ is $\Ob(\T)$-generated, i.e.
\[\F=\<\pi(\phi)\colon P,Q\in\Ob(\T),\;\phi\in\Mor_\T(P,Q)\>.\]
\end{definition}

If $(\T,\delta,\pi)$ is a linking system associated to a saturated fusion system $\F$, then it follows from Alperin's Fusion Theorem (cf. \cite[Theorem~I.3.6]{AKO}) that $(\T,\delta,\pi)$ is a transporter system \emph{over} $\F$. However, in general there can be transporter systems associated to a fusion system $\F$ which are not transporter systems over $\F$ (but just transporter systems over a subsystem of $\F$).

\smallskip

For a locality $(\L,\Delta,S)$, there is a transporter system $\T_\Delta(\L)$ over $\F_S(\L)$ defined. The object set of the category $\T_\Delta(\L)$ is $\Delta$, and for $P,Q\in\Delta$, the set $\Mor_{\T_\Delta(\L)}(P,Q)$ consists of all $f\in\L$ with
\[P\leq S_{f^{-1}}\mbox{ and }^f\!P:=P^{f^{-1}}\leq Q.\]
It turns out (see \cite[Proposition~A.3]{Ch}) that $(\T_\Delta(\L),\delta,\pi)$ is a transporter system, where the functor $\delta\colon \T_\Delta(S)\rightarrow \T_\Delta(\L)$ is the identity on objects and the inclusion on morphism sets, and the functor $\pi\colon \T_\Delta(\L)\rightarrow \F_S(\L)$ is the inclusion on objects and  sends a morphism $f\in \Mor_{\T_\Delta(\L)}(P,Q)$ to $c_f|_P\in\Hom_{\F_S(\L)}(P,Q)$.

\smallskip

To review some notation and basic results, let $(\T,\delta,\pi)$ be a transporter system and fix $P,Q\in\Ob(\T)$. If $P\leq Q$, then the morphism
\[\iota_{P,Q}:=\delta_{P,Q}(1)\]
is regarded as an ``inclusion map''. If $\phi\in\Mor_\T(P,Q)$ and $P'\leq P$, then set
\[\phi(P'):=\pi(\phi)(P').\]
By \cite[Lemma~A.6]{OliverVentura}, a morphism $\phi\in\Mor_\T(P,Q)$ is an isomorphism in the categorical sense if and only if $\pi(\phi)$ is an isomorphism, i.e. if and only if $\phi(P)=Q$. It is shown in \cite[Lemma~3.2(c)]{OliverVentura} that for all $P',Q'\in\Delta$ with $P'\leq P$, $Q'\leq Q$ and all $\phi\in\Mor_\T(P,Q)$ with $\phi(P')\leq Q'$, there exists a unique morphism $\phi|_{P',Q'}\in\Mor_\T(P',Q')$ with
\[\phi\circ \iota_{P',P}=\iota_{Q',Q}\circ \phi|_{P',Q'}.\]
The morphism $\phi|_{P',Q'}$ is called the \emph{restriction} of $\phi$ to a morphism from $P'$ to $Q'$.

\begin{lemma}\label{L:Restrictions}
Let $(\T,\delta,\pi)$ be a transporter system associated to some fusion system over a $p$-group $S$. Then the following hold:
\begin{itemize}
 \item [(a)] Let $P''\leq P'\leq P$ and $Q''\leq Q'\leq Q$ be objects in $\T$. Let $\phi\in\Mor_\T(P,Q)$ with $\phi(P')\leq Q'$ and $\phi(P'')\leq Q''$. Then
 \[\phi|_{P'',Q''}=(\phi|_{P',Q'})|_{P'',Q''}.\]
 \item [(b)] Let $P,Q,R,P',R'\in\Ob(\T)$, $\phi\in\Mor_\T(P,Q)$ and $\psi\in\Mor_\T(Q,R)$ with $P'\leq P$ and $(\psi\circ\phi)(P')\leq R'\leq R$. Then
 \[(\psi\circ \phi)|_{P',R'}=\psi|_{\phi(P'),R'}\circ \phi|_{P',\phi(P')}.\]
 \item [(c)] Let $x\in S$ and $\alpha=\delta_S(x)$. For every $P\in\Delta$ we have $\alpha(P)={}^x\!P\in\Delta$ and
 \[\alpha|_{P,\alpha(P)}=\delta_{P,\alpha(P)}(x).\]
 \item [(d)] Let $P,Q,P'\in\Ob(\T)$ and $\psi,\phi\in\Mor_\T(P,Q)$ such that $P'\leq P$, $Q':=\psi(P')=\phi(P')$ and $\phi|_{P',Q'}=\psi|_{P',Q'}$. Then $\phi=\psi$.
\end{itemize}
\end{lemma}

\begin{proof}
\textbf{(a)} Setting $\phi'=\phi|_{P',Q'}$ it follows from the definition of the restriction that
\[\iota_{Q',Q}\circ \phi'=\phi\circ\iota_{P',P}\mbox{ and }\iota_{Q'',Q'}\circ \phi'|_{P'',Q''}=\phi'\circ \iota_{P'',P'}.\]
Hence,
\begin{eqnarray*}
\iota_{Q'',Q}\circ \phi'|_{P'',Q''}
 &=&\iota_{Q',Q}\circ \iota_{Q'',Q'}\circ \phi'|_{P'',Q''}\\
 &=&\iota_{Q',Q}\circ \phi'\circ \iota_{P'',P'}\\
 &=&\phi\circ \iota_{P',P}\circ \iota_{P'',P'}\\
 &=&\phi\circ \iota_{P'',P}.
\end{eqnarray*}
This implies (a).

\smallskip

\textbf{(b)} See \cite[Lemma~A.7(b)]{Ch}.

\smallskip

\textbf{(c)} By axiom (B) of a transporter system as stated in \cite[Definition~3.1]{OliverVentura}, we have $\pi(\alpha)=c_{x^{-1}}$ and thus $^x\!P=P^{x^{-1}}=\alpha(P)$. As $\delta$ is a functor, it follows that
\[\iota_{\alpha(P),S}\circ \delta_{P,\alpha(P)}(x)=\delta_{P,S}(1\cdot x)=\delta_{P,S}(x\cdot 1)=\delta_S(x)\circ \delta_{P,S}(1)=\alpha\circ \iota_{P,S}.\]
This shows (c).

\textbf{(d)} Assume the hypothesis of (d) and set $\phi_0:=\phi|_{P',Q'}=\psi|_{P',Q'}$. It follows from the definition restrictions that
\[\phi\circ\iota_{P',P}=\iota_{Q',Q}\circ \phi_0=\psi\circ\iota_{P',P}.\]
As every morphism in $\T$ is by \cite[Lemma~3.2(d)]{OliverVentura} an epimorphism, it follows that $\phi=\psi$ as required.
\end{proof}

Central to the considerations in this appendix is the following definition which generalizes the definition of a normal pair of linking systems (cf. \cite[Definition~III.4.12]{AKO}).

\begin{definition}\label{D:NormalPair}
Fix a pair of fusion systems $\F_0\subseteq\F$ over $p$-groups $S_0\leq S$ such that $\F_0$ is $\F$-invariant. Let $\T_0\subseteq \T$ be transporter systems over $\F_0\subseteq\F$ respectively. Then $\T_0$ is called \emph{normal} in $\T$ (written as $\T_0\unlhd\T$) if the following hold:
\begin{itemize}
 \item [(i)] $\Ob(\T)=\{P\leq S\colon P\cap S_0\in\Ob(\T_0)\}$;
 \item [(ii)] for all $P\in\Ob(\T_0)$ and $\psi\in\Mor_\T(P,S_0)$, there are morphisms $\gamma\in\Aut_\T(S_0)$ and $\psi_0\in\Mor_{\T_0}(P,S_0)$ such that $\psi=\gamma\circ \psi_0$; and
 \item [(iii)] for all $\gamma\in\Aut_\T(S_0)$, $P,Q\in\Ob(\T_0)$, and $\psi\in\Mor_{\T_0}(P,Q)$,
 \[\gamma|_{Q,\gamma(Q)}\circ \psi\circ (\gamma|_{P,\gamma(P)})^{-1}\in\Mor_{\T_0}(\gamma(P),\gamma(Q)).\]
\end{itemize}
We say then also that $\T_0\unlhd\T$ is a \emph{normal pair of transporter systems} over $\F_0\subseteq\F$.
\end{definition}

Saying that $\T_0\subseteq \T$ are transporter systems over $\F_0\subseteq\F$ respectively means here more precisely that $\T_0$ is a subcategory of $\T$, that $(\T,\delta,\pi)$ is a transporter system over $\F$ (for appropriate functors $\delta$ and $\pi$) and that $(\T_0,\delta|_{\T_{\Ob(\T_0)}(S_0)},\pi|_{\T_0})$ is a transporter system over $\F_0$.

\smallskip

We prove now first that kernels of localities lead to normal pairs of transporter systems.

\begin{prop}\label{L:KernelsToNormalPairs}
Let $(\N,\Gamma,T)$ be a kernel of a locality $(\L,\Delta,S)$. Then $\T_\Gamma(\N)\unlhd \T_\Delta(\L)$ is a normal pair of transporter systems over $\F_T(\N)\subseteq\F_S(\L)$.
\end{prop}

\begin{proof}
It follows from the discussion above that $\T_\Gamma(\N)\subseteq\T_\Delta(\L)$ are transporter systems over $\F_T(\N)\subseteq\F_S(\L)$. By Lemma~\ref{L:KernelBasic}(d), $\F_T(\N)$ is $\F_S(\L)$-invariant. Referring to the properties (i),(ii),(iii) in Definition~\ref{D:NormalPair}, it follows from the definition of a kernel that (i) holds. Property (ii) is a consequence of Lemma~\ref{L:FrattiniSplitting}. Property (iii) holds since $\N\unlhd\L$.
\end{proof}

We outline now how a locality can be constructed from a transporter system and use this afterwards to show that normal pairs of transporter systems lead to localities with kernels.

\smallskip

Let $(\T,\delta,\pi)$ be a transporter system over a fusion system $\F$ on $S$. Write $\Iso(\T)$ for the set of all isomorphisms in $\T$. Define a relation $\uparrow_\T$ on $\Iso(\T)$ by writing
\[\phi\uparrow_\T \psi\]
if $\psi$ restricts to $\phi$. More precisely this means that $\phi\uparrow_\T\psi$, if $\phi\in\Iso_\T(P',Q')$ and $\psi\in\Iso_\T(P,Q)$ for some $P,P',Q,Q'\in\Ob(\T)$ with $P'\leq P$, $Q'\leq Q$, $\psi(P')=Q'$ and $\phi=\psi|_{P',Q'}$. Let then $\equiv_\T$ be the smallest equivalence relation on $\Iso(\T)$ containing $\uparrow_\T$. Write $[\phi]$ for the $\equiv_\T$-equivalence class of $\phi\in\Iso(\T)$ and $\L(\T)$ for the set of all equivalence classes of $\Iso(\T)$.

\begin{remark}\label{R:equiv}
We have $\phi\equiv_\T\psi$ if and only if there exists a sequence $\phi_1,\phi_2,\cdots,\phi_k\in \Iso(\T)$ such that, for all $i=1,2,\dots,k-1$,
\[\phi_i\uparrow_\T\phi_{i+1}\mbox{ or }\phi_{i+1}\uparrow_\T\phi_i.\]
\end{remark}

By $\D$ denote the set of tuples $w=(f_1,f_2,\dots,f_k)\in\W(\L(\T))$ for which there exist $\phi_i\in f_i$ for $i=1,\dots,k$ such that the composition $\phi_1\circ\phi_2\circ\cdots\circ \phi_k$ is defined in the category $\T$. Moreover, given such $w$ and $\phi_i$, set
\[\Pi(w):=[\phi_1\circ\phi_2\circ\cdots\circ \phi_k].\]
The map $\Pi\colon \D\longrightarrow\L(\T)$ is well-defined. Together with $\Pi$ and the map
\[\L(\T)\longrightarrow\L(\T),[\phi]\mapsto[\phi^{-1}],\]
which is also well-defined, the set $\L(\T)$ forms a partial group by \cite[Proposition~A.9]{Ch}.  Moreover, the map
\[S\longrightarrow\L(\T),\;x\mapsto [\delta_S(x)]\]
is an injective homomorphism of partial groups, and its image is a subgroup of $\L(\T)$. We will usually identify $x\in S$ with $[\delta_S(x)]\in \L(\T)$. With this identification, $S$ is a subgroup of $\L(\T)$. Setting $\Delta:=\Ob(\T)$, it is shown in  \cite[Proposition~A.13]{Ch} that $(\L(\T),\Delta,S)$ is a locality. As we assume that $(\T,\delta,\pi)$ is a transporter system over $\F$, it follows from \cite[Lemma~4.4(a)]{Henke:2020} that $(\L(\T),\Delta,S)$ is a locality over $\F$. We will use these properties throughout without further reference.

\begin{remark}\label{R:ChermakConstruction}
The partial group structure on $\L(\T)$ constructed above is not exactly the same as the one constructed by Chermak \cite[Appendix~A]{Ch}. The reason is that Chermak uses consistently the ``right-hand notation'' for maps and also for the category $\T$. So if $\Pi\colon\D\rightarrow \L(\T)$ is the partial product defined above and $\Pi'\colon \D'\rightarrow \L(\T)$ is the product constructed by Chermak, then \[\D'=\{(f_n,f_{n-1},\dots,f_1)\colon (f_1,f_2,\dots,f_n)\in\D\}\]
and $\Pi'(f_n,f_{n-1},\dots,f_1)=\Pi(f_1,f_2,\dots,f_n)$ for all $(f_1,f_2,\dots,f_n)\in\D$. In particular, conjugation by $f\in\L(\T)$ with respect to the partial group with product $\Pi'$ corresponds to conjugation by $f^{-1}$ in the partial group constructed above.
\end{remark}

The main goal of this appendix is to prove the following theorem.

\begin{theorem}\label{T:NormalPairsToKernels}
Let $\F_0\subseteq\F$ be fusion systems over $S_0\leq S$ such that $\F_0$ is $\F$-invariant. Let $\T_0\unlhd\T$ be a normal pair of transporter systems over $\F_0\subseteq\F$. Set $\Delta:=\Ob(\T)$, $\Gamma:=\Ob(\T_0)$, $\L:=\L(\T)$ and
\[\N:=\{f\in\L\colon f\cap \Iso(\T_0)\neq\emptyset\}.\]
Then $(\N,\Gamma,S_0)$ is a kernel of $(\L,\Delta,S)$ with $\F_{S_0}(\N)=\F_0$. Moreover, there is an isomorphism $\N\rightarrow\L(\T_0)$, which is the identity on $S_0$, and there exists an invertible functor $\T_\Gamma(\N)\rightarrow \T_0$ which is the identity on $\Gamma$.
\end{theorem}

\begin{remark}\label{R:KernelsToNormalPairs}
Note that Theorem~\ref{T:NormalPairsToKernels} allows us to conclude results about normal pairs of transporter systems from our results about kernels of localities. Let us point out one example: Call a transporter system $(\T,\delta,\pi)$ over $\F$ cr-complete if $\F^{cr}\subseteq\Ob(\T)$. Consider a normal pair of transporter systems $\T_0\unlhd\T$ over $\F_0\subseteq\F$. Then it follows from Theorem~\ref{T:crComplete} that $\T_0$ is cr-complete if and only if $\T$ is cr-complete. Moreover, if so, then $\F_0$ is normal in $\F$. As a particular consequence, if $\T_0\unlhd\T$ is a normal pair of linking systems over $\F_0\subseteq\F$ (as defined in \cite{AOV1} and \cite{OliverExtensions}), then $\F_0$ is always normal in $\F$.
\end{remark}

To prove  Theorem~\ref{T:NormalPairsToKernels}, we assume the following hypothesis:

\smallskip

\textbf{From now on let $\F_0\subseteq\F$ be fusion systems over $S_0\leq S$ respectively such that $\F_0$ is $\F$-invariant, and let $\T_0\unlhd\T$ be a normal pair of transporter systems over $\F_0\subseteq\F$ respectively.}

\smallskip

\textbf{More precisely, let $(\T,\delta,\pi)$ be a transporter system over $\F$ and $(\T,\delta|_{\T_{\Ob(\T_0)}(S)},\pi|_{\T_0})$ be a transporter system over $\F_0$.}

\begin{lemma}\label{L:FrattiniTransporter}
Let $P,Q\in\Ob(\T_0)$ and $\psi\in\Iso_\T(P,Q)$. Then there exist $\gamma\in\Aut_\T(S_0)$, $R\in\Ob(\T_0)$ and $\psi_0\in\Iso_{\T_0}(P,R)$ such that $\gamma(R)=Q$ and $\psi=\gamma|_{R,Q}\circ \psi_0$.
\end{lemma}

\begin{proof}
Notice that $\hat{\psi}:=\iota_{Q,S_0}\circ \psi\in \Mor_{\T_0}(P,S_0)$ with $\hat{\psi}|_{P,Q}=\psi$. Hence, by axiom (ii) of Definition~\ref{D:NormalPair}, there exists $\gamma\in\Aut_\T(S_0)$ and $\hat{\psi}_0\in \Mor_{\T_0}(P,S_0)$ such that
\[\hat{\psi}=\gamma\circ \hat{\psi}_0.\]
Setting $R:=\hat{\psi}_0(P)$ and $\psi_0:=\hat{\psi}_0|_{P,R}\in\Iso_{\T_0}(P,R)$, we have $\gamma(R)=Q$ and it follows from Lemma~\ref{L:Restrictions}(b) that $\psi=\hat{\psi}|_{P,Q}=\gamma|_{R,Q}\circ \psi_0$.
\end{proof}

\begin{lemma}\label{L:RestrictToT0}
Let $P,\ov{P},Q,\ov{Q}\in\Ob(\T_0)$ with $P\leq \ov{P}$ and $Q\leq \ov{Q}$. Let $\psi\in\Iso_\T(\ov{P},\ov{Q})$ with $\psi(P)\leq Q$ and $\psi|_{P,Q}\in\Mor_{\T_0}(P,Q)$. Then $\psi$ is a morphism in $\T_0$.
\end{lemma}

\begin{proof}
As $\ov{P}$ is a $p$-group, $P$ is subnormal in $\ov{P}$. As $\Ob(\T_0)$ is overgroup-closed in $S_0$, induction on $|\ov{P}:P|$ allows us to reduce to the case that $P\unlhd\ov{P}$. Moreover, replacing $Q$ by $\psi(P)$, we may assume that $\psi(P)=Q$, $\phi:=\psi|_{P,Q}\in\Iso_{\T_0}(P,Q)$ and $Q=\psi(P)\unlhd\psi(\ov{P})=\ov{Q}$. It follows from \cite[Lemma~3.3]{OliverVentura} that $\phi\circ\delta_P(x)=\delta_Q(\pi(\psi)(x))\circ \phi$ for all $x\in \ov{P}$. As $\pi(\psi)(\ov{P})\leq \ov{Q}$, this implies that $\phi\circ \delta_P(\ov{P})\circ\phi^{-1}\leq \delta_Q(\ov{Q})$. Hence, by Axiom II of a transporter systems (as stated in \cite[Definition~3.1]{OliverVentura}) applied to the transporter system $\T_0$, there exists $\ov{\phi}\in\Mor_{\T_0}(\ov{P},\ov{Q})$ with $\ov{\phi}|_{P,Q}=\phi=\psi|_{P,Q}$. 
It follows now from Lemma~\ref{L:Restrictions}(d) that $\psi=\ov{\phi}$ is a morphism in $\T_0$.
\end{proof}

Set now $\Delta:=\Ob(\T)$, $\Gamma:=\Ob(\T_0)$, $\L:=\L(\T)$ and
\[\N:=\{f\in\L\colon f\cap \Iso(\T_0)\neq\emptyset\}.\]
As before we write $\Pi\colon\D\rightarrow\L$ for the product on $\L$.

\begin{notation}
For $P\leq S$ set $P_0:=P\cap S_0$. Similarly define $Q_0,P'_0,Q'_0$ for subgroups $Q,P',Q'\leq S$. For $P,Q\in\Delta$ and $\phi\in\Iso_\T(P,Q)$ set moreover
\[\phi^0:=\phi|_{P_0,Q_0}.\]
\end{notation}

As $S_0$ is strongly closed, we have in the situation above that $\phi(P_0)=\pi(\phi)(P_0)=(\pi(\phi)(P))\cap S_0=Q_0$. So $\phi^0$ is well-defined and an element of $\Iso_\T(P_0,Q_0)$. Observe also that $\phi^0\uparrow_\T\phi$ and thus $[\phi]=[\phi^0]$ for all $\phi\in\Iso(\T)$.

\begin{lemma}\label{L:phi0uparrow}
 If $\phi,\psi\in\Iso(\T)$ with $\psi\uparrow_\T\phi$, then $\psi^0\uparrow_\T\phi^0$.
\end{lemma}

\begin{proof}
 Let $P'\leq P$ and $Q'\leq Q$ be objects in $\T$ such that $\phi\in\Mor_\T(P,Q)$, $\psi\in\Mor_\T(P',Q')$, $\phi(P')\leq Q'$ and $\phi|_{P',Q'}=\psi$. Applying Lemma~\ref{L:Restrictions}(a) twice gives then
 \[\phi^0|_{P'_0,Q'_0}=\phi|_{P'_0,Q'_0}=\psi|_{P_0',Q_0'}=\psi^0.\]
 Hence, $\psi^0\uparrow_\T\phi^0$.
\end{proof}

\begin{lemma}\label{L:equiv}
The following hold:
\begin{itemize}
\item [(a)] If $\phi,\psi\in\Iso(\T_0)$, then
$\phi\equiv_\T\psi$ if and only if $\phi\equiv_{\T_0}\psi$.
\item [(b)] Let $\phi\in\Iso(\T_0)$ and $\psi\in\Iso_\T(P,Q)$ for some $P,Q\in\Ob(\T_0)$. If $\phi\equiv_\T\psi$, then $\psi\in\Iso(\T_0)$.
\item [(c)] If $\phi\in\Iso(\T)$ with $[\phi]\in\N$, then $\phi^0\in\Iso(\T_0)$.
\end{itemize}
\end{lemma}

\begin{proof}
\textbf{(a,b)} Clearly the relation $\uparrow_{\T_0}$ is contained in the relation $\uparrow_\T$ and so, if $\phi,\psi\in\Iso(\T_0)$ with $\phi\equiv_{\T_0}\psi$, then $\phi\equiv_\T\psi$.

\smallskip

Suppose now that $\phi\in\Iso(\T_0)$ and $\psi\in\Iso_\T(P,Q)$ for some $P,Q\in\Ob(\T_0)$. Assume $\phi\equiv_\T\psi$. To show (a) and (b) it remains to show that $\psi\in\Iso(\T_0)$ and $\phi\equiv_{\T_0}\psi$. By Remark~\ref{R:equiv}, there exists a series $\phi=\phi_1,\phi_2,\dots,\phi_n=\psi\in\Iso(\T)$ such that, for all $i=1,2,\dots,n-1$, we have $\phi_i\uparrow_\T \phi_{i+1}$ or $\phi_{i+1}\uparrow_\T\phi_i$. Notice that $\phi^0=\phi$ and $\psi^0=\psi$ as $P,Q\in\Ob(\T_0)$. Thus,
Lemma~\ref{L:phi0uparrow} allows us to replace $\phi_1,\phi_2,\dots,\phi_n$ by $\phi_1^0,\phi_2^0,\dots,\phi_n^0$. Thus, we may assume that $\phi_1,\phi_2,\dots,\phi_n$ are isomorphisms in $\T$ between objects of $\T_0$. Then Lemma~\ref{L:RestrictToT0} implies that, for all $i=1,2,\dots,n-1$, $\phi_i\in\Iso(\T_0)$ if and only if $\phi_{i+1}\in\Iso(\T_0)$. As $\phi_1=\phi\in\Iso(\T_0)$ by assumption, it follows therefore inductively that $\phi_1,\phi_2,\dots,\phi_n\in\Iso(\T_0)$. In particular, $\psi=\phi_n\in\Iso(\T_0)$. For $\alpha,\beta\in\Iso(\T_0)$ it is easy to observe that $\alpha\uparrow_\T\beta$ if and only if $\alpha\uparrow_{\T_0}\beta$. Hence, we have also $\phi_i\uparrow_{\T_0} \phi_{i+1}$ or $\phi_{i+1}\uparrow_{\T_0}\phi_i$ for $i=1,2,\dots,n-1$. This shows $\phi\equiv_{\T_0}\psi$. So (a) and (b) hold.

\smallskip

\textbf{(c)} If $\phi\in\Iso(\T)$ with $[\phi]\in\N$, then by definition of $\N$, there exists $\psi\in\Iso(\T_0)$ with $\psi\in[\phi]$. Then $\phi^0\equiv_\T\phi\equiv_\T\psi$. Hence, it follows from part (b) that $\phi^0\in\Iso(\T_0)$.
\end{proof}

For the following lemma recall that we identify $x\in S$ with $[\delta_S(x)]\in\L$.

\begin{lemma}\label{L:NPartialSubgroup}
\begin{itemize}
\item [(a)] $\N$ is a partial subgroup of $\L$ with $\N\cap S=S_0$.
\item [(b)] Set $\L_0:=\L(\T_0)$, write $[\phi]_0$ for the $\equiv_{\T_0}$-equivalence class of $\phi\in\Iso(\T_0)$, and identify $x\in S_0$ with $[\delta_{S_0}(x)]_0$. Then the map
\[\theta\colon\N\rightarrow \L_0,f\mapsto f\cap \Iso(\T_0)\]
is an isomorphism of localities which restricts to the identity on $S_0$.
\end{itemize}
\end{lemma}

\begin{proof}
We prove first:
\begin{equation}\label{E:Ninversion}
 \N\mbox{ is closed under inversion.}
\end{equation}
For the proof note that every $n\in\N$ can be written as $n=[\phi]$ for some $\phi\in\Iso(\T_0)$. Then $\phi^{-1}\in \Iso(\T_0)$ and it follows from the definition of the inversion on $\L$ that $n^{-1}=[\phi^{-1}]\in\N$.  This shows \eqref{E:Ninversion}. We argue next that
\begin{equation}\label{E:thetaBijection}
\mbox{the map $\theta$ is well-defined and a bijection.}
\end{equation}
Indeed, it follows from Lemma~\ref{L:equiv}(a) that $[\phi]\cap \Iso(\T_0)=[\phi]_0$ for all $\phi\in\Iso(\T_0)$. Hence, $\theta$ is well-defined and surjective. Note also that $f\cap g\neq\emptyset$ for all $f,g\in\N$ with $\theta(f)=\theta(g)$. As the equivalence classes of $\equiv_\T$ form a partition of $\Iso(\T)$, we can therefore conclude that the map $\theta$ is  injective. So \eqref{E:thetaBijection} holds. We show next:
\begin{equation}\label{E:S0inN}
 S_0\subseteq \N\cap S\mbox{ and }\theta|_{S_0}=\id_{S_0}.
\end{equation}
For the proof let $x\in S_0$. Lemma~\ref{L:Restrictions}(c) implies $\delta_S(x)|_{S_0}=\delta_{S_0}(x)\in\Aut_{\T_0}(S_0)\subseteq\Iso(\T_0)$ and so $x=[\delta_S(x)]=[\delta_{S_0}(x)]\in \N$. Moreover, $x=[\delta_S(x)]$ gets mapped $[\delta_{S_0}(x)]_0$, which we also identify with $x$ (as stated in part (b)).  This proves \eqref{E:S0inN}. We show next:
\begin{equation}\label{E:AHom}
\mbox{$\N$ is a partial subgroup and $\theta$ is a homomorphism of partial groups.}
\end{equation}
For the proof let $w:=(n_1,n_2,\dots,n_k)\in\W(\N)\cap \D$. As $w\in\D$, there exist $\phi_i\in n_i$ for $i=1,2,\dots,k$ such that the composition $\phi_1\circ\phi_2\circ\cdots\circ \phi_k$ is defined in $\T$. Note that then also the composition $\phi_1^0\circ\phi_2^\circ \cdots \circ \phi_k^0$ is defined. Moreover, as $\phi_i^0\uparrow_\T\phi_i$, we have $\phi_i^0\in n_i$ for $i=1,2,\dots,k$. By Lemma~\ref{L:equiv}(c), $\phi_i^0\in\Iso(\T_0)$. So replacing $\phi_1,\phi_2,\dots,\phi_k$ by $\phi_1^0,\phi_2^0,\dots,\phi_k^0$, we may assume $\phi_i\in\Iso(\T_0)$. Then $\phi_1\circ\phi_2\circ \cdots \circ \phi_k\in\Iso(\T_0)$ and hence
\[\Pi(w)=[\phi_1\circ \phi_2\circ\cdots\circ \phi_k]\in\N.\]
Together with \eqref{E:Ninversion} this shows that $\N$ is a partial subgroup.

\smallskip

Note also that $\theta(n_i)=n_i\cap\Iso(\T_0)=[\phi_i]_0$ for $i=1,2,\dots,k$. Moreover, the composition $\phi_1\circ\phi_2\circ\cdots\circ\phi_k$ is defined in $\T_0$. Hence, writing $\Pi_0\colon\D_0\rightarrow\L_0$ for the product on $\L_0$ (defined in the usual way), it follows that $\theta^*(w)=(\theta(n_1),\theta(n_2),\dots,\theta(n_k))\in \D_0$ and
\[\Pi_0(\theta^*(w))=[\phi_1\circ\phi_2\circ\cdots\circ\phi_n]_0=[\phi_1\circ\phi_2\circ\cdots\circ\phi_n]\cap \Iso(\T_0)=\Pi(w)\cap\Iso(\T_0)=\theta(\Pi(w)).\]
This proves that $\theta$ is a homomorphism of partial groups and \eqref{E:AHom} holds.

\smallskip

For the proofs of the next two properties recall Definition~\ref{D:Homomorphisms} and the notation introduced there. We show next that
\begin{equation}\label{E:thetaIso}
 \mbox{$\theta$ is an isomorphism of partial groups.}
\end{equation}
Writing again $\Pi_0\colon\D_0\rightarrow\L_0$ for the product on $\L_0$, it is by \eqref{E:thetaBijection} and \eqref{E:AHom}  sufficient to prove that $\D_0\subseteq \theta^*(\D\cap\W(\N))$. For the proof of this property let $u=(g_1,g_2,\dots,g_n)\in\D_0$ and fix $\phi_i\in g_i$ for $i=1,2,\dots,n$ such that the composition $\phi_1\circ\phi_2\circ\cdots\circ\phi_n$ is defined in $\T_0$. Note that such $\phi_i$ exist by definition of $\D_0$. Then $\phi_i\in\Iso(\T_0)$ and hence $f_i:=[\phi_i]\in\N$ with $\theta(f_i)=g_i$. As the composition $\phi_1\circ\phi_2\circ\cdots\circ\phi_n$ is defined in $\T_0$, it is also defined in $\T$. Therefore, $w:=(f_1,\dots,f_n)\in\D\cap \W(\N)$ with $\theta^*(w)=u$. This shows $\D_0\subseteq\theta^*(\D\cap\W(\N))$ and thus \eqref{E:thetaIso}.

\smallskip

Note that (b) holds by \eqref{E:S0inN} and \eqref{E:thetaIso}. Moreover, $S_0\subseteq S\cap \N$. Recall also that $\N$ is a partial subgroup of $\L$ by \eqref{E:AHom} and that $(\L_0,S_0,\Gamma)$ is by \cite[Proposition~A.13]{Ch} a locality. In particular, $S_0$ is a maximal $p$-subgroup of $\L_0$ and thus by (b) also a maximal $p$-subgroup of $\N$. This implies $S\cap \N=S_0$.
\end{proof}

\begin{lemma}\label{L:7}
 Let $P,Q,R,P',Q',R'\in\Ob(\T_0)$, $\phi\in\Iso_\T(P,Q)$, $\psi\in\Iso_\T(P',Q')$, $\phi_0\in\Iso_{\T_0}(P,R)$, $\psi_0\in\Iso_{\T_0}(P',R')$, $\alpha,\gamma\in \Aut_\T(S_0)$ such that $\alpha(R)=Q$, $\gamma(R')=Q'$,
 \[\phi=\alpha|_{R,Q}\circ\phi_0\mbox{ and }\psi=\gamma|_{R',Q'}\circ\psi_0.\]
If $\phi\equiv_\T\psi$, then $\gamma^{-1}\circ \alpha\in \Aut_{\T_0}(S_0)$.
\end{lemma}

\begin{proof}
 By Remark~\ref{R:equiv}, there exists a series $\phi=\beta_1,\beta_2,\dots,\beta_n=\psi\in\Iso(\T)$ such that $\beta_i\uparrow\beta_{i+1}$ or $\beta_{i+1}\uparrow \beta_i$ for $i=1,\dots,n-1$. By Lemma~\ref{L:FrattiniTransporter}, every $\beta_i$ can be factored as a restriction of an automorphism $\alpha_i\in\Aut_\T(S_0)$ with an isomorphism in $\T_0$. We may assume that $\alpha_1=\alpha$ and $\alpha_n=\gamma$. If $\alpha_{i+1}^{-1}\circ \alpha_i\in\Aut_{\T_0}(S_0)$ for $i=1,\dots,n-1$, then
 \[\gamma^{-1}\circ\alpha=\alpha_n^{-1}\circ\alpha_1=(\alpha_n^{-1}\circ \alpha_{n-1})\circ (\alpha_{n-1}^{-1}\circ \alpha_{n-2})\circ\cdots \circ (\alpha_2^{-1}\circ\alpha_1)\in\Aut_{\T_0}(S_0).\]
Thus, we may reduce to the case $\phi\uparrow_\T\psi$ or $\psi\uparrow_\T\phi$. As $\gamma^{-1}\circ\alpha\in\Aut_{\T_0}(S_0)$ if and only if $\alpha^{-1}\circ\gamma=(\gamma^{-1}\circ\alpha)^{-1}\in\Aut_{\T_0}(S_0)$, we may indeed assume that
\[\phi\uparrow_\T\psi.\]
This means that $P\leq P'$, $\psi(P)=Q\leq Q'$ and $\phi=\psi|_{P,Q}$. Setting $X:=\psi_0(P)$, Lemma~\ref{L:Restrictions}(b) gives
\[\alpha|_{R,Q}\circ\phi_0=\phi=\psi|_{P,Q}=\gamma|_{X,Q}\circ \psi_0|_{P,X}.\]
As $\phi_0$ and $\psi_0$ are morphisms in $\T_0$, this implies together with Lemma~\ref{L:Restrictions}(b) that
\[(\gamma^{-1}\circ \alpha)|_{R,X}=(\gamma|_{X,Q})^{-1}\circ \alpha|_{R,Q}=\psi_0|_{P,X}\circ \phi_0^{-1}\in\Iso_{\T_0}(R,X).\]
Hence, Lemma~\ref{L:RestrictToT0} gives $\gamma^{-1}\circ\alpha\in\Aut_{\T_0}(S_0)$.
\end{proof}

\begin{lemma}\label{L:NPartialNormal}
We have $\N\unlhd\L$.
\end{lemma}

\begin{proof}
Recall that $\N$ is a partial subgroup of $\L$ by Lemma~\ref{L:NPartialSubgroup}(a). Let $f\in\L$ and $n\in\N$ such that $(f^{-1},n,f)\in\D$. Then there exist
\[\phi^{-1}\in f^{-1},\;\rho\in n,\;\psi\in f\]
such that the composition $\phi^{-1}\circ\rho\circ\psi$ is defined. Replacing $\phi,\rho,\psi$ by $\phi^0,\rho^0,\psi^0$, we may assume that, for some $P,Q,P',Q'\in\Ob(\T_0)$, we have
\[\psi\in\Iso_\T(Q',P'),\;\rho\in\Iso_{\T_0}(P',P),\;\phi^{-1}\in\Iso_\T(P,Q).\]
By Lemma~\ref{L:FrattiniTransporter}, there exist then $R,R'\in\Ob(\T_0)$, $\phi_0\in\Mor_{\T_0}(P,R)$, $\psi_0\in\Mor_{\T_0}(P',R')$ and $\alpha,\gamma\in\Aut_{\T_0}(S_0)$ such that $\alpha(R)=Q$, $\gamma(R')=Q'$,
\[\phi^{-1}=\alpha|_{R,Q}\circ\phi_0\mbox{ and }\psi^{-1}=\gamma|_{R',Q'}\circ\psi_0.\]
As $f=[\psi]$, we have $[\psi^{-1}]=f^{-1}=[\phi^{-1}]$ and so $\psi^{-1}\equiv_\T\phi^{-1}$. Thus, Lemma~\ref{L:7} (applied with $(\psi^{-1},\phi^{-1})$ in place of $(\phi,\psi)$) implies that $\gamma^{-1}\circ\alpha\in\Aut_{\T_0}(S_0)$. By definition of the product $\Pi$ on $\L$, we have
\[n^f=\Pi(f^{-1},n,f)=[\phi^{-1}\circ\rho\circ\psi].\]
Using Lemma~\ref{L:Restrictions}(b), observe that
\begin{eqnarray*}
 \phi^{-1}\circ\rho\circ\psi &=& \alpha|_{R,Q}\circ\phi_0\circ\rho\circ\psi_0^{-1}\circ (\gamma|_{R',Q'})^{-1}\\
 &=& \alpha|_{R,Q}\circ \phi_0\circ\rho\circ\psi_0^{-1}\circ (\gamma^{-1}\circ \alpha)|_{\alpha^{-1}(Q'),R'}\circ (\alpha|_{\alpha^{-1}(Q'),Q'})^{-1}.
\end{eqnarray*}
As $\phi_0\circ\rho\circ\psi_0^{-1}\circ (\gamma^{-1}\circ \alpha)|_{\alpha^{-1}(Q'),R'}\in\Iso(\T_0)$, it follows from axiom (iii) in Definition~\ref{D:NormalPair} that $\phi^{-1}\circ\rho\circ\psi\in\Iso(\T_0)$. Hence, $n^f=[\phi^{-1}\circ\rho\circ\psi]\in\N$. This shows the assertion.
\end{proof}

\begin{proof}[Proof of
Theorem~\ref{T:NormalPairsToKernels}]
By Lemmas~\ref{L:NPartialSubgroup}(a) and \ref{L:NPartialNormal}, $\N$ is a partial normal subgroup of $\L$ with $S\cap \N=S_0$. Using that $\Ob(\T_0)\subseteq\Ob(\T)$ and axiom (i) in Definition~\ref{D:NormalPair} holds, we can see that $\Gamma=\{P\cap S_0\colon P\in\Delta\}\subseteq\Delta$. Hence, $(\N,\Gamma,S_0)$ is a kernel of $(\L,\Delta,S)$.

\smallskip

By Lemma~\ref{L:NPartialSubgroup}(b), there is an isomorphism $\theta\colon \N\rightarrow\L_0:=\L(\T_0)$ which (with appropriate identifications) restricts to the identity on $S_0$. In particular, $\F_{S_0}(\N)=\F_{S_0}(\L_0)$ and there exists an invertible functor $\T_\Gamma(\N)\rightarrow \T_\Gamma(\L_0)$ which is the identity on $\Gamma$. We have seen in our general discussion above that $\F=\F_S(\L)$. So we have similarly that $\F_0=\F_{S_0}(\L_0)=\F_S(\N)$. It is moreover shown in \cite[Lemma~A.15]{Ch} that there is an invertible functor $\T_\Gamma(\L_0)\rightarrow \T_0$ which restricts to the identity on $\Gamma$. This implies the assertion.
\end{proof}

\bibliographystyle{alpha}
\bibliography{my.books}
\end{document}